\documentclass[10pt, A4]{amsart} \textwidth=14.5cm \oddsidemargin=1cm \evensidemargin=1cm
\usepackage[utf8]{inputenc}

\usepackage{multirow}
\usepackage[all,cmtip]{xy}
\usepackage{caption}
\usepackage{amsmath,amssymb,amscd,mathrsfs,amscd,wasysym,marvosym,color,xcolor}
\usepackage{setspace}
\usepackage{amsthm}
\usepackage[]{xcolor}
\usepackage{mathrsfs}  
\usepackage{xypic,makecell,hhline}
\usepackage{tikz}
\usepackage{verbatim}
\usepackage{amsfonts}
\usepackage{amsxtra}
\usepackage{amssymb}
\usepackage{eucal}
\usepackage{latexsym,todonotes}
\usepackage{stmaryrd}
\usepackage{graphicx}
\usepackage{fontawesome5}
\usepackage{dynkin-diagrams}
\usepackage{tikz}
\usepackage[linktocpage=true]{hyperref}
\usepackage[capitalize]{cleveref}

\usepackage[
  backend=bibtex,
  style=numeric,   
  giveninits=true,
  doi=false,
  isbn=false,
  url=false,
  eprint=false
]{biblatex}

\DeclareFieldFormat[article,unpublished,incollection,book]{title}{#1}
\DeclareFieldFormat{pages}{#1}
\renewbibmacro{in:}{}
\usetikzlibrary{intersections}
\usetikzlibrary{shapes}
\newtheorem{theorem}{Theorem}[section]

\newtheorem{corollary}[theorem]{Corollary}

\newtheorem{definition}[theorem]{Definition}
\newtheorem{example}[theorem]{Example}

\newtheorem{lemma}[theorem]{Lemma}

\newtheorem{proposition}[theorem]{Proposition}

\numberwithin{equation}{section}

\theoremstyle{remark}
\newtheorem{remark}[theorem]{Remark}
\crefname{equation}{}{}

\newcommand{\gl}{{\mathfrak{gl}}}
\newcommand{\g}{{\mathfrak{g}}}
\newcommand{\h}{{\mathfrak{h}}}

\newcommand{\n}{{\mathfrak{n}}}

\newcommand{\I}{\mathbf I}
\def \tpsi{\psi} 

\newcommand{\new}{\varrho}

\newcommand{\II}{\mathbb{I}}
\newcommand{\half}{\frac{1}{2}}
\newcommand{\talpha}{\tilde{\alpha}}

\def \cJ{\mathcal{J} }
\def \TT{\mathbf{T}}

\def \tTD{\widetilde{T}}

\def \cK{\mathcal{K}}

\def \bcG{{\mathcal{G}}_{\bullet}}

\def \bbw{{\boldsymbol{w}}_\circ}

\def \Br{\text{Br}}
\def \bs{\mathbf{r}}
\def \ba{\mathbf{a}}
\def \reW{W^{\circ}}

\def \dm{\diamond}
\def \bIi{\I_{\bullet,i}}

\newcommand{\J}{{\bf J}}
\newcommand{\Dmn}{\mathcal{D}_{m,n}}
\newcommand{\DiAIII}{\mathcal{D}^\io}
\newcommand{\DiAI}{\mathcal{D}^{\io}_{\text{id}}}
\newcommand{\bF}{\mathbb{F}}
\newcommand{\va}{\varsigma}

\newcommand{\N}{\mathbb{N}}
\newcommand{\Z}{\mathbb{Z}}

\newcommand{\Q}{\mathbb{C}}
\newcommand{\C}{\mathbb{C}}

\newcommand{\End}{\text{End}}

\newcommand{\la}{\langle}
\newcommand{\ra}{\rangle}

\newcommand{\io}{\imath}
\newcommand{\bu}{\bullet}
\newcommand{\U}{\mathbf U}
\newcommand{\tU}{\widetilde{\U}}
\newcommand{\Ui}{{\mathbf U}^\imath}

\newcommand{\pt}{\mathsf{a}}
\newcommand{\nb}{b}
\newcommand{\up}{\Upsilon}
\newcommand{\Veven}{V_{\overline 0}}
\newcommand{\Vodd}{V_{\overline 1}}

\newcommand{\Ieven}{\I_{\overline 0}}
\newcommand{\Iodd}{\I_{\overline 1}}
\newcommand{\Phieven}{\Phi_{\overline 0}}
\newcommand{\Phiodd}{\Phi_{\overline 1}}

\newcommand{\Ub}{\U_{\bu}}
\newcommand{\qbinom}[2]{\begin{bmatrix} #1\\#2 \end{bmatrix} }

\newcommand{\wseq}{\underline{w}}
\newcommand{\relongest}{\mathbf{w}^\circ_{\I}}

\def \tTD{{T}}

\def \Id{\mathrm{Id}}

\newcommand{\zero}{{\bar{0}}}
\newcommand{\one}{{\bar{1}}}

\newcommand{\iso}{{\mathsf{iso}}}
\newcommand{\niso}{{\mathsf{n}\text{-}\mathsf{iso}}}

\newcommand{\af}{\alpha}

\newcommand{\bva}{{\boldsymbol{\varsigma}}} 

\tikzset{anchorbase/.style={>=To,baseline={([yshift=-0.5ex]current bounding box.center)}}}

\newcommand{\set}[1]{\left\{#1\right\}}

\everymath=\expandafter{\the\everymath\displaystyle} 
\allowdisplaybreaks
\title[Relative braid group symmetries of type sAIII]{Relative braid group symmetries on quantum supersymmetric pairs of type sAIII}
\author{Yaolong Shen}
\address[Y.S.]{
    Department of Mathematics and Statistics \\
    University of Ottawa \\
    Ottawa, ON, K1N 6N5, Canada
}
\urladdr{\href{https://sites.google.com/virginia.edu/yaolongshen}{sites.google.com/virginia.edu/yaolongshen}, \textrm{\textit{ORCiD}:} \href{https://orcid.org/0000-0002-8840-3394}{orcid.org/0000-0002-8840-3394}}
\email{yshen5@uottawa.ca}

\author{Weinan Zhang}
\address[W.Z.]{Department of Mathematics and New Cornerstone Science Laboratory, The University of Hong Kong, Pokfulam, Hong Kong SAR 999077, P.R.China}
\urladdr{\href{https://sites.google.com/virginia.edu/weinan/home}{sites.google.com/virginia.edu/weinan}, \textrm{\textit{ORCiD}:} \href{https://orcid.org/0000-0001-5893-1543}{orcid.org/0000-0001-5893-1543}}
\email{zhangweinan72@gmail.com}

\subjclass[2020]{Primary 17B37, 17B10}

\keywords{Braid group actions, Quantum symmetric pairs, Quantum supergroups, iQuantum groups}

\begin{document}

\begin{abstract} 
We introduce the relative Coxeter groupoid and construct intrinsic relative braid group symmetries for quantum supersymmetric pairs of type sAIII. These symmetries are constructed by establishing new intertwining properties of quasi $K$-matrices, which generalize the earlier non-super construction of Wang and the second author.  
We derive explicit formulas for these symmetries and prove that they satisfy the braid relations in the relative Coxeter groupoid.  
\end{abstract}

\maketitle

\setcounter{tocdepth}{1}
\tableofcontents

\section{Introduction}
\renewcommand{\thetheorem}{\Alph{theorem}}
\setcounter{theorem}{0}

\subsection{Background}
Braid group symmetries, introduced by Lusztig \cite{Lu90a,Lu90b,Lubook}, are among the most fundamental structures in the theory of Drinfeld--Jimbo quantum groups.  
They have played a central role in the construction of PBW bases and canonical bases, and have found important applications in geometric representation theory, categorification, and related areas. 

Associated with any Satake diagram \cite{Ara62}, quantum symmetric pairs, introduced by Letzter \cite{Let02} and Kolb \cite{Ko14}, provide a natural quantization of classical symmetric pairs. A quantum symmetric pair consists of a Drinfeld--Jimbo quantum group and an iquantum group, which is a coideal subalgebra of this quantum group. Over the past decade, many fundamental constructions for quantum groups have been extended to the framework of iquantum groups; we refer the reader to the exposition \cite{Wan23} for an overview of this development. 

One of the most important ingredients---the quasi $K$-matrix, which is the analogue of the quasi $R$-matrix---was introduced by Bao-Wang \cite{BW18KL} using bar involutions on iquantum groups with special parameters. A complete proof in greater generality was established in \cite{BK19}.  In \cite{WZ23}, the quasi $K$-matrix was formulated for general parameters via an intertwining property involving the anti-involution on iquantum groups (see also \cite{AV22} for a different formulation).

Using the quasi $K$-matrix, relative braid group symmetries were systematically constructed by Wang and the second author \cite{WZ23} on iquantum groups of arbitrary finite type, solving an earlier conjecture of Kolb-Pellegrini \cite{KP11}. This construction has been generalized to the quasi-split Kac-Moody setting in \cite{Zha23}, and compatible symmetries on integrable modules over quasi-split iquantum groups, together with integrality, have recently been established in \cite{WZ25}. The relative braid group symmetries have played a fundamental role in various constructions for iquantum groups, including the PBW basis and the Hall algebra realization; see \cite{LYZ25, LW22c}.

Now let $\g$ be a basic Lie superalgebra.  
A distinctive feature of a Lie superalgebra is that its Dynkin diagrams are not unique.  
Indeed, unlike the purely even case, the fundamental systems associated with \(\g\) are not necessarily conjugate under the Weyl group action, because of the presence of odd roots; see \cite{CW12}.  
In this context, one distinguishes between \emph{odd} or \emph{even} reflections, according to whether the corresponding root is odd or even.

The \emph{quantum supergroup} \(\U\), as a Drinfeld--Jimbo quantization of \(\g\), was introduced in \cite{Ya94}. The generator-relation presentation of \(\U\) depends on the Dynkin diagram of \(\g\), and hence one naturally have a family of quantum algebras for a given $\g$. Based on this family of algebras, Yamane \cite{Ya99} quantized odd reflections into algebra isomorphisms relating the different presentations of \(\U\), thus obtaining a super analogue of Lusztig's braid group symmetries. 

A notion of \emph{super Satake diagrams} \((\I=\I_\circ \cup \I_\bu,\tau)\), subject to suitable super admissible conditions, was recently introduced in \cite[Definition~2.3]{SWsuper}.  
Then the theory of quantum symmetric pairs, together with its basic structural features, including the quasi $K$-matrix was subsequently extended to the setting of quantum supersymmetric pairs associated with arbitrary Dynkin diagrams for finite-dimensional basic Lie superalgebras in \cite{SWsuper}; see also \cite{KY20,AMS25}. Since $\g$ admits non-conjugate Dynkin diagrams, isomorphic supersymmetric pairs can arise from different super Satake diagrams; see \cite[Example 2.16]{SWsuper}.  However, it is not yet clear when quantum supersymmetric pairs associated to different super Satake diagrams are isomorphic; see \cite[Remark 7.8]{SWsuper}.

\subsection{Goal and Scope}

The goal of this paper is to take a first step toward constructing relative braid group symmetries for quantum supersymmetric pairs of all basic types.  More precisely, we generalize the approach of \cite{WZ23} to the super setting and develop an intrinsic construction of relative braid group symmetries for quantum supersymmetric pairs of type sAIII, where the underlying Lie superalgebra is \(\g=\gl(m|n)\) and the underlying super Satake diagram is of the form \eqref{AIIIdiagram}.

The type sAIII case is a natural and essential testing ground for the general theory.  It is closely related to quantum supersymmetric pairs of diagonal type, for which the iquantum supergroup coincides with the usual quantum supergroup, while already exhibiting genuinely new phenomena arising from the theory of quantum supersymmetric pairs.  Thus, a complete treatment of type sAIII not only extends the construction of \cite{WZ23} beyond the classical setting but also provides the first evidence and a framework for developing relative braid group symmetries for quantum supersymmetric pairs in all basic types.

Relative braid group symmetries are expected to play a key role in future developments of the representation theory of quantum supersymmetric pairs. Rather than being confined to a single Satake diagram, they allow one to relate quantum supersymmetric pairs associated with different super Satake diagrams, and thereby provide a natural framework for comparing the corresponding module categories.

\subsection{Main results}

Let \(W\) be the Coxeter groupoid associated with \(\gl(m|n)\), as in \cref{groupoiddef}.  
In \cref{def:relgoupoid}, we introduce the relative Coxeter groupoid \(W^\circ\) of type sAIII as a subgroupoid of \(W\).  
The appearance of a groupoid, rather than a single Coxeter group, is one of the main new features in the super setting. Recall that different choices of simple systems for \(\gl(m|n)\) are related by both even and odd reflections, and the corresponding quantum supergroups may have different presentations.  
For quantum supersymmetric pairs, this phenomenon is even more delicate, since the Satake data must be transformed compatibly.

Let \(\DiAIII_\pt\) denote the collection of all sAIII super Satake diagrams of the form \cref{AIIIdiagram}. We note that \(W^\circ\) is generated by the idempotents \(e_\I\) and the simple reflections \(\bs_{i,\I}\), for all \(\I\in \DiAIII_\pt\) and \(i\in \I_\circ\). As expected, these simple reflections satisfy type B braid relations. 
Unlike the case of Lie superalgebras, the action of \(\reW\) on \(\DiAIII_\pt\) is not transitive; hence we classify its orbits in \cref{prop:orbit}.

Let $T'_{w},T_{w}$ for $w\in W$ be braid group symmetries on quantum supergroups; see \eqref{eq:TiLus} for the convention.
Let $\up_{i,\I}$ be the rank-one quasi $K$-matrix associated to $\I\in \DiAIII_\pt,i\in \I_\circ$. After establishing certain intertwining relations of the quasi $K$-matrix in \cref{sec:intertwrel}, we obtain
\begin{theorem}[Theorems~\ref{thm:ibraid}]
Let $\I,\J\in \DiAIII_\pt$ such that $\J=\bs_{i}(\I)$. 
\begin{itemize}
\item[(1)] For any $x\in \Ui(\I)$, there exists a unique element $x'\in \Ui(\J)$ such that 
\begin{equation}
x' \up_{i,\J} = \up_{i,\J} T_{\bs_{i}}'(x).
\end{equation}
Moreover, the map $x\mapsto x'$ defines an algebra isomorphism $\TT'_{i,\I}:\Ui(\I)\rightarrow \Ui(\J)$.

\item[(2)] For any $x\in \Ui(\I)$, there exists a unique element $x''\in \Ui(\J)$ such that 
\begin{equation}
x'' T_{\bs_{i}}(\up_{i,\I}^{-1}) = T_{\bs_{i}}(\up_{i,\I}^{-1}) T_{\bs_{i},\I}(x).
\end{equation}
Moreover, the map $x\mapsto x''$ defines an algebra isomorphism $\TT_{i,\I}:\Ui(\I)\rightarrow \Ui(\J)$.

\item[(3)] We have $\TT'_{i,\J} \TT_{i,\I}= \TT_{i,\J}\TT'_{i,\I}=\Id_{\I}$ 
and $\TT'_{i,\I} \TT_{i,\J}= \TT_{i,\I}\TT'_{i,\J}=\Id_{\J}$.
\end{itemize}
\end{theorem}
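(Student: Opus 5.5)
Following \cite{WZ23}, the plan is to reduce everything to the intertwining relations for the rank-one quasi $K$-matrices from \cref{sec:intertwrel}, checked on the generators of $\Ui(\I)$.

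\emph{Part (1).} Recall that $\Ui(\I)$ is generated by $F_j+\va_j T_{w_\bu}(E_{\tau j})K_j^{-1}$ (and $K_\mu$, $E_j,F_j$ for $j\in\I_\bu$). Since $\up_{i,\J}$ is invertible in a completion of $\U^+$, the element $x'=\up_{i,\J}T'_{\bs_i}(x)\up_{i,\J}^{-1}$ is uniquely determined as an element of the completion. The content of (1) is that $x'$ lies in $\Ui(\J)$. Because $x\mapsto \up_{i,\J}T'_{\bs_i}(x)\up_{i,\J}^{-1}$ is an algebra homomorphism, it suffices to check this on generators. For this I will split into three cases:
\begin{enumerate}
\item $j\in\I_\bu$ or Cartan parts. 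Here $T'_{\bs_i}$ maps the Levi part of $\Ui(\I)$ to that of $\Ui(\J)$, and $\up_{i,\J}$ commutes with it by the weight/Levi properties of the quasi $K$-matrix.
\item $j\in\I_\circ$ not adjacent to $i$ (in the relative sense). Here $T'_{\bs_i}$ acts as a Lusztig/Yamane symmetry on a commuting part, and the rank-one factor $\up_{i,\J}$ commutes with the image.
\item The remaining generators $B_i$ and $B_j$ with $j$ relatively adjacent to $i$. These are exactly the intertwining relations established in \cref{sec:intertwrel}, which give explicit elements of $\Ui(\J)$ as the images.
\end{enumerate}
Injectivity is automatic from conjugation. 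Surjectivity will follow from (3).

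\emph{Part (2).} This is handled the same way. I set $x''=T_{\bs_i}(\up_{i,\I}^{-1})T_{\bs_i}(x)T_{\bs_i}(\up_{i,\I})$, which is again a homomorphism. Alternatively, I would derive it from (1) by applying the bar-type anti-involution $\sigma^\imath$ (or $\sigma\tau$) intertwining $T'$ and $T$ and $\up$ with $\up^{-1}$-type expressions, exactly as in \cite{WZ23}. This avoids a second case-by-case check, provided the super sign conventions for $\sigma$ on $\Ui(\I)$ versus $\Ui(\J)$ match; I would verify that on generators.

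\emph{Part (3).} Here I compose. $\TT_{i,\J}\TT'_{i,\I}(x)$ equals the conjugate of $T_{\bs_i}T'_{\bs_i}(x)=x$ (using $T_{\bs_i,\J}T'_{\bs_i,\I}=\Id$ for the groupoid symmetries) by the element $T_{\bs_i}(\up_{i,\J}^{-1})\up_{i,\J}$. So the key identity needed is $T_{\bs_i}(\up_{i,\J})=\up_{i,\I}^{-1}$-type compatibility, i.e.\ the conjugating factor acts trivially. I would establish the rank-one relation $T_{\bs_i}(\up_{i,\J})\,\up_{i,\I}$ commutes with $\Ui(\I)$. The cleanest route is uniqueness: the element satisfies the defining intertwining property of the quasi $K$-matrix with trivial constant term, hence equals $1$. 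This is proved by reducing to the rank-one subdiagram (of type sAIII with $\I_\circ=\{i,\tau i\}$ or the odd rank-one case), where it can be checked directly from the explicit formula for $\up_{i}$.

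\emph{Main obstacle.} I expect the hardest step to be the third case of Part (1) when $i$ is the odd/central node or when the reflection $\bs_i$ is odd, so that $\I$ and $\J$ have genuinely different parities and Satake data. In that case the $B_j$'s images involve $q$-commutators with signs, and one must match the parameters $\va_j$ between $\I$ and $\J$. I would first compute in the rank-two super subdiagrams using the rank-one intertwining relations, and then choose $\va$ on $\J$ accordingly.
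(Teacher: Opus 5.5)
Your plan for (1) and (2) is the paper's plan: define $x'$ and $x''$ by conjugation in a completion, note that this is multiplicative, and check on generators that the images lie in $\Ui(\J)$. Your Part (3), however, rests on a miscalculation and on an identity that is false. In computing $\TT_{i,\J}\TT'_{i,\I}(x)$ you applied $T_{\bs_i,\J}$ to $T'_{\bs_i,\I}(x)$ but not to the factors $\up_{i,\J}^{\pm1}$ in $\TT'_{i,\I}(x)=\up_{i,\J}T'_{\bs_i,\I}(x)\up_{i,\J}^{-1}$. That is where your conjugator $T_{\bs_i}(\up_{i,\J}^{-1})\up_{i,\J}$ comes from. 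It multiplies elements of completions of $\U(\I)$ and of $\U(\J)$, so it is not even meaningful when $\I\neq\J$.

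Since $T_{\bs_i,\J}$ is an algebra homomorphism with $T_{\bs_i,\J}T'_{\bs_i,\I}=\Id$, one simply gets
\[
\TT_{i,\J}\TT'_{i,\I}(x)=T_{\bs_i,\J}(\up_{i,\J}^{-1})\,T_{\bs_i,\J}(\up_{i,\J})\,x\,T_{\bs_i,\J}(\up_{i,\J}^{-1})\,T_{\bs_i,\J}(\up_{i,\J})=x.
\]
Likewise $T'_{\bs_i,\J}\big(\TT_{i,\I}(x)\big)=\up_{i,\I}^{-1}x\,\up_{i,\I}$, so $\TT'_{i,\J}\TT_{i,\I}(x)=x$ because $\bs_i(\J)=\I$. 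The remaining two identities follow by interchanging $\I$ and $\J$. This formal cancellation is all that ``(3) follows directly from \eqref{eq:intertw}'' means in the paper, and it is what yields bijectivity.

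The identity you proposed to prove instead, $T_{\bs_i}(\up_{i,\J})=\up_{i,\I}^{-1}$, fails. The element $\up_{i,\I}^{-1}-1$ is nonzero and has only positive weight components. On the other side, write $\bs_i=(\bs_iw_\bu)w_\bu$ with $\bs_iw_\bu$ the longest element for $\I_{\bu,i}$. Then $T_{\bs_i}$ sends the weights $m(\alpha_{i}+w_\bu\alpha_{\tau i})$, $m\geq1$, of $\up_{i,\J}$ to negative weights; it turns $E$'s into $F$'s times Cartan elements, as the computation of $T_{\bs_i,\I}(F_{i,\I})$ in the proof of \cref{prop:qsrkone} illustrates. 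So $T_{\bs_i}(\up_{i,\J})\up_{i,\I}$ is not a series in $\U^+$ with constant term $1$, and the uniqueness in \cref{Kmatrix} cannot be invoked. This asymmetry is exactly why the paper needs the separate element $\up''_{i,\J}=T_{\bs_i}(\up_{i,\I}^{-1})$ in (2).

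\textbf{Further corrections.}

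\emph{Part (2).} The shortcut through $\sigma\tau$ or $\sigma^\imath$ is not available. The map $\sigma\tau$ does not preserve $\Ui$. The remark after \cref{thm:formula2} points out that the relevant assignment ($B_i\mapsto B_i$, $k_i\mapsto(-1)^{|i,\I|}k_i^{-1}$, whereas $\sigma(k_i)=k_i^{-1}$) is not known to extend to an anti-involution of $\Ui(\I)$. Checking it on generators does not suffice without a presentation of $\Ui(\I)$, and the non-super construction via the quasi $K$-matrix does not carry over. So the second verification must be done directly, as the paper does in \cref{prop:qsrkone}, \cref{prop:Tirktwo1}, \cref{prop:Tirktwo2}, \cref{prop:TT0} and \cref{prop:qs}.

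\emph{Case (c) of (1).} \cref{sec:intertwrel} only handles $B_i$, $B_{\tau i}$ and the commutation lemmas. The adjacent $B_j$ need the separate rank-two computations of Section~\ref{sec:rktwo}. There $B_j$ is split into its $F_j$-part and its $E$-part, and \cref{lem:rktwo} is combined with the rank-one relations. Correction terms such as $[a_{ij,\J}]_iB_{j,\J}k_{\tau i,\J}$ or $[c_{\pt,\pt+1,\J}]_\pt B_{\pt+1,\J}\cK_{\pt,\J}^{-1}$ appear.

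\emph{Parameters.} You are not free to ``choose $\va$ on $\J$''. Both $\Ui(\I)$ and $\Ui(\J)$ carry the distinguished parameters \eqref{balance}, and \cref{lem:iparity} is what makes these match under $\bs_i$. Other parameters would force a rescaling of the Lusztig symmetries.
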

These symmetries \(\TT'_{i,\I}\) and \(\TT_{i,\I}\) will be referred to as \emph{relative braid group symmetries}. We also derive explicit formulas for their actions on the generators of \(\Ui(\I)\); see \cref{sec:formula} and \cref{sec:rktwo}.

In particular, these relative braid group symmetries are well defined for odd reflections as well.  
Together with our classification of the $\reW$-orbits, the new symmetries we obtain provide natural algebra isomorphisms between iquantum groups associated with different Satake diagrams. This parallels the role of odd braid group operators, which relate different presentations of quantum supergroups.

\begin{corollary}
Let $\I,\J\in \DiAIII_\pt$. If $\I,\J$ are in the same $\reW$-orbit, then $\Ui(\I)$ is isomorphic to $\Ui(\J)$.
\end{corollary}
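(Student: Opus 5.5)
The corollary follows directly from the relative braid group symmetries in the preceding theorem. The plan is to compose them along a path in the relative Coxeter groupoid $\reW$.

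First I unwind what it means for $\I$ and $\J$ to lie in the same $\reW$-orbit. By construction, $\reW$ is generated by the idempotents $e_\I$ and the simple reflections $\bs_{i,\I}$ for $\I\in\DiAIII_\pt$ and $i\in\I_\circ$. Hence any morphism from $\I$ to $\J$ in $\reW$ factors as a composable word $\bs_{i_k,\I_{k-1}}\cdots\bs_{i_1,\I_0}$. Here
\[
\I_0=\I,\qquad \I_r=\bs_{i_r}(\I_{r-1}),\qquad i_r\in(\I_{r-1})_\circ,\qquad \I_k=\J.
\]
If the orbit relation is defined via arbitrary groupoid morphisms, including inverses, this is still fine. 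Each simple reflection is an involution in the groupoid, $\bs_{i,\J}\bs_{i,\I}=e_\I$ when $\J=\bs_i(\I)$, so inverses are again words in simple reflections.

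Next, for each step $\I_{r-1}\to\I_r$, part (1) of \cref{thm:ibraid} provides an algebra isomorphism
\[
\TT'_{i_r,\I_{r-1}}:\Ui(\I_{r-1})\to\Ui(\I_r).
\]
Part (3) confirms it is invertible, with inverse $\TT_{i_r,\I_r}$. The composite $\TT'_{i_k,\I_{k-1}}\circ\cdots\circ\TT'_{i_1,\I_0}$ is then an algebra isomorphism $\Ui(\I)\to\Ui(\J)$. Note that I only need some isomorphism, so I do not have to check that the composite is independent of the chosen reduced word. That independence is the content of the braid relations proved later, and it is not needed here.

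The only point requiring care is confirming that every step $\I_{r-1}\to\I_r$ really satisfies the hypotheses of \cref{thm:ibraid}. Concretely, $\bs_{i_r}(\I_{r-1})$ must again be a diagram of the form \eqref{AIIIdiagram} in $\DiAIII_\pt$, and $i_r$ must lie in $(\I_{r-1})_\circ$. This holds by the definition of $\reW$ in \cref{def:relgoupoid}, which only allows composable morphisms between objects of $\DiAIII_\pt$. With that confirmed, the corollary follows immediately.
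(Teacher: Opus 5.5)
Your proposal is correct and matches the paper's argument: the paper states this corollary as an immediate consequence of \cref{thm:ibraid}, obtained by composing the isomorphisms $\TT'_{i,\I}$ (equivalently $\TT_{i,\I}$, with part (3) giving invertibility) along a word in the generators $\bs_{i}$ of $\reW$ connecting $\I$ to $\J$. The one point you attribute to the definition of $\reW$, namely that each intermediate $\bs_{i_r}(\I_{r-1})$ lies in $\DiAIII_\pt$, is formally \cref{bsiI}, although your composability argument also gives it.
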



The next result concerns the factorization of the quasi \(K\)-matrix. In the non-super setting, it was conjectured in \cite{DK19}, and later proved in \cite{WZ23}, that the quasi $K$-matrix admits a factorization into products of rank-one quasi $K$-matrices, in close analogy with the factorization property of quasi $R$-matrices. In the present super setting, we prove that the quasi \(K\)-matrix for type sAIII can be assembled similarly from the rank-one quasi \(K\)-matrices, thereby providing a canonical factorization of the quasi \(K\)-matrix. 

The factorization result is also the key input in proving the braid relations.  Indeed, the braid relations follow from the compatibility of the rank-one quasi \(K\)-matrices with the factorization of the global quasi \(K\)-matrix.  This shows that the maps \(\TT'_{i,\I}\) and \(\TT_{i,\I}\) genuinely define braid groupoid symmetries.
\begin{theorem}[{\rm Theorems~\ref{thm:property}, \ref{thm:braidreW}, and \ref{thm:braidW}}]{$\ $}

    \begin{enumerate}
        \item The quasi \(K\)-matrix associated with quantum supersymmetric pairs of type sAIII admits a canonical factorization.
        \item The symmetries \(\TT'_{i,\I}\) and \(\TT_{i,\I}\) satisfy the braid relations of the underlying relative Coxeter groupoid.
    \end{enumerate}
\end{theorem}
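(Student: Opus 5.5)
We follow the strategy of \cite{WZ23}, adapted to the groupoid setting, and treat the two parts in order; the factorization (1) is the main input for the braid relations (2).

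For (1), fix $\I\in\DiAIII_\pt$ and a reduced expression $\relongest=\bs_{i_1}\cdots\bs_{i_r}$ of the longest element of the relative Coxeter groupoid $\reW$ starting at $\I$. This is a path $\I=\I_0\to\I_1\to\cdots\to\I_r$ of sAIII super Satake diagrams with $\I_k=\bs_{i_k}(\I_{k-1})$. The proposed factorization is
\[
\up_{\I}=\up_{i_1,\I_0}\,T_{\bs_{i_1}}(\up_{i_2,\I_1})\,T_{\bs_{i_1}}T_{\bs_{i_2}}(\up_{i_3,\I_2})\cdots ,
\]
written in whichever of $T$ or $T'$ is dictated by the intertwining relations of \cref{sec:intertwrel}. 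Here $T_{\bs_i}$ are the Lusztig--Yamane symmetries attached to the even or odd reflections composing $\bs_i$. To prove it, let $\Theta$ denote the right-hand side and verify that $\Theta$ satisfies the characterizing intertwining property of the quasi $K$-matrix $\up_\I$ from \cite{SWsuper}, namely $B_j\Theta=\Theta\,B_j^{\sigma}$ for all generators, together with the normalization $\Theta_0=1$. Then uniqueness of the quasi $K$-matrix gives $\Theta=\up_\I$.

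The intertwining check proceeds by induction on $r$. The rank-one relations from \cref{sec:intertwrel} and \cref{thm:ibraid} show that conjugation by $\up_{i_1,\I}$ converts $T'_{\bs_{i_1}}(x)$ into $\TT'_{i_1,\I}(x)$. This reduces the claim for $\I$ to the claim for $\I_1$ with a shorter word, since $\bs_{i_1}\relongest$ is a reduced word starting at $\I_1$. Independence of the reduced expression then follows from the uniqueness statement itself. We also need that $\Theta$ lies in the right completion of $\U^+$. For this we use that $T_w$ maps the relevant rank-one root subalgebras into $\U^+$ whenever $w$ is reduced, which is the super analogue of Lusztig's positivity. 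For odd reflections this must be checked from Yamane's explicit formulas.

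For (2), the generators $\bs_{i,\I}$ of $\reW$ satisfy type B braid relations in the groupoid. It therefore suffices to check each rank-two relation $\bs_i\bs_j\cdots=\bs_j\bs_i\cdots$ of length $m_{ij}\in\{2,3,4\}$, as maps $\Ui(\I)\to\Ui(\J)$. Take the two reduced expressions $w=\bs_i\bs_j\cdots=\bs_j\bs_i\cdots$ of the rank-two longest element. Applying the factorization argument of (1) to the parabolic rank-two quasi $K$-matrix $\up_{\{i,j\},\I}$ gives
\[
\up_{i,\I}\,T_{\bs_i}(\up_{j,\cdot})\cdots=\up_{\{i,j\},\I}=\up_{j,\I}\,T_{\bs_j}(\up_{i,\cdot})\cdots .
\]
Iterating the defining relation of $\TT'$ along each word shows that both composites $\TT'_{i}\TT'_{j}\cdots$ and $\TT'_{j}\TT'_{i}\cdots$ are characterized by $x'\,\up_{\{i,j\}}=\up_{\{i,j\}}\,T'_w(x)$. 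Here $T'_w$ is well defined by the braid relations of the Yamane/Lusztig symmetries on the groupoid $W$. By the uniqueness in \cref{thm:ibraid}, the two composites coincide. The statement for $\TT$ then follows from (3) of \cref{thm:ibraid}.

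The main obstacle is the rank-two factorization in the super case. The even word for $\bs_i$ may interleave odd reflections that change the parity pattern of the diagram, so the bookkeeping of which $\Ui(\I_k)$ and which rank-one $\up_{i,\I_k}$ appear requires care. We would first handle the type B relation of length $4$ involving the node adjacent to the odd part. The fallback is a direct verification on generators via the explicit formulas of \cref{sec:formula} and \cref{sec:rktwo}.
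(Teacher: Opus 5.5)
\textbf{The gap is in part (1).} Your part (2) explicitly rests on it (``applying the factorization argument of (1) to the parabolic rank-two quasi $K$-matrix''), so the gap propagates.

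\emph{Why the induction does not close.} Write $\Theta=\up_{i_1}T'_{\bs_{i_1}}(\Theta_1)$ in your ordering. By \eqref{eq:intertw} and \cref{thm:ibraid}(3),
\[
\up_{i_1}^{-1}B_{\tau j}\up_{i_1}=T'_{\bs_{i_1}}(y),\qquad y=\TT_{i_1}(B_{\tau j}).
\]
For $j\sim i_1$, this $y$ is one of the rank-two expressions of \cref{thm:formula}. So what remains to prove is
\[
\Theta_1^{-1}\,y\,\Theta_1=(T'_{\bs_{i_1}})^{-1}(\sigma\tau(B_j)).
\]
This is a statement about the partial product attached to the non-longest element $\bs_{i_1}\relongest$. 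It is not the quasi $K$-matrix intertwining property for $\bs_{i_1}(\I)$, since $\Theta_1$ is not the quasi $K$-matrix of any diagram. Hence there is no inductive hypothesis of the same form to invoke. The rank-one relations only handle the outermost factor; they give no control over the middle factors $T_{\bs_{i_1}}\cdots T_{\bs_{i_{k-1}}}(\up_{i_k})$.

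For the same reason, your remark that ``independence follows from uniqueness'' only becomes usable once the intertwining is known for every reduced expression. Even then it covers only $\relongest$; arbitrary $w\in\reW$ needs Matsumoto plus the rank-two cases.

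\textbf{What is missing, and how the paper supplies it.} The missing idea is some control over how the relative symmetries transport generators along a reduced word. The paper provides this in two steps.
\begin{enumerate}
\item It reduces \cref{thm:factorization} formally to real rank two (\cref{prop:DK}). The only non-formal input is \cref{prop:tau0}, namely $T_{\bs_{i_1}}\cdots T_{\bs_{i_{m-1}}}(\up_{i_m,\I})=\up_{i_m,\I}$. This is obtained from $w_\I=w_\bu\relongest$ and the formulas \eqref{Twi}.
\item In real rank two, it verifies \eqref{aqua} for every reduced expression of $\relongest$, case by case. This rests on \cref{thm:property}: $\TT_{\wseq}(B_{i,\I})=B_{j,\J}$ whenever $w(\alpha_{i,\I})=\alpha_{j,\J}$. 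That result is proved from the rank-two computations \cref{tata1}, \cref{tata2}, \cref{prop:relamb1} and an induction on $\ell_\circ$ via parabolic decompositions.
\end{enumerate}
Your stated fallback, checking on generators with the explicit rank-two formulas, is much closer to what is actually required than the induction.

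\textbf{Part (2), given (1).} Your argument is essentially the paper's proof of \cref{thm:braidreW}: iterate \eqref{eq:intertw} along both words, use the braid relations of the $T'_{\bs_i}$, reduce to \eqref{fac}, and pass to $\TT$ via \cref{thm:ibraid}(3). Two caveats:
\begin{enumerate}
\item Iterating the $\TT'$-relation produces the products $\up_{i}T'_{\bs_i}(\up_j)\cdots$. These are the $\sigma$-twisted factorizations of $\up_{\bbw}$, not the $T$-products you wrote, so the factorization must be available in that normalization.
\item The mixed relations with $T'_j$, $j\in\I_\bu$, in \cref{thm:braidW} need the separate argument via \cref{lem:blackfix}.
\end{enumerate}
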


\subsection{Organization}
The paper is organized as follows. In \cref{sec:QG}, we review the root datum of the general linear Lie superalgebra $\g=\gl(m|n)$ and recall the basic construction of the quantum supergroup $\U=\U(\g)$. We then recall the braid group symmetries of $\U$. In \cref{sec:QSP}, we review super Satake diagrams of type sAIII and the corresponding construction of quantum supersymmetric pairs $(\U,\Ui)$. We also introduce the relative Coxeter groupoid and classify its orbits on the set of all super Satake diagrams of type sAIII. In \cref{sec:main}, we construct the relative braid group operators by establishing an intertwining property of quasi $K$-matrices, and summarize the explicit formulas for the action of these symmetries on generators. In \cref{sec:rktwo}, we derive rank \(2\) formulas for these symmetries and complete the proof of the main theorems. In \cref{sec:factorization}, we establish a factorization property of the quasi $K$-matrix, and then use it to prove that these symmetries indeed satisfy the braid relations of the relative Coxeter groupoid. 

\vspace{0.2in}

\noindent {\bf Acknowledgement: } YS was partially supported by the Fields Institute for Research in Mathematical Sciences. WZ is partially supported by the New Cornerstone Foundation through the New Cornerstone Investigator grant awarded to Xuhua He.

\section{Quantum supergroups}
\renewcommand{\thetheorem}{\thesection.\arabic{theorem}}
\setcounter{theorem}{0}

\label{sec:QG}

In this section, we set up notations for Lie superalgebras, quantum supergroups, Drinfeld doubles, and quantum supersymmetric pairs of type sAIII. When working with superspaces, we denote the parity of a homogeneous element $v$ by $\bar v \in \Z_2$. Whenever an expression involves $\bar v$, we implicitly assume that $v$ is homogeneous. For a statement $P$, we set
\[
\delta_P
:=
\begin{cases}
	1 & \text{if $P$ is true}, \\
	0 & \text{if $P$ is false}.
\end{cases}
\]
In particular, the Kronecker delta is given by $\delta_{i,j} := \delta_{i=j}$.  For any $a \in \Z_{\ge 1}$, we define
\[
    \II_a := \left \{\frac{1-a}2,\frac{3-a}{2},\ldots, \frac{a-3}{2}, \frac{a-1}2 \right\}.
\]

\subsection{Lie superalgebra of type A}
Let $V = \Veven \oplus \Vodd$ be a vector superspace over $\C$ with $\dim \Veven = m$, $\dim \Vodd = n$.  We fix a homogeneous basis
\[
    v_i,\qquad i \in \II_{m+n} = \left\{ \frac{1-m-n}{2}, \frac{3-m-n}{2}, \dotsc, \frac{m+n-1}{2} \right\},
\]
and use $|i|$ to denote the parity of $v_i$.
(Throughout, parities are always considered modulo $2$.)

The superspace $\End(V)$, equipped with the supercommutator, is a Lie superalgebra, called the general linear Lie superalgebra, which we will denote by $\mathfrak{g} = \mathfrak{gl}(V) = \gl(m|n,\C)$.  The Lie superalgebra $\mathfrak{g}$ has a homogeneous basis given by the matrix units $E_{ij}$, $i,j \in \II_{m+n}$, where the parity of $E_{ij}$ is 
\[
    \overline{E_{ij}} = |i| + |j|.
\]

Fix a Cartan subalgebra $\h$, which consists of diagonal matrices. Restricting the supertrace to $\h$, we obtain a non-degenerate symmetric bilinear form on it. Denote by $\{\epsilon_{a}\}_{a\in \II_{m+n}}$ the basis of $\h^*$ dual to the set of standard matrices $\{E_{a,a}\}_{a\in \II_{m+n}}$. Its root system $\Phi=\Phieven\oplus \Phiodd$ is given by
\begin{equation*}
\begin{aligned}
    \Phieven=&\{\epsilon_a-\epsilon_b\mid a\neq b\in \II_{m+n}, |a|=|b|\}, \\
     \Phiodd=&\{\pm(\epsilon_a-\epsilon_b)\mid |a|\neq |b|\}.
\end{aligned}
\end{equation*}

We let
\[
  \I :=\II_{m+n-1} =\left\{ 1-\frac{m+n}{2}, 1-\frac{m+n}{2}, \dotsc, \frac{m+n}{2}-1 \right\}
\]
and define the set of simple roots
\begin{equation} \label{simpleroots}
    \Pi := \left\{ \alpha_i := \epsilon_{i-\half}-\epsilon_{i+\half} : i \in \I \right\},\qquad
    \overline{\alpha_i} = |i-\half| + |i+\half|.
\end{equation}
 We denote even simple roots by $\fullmoon$ and odd simple roots by $
\otimes$. Then the corresponding Dynkin diagram is of the form
\begin{equation}
\label{eq:dynkin}
\begin{tikzpicture}[scale=1, semithick]
\node (1) [circle,draw,label=below:{$\scriptstyle{1-\frac{m+n}{2}}$},scale=0.6] at (0,0){.};
\node (2) [circle,draw,label=below:{$\scriptstyle{2-\frac{m+n}{2}}$},scale=0.6] at (1.5,0){.};
\node (3)  at (3,0) {$\cdots$} ;
\node (4) [circle,draw,scale=0.6] at (4.5,0){.};
\node (5) [circle,draw,label=below:{$\scriptstyle {\frac{m+n}{2}-1}$},scale=0.6] at (6,0){.};
\path (1) edge (2)
          (2) edge (3)
          (3) edge (4)
          (4) edge (5);
\end{tikzpicture}
\end{equation}
where $\odot=\begin{cases}
    \fullmoon & \overline{\alpha_i}=\overline{0},\\
    \otimes & \overline{\alpha_i}=\overline{1}.
\end{cases}$

Given a Dynkin diagram $\I$ of the form \eqref{eq:dynkin}, we decompose $\Pi=\Pi_\zero \sqcup \Pi_\one$ where $\Pi_s=\Pi\cap\Phi_s$ with $s \in \{\zero, \one\}$, and accordingly we have $\I=\Ieven\sqcup\Iodd$. 
For $\beta \in \Phi$, we write $|\beta|=s$ if $\beta\in \Phi_s$ with $s\in \{\zero,\one\}$. We also write $|i|:=\overline{\alpha_i}$, for any $i\in \I$. By linearity, we have a parity function $|\cdot|$ on the root lattice $X =\Z\Pi$. The Lie superalgebra $\g$ is generated by Chevalley generators $\set{e_i,f_i\mid i\in\I}$ and $\h$, and we have a  triangular decomposition $\g=\n^+\oplus \h\oplus\n^-$, where $\n^{\pm} =\oplus_{\beta\in \Phi^{\pm}}\g_\beta$. Set $\mathfrak b^+ =\h \oplus \n^+.$ 

Denote by $A=(c_{ij})_{i,j\in \I}$ the Cartan matrix for $\g$.
There exist non-zero integers $d_i$, for $i\in \I$, such that 
\begin{align}
    \label{di}
 d_i c_{ij}=d_j c_{ji}, \qquad \gcd(d_i | i\in \I)=1.
\end{align}

The set of simple coroots $\Pi^\vee=\{h_i\mid i\in \I\} \subset \h$ is given by
\[
    \alpha_j(h_i)=c_{ij},\quad \forall i,j\in \I.
\]
Let $Y=\Z \Pi^\vee$ denote the coroot lattice.The parity function $|\cdot|$ on $Y$ is given by by $|h_i|=\overline{\alpha_i}$, for all $i \in \I$, and extend by linearity as well. 

Define a symmetric bilinear form $(\cdot,\cdot):X\times X\longrightarrow \Z$ by letting
$$ (\af_i,\af_j) =d_i c_{ij},\;\;\;i,j\in \I.$$
A root $\alpha$ is called isotropic if $(\alpha, \alpha)=0$. Since we are only working with type A Lie superalgebra, all the odd simple roots are isotropic.

\subsection{Quantum supergroup of type A}
\label{QSG}
We set up notations for a quantum supergroup $\U$ of type A following \cite{Sh25,SWsuper}.

Let $q$ be an indeterminate and $\Q(q^{1/2})$ be the field of rational functions in $q^{1/2}$ with coefficients in $\Q$, the field of complex numbers. Let $\bF$ be the algebraic closure of $\Q(q^{1/2})$ and $\bF^\times:= \bF \setminus\{0\}$. Set 
 \[
 q_i:=q^{d_i},
 \]
 for $d_i$; see \eqref{di}. Denote the quantum integers and quantum binomial coefficients by
\[
[m]_i=\frac{q_i^m-q_i^{-m}}{q_i-q_i^{-1}},
\qquad
\qbinom{m}{k}_{i} =\frac{[m]_i [m-1]_i \ldots [m-k+1]_i}{[k]_i!}
\] 
for $m\in \Z, k \in \N,i\in \I$.  It will also be convenient for us to introduce the following notation. We will say that $i\neq j\in \I$ are {\em connected} if $c_{ij}\neq 0$ and write $i\sim j$. Likewise, we say that $i\neq j\in \I$ are {\em not connected} if $c_{ij}=0$ and write $i\nsim j$.

Recall $X=\Z\Pi$ and $Y=\Z\Pi^\vee$. We define $\breve{\U}_q(\g)$ to be the $\bF$-algebra generated by $E_i,F_i, K_i^{\pm 1}$, $i\in \I$, where $K_i, K_i^{-1}$ are two-sided inverse of each other, subject to the following relations: $K_i^{\pm 1}, K_j^{\pm 1}$ commute with each other, for all $i,j \in \I$,
\begin{gather}
[E_i,F_j]= E_i F_j-(-1)^{|j||i|}F_jE_i=\delta_{i,j}\frac{K_i-K_i^{-1}}{q_i-q_i^{-1}},   
\label{EF}
\\
K_i E_j  =q_i^{c_{ij}} E_j K_i,  \quad K_i F_j=q_i^{-c_{ij}} F_jK_i,
 \label{EKKF}
\end{gather}
and the quantum Serre relations,
\begin{equation}
\label{Urel}
    \begin{aligned}
        & E_i^2=F_i^2=0,\quad \text{ for }i\in \I_{\overline 1},\\
        &E_i^2E_j-[2]_iE_i E_j E_i+E_j E_i^2=0,\quad \text{ for }i\sim j, |i|=0,\\
        &F_i^2F_j-[2]_iF_i F_j F_i+F_j F_i^2=0,\quad \text{ for }i\sim j, |i|=0,\\
        &S_{|i|,|k|}(E_i,E_j,E_k)=0,
    \quad \text{ for }i\sim j\sim k,i\neq k, |j|=1,\\
        &S_{|i|,|k|}(F_i,F_j,F_k)=0,
    \quad \text{ for }i\sim j\sim k,i\neq k, |j|=1.
    \end{aligned}
\end{equation}
where $S_{t_1,t_2}(x_1,x_2,x_3)\in \bF\la x_1,x_2,x_3\ra$ is the polynomial in three non-commuting variables for $t_1,t_2\in\{\overline 0,\overline 1\}$ given by
\begin{multline}
 \label{superSerre}
    S_{t_1,t_2}(x_1,x_2,x_3)=(q+q^{-1})x_2x_3x_1x_2-[((-1)^{t_1}x_2x_3x_2x_1+(-1)^{t_1+t_1t_2}x_1x_2x_3x_2)\\
    +((-1)^{t_1t_2+t_2}x_2x_1x_2x_3+(-1)^{t_2}x_3x_2x_1x_2)].
\end{multline} 

The algebra $\breve{\U}_q(\g)$ is a Hopf superalgebra but not a Hopf algebra, and there is a simple way to modify this below if one prefers to work with Hopf algebras (cf. \cite{Ya94}). Define an algebra involution $\new$ of parity $0$ on $\breve{\U}_q(\g)$ as follows:
\begin{equation}
\label{new}
    \new(K^{\pm 1}_i)=K^{\pm 1}_i,\quad \new(E_i)=(-1)^{|i|}E_i\text{ and }
    \new(F_i)=(-1)^{|i|}F_i,\quad \forall i\in \I.
\end{equation}

Let 
\[
\U=\breve{\U}_q(\g)\oplus \breve{\U}_q(\g)\new.
\] 
Then we extend the algebra structure on $\breve{\U}_q(\g)$ to $\U$ by declaring  
\begin{equation}
\label{newrelation}
\new^2=1,\quad x\cdot \new= \new \cdot \new(x), \quad \forall x\in \U_q(\g).
\end{equation}
The comultiplication $\Delta: \U \rightarrow \U \otimes \U$ is defined as follows:
\begin{align}  \label{Delta}
\begin{split}
\Delta(E_i)  = E_i \otimes 1 + \new K_i \otimes E_i,  \quad \Delta(F_i) = \new \otimes F_i + F_i \otimes K_{i}^{-1},  \quad
 \Delta(K_{i}) = K_{i} \otimes K_{i}.
 \end{split}
\end{align}

A $\Q$-linear operator on a $\bF$-algebra  is said to be {\em anti-linear} if it sends $q^m \mapsto q^{-m}$, for $m\in\Z$.
\begin{proposition}
  \label{QGoperators}
  {\quad}
\begin{enumerate}
\item
There exists an anti-linear involution on $\U$, also denoted by $\psi$, which fixes $E_i,F_i,\new$ and swaps $K_i \leftrightarrow K_i^{-1}$, for $i\in \I $;
\item
There exists an anti-involution $\sigma$ on $\U$ such that (for all $i\in\I$)
\begin{equation*}
\sigma(E_j)=E_j,\quad\sigma(F_j)=F_j,\quad \sigma(K_{i})=(-1)^{|i|} K_{i}^{-1},\quad \sigma(\new)=\new.
\end{equation*}
\item Let $\ba=(a_i)_{i \in \I}\in (\bF^\times)^{\I}$. There exists an automorphism $\Psi_{\ba}$ on $\U$ such that (for all $i\in\I$)
\[
\Psi_{\ba}(K_i)=K_i,\quad \Psi_{\ba}(E_i)=a_i^{1/2} E_i, \quad \Psi_{\ba}(F_i)=a_i^{-1/2}F_i.
\]
\end{enumerate}
\end{proposition}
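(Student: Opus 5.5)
The plan is to define each map on generators and check that it preserves every defining relation of $\U$; the extension of $\breve{\U}_q(\g)$ to $\U$ by $\new$ is then handled separately. For (1) I define $\psi$ on $\breve{\U}_q(\g)$ as the $\Q$-linear anti-linear map fixing $E_i,F_i$ and sending $K_i\mapsto K_i^{-1}$. Then I go through the relations one at a time.

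For \eqref{EF}, applying $\psi$ gives $[E_i,F_j]=\delta_{ij}\frac{K_i^{-1}-K_i}{q_i^{-1}-q_i}$, which is the same relation. For \eqref{EKKF}, the two substitutions $q\mapsto q^{-1}$ and $K_i\mapsto K_i^{-1}$ cancel: $K_i^{-1}E_j=q_i^{-c_{ij}}E_jK_i^{-1}$ is equivalent to the original relation. The Serre relations \eqref{Urel} have bar-invariant coefficients, since $[2]_i$ and $q+q^{-1}$ are invariant and the signs do not involve $q$; they involve no $K$'s, so they are preserved. It is then immediate that $\psi$ commutes with $\new$. Setting $\psi(\new)=\new$ therefore preserves \eqref{newrelation}, and $\psi^2=\Id$ holds on generators.

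For (2) I check that $\sigma$ sends each relation, written in reversed order, to a valid relation. In \eqref{EKKF}, reversal gives $E_j\sigma(K_i)=q_i^{c_{ij}}\sigma(K_i)E_j$, that is, $E_jK_i^{-1}=q_i^{c_{ij}}K_i^{-1}E_j$. This is equivalent to the original, and the sign $(-1)^{|i|}$ is harmless because it appears on both sides. For \eqref{EF}, the reversed left side is $F_jE_i-(-1)^{|i||j|}E_iF_j$. When $i\ne j$ this is a multiple of $[E_i,F_j]=0$. When $i=j$ it equals $-(-1)^{|i|}[E_i,F_i]$, while the right side becomes $(-1)^{|i|}\frac{K_i^{-1}-K_i}{q_i-q_i^{-1}}=-(-1)^{|i|}\frac{K_i-K_i^{-1}}{q_i-q_i^{-1}}$. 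These match, and this is exactly why the sign $(-1)^{|i|}$ is needed.

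The quadratic Serre relations for even $i$ are palindromic, so reversal preserves them, and $E_i^2=0$ is trivially preserved. The super Serre polynomial needs a real check. Reversing \eqref{superSerre} sends $x_2x_3x_1x_2\mapsto x_2x_1x_3x_2$, which is not a term of $S_{t_1,t_2}$. So I would show that the reversed polynomial equals $\pm S_{t_2,t_1}(x_3,x_2,x_1)$ modulo the relations $E_j^2=0$ for odd $j$ and the commutation of non-adjacent generators. The relation with $i$ and $k$ swapped is also a defining relation, so this suffices. I expect this to be the main obstacle: it requires careful bookkeeping of the signs $(-1)^{t_1t_2}$, and I would verify it directly case by case for $(t_1,t_2)\in\{\zero,\one\}^2$, using $x_1x_3=x_3x_1$.

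Finally, the compatibility $\sigma(\new)=\new$ with $x\new=\new\new(x)$ holds because $\sigma$ commutes with $\new$, and $\sigma^2=\Id$ on generators (the $K_i$ gain the sign twice). For (3), $\Psi_{\ba}$ rescales each generator. Every relation is homogeneous in $E_i$ and $F_i$ with matched $E_i/F_i$ weights where they mix, so \eqref{EF} is preserved because $a_i^{1/2}a_i^{-1/2}=1$, while the Serre relations and \eqref{EKKF} are homogeneous. It also commutes with $\new$. The inverse is $\Psi_{\ba^{-1}}$.
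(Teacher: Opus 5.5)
Your proof is correct. The paper states this proposition without proof, recalling it as standard, so a direct check on generators and relations like yours is the natural argument. Your treatment of \eqref{EF}, \eqref{EKKF}, the $\new$-relations, and the role of the sign $(-1)^{|i|}$ in $\sigma(K_i)$ is right.

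The step you single out as the main obstacle is actually immediate. Word reversal sends $S_{t_1,t_2}(x_1,x_2,x_3)$ term by term to $S_{t_2,t_1}(x_3,x_2,x_1)$ in the free algebra. For example, $x_2x_3x_1x_2\mapsto x_2x_1x_3x_2$. Likewise $x_2x_3x_2x_1$, with coefficient $-(-1)^{t_1}$, goes to $x_1x_2x_3x_2$, which has the same coefficient in $S_{t_2,t_1}(x_3,x_2,x_1)$; the other three words match in the same way. Hence $\sigma(S_{|i|,|k|}(E_i,E_j,E_k))=S_{|k|,|i|}(E_k,E_j,E_i)$ exactly, which is the defining relation for the triple $(k,j,i)$. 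No case analysis or auxiliary relation is needed.

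That is just as well, because the relation $x_1x_3=x_3x_1$ you intended to use is not available in general. For non-adjacent $i,k$ the relation is $E_iE_k=(-1)^{|i||k|}E_kE_i$. It is not displayed in \eqref{Urel} but is part of the standard presentation, and all three of your maps preserve it. Since $i$ and $k$ can both be odd (e.g.\ three consecutive odd nodes in an alternating-parity diagram), $E_i$ and $E_k$ may anticommute.

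One caveat concerns $\psi$, and really the statement itself. You conclude $\psi^2=\Id$ from its values on generators, which presupposes that the bar map on scalars is an involution. Over $\C(q^{1/2})$ this is fine. Over $\bF$, however, there is no $\C$-linear field involution with $q^{1/2}\mapsto q^{-1/2}$. Its fixed field would have index $2$ in an algebraically closed field, hence be real closed by the Artin--Schreier theorem, yet it would contain $\sqrt{-1}$.

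So $\psi$ should be constructed on the $\C(q^{1/2})$-form of $\U$ defined by the same generators and relations; all structure constants lie in $\C(q)$. This is all that is needed to define the operators in \eqref{eq:sTs}, which are then extended $\bF$-linearly. Your arguments for $\sigma$ and $\Psi_{\ba}$ are $\bF$-linear and unaffected.
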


Let $\U=\bigoplus_{\nu \in X} \U_{\nu}$ be the weight decomposition of $\tU$ such that $E_i\in \U_{\alpha_i},F_i\in \U_{-\alpha_i}, \new,K_i^{\pm 1}\in \U_0$. Write $\U_\nu^+:=\U_\nu \cap \U^+.$

\begin{remark}
\label{rem:label}
In later sections, we frequently encounter \( \U \) with different presentations corresponding to different underlying Dynkin diagrams; see \cref{sec:oddrefl}. To avoid confusion, we include an additional lower subscript (when necessary) to indicate the Dynkin diagram in use.  For example, if \( \I \) is a fixed Dynkin diagram for \( \mathfrak{g} = \gl(m|n) \) as in \cref{eq:dynkin}, then we denote the corresponding quantum group by \( \U(\I) \), which is generated by
\[
E_{i,\I},\quad F_{i,\I},\quad K_{i,\I}^{\pm1},\quad \text{and} \quad \new_\I.
\]
We also have \( q_{i,\I} := q^{d_{i,\I}} \), where \( d_{i,\I} \) denotes the integers \cref{di} associated with the simple roots indexed by \( i,\I \) such that $d_{i,\I}c_{ij,\I}=(\alpha_{i,\I},\alpha_{j,\I})$ for any $i,j\in\I$.
\end{remark}

\subsection{Odd reflections}
\label{sec:oddrefl}
Let $\Dmn$ denote the set of all possible Dynkin diagrams for $\g=\gl(m|n)$ as in \eqref{eq:dynkin}. Different choices of $\I\in \Dmn$   under the Weyl group actions because of the existence of odd roots (cf. \cite[\S 1.3.6]{CW12}). 

\begin{remark}
The Dynkin diagram of $\g$ should be understood as the pair \((\I,\{d_i\}_{i\in \I})\).  
For brevity, we shall often write simply \(\I\).  
A subtlety arises when \(m=n\): in this case, the same underlying diagram \(\I\) admits two different choices of \(\{d_i\}_{i\in \I}\).  
More precisely, if \((\I,\{d_i\}_{i\in \I})\) is a Dynkin diagram of $\g$, then so is \((\I,\{-d_i\}_{i\in \I})\).  
These correspond to two different root systems, which are not conjugate under the action of the Weyl group. In what follows, we write $\I=\J$ to mean that the diagrams and $\{d_i\}$ both agree with each other.
\end{remark}

In fact, we have the following lemma,
\begin{lemma}
[\text{\cite[Proposition 2.2.1]{Ya99}, \cite[Lemma 1.26]{CW12}}]
Let $\alpha$ be an odd simple root of $\g$ in a positive system $\Phi^+$ given by the fundamental system $\Pi$. Then,
$$\Phi_{\alpha}^+:=\{-\alpha\}\cup\Phi^+\backslash \{\alpha\}$$
is a new positive system, whose corresponding fundamental system $\Pi_{\alpha}$ is given by
\begin{equation}
    \Pi_{\alpha}=\{\beta\in \Pi\mid (\beta,\alpha)=0,\beta\neq\alpha\}\cup\{\beta+\alpha\mid\beta\in \Pi,(\beta,\alpha)\neq 0\}\cup \{-\alpha\}.
\end{equation}
\end{lemma}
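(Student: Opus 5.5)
Our plan is to verify the statement directly, using the explicit description of root systems of $\gl(m|n)$ in terms of the $\epsilon_a$. The first step is to show that $\Phi^+_\alpha$ is a positive system. Choose a linear functional $f$ on $\h^*_{\mathbb R}$ with $f(\gamma)>0$ for all $\gamma\in\Phi^+$; such an $f$ exists because $\Phi^+$ is a positive system. Set $f'=f-c\,\lambda$, where $\lambda$ is a functional with $\lambda(\alpha)>0$ that nearly vanishes on the other simple roots; concretely, take $f(\beta)=1$ on $\Pi$ and let $f'$ be given by $f'(\alpha)=-1$ and $f'(\beta)=N$ on $\Pi\setminus\{\alpha\}$ for large $N$.

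The key point is that $\alpha$ is isotropic and odd, so $2\alpha\notin\Phi$. Consequently the only roots in $\Phi^+$ that are nonnegative multiples of $\alpha$ are just $\alpha$ itself. Any other positive root $\gamma=\sum_\beta k_\beta\beta$ has some $k_\beta>0$ with $\beta\neq\alpha$. In type A every root coefficient is at most $1$, so $f'(\gamma)\ge N-1>0$. Hence $f'$ is positive exactly on $\{-\alpha\}\cup\Phi^+\setminus\{\alpha\}$, and this set is therefore a positive system.

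The second step is to identify the simple roots of $\Phi^+_\alpha$. Write $\alpha=\epsilon_{a}-\epsilon_{b}$ with $a=i-\tfrac12$, $b=i+\tfrac12$, so that $|a|\neq|b|$. The new ordering of the basis indices is obtained by swapping the adjacent indices $a$ and $b$. Its simple roots are differences of consecutive indices in the new order. These are
\begin{itemize}
\item $-\alpha=\epsilon_b-\epsilon_a$;
\item $\epsilon_{a-1}-\epsilon_b=\alpha_{i-1}+\alpha$;
\item $\epsilon_a-\epsilon_{b+1}=\alpha+\alpha_{i+1}$;
\item all $\alpha_j$ with $|j-i|\ge2$.
\end{itemize}

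It remains to match this with the stated formula, namely to check that $(\beta,\alpha)\neq0$ for $\beta\in\Pi$ with $\beta\neq\alpha$ exactly when $\beta=\alpha_{i\pm1}$. The form on $\h^*$ is induced by the supertrace, so $(\epsilon_c,\epsilon_d)=\pm\delta_{cd}$ with sign $(-1)^{|c|}$. If $|j-i|\ge2$, then $\alpha_j$ and $\alpha$ share no index, so $(\alpha_j,\alpha)=0$. For $\beta=\alpha_{i-1}=\epsilon_{a-1}-\epsilon_a$ we get $(\beta,\alpha)=-(\epsilon_a,\epsilon_a)=\mp1\neq0$, and $\alpha_{i+1}$ is handled in the same way. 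For $\beta=\alpha$ itself we have $(\alpha,\alpha)=0$, which is the isotropy of $\alpha$; this is exactly why $\alpha$ has to be excluded explicitly in the first set of the formula.

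The main subtlety is the first step, specifically ruling out positive roots proportional to $\alpha$ other than $\alpha$. In this setting this is immediate from the explicit list of roots in $\Phi$. The general statement in \cite{CW12} instead uses the fact that an odd isotropic root has no multiples $2\alpha$ in $\Phi$.
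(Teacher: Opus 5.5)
Your proof is correct, but it takes a different route from the paper. The paper gives no argument of its own and simply quotes the general lemma of \cite{Ya99,CW12}. That lemma is formulated for an isotropic odd simple root of an arbitrary basic Lie superalgebra, so it must be established from intrinsic properties of the root system (such as $2\alpha\notin\Phi$) rather than from a coordinate model.

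You instead work only in $\gl(m|n)$. There every positive system has the form $\{\epsilon_c-\epsilon_d\mid c\prec d\}$ for a total order $\prec$ on $\II_{m+n}$. This is how the diagrams in $\Dmn$ are set up, so assuming $\Pi$ has the form \eqref{simpleroots} loses nothing. You then check that the odd reflection $s_\alpha$ is the transposition of the adjacent indices $i\pm\frac12$, which have opposite parity. You read off $\Pi_\alpha$ as the consecutive differences in the new order and match it with the stated formula via $(\epsilon_c,\epsilon_d)=(-1)^{|c|}\delta_{cd}$.

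This gives a short, self-contained proof in exactly the case the paper needs. It also makes explicit the ``swap adjacent weights'' picture that the paper later uses, for instance in the proof of \cref{prop:orbit}. What it gives up is generality: the features you exploit are special to type A, namely that positive systems are orderings of the $\epsilon_a$ and that all simple-root coefficients are at most $1$.

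One simplification is available. Under the swapped order $\prec'$, only the relative position of $i-\frac12$ and $i+\frac12$ changes, so $\{\epsilon_c-\epsilon_d\mid c\prec' d\}=\Phi^+_\alpha$ directly. Your Step 1 with the functional $f'$ is therefore redundant. If you keep it, drop the vague formulation via $f-c\lambda$ in favour of the concrete one.
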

The operation of obtaining $\Pi_\alpha$ from $\Pi$ is denoted by $s_{\alpha}$ and referred to as an odd reflection. When $\beta\in \Pi$ is an even simple root, we abuse the notation $s_{\beta}$ to denote the even reflection associated to $\beta$.

We may identify the fundamental systems with their corresponding Dynkin diagrams. Then for any Dynkin diagram $\I$ of the form \eqref{eq:dynkin}, we let $s_{i,\I}:=s_{\alpha_i}$ for all $i\in \I$. We often drop the lower script $\I$ when it is clear from the context and only write $s_i$ for simplicity. 


\begin{lemma}[\text{\cite[Lemma 3.2]{Sh25}}]
\label{paritylemma}
If $i,j,k \in \I$ with $j\sim i$ and $j\nsim k$. Then we have
\begin{equation*}
   |\alpha_{j,s_j(\I)}|=|\alpha_{j,\I}|,\quad |\alpha_{i,s_j(\I)}|=|\alpha_{i,\I}|+|\alpha_{j,\I}|,\quad
    |\alpha_{k,s_j(\I)}|=|\alpha_{k,\I}|.
\end{equation*}
\end{lemma}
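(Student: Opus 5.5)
The statement is \cref{paritylemma}, which concerns parities of simple roots after applying the reflection $s_j$. I would reduce it to the explicit description of the new fundamental system $\Pi_\alpha$ from the preceding lemma, and handle the even and odd cases of $j$ separately.

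\emph{Case $j$ odd.} Here $s_j$ is the odd reflection $s_{\alpha_j}$, and the preceding lemma gives the new simple roots directly.
\begin{itemize}
\item The new root in position $j$ is $-\alpha_j$. It has the same parity as $\alpha_j$.
\item For $k\nsim j$ with $k\neq j$, we have $(\alpha_k,\alpha_j)=0$, so $\alpha_k$ remains simple and keeps its parity.
\item For $i\sim j$, we have $(\alpha_i,\alpha_j)\neq 0$, so the new root in position $i$ is $\alpha_i+\alpha_j$. Its parity is $|\alpha_i|+|\alpha_j|$, since parity is additive on the root lattice $X$.
\end{itemize}
To use the lemma for $k\nsim j$, I must check that $c_{kj}=0$ forces $(\alpha_k,\alpha_j)=0$; this follows from $(\alpha_k,\alpha_j)=d_kc_{kj}$. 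Conversely, for $i\sim j$ in the linear diagram \eqref{eq:dynkin}, $(\alpha_i,\alpha_j)=\pm1\neq0$.

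I also need to confirm the labelling convention: the root $\alpha_i+\alpha_j$ occupies node $i$ of the new diagram. In terms of $\epsilon$'s, an odd reflection at $j$ swaps $\epsilon_{j-\half}$ and $\epsilon_{j+\half}$. So $\alpha_{j-1}+\alpha_j=\epsilon_{j-\frac32}-\epsilon_{j+\half}$ sits in position $j-1$, and similarly on the other side. This matches the node labelling and shows the result is again a diagram of the form \eqref{eq:dynkin}.

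\emph{Case $j$ even.} Here $s_j$ is the Weyl reflection, and the new simple system is $s_{\alpha_j}(\Pi)$, which sends $\alpha_j\mapsto-\alpha_j$, $\alpha_i\mapsto\alpha_i+\alpha_j$ for $i\sim j$ (since $c_{ji}=-1$), and fixes $\alpha_k$ for $k\nsim j$. The same parity bookkeeping applies, using $|\alpha_j|=0$. In particular $|\alpha_{i,s_j(\I)}|=|\alpha_{i,\I}|=|\alpha_{i,\I}|+|\alpha_{j,\I}|$.

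One could also argue uniformly through the $\epsilon$-description: $s_j$ swaps the labels $j\pm\half$ (together with their parities) in the ordered basis, and $|\epsilon_a-\epsilon_b|=|a|+|b|$ recovers all three formulas.

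The only delicate point is fixing the node-labelling convention so that positions $i$ and $k$ are identified consistently between $\I$ and $s_j(\I)$. I would settle this via the $\epsilon$-indexed description above; the rest is immediate.
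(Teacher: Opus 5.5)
Your proof is correct and takes essentially the intended approach: the paper gives no argument of its own here (it simply quotes \cite[Lemma 3.2]{Sh25}), and the natural verification is to read the parities off the description of $\Pi_\alpha$ for odd $j$ and of $s_{\alpha_j}(\Pi)$ for even $j$, which is what you do. Your uniform reformulation, that $s_j$ transposes $\epsilon_{j-\half}$ and $\epsilon_{j+\half}$ in the ordered basis while $|\alpha_i|=|i-\half|+|i+\half|$, settles the node-labelling issue cleanly and gives all three formulas at once.
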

(With the convention in \cref{rem:label}, we have $|i,\I|=|\alpha_{i,\I}|$ for all $i\in \I$ and $\I\in\Dmn$.)

From \cref{paritylemma} we see that even reflections will not change the parity of any simple root (and hence preserve the Dynkin diagram) while odd reflections change the parities of the ones adjacent to it.

\begin{example}
The following is an example of an odd reflection given by $s_1$.
{\rm
\[
\xy
(0,0)*{\otimes};
(10,0)*{\otimes}**\dir{-};
(0,-4)*{\scriptstyle 1};
(10,-4)*{\scriptstyle 2};
\endxy
\overset{s_1}{\Longrightarrow}
\xy
(0,0)*{\otimes};
(10,0)*{\fullmoon}**\dir{-};
(0,-4)*{\scriptstyle 1};
(10,-4)*{\scriptstyle 2};
\endxy
\]
}
\end{example}

In \cite{HY08}, the authors introduced the {\em Coxeter groupoid} as a natural replacement for the Weyl group in the context of Lie superalgebras of basic types. We now describe how their construction applies in our setting; see also \cite[\S 3]{HY08}.

\begin{definition}[\text{\cite[Definition 1]{HY08}}]
\label{groupoiddef}
    The Coxeter groupoid $W$ of $\gl(m|n)$ is the groupoid generated by elements $e_\I$ and $s_{i,\I}$ for all $i\in\I$ and $\I\in \Dmn$ subjecting to the following relations (for all $\I\neq\J\in\Dmn$ and $i\in\I$) :
    \begin{equation}
    \label{groupoid}
        \begin{aligned}
           & e_\I^2=e_\I,\qquad e_\I e_\J=0,\\
           & e_{s_{i}(\I)}s_{i,\I}=s_{i,\I}e_\I= s_{i,\I},\qquad s_{i,s_i(\I)}s_{i,\I}=e_\I,\\
           & s_{j,s_i(\I)}s_{i,\I}=s_{i,s_j(\I)}s_{j,\I},\quad\text{ for }i\nsim j\in \I,\\
           &
           s_{i,s_js_i(\I)}s_{j,s_i(\I)}s_{i,\I}=s_{j,s_is_j(\I)}s_{i,s_j(\I)}s_{j,\I},\quad\text{ for }i\sim j\in \I.
      \end{aligned}
    \end{equation}
\end{definition}
Note that by \eqref{groupoid}, a product $w=s_{i_1,\I_1}\cdots s_{i_k,\I_k}$ is zero unless $\I_l=s_{i_{l+1},\I_{l+1}}(\I_{l+1})$ for all $1 \leq l \leq k-1$. Consequently, when $w\neq 0$, we adopt the convention of writing it more simply as $w=s_1\cdots s_{k-1}s_{k,\I_k}$. With this convention, the last two relations in \eqref{groupoid} are simplified as 
\begin{equation}
\label{braid}
    \begin{aligned}
        &s_js_{i,\I}=s_{i}s_{j,\I}\quad\text{ for }i\nsim j\in \I, \\
        &s_{i}s_{j}s_{i,\I}=s_{j}s_{i}s_{j,\I},\quad\text{ for }i\sim j\in \I,
    \end{aligned}
\end{equation}
which can be viewed as type A braid relations. For convenience, we often write $s_i(\I):=s_{i,\I}(\I)$ without ambiguity.

Define $\ell\colon W\to \N \cup \set{0,-\infty}$ to be the length function on $W$ such that $\ell(0)=-\infty$, $\ell(e_\I)=0$ for any $\I\in\Dmn$ and 
\begin{equation}
    \label{length}
    \ell(w)=\min\set{k\in \N\mid w=s_{i_1}\cdots s_{i_{k-1}}s_{{i_k},\I_k}\text{ for some }\I_k\in \Dmn}
\end{equation}
One says a product $w=s_{i_1}\cdots s_{i_{k-1}}s_{{i_k},\I_k}$ is a reduced expression of $w$ if $\ell(w)=k$. In particular, the Coxeter groupoid $W$ satisfies Matsumoto’s theorem \cite{M64}, which states that any two reduced expressions of $w$  can be transformed to each other using Coxeter relations only; see also \cite[Theorem 29]{HY08}


Therefore, for any $w\in W$ with a reduced expression $w=s_{i_1}\cdots s_{i_{k-1}}s_{{i_k},\I}$, we have $$\J=w(\I)=s_{i_1}\cdots s_{i_k}(\I)\in \Dmn.$$ Moreover, $\J$ does not depend on the choice of the reduced expression. 

\begin{lemma}[\text{\cite[Proposition 1.32]{CW12}}]
\label{DmnItoJ}
    For any $\I,\J\in \Dmn$, there exists a sequence of reflections $s_{i_1,\I_1},\ldots,s_{i_k,\I_k}$ such that $\I_k=\I$ and $s_{i_1}\cdots s_{i_k} (\I)=\J$ 
\end{lemma}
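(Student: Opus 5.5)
The plan is to reduce the lemma to a purely combinatorial statement about parity sequences. A Dynkin diagram $\I\in\Dmn$ of the form \eqref{eq:dynkin}, together with its $d_i$, is determined by the ordered parity sequence $(|a|)_{a\in\II_{m+n}}$ of the basis vectors $v_a$. Such a sequence is a word in $m$ zeros and $n$ ones. Indeed, $\alpha_i=\epsilon_{i-\half}-\epsilon_{i+\half}$ is odd exactly when the adjacent letters differ. The $d_i$ are also fixed by the word, up to the global sign convention coming from the supertrace, since $(\epsilon_a,\epsilon_a)=(-1)^{|a|}$. In particular, when $m=n$ the two diagrams $(\I,\{\pm d_i\})$ correspond to a word and its complement. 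So $\Dmn$ is in bijection with the set of such words (equivalently, with Borel subalgebras of $\g$ containing $\h$, modulo the even Weyl group action on labels).

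Next I will compute the effect of the reflections on words, using the description of $\Pi_\alpha$ from the lemma of \cite{Ya99,CW12}. For an odd simple root $\alpha_i$, the letters in positions $i-\half$ and $i+\half$ differ. The new fundamental system $\Pi_{\alpha_i}$ is again of type \eqref{eq:dynkin}, and is obtained by swapping these two adjacent letters. Concretely, $-\alpha_i=\epsilon_{i+\half}-\epsilon_{i-\half}$, and its neighbours become $\alpha_{i\pm1}+\alpha_i$. After relabelling the basis vectors, this is exactly the transposition of the two letters; this is consistent with \cref{paritylemma}. An even reflection swaps two equal adjacent letters, so it fixes the word, which again matches \cref{paritylemma}.

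Given this, the claim becomes the statement that any two words with $m$ zeros and $n$ ones are connected by adjacent transpositions of unequal letters. This holds, for instance by bubble sort: moving one letter at a time toward its target position only ever swaps it past letters of the opposite type when a swap is actually needed. Composing the corresponding odd reflections $s_{i_k,\I_k}$ gives the required sequence with $\I_k=\I$ and $s_{i_1}\cdots s_{i_k}(\I)=\J$.

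The main obstacle I anticipate is the case $m=n$ with the sign ambiguity in $\{d_i\}$. I need to check that the diagram obtained by the swaps carries the intended $d_i$, and not their negatives. Since I track the actual fundamental system through the bijection with words, rather than only the underlying diagram, the $d_i$ are inherited from the form $(\cdot,\cdot)$ restricted to the new simple roots. Two words give the same $(\I,\{d_i\})$ only if they coincide. So the swap argument produces exactly $\J$, and I expect this to settle the ambiguity.
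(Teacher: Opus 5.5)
Your proof is correct. The paper gives no argument of its own for this lemma; it only imports it from \cite[Proposition 1.32]{CW12}. So your proof supplies what the paper merely cites.

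The usual type-free proof of such connectivity statements works with positive systems. One first uses even reflections to make the even positive systems agree. If then $\Phi^+\neq\Phi'^+$, some simple root of $\Phi^+$ lies in $-\Phi'^+$. It must be odd isotropic, and the odd reflection in it lowers $|\Phi^+\cap-\Phi'^+|$ by one.

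Your argument is the explicit $\gl(m|n)$ version of this. Parity words are exactly the positive systems modulo the even Weyl group. The odd reflection lemma becomes a transposition of adjacent unequal letters, and the induction becomes sorting.

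What each route buys:
\begin{itemize}
\item The general argument works uniformly for all basic types.
\item Yours is phrased directly in terms of the objects $(\I,\{d_i\})\in\Dmn$ that the paper actually uses. Two points the paper raises are then handled automatically: even reflections act trivially on diagrams, and for $m=n$ one must distinguish $\{d_i\}$ from $\{-d_i\}$. The general route would need these translated from a statement about Borel subalgebras with a fixed even part.
\end{itemize}

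A minor remark: for the lemma you only need two facts, namely that every diagram comes from some word and that reflections act on words as you describe. The injectivity of the word map that you check at the end is not actually needed for connectivity.
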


\begin{remark}
 When \( \g \) is a Lie algebra, it has a unique Dynkin diagram, and thus the associated Coxeter groupoid reduces to an actual Coxeter group; see \cite{HY08}. In this case, \( W \) has a unique longest element, allowing us to omit the subscript for the Dynkin diagram.  
\end{remark}

\begin{example}
    Consider $\g=\gl(2|2)$. We have $\I\in \mathcal{D}_{2,2}$ labelled by $\set{-1,0,1}$ for a fixed total order. Then the longest element associated to $\I$ is given by
    \[
    w_\I=s_{-1}s_0s_1s_{-1}s_0s_{-1,\I}.
    \]
\end{example}

\subsection{Braid group symmetries on $\U$}
Recall that Lusztig \cite[\S 37.1.3]{Lubook} has introduced automorphisms $T_{i,e}'$ and $T_{i,+1}''$ for $i\in \I$ and $e =\pm 1$ on quantum groups. For $i\in \Ieven$ and $e\in \{\pm 1\}$,  $T_{i,e}'$ and $T_{i,e}''$ naturally extend to automorphisms of the quantum supergroup $\U$.

For any $\I\in \Dmn$, we denote the associated quantum supergroup by $\U(\I)$ with generators $E_{i,\I},\ F_{i,\I}, K^{\pm 1}_{i,\I}$ and $\new_\I$ as in \cref{QSG}. We recall from \cite{Ya99,Sh25} the braid group operators on quantum supergroups.
\begin{proposition}[\text{\cite[Theorem 3.4]{Sh25}}]
\label{braidoperator}
Let $\I,\J\in \Dmn$ such that $\J=s_i(\I)$ for some $i\in \I$. Then there exist $\bF$-linear algebra isomorphisms $\tTD_{i,+1}'':\U(\I)\to \U(\J)$ for all $i\in\I$ satisfying
\begin{equation*}
    \tTD_{i,+1}''(E_{j,\I})=
    \begin{cases}
    -F_{i,\J}K_{i,\J},&\quad \text{if}\quad j=i, \\
    E_{i,\J}E_{j,\J}-(-1)^{|i,\J||j,\J|}q^{(\alpha_{i,\J},\alpha_{j,\J})}E_{j,\J}E_{i,\J} &\quad \text{if}\quad j \sim i,\\
    E_{j,\J} &\quad \text{if}\quad j \nsim i,
    \end{cases}
\end{equation*}
\begin{equation*}
    \tTD_{i,+1}''(F_{j,\I})=
    \begin{cases}
    -K_{i,\J}^{-1}E_{i,\J},&\quad \text{if}\quad j=i, \\
    F_{j,\J}F_{i,\J}-(-1)^{|i,\J||j,\J|}q^{-(\alpha_{i,\J},\alpha_{j,\J})}F_{i,\J}F_{j,\J} &\quad \text{if}\quad j \sim i,\\
    F_{j,\J} &\quad \text{if}\quad j \nsim i,
    \end{cases}
\end{equation*}
\begin{equation*}
    \tTD_{i,+1}''(K_{j,\I})=
    \begin{cases}
    (-1)^{|i,\J|}K_{i,\J}^{-1},&\quad \text{if}\quad j=i, \\
   (-1)^{|i,\J||j,\J|}K_{i,\J}K_{j,\J} &\quad \text{if}\quad j \sim i,\\
    K_{j,\J} &\quad \text{if}\quad j \nsim i,
    \end{cases}
\end{equation*}
\[
 \tTD_{i,+1}''(\new_\I)=\new_\J.
\]
Moreover, the $\tTD''_{i,+1}$, for $i\in \I$, satisfy the braid relations.
\end{proposition}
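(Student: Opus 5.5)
The plan is to define $\tTD_{i,+1}''$ on the generators of $\U(\I)$ by the stated formulas and check three things in turn: that the images satisfy the defining relations of $\U(\I)$, so the map is a well-defined algebra homomorphism $\U(\I)\to\U(\J)$; that it is bijective; and that the braid relations hold. It is convenient to first do this on $\breve{\U}_q(\g)$ and then extend to $\U$ by $\new_\I\mapsto\new_\J$. The relation $x\new=\new\,\new(x)$ is preserved because each image of $E_{j,\I}$ or $F_{j,\I}$ is homogeneous of parity $|j,\I|$. This uses \cref{paritylemma}: for instance, for $j\sim i$ the element $E_{i,\J}E_{j,\J}-\cdots$ has parity $|i,\J|+|j,\J|=|j,\I|$.

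\textbf{Relations.} For $i\in\Ieven$, the reflection is even and $\J=\I$. Here the map is Lusztig's $T''_{i,+1}$, twisted by signs coming from the super commutator, and its relations follow as in \cite[\S37]{Lubook}. Only the super Serre relations \eqref{superSerre} need an extra rank-three check in $\U(\J)$, which reduces to identities among $q$-commutators.

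The genuinely new case is $i\in\Iodd$. Then $E_{i,\J}^2=0$, and the key inputs are the following.

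The Cartan relations \eqref{EKKF} follow from $(\alpha_{i,\J},\alpha_{i,\J})=0$ and $\alpha_{j,\I}=\alpha_{j,\J}+\alpha_{i,\J}$ for $j\sim i$, together with $\alpha_{i,\I}=-\alpha_{i,\J}$.

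For \eqref{EF}, I would compute $[E_{j,\I},F_{k,\I}]$ case by case. When $j=k=i$, the supercommutator $[F_{i,\J}K_{i,\J},K_{i,\J}^{-1}E_{i,\J}]$ reduces, via $[E_{i,\J},F_{i,\J}]$ and the sign $(-1)^{|i,\J|}$ built into $\tTD''(K_i)$, to $(K_{i,\I}-K_{i,\I}^{-1})/(q_{i,\I}-q_{i,\I}^{-1})$. Here one uses that $d_{i,\I}$ relates to $d_{i,\J}$ by a sign, so that $q_{i,\I}=q_{i,\J}^{-1}$. When $j\sim i$ and $k=j$, the commutator of the two $q$-commutators collapses using $E_{i,\J}^2=F_{i,\J}^2=0$.

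The Serre relations, $E_{j,\I}^2=0$ and the cubic/quartic ones, are checked in rank $2$ and $3$ subdiagrams $\{i,j\}$ or $\{j,i,k\}$ of $\J$.

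I expect this verification of the (super) Serre relations for neighbours $j\sim i$ under an odd reflection to be the main obstacle. The parity of $j$ flips, so a quadratic Serre relation in $\U(\I)$ must come from a quartic super Serre relation in $\U(\J)$ or from $E_{i,\J}^2=0$, and vice versa. I would handle this by reducing to the $\gl(2|1)$ and $\gl(2|2)$/$\gl(3|1)$ rank cases and expanding explicitly.

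\textbf{Bijectivity.} I would write down the analogous map $\tTD'_{i,-1}:\U(\J)\to\U(\I)$, with the inverse formulas (e.g. $E_{i,\J}\mapsto -K_{i,\I}^{-1}F_{i,\I}$), and check on generators that the two maps are mutually inverse. Since $s_{i,\J}s_{i,\I}=e_\I$, this is a finite check.

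\textbf{Braid relations.} These are the groupoid relations \eqref{braid}. When $i\nsim j$ the operators act on disjoint generators, and the relation is immediate. For $i\sim j$, it suffices to evaluate both sides of $\tTD''_i\tTD''_j\tTD''_i=\tTD''_j\tTD''_i\tTD''_j$ on generators inside the rank-two (or rank-three, to include neighbours) subdiagram containing $i$ and $j$. For even–even pairs this is Lusztig's result; the remaining parity patterns are handled case by case in $\gl(2|1)$ and $\gl(1|2)$. Alternatively, I would reduce to checking on $E_j$'s only, using a compatibility of the operators with the anti-involution $\sigma$.
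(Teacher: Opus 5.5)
Your overall route (verify the defining relations \eqref{EF}--\eqref{Urel} on generators, obtain the inverse as $\sigma\circ\tTD''_{i,+1}\circ\sigma$, check the groupoid braid relations in small rank) is a legitimate self-contained alternative. The paper gives no argument: it imports the statement from \cite[Theorem 3.4]{Sh25}, which rests on Yamane's quantized odd reflections \cite{Ya99}.

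However, the one computation you spell out fails as you describe it, and it is exactly the delicate point. For odd $i$ we have $c_{ii}=0$, so $K_{i,\J}^{\pm1}$ commute with $E_{i,\J}$ and $F_{i,\J}$. Hence
\[
\tTD''_{i,+1}\bigl(E_{i,\I}F_{i,\I}+F_{i,\I}E_{i,\I}\bigr)=E_{i,\J}F_{i,\J}+F_{i,\J}E_{i,\J}=\frac{K_{i,\J}-K_{i,\J}^{-1}}{q_{i,\J}-q_{i,\J}^{-1}},
\qquad
\tTD''_{i,+1}\Bigl(\frac{K_{i,\I}-K_{i,\I}^{-1}}{q_{i,\I}-q_{i,\I}^{-1}}\Bigr)=\frac{K_{i,\J}-K_{i,\J}^{-1}}{q_{i,\I}-q_{i,\I}^{-1}},
\]
the second because $\tTD''_{i,+1}(K_{i,\I})=-K_{i,\J}^{-1}$. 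So this relation is preserved if and only if $q_{i,\J}=q_{i,\I}$. The sign $(-1)^{|i,\J|}$ in $\tTD''_{i,+1}(K_{i,\I})$ and a flip $q_{i,\J}=q_{i,\I}^{-1}$ each contribute a factor $-1$. You invoke both, and with both the relation comes out with the wrong sign.

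Consequently, before any case-by-case check you must fix, on every diagram, the normalization of the $d_k$ for odd $k$ (equivalently the signs of the odd rows of the Cartan matrix) so that $d_{i,s_i(\I)}=d_{i,\I}$. With a normalization in which $d_i$ flips under $s_i$, such as $d_{k,\I}=(\epsilon_a,\epsilon_a)$ for $\alpha_{k,\I}=\epsilon_a-\epsilon_b$, the formulas as stated are not homomorphisms.

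This choice is not cosmetic: it also governs the sign $(-1)^{|i,\J||j,\J|}$ in $\tTD''_{i,+1}(K_{j,\I})$. Take $\I$ the distinguished diagram of $\gl(2|1)$, $j$ its even node, $i$ its odd node, and suppose $q_{j,\J}=q_{j,\I}$ and $q_{i,\J}=q_{i,\I}^{-1}$. Then one finds $[\tTD''_{i,+1}(E_{j,\I}),\tTD''_{i,+1}(F_{j,\I})]=(K_{i,\J}K_{j,\J}-K_{i,\J}^{-1}K_{j,\J}^{-1})/(q_{j,\I}-q_{j,\I}^{-1})$, which is incompatible with $\tTD''_{i,+1}(K_{j,\I})=-K_{i,\J}K_{j,\J}$.

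The same sign source affects your inverse. The map $\sigma\circ\tTD''_{i,+1}\circ\sigma$ sends $E_{i,\J}\mapsto-(-1)^{|i,\J|}K_{i,\I}^{-1}F_{i,\I}$, i.e.\ $+K_{i,\I}^{-1}F_{i,\I}$ for odd $i$, not the even-case formula you wrote.

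Two smaller points. For $i\nsim j$ the operators do not act on disjoint generators: a common neighbour $k$ is moved by both, so you must compare two nested $q$-commutators whose coefficients depend on the changing parity of $k$. Also, $\sigma$ fixes both the $E$'s and the $F$'s and only relates $\tTD''$ to $\tTD'$, so by itself it cannot reduce checks on $F$'s to checks on $E$'s.
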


We can then define for any $i\in \I$ that (super analog of \cite[37.2.4]{Lubook} in $\U$)
\begin{align}
  \label{eq:sTs}
  \begin{split}
  \tTD'_{i,-1} &:=\sigma \circ \tTD''_{i,+1} \circ \sigma,
  \\
  \tTD''_{i,-e} := \tpsi \circ \tTD''_{i,+e} \circ \tpsi,
  & \qquad
  \tTD'_{i,+e} := \tpsi \circ \tTD'_{i,-e} \circ \tpsi.
  \end{split}
\end{align}
We often use the following short hand notations
\begin{align}
\label{eq:TiLus}
    T_{i}:= T''_{i,+1},\quad T'_{i}:= T'_{i,-1}.
\end{align}
Then $T_i'=T_{i}^{-1}$. Hence, we can define
\begin{align*}  
\tTD_w \equiv \tTD_{w,+1}'' :=\tTD_{i_1}\cdots \tTD_{i_r}\colon \U(\I) \to \U(w(\I)),
\end{align*}
where $w=s_{i_1} \cdots s_{i_r,\I}$ is any reduced expression of $w\in W$. In the special case where $w(\I)=\I$, $T_w$ becomes an automorphism of $\U(\I)$. Similarly, we define $T'_w$ for $w\in W$.


\section{Quantum supersymmetric pairs}
\label{sec:QSP}
In this section, we introduce the relative Coxeter groupoid associated to a super Satake diagram. We recall the definitions of the quantum supersymmetric pairs and the quasi \( K \)-matrice from \cite{Sh25} and \cite{SWsuper}. 

\subsection{Super Satake diagrams}
For a subset $\I_\bu\subset \Ieven$, denote by $\g_\bu$ the semisimple Lie subalgebra of $\g$ associated with $\I_\bu$. Denote by $W_\bu$ the Weyl group for $\g_\bu$ with the longest element $w_\bu$. Let $\tau_\bu$ denote the diagram involution of $\g_\bu$ corresponding to the element $w_\bu$. Let $\Phi_\bu \subset \Phi$ be the corresponding root system and denote $\Phi_\bu^+ =\Phi_\bu \cap \Phi^+$. We view $\g_\bu$ as a Levi subalgebra of $\g$, and therefore we have the positive coroots for $\g_\bu$ as part of coroots for $\g$. Let $\rho_\bu^\vee$ denote half the sum of the positive coroots in $\Phi_\bu$.


A super admissible pair $(\I=\I_\circ \cup\I_\bu,\tau)$ (cf. \cite[Definition 2.3]{SWsuper}), consists of subsets $\I_\bu\subset \Ieven$, $\I_\circ=\I\backslash \I_\bu$, and a (parity-preserving) Dynkin diagram involution $\tau$ such that \begin{enumerate}
    \item $\tau^2=\Id$,
    \item 
    The action of $\tau$ on $\I_\bu$ coincides with the action of $-w_\bu$,
    \item 
    If $j\in (\Ieven\cup \I_{\niso})\cap \I_\circ$ and $\tau(j)=j$, then $\alpha_j(\rho_\bu^\vee)\in \Z$, 
    \item
    If $j\in \I_{\iso}\cap \I_\circ$ and $\tau(j)=j$, then
$\alpha_j(\rho_\bu^\vee)\neq 0$
\end{enumerate}
where $\I_{\iso}$ (resp. $\I_{\niso}$) is the collection of isotropic (resp. non-isotropic) odd simple roots in $\I$; see \cite[Lemma 2.1]{SWsuper}.

\begin{remark}
Conditions (1), (2) and (3) (where $(\Ieven\cup\I_{\niso})\cap \I_\circ$ is replaced by $\I_\circ$) are exactly the conditions for admissible pairs of (non-super) Kac-Moody type  \cite[\S 2.4]{Ko14}. Condition~ (4) can be rephrased as that any isotropic simple root fixed by $\tau$ must be connected to $\I_\bu$.
\end{remark}

\begin{definition}\cite[Definition 2.8]{SWsuper}
\label{def:rank}
We define the {\em real odd (resp. even)  rank} of $(\I=\I_\circ \cup \I_\bu,\tau)$ by the number of $\tau$-orbits in $\I_\circ\cap \I_\iso$ (resp. $\I_\circ \cap(\Ieven\cup \I_\niso)$). Moreover, the real rank of $(\I=\I_\circ \cup \I_\bu,\tau)$ is defined to be the sum of its real odd and real even rank.
\end{definition}

The diagrams associated to super admissible pairs are called super Satake diagrams. We refer to \cite[Table 3]{SWsuper} for a list of super Satake diagrams of real rank one. A supersymmetric pair $(\g,\theta)$ consists of a Lie superalgebra $\g$ (of basic types) and an algebra automorphism $\theta$ of order $2$ or $4$ on $\g$; cf. \cite[Proposition 2.1]{SWsuper}. For any given super admissible pair, the authors have developed a theory of quantum supersymmetric pairs in \cite{SWsuper}. In this paper, we are mainly interested in super admissible pairs $(\g,\theta)$ of type sAIII. In this case we have $\g=\gl(m|n)$ and $\theta$ is of order $2$. We do not recall the purely even setting here, for readers of interest we refer to \cite{SW23} for a thorough treatment.

     A super Satake diagram of type sAIII with $\nb-1 =2\pt-1$ black nodes and $r$ pairs of white nodes is given as follows, where the diagram involution $\tau$ is indicated by the dashed arrows:
\begin{equation}
\label{AIIIdiagram}
\begin{tikzpicture}[yscale=.7, semithick]
\node (-4) [circle,draw,label=above:{$-\pt-r+1$},scale=0.6] at (0,0){$\cdot$};
\node (-3)  at (2,0) {$\cdots$} ;
\node (-2) [circle,draw,label=above:{$-\pt$},scale=0.6] at (4,0){$\cdot$};
\node (-1) [circle,fill,draw,label=above:{$-\pt+1$},scale=0.6] at (5,-0.5){};
\node (0)  at (5,-1.5){$\vdots$};
\node (1) [circle,fill,draw,label=below:{$\pt-1$},scale=0.6] at (5,-2.5){};
\node (2) [circle,draw,label=below:{$\pt$},scale=0.6] at (4,-3){$\cdot$};
\node (3)  at (2,-3){$\cdots$};
\node (4) [circle,draw,label=below:{$\pt+r-1$},scale=0.6] at (0,-3){$\cdot$};
\path (-4) edge (-3)
          (-3) edge (-2)
          (-2) edge (-1)
          (-1) edge (0)
          (0) edge (1)
          (1) edge (2)
          (2) edge (3)
          (3) edge (4);
\path (-4) edge[dashed,bend right,<->] (4)
    (-2) edge[dashed,bend right,<->] (2)
    (-3) edge[dashed,bend right,<->] (3);
\end{tikzpicture}
\end{equation}
 where $
\I_\bu =\{1-\pt,2-\pt,\ldots, \pt-1\},\ \I_\circ=\I\backslash \I_\bu.
$ (In case $\pt=0$, the black nodes are dropped; the nodes $\pt$ and $-\pt$ are identified and fixed by $\tau$.) We refer to $\odot$ as white dots and $\newmoon$ as black dots. Note that we have $2a+2r=m+n$. The advantage of adopting the labeling $\I = \II_{m+n-1}$ is that it allows us to describe the action of $\tau$ on simple roots as multiplication by $-1$. Since $\tau$ is parity-preserving, we must have $$|\tau(i),\I|=|-i,\I|=|i,\I| \text{ for all }i\in \I.$$
Throughout the paper, we will refer to \eqref{AIIIdiagram} for notations regarding type sAIII super Satake diagrams.

For later use, we let $\DiAIII_\pt$ denote the collection of super Satake diagrams associated with $\g = \gl(m|n)$ of the form \eqref{AIIIdiagram} with exactly $2\pt-1$ black nodes.

\begin{remark}
In \cite[\S 4.1]{Sh25}, the first author constructed super Satake diagrams of type sAIII, allowing $\I_\bu$ to include odd simple roots as well. However, the quasi $K$-matrix was only constructed for super Satake diagrams satisfying $\I_\bu\subset \Ieven$, which is a crucial requirement for defining the relative braid group symmetries later.
\end{remark}

\begin{remark}
    We note that the following diagram  {\rm $$\xy
(0,0)*{\otimes};(0,-4)*{\scriptstyle 0}
\endxy$$
with $\tau=id$ is not a super admissible pair since $\alpha_0(\rho_\bu^\vee)=0$ as $\I_\bu=\varnothing$.}
\end{remark}

\subsection{Relative Coxeter groupoids}
Let $(\I=\I_\bu \cup \I_\circ,\tau) \in \DiAIII_\pt$ be a super admissible pair of type sAIII. For any $i\in \I_\circ$, we define
\begin{align}
  \label{Iib}
 \I_{\bullet,i}:=\I_\bu\cup \{i,\tau i\}.
\end{align}
In this case, the corresponding algebra $\g_{\bu,i}$ associated with  $\I_{\bullet,i}$ has four possibilities:
\begin{enumerate}
    \item $\g_{\bu,i}=\gl(2\pt+2)$, if $i=\pm \pt$ and $|i,\I|=0$,
    \item  $\g_{\bu,i}=\gl(2|2\pt)$, if $i=\pm \pt$ and $|i,\I|=1$,
    \item  $\g_{\bu,i}=\gl(2)\times \gl(2\pt)\times \gl(2)$, if $i\neq \pm \pt$ and $|i,\I|=0$,
    \item  $\g_{\bu,i}=\gl(1|1)\times \gl(2\pt)\times \gl(1|1)$, if $i\neq \pm \pt$ and $|i,\I|=1$.
\end{enumerate}
Recall the groupoid $W$ from \cref{groupoiddef}. We define an element $\bs_{i,\I} \in W$ for  any $\I\in \DiAIII_\pt$ and $i\in \I_\circ$  as follows :
\begin{equation}
    \label{ridef}
    \bs_{i,\I}=\begin{cases}
        s_is_{-i,\I}, & i\neq \pm \pt,\\
        s_\pt\cdots s_{-\pt+1}s_{-\pt}s_{-\pt+1}\cdots s_{\pt,\I},& i=\pm \pt.
    \end{cases}
\end{equation}
By definition, we have the identifications $\I_{\bullet,i} =\I_{\bullet,\tau i}$ and $\bs_i=\bs_{\tau i} $.  For convenience, we often write $r_i(\I):=r_{i}(\I)$ without ambiguity.

\begin{lemma}
\label{bsiI}
    For any $\I\in\DiAIII_\pt$ and $i\in \I_\circ$, we have $\bs_{i}(\I)\in \DiAIII_\pt$. Moreover, we have 
    \[
    \bs_{\pt}(\I)=\I.
    \]
\end{lemma}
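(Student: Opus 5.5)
We must show two things: $\bs_i(\I)$ is again of the form \eqref{AIIIdiagram} with the same $\I_\bu$ and the same parity symmetry $|{-j}|=|j|$, and $\bs_\pt(\I)=\I$. The plan is to track the underlying simple system concretely. A Dynkin diagram $\I\in\Dmn$ is determined, together with the $d_i$, by the parity sequence $(|v_a|)_{a\in\II_{m+n}}$ of the ordered basis. By \cref{paritylemma}, applying $s_j$ (even or odd) amounts to swapping the two adjacent basis vectors $v_{j-\half}$ and $v_{j+\half}$ in the ordering: an even reflection swaps vectors of equal parity, so nothing changes, while an odd reflection swaps vectors of opposite parity. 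So each diagram corresponds to a $\{0,1\}$-word of length $m+n$. Under this dictionary, $\I\in\DiAIII_\pt$ means precisely the following: the middle $2\pt$ letters (positions $-\pt+\half,\dots,\pt-\half$) all share one parity, since $\I_\bu\subset\Ieven$; and the word is palindromic, since $\tau$ is parity-preserving and acts by $j\mapsto -j$. Admissibility condition (4) must also be checked when $\pt=0$.

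\emph{Case $i\neq\pm\pt$.} Here $\bs_i=s_is_{-i}$ with $i\nsim -i$, because $|i|\ge \pt+1$ and the nodes $i,-i$ are at distance at least $2\pt+2\ge 2$. This element swaps letters at positions $i\pm\half$ and, symmetrically, at $-i\pm\half$. These positions lie outside the middle block, so the block is untouched and the palindromic property is preserved. Conditions (1)--(3) are immediate. For (4): if $\pt=0$ the fixed node is $0$, and it is unaffected, because positions $\pm\half$ are not moved when $|i|\ge1$. Hence $\bs_i(\I)\in\DiAIII_\pt$.

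\emph{Case $i=\pm\pt$.} Here $\bs_\pt=s_\pt\cdots s_{-\pt}\cdots s_{\pt}$. First I would check that this is a valid composite: each successive factor must refer to the diagram produced by the previous ones, which is automatic in the groupoid convention. As a permutation of the positions of the word, this product of adjacent transpositions is the transposition exchanging positions $-\pt-\half$ and $\pt+\half$ (the conjugate of $(-\pt-\half,-\pt+\half)$ by the cycle). Palindromicity gives these two letters equal parity, so the resulting word is literally the same. Therefore $\bs_\pt(\I)=\I$ as diagrams, with the same $\I_\bu$ and $\tau$, and in particular $\bs_\pt(\I)\in\DiAIII_\pt$.

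The main subtlety I expect is justifying that the parity word, together with an overall sign choice when $m=n$, determines the diagram including the $d_i$, and that reflections never flip that global sign. I would handle this by noting that the $d_i$ are read off from the bilinear form on $\epsilon_a$ given by the supertrace. Reflections merely permute the $\epsilon_a$, so the form, and hence the $d_i$ computed as $(\alpha_i,\alpha_i)$ or through the connected neighbours, is determined by the word.
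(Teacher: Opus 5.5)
Your proposal is correct and is essentially the paper's approach made explicit. The paper only says the lemma follows from a direct case-by-case check. Your parity-word computation is exactly that check: reflections swap adjacent weights, so $\bs_i=s_is_{-i}$ swaps two mirror-image pairs outside the black block, while $\bs_\pt$ is the transposition of positions $-\pt-\half$ and $\pt+\half$. This is the same language the paper uses in the proof of \cref{prop:orbit}.

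One small correction to your justification of palindromicity. Parity-preservation of $\tau$ only forces $|v_a|+|v_{-a}|$ to be constant in $a$, so it does not by itself exclude an anti-palindromic word. To rule that out you need $|v_{\half}|=|v_{-\half}|$. This comes from $\I_\bu\subset\Ieven$ when $\pt\ge 1$, and from admissibility condition (4) when $\pt=0$. In the case $\pt=0$ this is precisely what makes $\bs_0=s_0$ fix $\I$.
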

\begin{proof}
    It follows from a case-by-case checking directly.
\end{proof}
Consider products of the form 
\[
\vartheta=\bs_{i_1,\I_1}\cdots \bs_{i_k,\I_k}
\]
for $\I_1,\cdots,\I_k\in \DiAIII_\pt$. By \eqref{groupoid} and \cref{bsiI}, we see that $\vartheta=0$ unless $\I_l=\bs_{i,\I_{l+1}}(\I_{l+1})$ for all $1\leq l \leq k-1$. Consequently, we adopt the convention of writing it as 
\[
\vartheta=\bs_{i_1}\cdots \bs_{i_{k-1}}\bs_{i_k,\I_k}.
\]

This motivates the following construction; see also \cite{Lus76,Lus03, DK19,WZ23}.

\begin{definition}\label{def:relgoupoid}
    We define the relative Coxeter groupoid $\reW$ of a supersymmetirc pair of type sAIII to be the sub-groupoid of $W$ (see \cref{groupoiddef}) generated by elements $e_\I$ and $\bs_{i,\I}$ for all $(\I=\I_\bu\cup \I_\circ,\tau)\in\DiAIII_\pt$ and $i\in \I_\circ$.  
\end{definition}

\begin{theorem}
\label{relCgroupid}
  The relative Coxeter groupoid $\reW$ satisfy the following relations (for all $\I\neq\J\in\DiAIII_\pt$ and $i\in\I_\circ$) :
    \begin{equation}
    \label{relativegroupoid}
        \begin{aligned}
           & e_\I^2=e_\I,\qquad e_\I e_\J=0,\\
           & e_{\bs_{i}(\I)}\bs_{i,\I}=\bs_{i,\I}e_\I= \bs_{i,\I},\qquad \bs_{i,\bs_i(\I)}\bs_{i,\I}=e_\I,\\
           & \bs_{j}\bs_{i,\I}=\bs_{i}\bs_{j,\I},\quad\text{ for }i\nsim j\in \I,\\
           &
           \bs_{i}\bs_{j}\bs_{i,\I}=\bs_{j}\bs_{i}\bs_{j,\I},\quad\text{ for }i\sim j\in \I \text{ such that } i,j\neq \pm \pt,\\
           &
\bs_{\pt}\bs_{\pt+1}\bs_{\pt}\bs_{\pt+1,\I}=\bs_{\pt+1}\bs_{\pt}\bs_{\pt+1}\bs_{\pt,\I}.
      \end{aligned}
    \end{equation}
\end{theorem}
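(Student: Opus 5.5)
The relations in \eqref{relativegroupoid} live inside $W$, so the plan is to verify each of them in the ambient Coxeter groupoid $W$, using \eqref{groupoid} (equivalently \eqref{braid}) and \cref{bsiI}. The relations involving only idempotents are immediate from \eqref{groupoid}. So are $e_{\bs_i(\I)}\bs_{i,\I}=\bs_{i,\I}e_\I=\bs_{i,\I}$: each factor $s_{j,\I'}$ in \eqref{ridef} is absorbed by the appropriate idempotent, and by \cref{bsiI} the target $\bs_i(\I)$ again lies in $\DiAIII_\pt$.

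\textbf{Inverse relation.} For $\bs_{i,\bs_i(\I)}\bs_{i,\I}=e_\I$ with $i\ne\pm\pt$, note that $i\nsim -i$. Hence $s_is_{-i,\I}=s_{-i}s_{i,\I}$ by \eqref{braid}. Then
\[
\bs_{i,\bs_i(\I)}\bs_{i,\I}=s_is_{-i}s_{-i}s_{i,\I}=e_\I,
\]
using $s_{j,s_j(\I')}s_{j,\I'}=e_{\I'}$ twice. For $i=\pm\pt$, the word in \eqref{ridef} is a palindrome, and $\bs_\pt(\I)=\I$ by \cref{bsiI}. Hence the product telescopes from the middle to $e_\I$.

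\textbf{Commuting relations.} For $i\nsim j$ in the relative sense, both sides are words in the simple reflections $s_{\pm i}$ and $s_{\pm j}$ (together with the black nodes if one of them is $\pm\pt$). The needed commutations are type A commutations $s_ks_{l}=s_ls_k$ with $k\nsim l$, available from \eqref{braid}. The one nontrivial subcase is $j=\pt$ with $i$ not adjacent to $\pm\pt$; there the whole middle segment $s_{\pt}\cdots s_{-\pt}\cdots s_\pt$ commutes past $s_{\pm i}$ letter by letter.

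\textbf{Braid relations of length three.} For $i\sim j$ with $i,j\ne\pm\pt$, I will write both sides as words in $s_{\pm i},s_{\pm j}$. Since $\{i,j\}$ and $\{-i,-j\}$ are disjoint and not adjacent (here $|i|,|j|>\pt$), the left side $s_is_{-i}s_js_{-j}s_is_{-i}$ can be reordered as $(s_is_js_i)(s_{-i}s_{-j}s_{-i})$. I will then apply the type A braid relation \eqref{braid} in each half and reassemble into the right side.

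\textbf{Length-four relation (main obstacle).} The real work is the last relation, which is the type B braid relation. I will argue it via the longest element. Set
\[
\I_{\bu,\pt,\pt+1}=\I_\bu\cup\{\pm\pt,\pm(\pt+1)\},
\]
which is a type A segment of rank $2\pt+3$. Let $w_{\bu}$ and $w_{\bu,\pt,\pt+1}$ denote the longest elements of the sub-Coxeter groupoid generated by the $s_k$ for $k\in\I_\bu$ and for $k$ in this segment, respectively. I expect each $\bs_{i,\I}$ to equal $w_{\bu,i}w_{\bu}$ for the corresponding segment. Both sides of the relation should then equal $w_{\bu,\pt,\pt+1}w_\bu$ (with the appropriate source diagram), once one shows that the lengths add:
\[
\ell(\bs_\pt\bs_{\pt+1}\bs_\pt\bs_{\pt+1,\I})=\ell(w_{\bu,\pt,\pt+1})-\ell(w_\bu).
\]
This length count comes from the standard count of positive roots in the relevant type A root systems, which is parity independent in the groupoid. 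Matsumoto's theorem for $W$ then identifies the two reduced expressions.

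If that argument becomes cumbersome, the fallback is a direct word manipulation. Commute $s_{\pm(\pt+1)}$ past the far black nodes, and apply the type A braid relation at the junctions between $\pt$ and $\pt+1$. This parallels the classical Lusztig verification that relative reflections in type AIII satisfy type B relations. The key point in either approach is that all of these manipulations use only the groupoid relations \eqref{braid}, which hold for arbitrary parities. The parity bookkeeping is then automatic, because every intermediate diagram is determined by the word and, at the ends, lies in $\DiAIII_\pt$ by \cref{bsiI}.
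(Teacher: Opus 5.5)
Your proposal is correct and is essentially the paper's argument, which consists only of the remark that this is a direct calculation as in the Coxeter group case. Your key observation makes that remark precise: since \eqref{braid} is imposed at every base point, any word identity valid in $S_{m+n}$ lifts to $W$. You also rightly read $i\nsim j$ in the relative sense; read literally, for $\pt\ge 1$ the nodes $\pt$ and $-\pt-1$ are not adjacent, yet $\bs_{-\pt-1}=\bs_{\pt+1}$ does not commute with $\bs_\pt$.

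The length-four step needs tightening. The count of positive roots only gives $\ell(w_{\bu,\pt,\pt+1})-\ell(w_\bu)=8\pt+6$. You still have to show that each product $x$ has $\ell(x)=8\pt+6$ and $\ell(xw_\bu)=\ell(x)+\ell(w_\bu)$; an inversion count does this, using that each $\bs_k$ fixes the black positions. Then uniqueness of the longest element of the parabolic piece at $\I$, not Matsumoto's theorem, gives the equality of the two sides. Alternatively, check the relation directly on permutations: there $\bs_\pt$ and $\bs_{\pt+1}$ generate a signed permutation group of type $B_2$.
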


\begin{proof}
    It follows from a direct calculation as in the case of Coxeter groups.
\end{proof}

By \eqref{relativegroupoid}, we see that $\reW$ is a Coxeter groupoid itself and it satisfies the type B braid relations. Let $\ell_{\circ}$ denote the length function of the Coxeter groupoid $\reW$.
\begin{proposition}  
 \label{prop:LL}
For any $w_1, w_2 \in \reW$, the equation $\ell(w_1 w_2) =\ell(w_1) +\ell(w_2)$ holds if and only if $\ell_\circ(w_1 w_2) =\ell_\circ (w_1) +\ell_\circ (w_2)$.
\end{proposition}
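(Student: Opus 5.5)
The plan is to follow the classical argument for relative Weyl groups (Lusztig \cite{Lus76}, and \cite{DK19,WZ23}), with root systems replaced by the positive systems attached to each $\I\in\Dmn$. First I will describe the length $\ell$ on $W$ by inversion sets. For $w\in W$ with source $\I$, put $N(w):=\{\beta\in\Phi^+_\I : w(\beta)\in -\Phi^+_{w(\I)}\}$. Here positive roots of $\gl(m|n)$ are transported along $W$ through the identification of all root systems with the fixed set $\{\epsilon_a-\epsilon_b\}$. Odd reflections change the positive system by a single root, as in the odd reflection lemma, and even reflections act as usual. From this one gets $\ell(w)=|N(w)|$ and the cocycle rule
\[
\ell(w_1w_2)=\ell(w_1)+\ell(w_2)\iff N(w_2)\subseteq N(w_1w_2).
\]
This is the standard fact for Coxeter groupoids of Lie superalgebras from \cite{HY08}, and I will cite it there, or derive it from Matsumoto's theorem stated above.

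Second, I will show that each $\bs_{i,\I}$ is the longest element of the parabolic subgroupoid generated by $\I_{\bu,i}$, composed with the longest element of $\I_\bu$. Precisely, $\bs_{i,\I}=w_{\bu,i}w_\bu$ with $\ell(\bs_{i,\I})=\ell(w_{\bu,i})-\ell(w_\bu)$, where $w_{\bu,i}$ and $w_\bu$ are the longest elements. This is checked case by case on the four possibilities for $\g_{\bu,i}$, using \eqref{ridef}. The key consequence is
\[
N(\bs_{i,\I})=\big(\Phi^+_{\bu,i}\setminus\Phi^+_\bu\big)_\I,
\]
and this set is stable under $\theta=-w_\bu\tau$.

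Third, I will restrict to the $\theta$-stable objects. Every $w\in\reW$ commutes with the involution: it sends $\Phi_\bu^+$ to $\Phi_\bu^+$ and intertwines $\tau$. So $N(w)$ contains no root of $\Phi_\bu$ and is a union of classes of restricted roots, i.e.\ of the images of the $\Phi^+_{\bu,j}\setminus\Phi^+_\bu$ under $\reW$. The analogue of Lusztig's argument then gives $\ell_\circ(w)=\#\{$restricted positive roots $\bar\beta$ with $N(w)\supseteq$ the class of $\bar\beta\}$. Concretely, I will prove the following by induction on $\ell_\circ$ using the second step: if $\ell_\circ(\bs_{i}w)=\ell_\circ(w)+1$, then $N(\bs_i w)=N(w)\sqcup w^{-1}N(\bs_{i})$, and hence $\ell(\bs_iw)=\ell(w)+\ell(\bs_i)$.

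Granting this, both equivalences are immediate. The condition $\ell_\circ(w_1w_2)=\ell_\circ(w_1)+\ell_\circ(w_2)$ holds iff the restricted inversion classes satisfy $\bar N(w_2)\subseteq\bar N(w_1w_2)$. The condition $\ell(w_1w_2)=\ell(w_1)+\ell(w_2)$ holds iff $N(w_2)\subseteq N(w_1w_2)$. These two containments are equivalent, because $N$ is a union of full classes, each being the disjoint union of the roots in it.

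The main obstacle is the third step, namely the claim that $N(w)$ for $w\in\reW$ is a union of full restricted classes, and the exchange property behind it. In the super setting the root systems vary along the groupoid, and the class $\Phi^+_{\bu,i}\setminus\Phi^+_\bu$ mixes even and odd roots differently in the cases $\gl(2|2\pt)$ and $\gl(1|1)\times\gl(2\pt)\times\gl(1|1)$. I would handle this by identifying the restricted root system with a type B (or BC) system via $\epsilon_a\mapsto\epsilon_a-\epsilon_{-a}$. Under this identification the parity plays no role in the set-theoretic statements, so the argument becomes the non-super one of \cite{Lus76} verbatim.
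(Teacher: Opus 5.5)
Your proposal is correct and takes essentially the paper's route. The paper's proof is only the instruction to rerun Lusztig's argument \cite[Theorem 5.9]{Lus76}, and your inversion-set argument is that rerun: the rank-one sets $\Phi^+_{\bu,i}\setminus\Phi^+_\bu$, the decomposition of $N(w)$ into full restricted-root classes, and the observation that parities drop out once everything is written in terms of the $\epsilon_a$. One harmless slip: these rank-one sets are stable under $w_\bu\tau=-\theta$, not under $\theta=-w_\bu\tau$, which sends them into the negative roots.
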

\begin{proof}
    It follows from a rerun of \cite[Theorem 5.9]{Lus76}.
\end{proof}

Thus, when discussing reduced expressions of an element $w\in \reW \subset W$, we may unambiguously refer to either the Coxeter system of $\reW$ or that of $W$. Let $\relongest$ denote the longest element in $\reW$.

Unlike \cref{DmnItoJ}, the action of $\reW$ on $\DiAIII_\pt$ is not transitive. For any $\I\in \DiAIII_\pt$, recall from \eqref{AIIIdiagram} that we label the simple roots in $\I$ by $\II_{m+n-1}$. For any $i\in \I$, we suppose 
\[
\alpha_{i}=\epsilon_{i-1/2}-\epsilon_{i+1/2}.
\]
Then we get a parity sequence associated with $I$, denoted by
\[
P(\I)=\{ |\epsilon_{-\pt-r+1/2}|,|\epsilon_{-\pt-r+3/2}|,\ldots,|\epsilon_{\pt+r-1/2}|\}.
\]
By our parity assumptions in \cref{AIIIdiagram}, this parity sequence is uniquely determined by $m,n$ and its subsequence
\[
P'(\I):=\{|\epsilon_{\pt+1/2}|,|\epsilon_{\pt+3/2}|,\ldots, |\epsilon_{\pt+r-1/2}|\}
\]
since $\epsilon_{\pt-1/2},\epsilon_{\pt-3/2},\ldots, \epsilon_{-\pt+1/2}$ are all of the same parity.

Let $\text{pari}(\I)=\sum_{x\in P'(\I)}x$ denote the number of weights that have odd parity in $P'(\I)$. We say $\text{pari}(\I)$ is the parity of $\I$.

\begin{proposition}
\label{prop:orbit}
    For any $\I,\J\in \DiAIII_\pt$, there exists a sequence of reflections $r_{i_1,\I_1},\ldots,r_{i_k,\I_k}$ such that $\I_k=\I$ and $r_{i_1}\cdots r_{{i_k}}(\I)=\J$ if and only if $\text{pari}(\I)=\text{pari}(\J)$.
\end{proposition}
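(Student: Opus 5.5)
The plan is to translate everything into combinatorics of the parity sequence $P(\I)$, which records the parities of the basis vectors $\epsilon_{k}$, $k\in \II_{m+n}$. An element $w\in W$ acts on $P(\I)$ by permuting positions: $s_i$ swaps the entries in positions $i-\half$ and $i+\half$, whether the reflection is even or odd. The super admissibility conditions for \eqref{AIIIdiagram} say exactly two things. First, $\tau$ is parity-preserving. Second, the middle block is even. By \cref{paritylemma} and $|\tau(i)|=|i|$, a diagram $\I\in\DiAIII_\pt$ is then determined by its tail $P'(\I)\in\{0,1\}^r$ together with the common parity $c$ of the middle $2\pt$ entries. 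The left tail is the reversal of the right tail, possibly complemented. Since $m,n$ are fixed, $c$ is determined by $\text{pari}(\I)$.

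\textbf{Step 1: effect of each generator.} I will compute how each $\bs_{i,\I}$ acts on $P'(\I)$.
\begin{itemize}
\item For $i\neq\pm\pt$, the reflection $\bs_i=s_is_{-i}$ swaps the entries in positions $i\pm\half$ of the right tail (and symmetrically on the left). So it transposes two adjacent entries of $P'(\I)$ and preserves $\text{pari}$.
\item For $i=\pm\pt$, the long element $s_\pt\cdots s_{-\pt}\cdots s_{\pt}$ is the transposition of positions $\pm(\pt+\half)$. By \cref{bsiI} it fixes $\I$, so in particular it fixes $P'$.
\end{itemize}
This gives the ``only if'' direction: every generator of $\reW$ preserves the multiset of entries of $P'$, hence preserves $\text{pari}$.

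\textbf{Step 2: converse.} Suppose $\text{pari}(\I)=\text{pari}(\J)$. Then $P'(\I)$ and $P'(\J)$ are $0/1$ words of length $r$ with the same number of ones, so they differ by a permutation. Adjacent transpositions generate $S_r$, so a word in the $\bs_i$ with $i\in\{\pt+1,\dots,\pt+r-1\}$ carries $P'(\I)$ to $P'(\J)$. Each intermediate diagram stays in $\DiAIII_\pt$ by \cref{bsiI}, so the composite is a legitimate groupoid path.

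It remains to see that equal $P'$ forces $\I=\J$. Both diagrams have the same $m,n$ and the same $\text{pari}$, so the middle block parity $c$ agrees. The left tail is then forced by the $\tau$-compatibility of parities, so $P(\I)=P(\J)$. A Dynkin diagram of $\gl(m|n)$ together with its $d_i$ is determined by its parity sequence, which also settles the $m=n$ sign ambiguity, so $\I=\J$.

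\textbf{Main obstacle.} The hard part is Step 1: checking that $s_is_{-i}$ really acts as the claimed double transposition, compatibly with odd reflections changing neighbouring parities as in \cref{paritylemma}. The same check is needed for the long element at $\pm\pt$. I would verify this by tracking the $\epsilon$-basis directly: each $s_j$, even or odd, exchanges $\epsilon_{j-\half}$ and $\epsilon_{j+\half}$ in the ordered basis, so the simple roots of $s_j(\I)$ are again consecutive differences in the new ordering. This reduces everything to permutations of the sequence.
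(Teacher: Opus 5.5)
Your proposal is correct and follows essentially the paper's argument. The ``only if'' direction uses exactly that $\bs_{\pm\pt}$ fixes $\I$ (\cref{bsiI}) while $\bs_i$ for $i\neq\pm\pt$ swaps adjacent entries of $P'(\I)$. For the converse, the paper connects the tail subdiagrams labelled $\pt+1,\dots,\pt+r-1$ by ordinary reflections (the content of \cref{DmnItoJ}, which is your ``adjacent transpositions generate $S_r$'') and lifts these through $\bs_i=s_is_{-i}$.

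Your explicit check that $P'(\I)$ determines $\I$ spells out a step the paper leaves implicit. For $\pt=0$, however, that check needs admissibility condition (4), which forces the $\tau$-fixed node $0$ to be even; $\tau$-compatibility of parities alone does not suffice. In fact the left tail is never complemented.
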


\begin{proof}
 Suppose there exists a sequence of reflections
\(
r_{i_1,\I_1},\ldots,r_{i_k,\I_k}
\)
such that $\I_k=\I$ and
\[
r_{i_1,\I_1}\cdots r_{i_k,\I_k}(\I)=\J.
\]
By \cref{bsiI}, each reflection $r_{a,\I_j}$ fixes the diagram $\I_j$ on which it acts, and therefore does not change $\text{pari}(\I_j)$. On the other hand, for $i\neq \pm a$, the reflection $r_{i,\I_j}$ acts by swapping adjacent weights, so it also preserves $\text{pari}(\I_j)$. Hence every reflection in the sequence preserves parity, and we conclude that
\(
\text{pari}(\I)=\text{pari}(\J).
\)

Conversely, suppose that
\(
\text{pari}(\I)=\text{pari}(\J).
\)
Then, by \cref{bsiI}, there exists a sequence
\(
s_{i_1,\I_1},\ldots,s_{i_k,\I_k}
\)
such that $\I_k$ is the subdiagram of $\I$ labeled by $\pt+1,\ldots,\pt+r-1$, and
\(
s_{i_1,\I_1}\cdots s_{i_k,\I_k}(\I_k)
\)
is the subdiagram of $\J$ labeled by $\pt+1,\ldots,\pt+r-1$. Then it follows by definition of $r_i$ that
\[
r_{i_1,\I}\cdots r_{i_k}(\I)=\J.
\]
This completes the proof.
\end{proof}

\begin{remark}
    When $(\I=\I_\circ\cup\I_\bu,\tau)$ is a symmetric pair of type AIII, then the relative Coxeter groupoid $\reW$ reduces to the {\em relative Weyl group} (cf. \cite{DK19} and \cite[\S 2.3]{WZ23}) associated with $\I$, which is a type B Weyl group itself.
\end{remark}




\begin{example}
\label{exrelative}
    Let us elaborate our construction in this subsection via an example. Consider the following type sAIII super Satake diagram whose underlying Lie superalgebra is $\mathfrak{gl}(4|2).$:
    {\rm
\begin{equation}
\label{exgl22}
\begin{tikzpicture}
    \draw[-] (-1.88, 0) -- (-1.12, 0);  
    \draw[-] (-.88, 0) -- (0, 0);   
    \draw[-] (.12, 0) -- (.88, 0);    
    \draw[-] (1.12, 0) -- (1.88, 0);    
    \node at (-2, 0) (moon1) {$\fullmoon$};
    \node at (-1, 0) (ot1) {$\otimes$};
    \node at (0, 0) (newmoon) {$\newmoon$};
    \node at (1, 0) (ot2) {$\otimes$};
    \node at (2, 0) (moon2) {$\fullmoon$};
    \node at (-2, -.4) {$\scriptstyle -2$};
    \node at (-1, -.4) {$\scriptstyle 1$};
    \node at (0, -.4) {$\scriptstyle 0$};
    \node at (1, -.4) {$\scriptstyle 1$};
    \node at (2, -.4) {$\scriptstyle 2$};
    \draw[<->, dashed, bend left=15] (-1, .4) to (1, .4); 
    \draw[<->, dashed, bend left=15] (-2, .4) to (2,.4); 
\end{tikzpicture}
\end{equation}}
where $a=1$, $\I=\I_5=\set{-2,-1,0,1,2}$ and $\I_\bu=\set{0}$. In this case the generalized Cartan matrix is 
\[A=\begin{pmatrix}
    2 & -1& 0 & 0 & 0 \\
   -1 & 0 & 1 & 0& 0\\
    0&-1 & 2 &-1&0\\
    0&0 & -1 & 0&1 \\
    0&0&0&-1& 2
\end{pmatrix}\]
with $d_{-2}=d_{-1}=d_2=1,\ d_0=d_1=-1$. Then $\DiAIII_1$ in this case is the collection of the following three super Satake diagrams
{\rm\[
\xymatrix{
\begin{tikzpicture}[scale=.8,baseline=0]
    \draw[-] (-1.88, 0) -- (-1.12, 0);  
    \draw[-] (-.88, 0) -- (0, 0);   
    \draw[-] (.12, 0) -- (.88, 0);    
    \draw[-] (1.12, 0) -- (1.88, 0);    
    \node at (-2, 0) (moon1) {$\fullmoon$};
    \node at (-1, 0) (ot1) {$\otimes$};
    \node at (0, 0) (newmoon) {$\newmoon$};
    \node at (1, 0) (ot2) {$\otimes$};
    \node at (2, 0) (moon2) {$\fullmoon$};
    \node at (-2, -.4) {$\scriptstyle -2$};
    \node at (-1, -.4) {$\scriptstyle -1$};
    \node at (0, -.4) {$\scriptstyle 0$};
    \node at (1, -.4) {$\scriptstyle 1$};
    \node at (2, -.4) {$\scriptstyle 2$};
    \draw[<->, dashed, bend left=15] (-1, .4) to (1, .4); 
    \draw[<->, dashed, bend left=15] (-2, .4) to (2,.4); 
\end{tikzpicture} &
\begin{tikzpicture}[scale=.8,baseline=0]
    \draw[-] (-1.88, 0) -- (-1.12, 0);  
    \draw[-] (-.88, 0) -- (0, 0);   
    \draw[-] (.12, 0) -- (.88, 0);    
    \draw[-] (1.12, 0) -- (1.88, 0);    
    \node at (-2, 0) (moon1) {$\otimes$};
    \node at (-1, 0) (ot1) {$\otimes$};
    \node at (0, 0) (newmoon) {$\newmoon$};
    \node at (1, 0) (ot2) {$\otimes$};
    \node at (2, 0) (moon2) {$\otimes$};
    \node at (-2, -.4) {$\scriptstyle -2$};
    \node at (-1, -.4) {$\scriptstyle -1$};
    \node at (0, -.4) {$\scriptstyle 0$};
    \node at (1, -.4) {$\scriptstyle 1$};
    \node at (2, -.4) {$\scriptstyle 2$};
    \draw[<->, dashed, bend left=15] (-1, .4) to (1, .4); 
    \draw[<->, dashed, bend left=15] (-2, .4) to (2,.4); 
\end{tikzpicture}    \ar@<.5ex>[r]^{\bs_2}  
&\begin{tikzpicture}[scale=.8,baseline=0]
    \draw[-] (-1.88, 0) -- (-1.12, 0);  
    \draw[-] (-.88, 0) -- (0, 0);   
    \draw[-] (.12, 0) -- (.88, 0);    
    \draw[-] (1.12, 0) -- (1.88, 0);    
    \node at (-2, 0) (moon1) {$\otimes$};
    \node at (-1, 0) (ot1) {$\fullmoon$};
    \node at (0, 0) (newmoon) {$\newmoon$};
    \node at (1, 0) (ot2) {$\fullmoon$};
    \node at (2, 0) (moon2) {$\otimes$};
    \node at (-2, -.4) {$\scriptstyle -2$};
    \node at (-1, -.4) {$\scriptstyle -1$};
    \node at (0, -.4) {$\scriptstyle 0$};
    \node at (1, -.4) {$\scriptstyle 1$};
    \node at (2, -.4) {$\scriptstyle 2$};
    \draw[<->, dashed, bend left=15] (-1, .4) to (1, .4); 
    \draw[<->, dashed, bend left=15] (-2, .4) to (2,.4); 
\end{tikzpicture} \ar@<.5ex>[l]^{\bs_2}  
}
\]}
with $\bs_i$ given as in \cref{ridef}. Note that $\bs_1$ sends each $\I\in \DiAIII_1$ to itself. Hence, the first and second diagram above do not lie in the same $\reW$-orbit since the first diagram has parity $0$ while the second and the third diagram have parity $1$.
\end{example}

\subsection{The iquantum supergroup}
  \label{sub:iQG}
We recall results for the quantum supersymmetric pairs of type sAIII; cf. \cite[\S 4.1]{Sh25} and \cite{SWsuper}. Recall from \cref{rem:label} the convention of labeling.

Consider a super Satake diagram $(\I=\I_\bu\cup \I_\circ, \tau) \in \DiAIII_\pt$ of the form \eqref{AIIIdiagram}. Let $w_\bu$ denote the longest element of $W_\bu$. Since $\I_\bu\subset \Ieven$, it follows from \cite[Lemma 4.2]{Sh25} that  
\begin{equation}
\label{wbI=I}
w_\bu(\I) = \I.
\end{equation}
Note that the condition \eqref{wbI=I} is crucial, as it allows us to define the associated iquantum supergroup without requiring  presentations of the quantum supergroup $\U$ associated with two distinct Dynkin diagrams; cf. \cite[\S 4.1]{Sh25}. Moreover, since every \(\I \in \DiAIII\) has the same \(\I_\bu\), we do not distinguish the longest element \( w_\bu \) across different super Satake diagrams.

Define the subalgebra $\Ub(\I)$ of $\U(\I)$ with Chevalley generators  
\[
\bcG=\bcG(\I) := \{E_{i},F_{i},K_{i}^{\pm 1},\new \mid i\in \I_\bu\}.
\]
The iquantum supergroup associated with the super Satake diagram $(\I=\I_\bu\cup \I_\circ,\tau)$ of type sAIII is the $\bF$-subalgebra of $\U(\I)$ given by  
\[
\Ui_{\bva_\I}(\I) =\langle B_{i,\I},k_{i,\I} , g_\I \mid i\in \I_\circ, g \in\bcG\rangle
\]
via the embedding $\imath:\Ui_{\bva_\I}(\I) \rightarrow \U(\I)$ with
\begin{align}
 \label{eq:Uibvs}
B_{i,\I} & \mapsto F_{i,\I} + \va_{i,\I} T_{w_\bu}(E_{\tau i,\I}) K^{-1}_{i,\I},\qquad k_{i,\I} \mapsto K_{i,\I} K_{\tau i,\I}^{-1}, \qquad \forall i\in \I_\circ.
\end{align}
where $\bva_\I=(\va_{i,\I})_{i\in\I_\circ}\in (\bF^\times)^{\I_\circ}$ is a set of parameters satisfying the following condition
    \begin{align}\label{par}
            \va_{i,\I}&=\va_{-i,\I}, \qquad \forall i\in \I .
    \end{align}
i.e., we only consider {\em balanced parameters} in this paper. Note that the condition \eqref{par} is slightly more restrictive that the condition in \cite[\S 4.2]{SWsuper} for general parameters; see \cref{rmk:par} below for the reason of using balanced parameters.

We set $\bva_\I^\dm=(\va_{i,\I})_{i\in\I_\circ}$ to be the following special parameter
\begin{align}
            \label{balance}
            \va_{i,\I}^\dm &=\va_{-i,\I}^\dm=-\sqrt{(-1)^{|i,\I|}} 
            q^{-(\alpha_{i,\I},\alpha_{i,\I}+w_\bu\alpha_{-i,\I})/2}.
    \end{align}

The parameter $\bva_\I^\dm$ \eqref{balance} is called the {\em distinguished parameter}; cf. \cite{DK19}, \cite[(2.21)]{WZ23} for its non-super counterpart.

\begin{proposition}\label{prop:par}
$\Ui_{\bva_\I}(\I)$ is isomorphic to $\Ui_{\bva_\I^\dm}(\I)$ as algebras.
\end{proposition}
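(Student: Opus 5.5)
The plan is to realize the isomorphism as the restriction of a rescaling automorphism $\Psi_{\ba}$ of $\U(\I)$ from \cref{QGoperators}(3). This is the same strategy used in the non-super setting (cf. \cite{WZ23}, \cite{Ko14}).

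First I compute how $\Psi_{\ba}$ acts on the generators of $\Ui_{\bva_\I}(\I)$.
\begin{itemize}
\item It fixes every $K_i$, and hence every $k_{i,\I}$ and $\new_\I$.
\item On the black part I take $a_j=1$ for all $j\in\I_\bu$. Then $\Psi_{\ba}$ is the identity on $\Ub(\I)$; in particular it fixes all of $\bcG(\I)$ and commutes with $T_{w_\bu}$ on elements built from $\Ub$ and $E_{\tau i}$.
\item $T_{w_\bu}(E_{\tau i})$ is a homogeneous element of weight $w_\bu\alpha_{\tau i}$. This weight equals $\alpha_{\tau i}$ plus a combination of $\alpha_j$ with $j\in\I_\bu$. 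Since $\Psi_{\ba}$ acts on a homogeneous element of weight $\sum_k n_k\alpha_k$ by $\prod_k a_k^{n_k/2}$, I get $\Psi_{\ba}(T_{w_\bu}(E_{\tau i}))=a_{\tau i}^{1/2}T_{w_\bu}(E_{\tau i})$.
\end{itemize}
I need to justify that $T_{w_\bu}$ preserves weight spaces in the expected way. Here I use \eqref{wbI=I}: $T_{w_\bu}$ is an automorphism of $\U(\I)$ given by even Lusztig operators, so it maps $\U_\nu$ to $\U_{w_\bu\nu}$.

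Putting these together,
\[
\Psi_{\ba}(B_{i,\I})=a_i^{-1/2}\bigl(F_{i,\I}+a_i^{1/2}a_{\tau i}^{1/2}\va_{i,\I}T_{w_\bu}(E_{\tau i,\I})K_{i,\I}^{-1}\bigr).
\]
So I choose $a_i=a_{-i}$ with $a_i\,\va_{i,\I}=\va^\dm_{i,\I}$, i.e. $a_i=\va^\dm_{i,\I}/\va_{i,\I}\in\bF^\times$. This choice is consistent precisely because both parameter sets are balanced ($\va_{i}=\va_{-i}$ by \eqref{par} and $\va^\dm_i=\va^\dm_{-i}$ by \eqref{balance}). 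Square roots exist because $\bF$ is algebraically closed.

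With this $\ba$, $\Psi_{\ba}$ sends each generator $B_{i,\I}$ of $\Ui_{\bva_\I}(\I)$ to the nonzero scalar $a_i^{-1/2}$ times $B^\dm_{i,\I}$. It also fixes the $k_{i,\I}$ and $\bcG$. Hence $\Psi_{\ba}$ maps $\Ui_{\bva_\I}(\I)$ into $\Ui_{\bva^\dm_\I}(\I)$. The inverse automorphism $\Psi_{\ba^{-1}}$ maps back, so the restriction is an algebra isomorphism.

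The only delicate point is the scaling of $T_{w_\bu}(E_{\tau i})$. In the super setting I would double-check that the even braid operators $T_j$ ($j\in\I_\bu$) commute with $\Psi_{\ba}$ when $a_j=1$, using the explicit formulas in \cref{braidoperator}. This is clear, since those formulas are weight-homogeneous.
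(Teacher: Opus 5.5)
Your proposal is correct and is essentially the paper's proof: the paper uses the same rescaling automorphism $\Psi_{\ba}$ of \cref{QGoperators}(3), with $a_i=\va_{i,\I}/\va^\dm_{i,\I}$ for $i\in\I_\circ$ and $a_j=1$ for $j\in\I_\bu$, to map $\Ui_{\bva^\dm_\I}(\I)$ onto $\Ui_{\bva_\I}(\I)$, which is just the inverse of your map. Your write-up fills in what the paper calls ``straightforward'', namely the weight argument giving $\Psi_{\ba}(T_{w_\bu}(E_{\tau i,\I}))=a_{\tau i}^{1/2}T_{w_\bu}(E_{\tau i,\I})$ and the role of balanced parameters; the only extra care needed is to choose the square roots of $a_i=a_{\tau i}$ equal, so that $a_i^{1/2}a_{\tau i}^{1/2}=a_i$.
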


\begin{proof} 
Let $\ba^\dm=(a_i^\dm)_{i\in \I}\in (\bF^\times)^{\I}$ such that
\begin{equation}
a_i^\dm=\begin{cases}
\va_{i,\I}(\va_{i,\I}^\dm)^{-1}, & \text{ if } i\in \I_\circ
\\
1, & \text{ if } i\in \I_\bu.
\end{cases}
\end{equation}
Using \cref{QGoperators}, it is straightforward to check that the automorphism $\Psi_{\ba^\dm}$ induces an isomorphism from $\Ui_{\bva_\I^\dm}(\I)$ to $\Ui_{\bva_\I}(\I)$.
\end{proof}

\begin{remark}\label{rmk:par}
It was shown in \cite[Lemma 2.5.1]{W21a} that iquantum groups with arbitrarily different parameters are all isomorphic, though not necessarily via Hopf algebra automorphisms.
Hence, it is expected that \cref{prop:par} generalizes to iquantum supergroups with general parameters. However, beyond balanced parameters, establishing \cref{prop:par} requires a complete set of defining relations for iquantum supergroups, which has not yet been explicitly formulated and is beyond the primary scope of this paper. 
\end{remark}

\begin{remark} 
For any parameter other than $\bva^\dm$, an additional rescaling of Lusztig symmetries is required (cf. \cite[\S 9.4]{WZ23} for the non-super case) in order to construct relative braid group symmetries. For the distinguished  parameter, the rescaling is trivial and hence it is the most suitable one for the construction of the relative braid group symmetries.
\end{remark}

\begin{remark}
In \cite{WZ23}, relative braid group symmetries were constructed first on universal iquantum groups, which is a subalgebra of the Drinfeld double quantum group. In that framework, the iquantum groups with parameters can be recovered through a central reduction; see \cite[\S 2.4]{WZ23}. For interested readers, it is straightforward to construct an analogue of the Drinfeld double for quantum supergroups and define a universal iquantum supergroup.
\end{remark}

In the rest of the paper, thanks to \cref{prop:par}, it suffices to only consider $\Ui_{\bva_\I^\dm}(\I)$ and we will denote 
\[
\Ui(\I):=\Ui_{\bva_\I^\dm}(\I)
\]

Let $\U^{\imath 0}(\I)$ be the subalgebra of $\Ui(\I)$ generated by $\new,k_{i,\I}, K_{j,\I}^{\pm 1}$, for $i\in \I_\circ$ and $j\in \I_\bu$. Following \cite[\S 4.4]{Sh25} and \cite[\S 4.6]{SWsuper}, we recall a monomial basis for $\Ui$. Set $B_{j,\I} =F_{j,\I}$ for $j\in \I_\bu$. Given a multi-index $J=(j_1,j_2,\dots,j_n) \in \I^n$, define  
\[
F_{J,\I} := F_{j_1,\I}F_{j_2,\I}\cdots F_{j_n,\I}, \quad 
B_{J,\I} := B_{j_1,\I}B_{j_2,\I}\cdots B_{j_n,\I}.
\]
Let $\cJ$ be a fixed subset of $\bigcup_{n\geq 0} \I^n$ such that $\{F_{J,\I} \mid J\in \cJ\}$ forms a basis of $\U(\I)$ as a $\U^+(\I) \U^0(\I)$-module.

\begin{proposition}[\text{cf. \cite[\S 4.6]{SWsuper}}]
\label{Iwasawabasis}
The set $\{B_{J,\I} \mid J\in \cJ \}$ forms a basis of the left (or right) $\Ub^+(\I) \U^{\imath 0}(\I)$-module $\Ui(\I)$.
\end{proposition}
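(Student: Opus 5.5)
The natural route is to deduce the statement from the ordinary triangular decomposition of $\U(\I)$ by a leading-term (filtration) argument, as in the non-super Iwasawa-type decompositions (Letzter, Kolb) and in \cite[\S 4.6]{SWsuper}. We only treat the left module statement; the right one follows by applying the anti-involution $\sigma$ of \cref{QGoperators}, or by rerunning the argument with the order of factors reversed.

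First we place a filtration on $\U(\I)$ by height. Set $\U(\I)=\bigoplus_\mu \U^+\U^0 F$-parts, and for $x\in\U(\I)$ write $x=\sum_J u_J F_{J,\I}$ with $u_J\in \U^+(\I)\U^0(\I)$. Define $\U_{\le N}$ to be the span of terms with $\mathrm{ht}(J)\le N$. By \eqref{eq:Uibvs},
\[
B_{i,\I}=F_{i,\I}+\va_{i,\I}T_{w_\bu}(E_{\tau i,\I})K_{i,\I}^{-1}.
\]
Here $T_{w_\bu}(E_{\tau i})\in\U^+(\I)$: since $\I_\bu\subset\Ieven$ and \eqref{wbI=I} holds, $T_{w_\bu}$ is an automorphism of $\U(\I)$ built from even Lusztig operators, and it sends $E_{\tau i}$ for $\tau i\notin \I_\bu$ into $\U^+$. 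Consequently $B_{J,\I}\equiv F_{J,\I}$ modulo $\sum_{J'}\U^+\U^0 F_{J',\I}$ with $\mathrm{ht}(J')<\mathrm{ht}(J)$. To justify this we commute the $E$-parts to the left using \eqref{EF} and \eqref{EKKF}, together with the relations involving $\new$ from \eqref{newrelation}; this only lowers the $F$-degree and produces scalars and signs in $\bF$.

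Next we prove linear independence. Suppose $\sum_J b_J B_{J,\I}=0$ with $b_J\in \Ub^+(\I)\U^{\imath0}(\I)$, not all zero. Note that $\Ub^+\U^{\imath 0}\subset \U^+\U^0$ together with $\new$, and $\new$ is harmless since $\U=\breve\U\oplus\breve\U\new$. Choose the maximal height $N$ among the nonzero $b_J$. Projecting onto the top $F$-degree part, the leading term is $\sum_{\mathrm{ht}(J)=N} b_J F_{J,\I}$. This vanishes only if all such $b_J=0$, by freeness of $\{F_{J,\I}\}_{J\in\cJ}$ over $\U^+\U^0$, which is a contradiction.

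Finally we prove spanning. We show $\Ui(\I)\subseteq\sum_J \Ub^+\U^{\imath0}B_{J,\I}$ by induction on filtration degree, and this is the main obstacle. Any product of generators of $\Ui(\I)$ with $F$-degree at most $N$ has leading part $\sum u_J F_J$. We need to show that the coefficients $u_J$ can be taken in $\Ub^+\U^{\imath0}$, that is, that the projection of $\Ui(\I)\cap\U_{\le N}$ onto the top graded piece lands in $\Ub^+\U^{\imath0}\otimes \mathrm{span}\{F_J\}$. We plan to establish this by showing that $\Ui(\I)$ is spanned by ordered monomials $b\,B_{J}$, straightening words in the generators using relations of $\Ui$. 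The key relations needed are:
\begin{itemize}
\item[(a)] $k_i$ and $K_j$ ($j\in\I_\bu$) normalize the $B$'s up to scalars;
\item[(b)] for $j\in\I_\bu$, $E_jB_i-(\pm)B_iE_j\in \Ub^{0}$-multiples of lower terms, which holds because $T_{w_\bu}(E_{\tau i})$ is a lowest-weight-type vector for $\Ub$ and $[E_j,F_i]=0$;
\item[(c)] $B_i$ for $i\in\I_\circ$ with the even/odd Serre-type relations, where $B_i^2$ for odd isotropic $i$ lies in lower filtration inside $\Ui$, expressible through $\U^{\imath0}$ and lower $B$-monomials.
\end{itemize}
Item (c) is the genuinely super input, since $F_i^2=0$ forces $B_i^2$ to be of lower degree, and we compute it directly: $B_i^2=\va_i[F_i,T_{w_\bu}(E_{\tau i})K_i^{-1}]$, which lies in $\Ub$-span of $\U^{\imath0}$ terms, by \eqref{EF}. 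With (a)--(c), an induction on height reduces every word to the claimed form, completing the spanning step.
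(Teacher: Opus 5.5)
The paper gives no argument of its own for this proposition; it only cites \cite[\S 4.6]{SWsuper}. Your proposal therefore has to stand by itself.

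Your linear-independence step is fine. Define $\mathcal F_n$ through the $\U^-$-weight in the triangular decomposition, with $\new\in\U^0$. Then left multiplication by $\U^+(\I)\U^0(\I)$ preserves $\mathcal F_n$, $B_{J,\I}-F_{J,\I}\in\mathcal F_{|J|-1}$, and freeness of $\{F_{J,\I}\}_{J\in\cJ}$ kills the coefficients of maximal length.

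The gap is in the spanning step. Straightening a word $B_{J'',\I}$ with $J''\notin\cJ$ produces $\sum_{J\in\cJ}c_JB_{J,\I}$ plus terms $B_{J_1,\I}\,R(B)\,B_{J_2,\I}$, where $R$ runs over the defining relations of $\U^-(\I)$. Your induction on length closes only if
\[
R(B)\in\sum_{|J'|<\deg R}\Ub^+(\I)\U^{\imath0}(\I)\,B_{J',\I}\qquad\text{for every such } R .
\]
The reason you give in (c) is circular. You argue that $R(B)$ lies in $\Ui(\I)$ and in lower filtration degree, hence is expressible through lower $B$-monomials. That is exactly the spanning statement for an arbitrary element of $\Ui(\I)\cap\mathcal F_{\deg R-1}$. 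Your induction only controls words of smaller length, and $R(B)$ is a combination of words of full length $\deg R$.

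You actually verify only $R=F_i^2$. There in fact $B_{i,\I}^2=0$, because $F_{i,\I}$ supercommutes with $T_{w_\bu}(E_{\tau i,\I})$ for $i\neq\tau i$, and the $\tau$-fixed node $0$ is even. The remaining relations need genuine super computations:
\begin{itemize}
\item $B_iB_{\tau i}-(-1)^{|i|}B_{\tau i}B_i$ has a nonzero lower term. For $i\neq\pm\pt$ it is a multiple of $k_i-k_i^{-1}$. For $i=\pm\pt\neq0$ it is built from $[F_{\pm\pt},T_{w_\bu}(E_{\pm\pt})]$, and you must show it lies in $\Ub^+(\I)\U^{\imath0}(\I)$.
\item When $\pt=0$, the cubic Serre relations at the even node $0$ acquire lower terms, as in the non-super case.
\item For $\pt\ge1$, the relations of $B_{\pm\pt}$ with the black neighbour $F_{\pm(\pt-1)}$ must be checked by hand, since this neighbour does not commute with the $E$-part of $B_{\pm\pt}$. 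This includes the quartic relation $S_{|\pt-1|,|\pt+1|}(F_{\pt-1},B_\pt,B_{\pt+1})$ when $|\pt,\I|=\bar1$.
\item The quartic relations at odd nodes adjacent to $0$ must also be checked.
\end{itemize}

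The straightening also presupposes a presentation of $\U^-(\I)$: by $F_i^2=0$, the supercommutation of non-adjacent $F_i$'s (not among \eqref{Urel} as listed), and the cubic and quartic Serre relations. You should quote this from Yamane (cf.\ \cite{Ya94,Ya99}).

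To repair the proof, either derive all these deformed relations with lower terms in the displayed space, or replace the straightening by an argument that directly controls the leading coefficients of an arbitrary element of $\Ui(\I)$. This input is exactly what the non-super proofs \cite{Let02,Ko14} have to supply.

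Two minor points:
\begin{itemize}
\item $\sigma$ does not map $\Ui(\I)$ into itself, so the right-module version must come from the mirrored argument, not from applying $\sigma$.
\item In (b), $T_{w_\bu}(E_{\tau i})$ is a highest weight vector for $\mathrm{ad}\,\Ub(\I)$, not a lowest weight one. This in fact gives $E_jB_{i,\I}=B_{i,\I}E_j$ exactly for $j\in\I_\bu$.
\end{itemize}
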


\subsection{Quasi \(K\)-matrix}
Fix $\I\in\DiAIII_\pt$, where $\I=(\I_\bu\cup \I_\circ,\tau)$ is of the form \cref{AIIIdiagram}. The diagram involution $\tau$ can be naturally extended to an algebra isomorphism of $\U(\I)$ such that for any $i\in\I$ (cf. \cite[(3.11)]{SWsuper})
\begin{equation}
\label{tau}
    \tau(E_{i,\I})=(-1)^{|i,\I|}E_{\tau i,\I},\quad \tau(F_{i,\I})=F_{\tau i,\I},\quad \tau(K_{i,\I})=K_{\tau i,\I},\quad \tau(\new_{{i,\I}})=\new_{i,\I}.
\end{equation}
Recall the anti-involution $\sigma$ from \cref{QGoperators}.

\begin{theorem}[\text{\cite[Theorem 6.18]{Sh25},\cite[Theorem 6.9]{SWsuper}}]
\label{Kmatrix}
There is a unique element 
\[
\up_\I =\sum_{\mu \in X^+_{\overline{0}}} \up_{\mu,\I}
\qquad (\text{with } \up_{0,\I}=1, \up_{\mu,\I} \in \U^+_\mu(\I)), 
\]
where $X^+_{\overline 0} =\big\{\beta \in X \mid \beta \in \oplus_{i\in \I} \N \alpha_i, |\beta|=0 \big\}$
such that 
\begin{align}
B_{\tau i,\I}\up_{\I} &=\up_{\I} \sigma \tau (B_{i,\I}),  \qquad \text{ for } i\in \I_\circ,
\label{intertwine}
\\
x_\I \up_{\I} &=\up_{\I} x_\I,  \qquad\qquad \text{ for } x_\I \in \U^{\imath 0}(\I) \U_\bu(\I).
\label{xup=upx}
\end{align}
Moreover, $\up_{\mu,\I}=0$ unless $\mu \in X^+_{\overline{0}}$ and $w_\bu \tau (\mu) =\mu$.
\end{theorem}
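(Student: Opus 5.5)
The plan follows the strategy of Balagović--Kolb and Bao--Wang, adapted to the super setting as in \cite{Sh25,SWsuper}. Uniqueness is handled first, then existence is reduced to solving a recursive system in the weight components $\up_{\mu,\I}$.

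For uniqueness, suppose $\up_\I$ and $\up'_\I$ both satisfy \eqref{intertwine}--\eqref{xup=upx}, and let $D=\up_\I-\up'_\I$. Write $B_{\tau i,\I}=F_{\tau i,\I}+\va_{\tau i,\I}T_{w_\bu}(E_{i,\I})K^{-1}_{\tau i,\I}$ and compare the two sides of the intertwining relation. The terms of lowest weight give the commutator identity
\[
[F_{\tau i,\I}, D_\mu] = (\text{terms determined by } D_{\nu},\ \nu<\mu).
\]
Assume $\mu$ is minimal with $D_\mu\neq 0$. Then $[F_j,D_\mu]=0$ for all $j\in\I_\circ$. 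Commutation with $\U_\bu(\I)$ forces $[F_j,D_\mu]=0$ for $j\in\I_\bu$ as well. By the super analogue of \cite[Lemma 3.4]{Ja96}, using the skew-derivations $r_i,{}_ir$ on $\U^+$ with respect to the nondegenerate super Lusztig form, this gives $D_\mu=0$ for $\mu\neq0$.

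For existence, I would first rescale by $\Psi_{\ba}$ if needed, so that $\sigma\tau(B_{i,\I})$ has the expected shape. Expanding the intertwining relation weight by weight gives, for each $\mu$ and each $i\in\I_\circ$, an equation
\[
r_i(\up_{\mu,\I})=X_i,\qquad {}_ir(\up_{\mu,\I})=Y_i .
\]
Here $X_i,Y_i$ are expressions in the already constructed components $\up_{\mu-\alpha_i-w_\bu\tau\alpha_i,\I}$, together with $T_{w_\bu}$. For $j\in\I_\bu$ we impose $r_j=0={}_jr$. Solvability of this system is governed by the super version of \cite[Prop.~6.3]{BK19}. One builds the candidate element by nondegeneracy of the bilinear pairing on $\U^+_\mu$: the functional on $\U^-_{-\mu}$ prescribed by the $X_i,Y_i$ is well defined exactly when it kills the defining relations of $\U^-$. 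These relations are the quantum Serre relations \eqref{Urel}, including the odd relations $F_i^2=0$ and the super Serre relation \eqref{superSerre}.

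Checking this compatibility is the main obstacle. It amounts to verifying that the $X_i,Y_i$ satisfy the compatibilities $r_i\,{}_jr={}_jr\,r_i$ (with super signs) and annihilate the Serre elements. I would instead follow \cite{SWsuper} and reduce to checking that the corresponding relations hold in $\Ui(\I)$. This is possible because the elements $\sigma\tau(B_i)$ generate an algebra isomorphic to $\Ui$ through the anti-involution composed with $\tau$. I would use the Iwasawa basis of \cref{Iwasawabasis} and the condition $w_\bu(\I)=\I$ from \eqref{wbI=I}, the latter ensuring that $T_{w_\bu}$ preserves the presentation. The odd nodes need separate treatment, namely the relation $B_i^2$ for isotropic $i$ with $\tau i=i$; this is exactly where super admissibility condition~(4) enters.

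Finally, the vanishing statement comes out of the recursion. Each step adds a weight $\alpha_i+w_\bu\tau\alpha_i$, which is even because $\tau$ preserves parity and $w_\bu$ involves only even roots, and which is fixed by $w_\bu\tau$. So only $\mu\in X^+_{\overline0}$ with $w_\bu\tau\mu=\mu$ can occur. Commutation with $\U^{\imath0}\U_\bu$ follows from the weight condition together with uniqueness.
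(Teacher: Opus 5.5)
The paper does not prove this statement itself; it imports it from \cite[Theorem 6.18]{Sh25} and \cite[Theorem 6.9]{SWsuper}.

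Your uniqueness argument is the standard one and is fine: take the lowest nonzero component $D_\mu$ of the difference of two solutions, read off $[F_j,D_\mu]=0$ for $j\in\I_\circ$ from \eqref{intertwine} and for $j\in\I_\bu$ from \eqref{xup=upx}, and invoke nondegeneracy of the pairing on $\U^+$. Your bookkeeping for the support condition is also fine, since each increment $\alpha_i+w_\bu\alpha_{\tau i}$ is even and fixed by $w_\bu\tau$.

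The gap is in existence, which is the real content of the theorem. You correctly locate the difficulty in the solvability of the recursive system for the $\up_{\mu,\I}$, but the reduction you propose does not work.

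That $\sigma\tau$ is an anti-automorphism of $\U(\I)$ only says that $B_i\mapsto\sigma\tau(B_i)$ identifies $\Ui(\I)^{\mathrm{op}}$ with the algebra generated by the $\sigma\tau(B_i)$. This is automatic and is not what is needed. Since $\mathrm{Ad}(\up_\I^{-1})$ is an algebra automorphism (of a completion of $\U(\I)$), \eqref{intertwine}--\eqref{xup=upx} force the following: the assignment $B_{\tau i}\mapsto\sigma\tau(B_i)$, $x\mapsto x$ for $x\in\U^{\imath0}(\I)\U_\bu(\I)$, must extend to an algebra homomorphism $\Ui(\I)\to\U(\I)$. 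Equivalently, $B_i\mapsto B_{\tau i}$ together with $x\mapsto\sigma\tau(x)$ for $x\in\U^{\imath0}(\I)\U_\bu(\I)$ must define an anti-automorphism of $\Ui(\I)$, namely $\mathrm{Ad}(\up_\I)\circ\sigma\tau$.

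Compared with what $\sigma\tau$ gives for free, this requires the index swap $i\leftrightarrow\tau i$ and the order reversal to be simultaneously compatible with all relations of $\Ui(\I)$. It is not the restriction of any map on $\U(\I)$ available a priori. It is a consequence of the theorem, not an input you can use.

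Checking it relation by relation would require a presentation of $\Ui(\I)$ by generators and relations. As the paper itself stresses (Remark~\ref{rmk:par} and the remark following Theorem~\ref{thm:formula2}), no such presentation is known for iquantum supergroups. So your existence step is circular as written. To close it, you would need one of two things:
\begin{itemize}
\item verify the Balagovi\'c--Kolb-type compatibility conditions on the prescribed $r_i(\up_{\mu,\I})$, ${}_ir(\up_{\mu,\I})$ directly inside $\U(\I)$ from the inductive hypothesis, using the super Serre relations \eqref{Urel} and the admissibility and balanced-parameter conditions; or
\item give some other argument that does not presuppose relations in $\Ui(\I)$.
\end{itemize}

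A side remark: the odd case you single out, $B_i^2$ at a $\tau$-fixed isotropic node, never occurs in type sAIII. For $\pt\ge 1$ every white node is moved by $\tau$, and for $\pt=0$ condition (4) forces node $0$ to be even. The odd nodes that need care are those with $\tau i\neq i$.
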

For $i\in \I_\circ$, the rank-one quasi $K$-matrix
\[
\up_{i,\I} =1+\sum_{m\geq 1} \up_{i,\I}[m],\qquad \up_{i,\I}[m]\in \U^+_{m(\alpha_{i,\I}+w_\bu\alpha_{\tau i,\I})}(\I)\subset \U^+_{\bIi}(\I) 
\]
is defined to be the quasi $K$-matrix associated to the rank-one super Satake subdiagram $(\I_\bu\cup \{i,\tau i\}, \tau)$. By definition, we immediately have $\up_{i,\I} =\up_{\tau i,\I}$. 


\begin{lemma}\label{lem:iparity}
Let $\I,\J\in \DiAIII_\pt$ such that $\J=\bs_{i}(\I)$. Then we have $(\alpha_{i,\I},\alpha_{i,\I})=(\alpha_{i,\J},\alpha_{i,\J})$ and $(\alpha_{i,\I},w_\bu\alpha_{-i,\I})=(\alpha_{i,\J},w_\bu \alpha_{-i,\J})$.
\end{lemma}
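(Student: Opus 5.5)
The plan is to compute both bilinear form values explicitly in the standard basis $\{\epsilon_a\}$ of $\h^*$, where $(\epsilon_a,\epsilon_b)=\delta_{ab}(-1)^{|\epsilon_a|}$ up to the global sign fixed by the supertrace convention. I then compare the results for $\I$ and $\J=\bs_i(\I)$ using the explicit effect of $\bs_i$ on the parity sequence $P(\I)$. Recall that $\alpha_{i,\I}=\epsilon_{i-\half}-\epsilon_{i+\half}$, with the parities of the $\epsilon$'s read off from $P(\I)$. This gives
\[
(\alpha_{i,\I},\alpha_{i,\I})=(-1)^{|\epsilon_{i-1/2}|}+(-1)^{|\epsilon_{i+1/2}|},
\]
again up to the global sign. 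So the first identity amounts to showing that the multiset of parities $\{|\epsilon_{i-1/2}|,|\epsilon_{i+1/2}|\}$ is the same for $\I$ and $\J$. For the second quantity, $w_\bu$ acts on the indices $-\pt+\half,\dots,\pt-\half$ by the reversal permutation. It fixes every $\epsilon_a$ with $|a|>\pt$, and all the $\epsilon_a$ with $|a|<\pt$ have a common parity. So $w_\bu\alpha_{-i,\I}$ is again a difference of two $\epsilon$'s, and $(\alpha_{i,\I},w_\bu\alpha_{-i,\I})$ is determined by the parities of at most three weights together with possible index coincidences.

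I then split into two cases according to \eqref{ridef}. If $i=\pm\pt$, then \cref{bsiI} gives $\J=\I$, and both identities are trivial. If $i\neq\pm\pt$, then $\bs_i=s_is_{-i}$, and I take $i>\pt$ by the symmetry $\bs_i=\bs_{\tau i}$. The indices $i$ and $-i$ are not adjacent, and $\tau$ preserves parity, so $s_i$ and $s_{-i}$ act independently. On the weights, $s_i$ swaps $\epsilon_{i-1/2}$ and $\epsilon_{i+1/2}$ in the sequence $P(\I)$ (both for even and odd reflections), and $s_{-i}$ does the mirror swap. Hence $\alpha_{i,\J}$ is the negative of $\alpha_{i,\I}$ as a vector in $\h^*$ after the relabeling. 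Equivalently, the parities attached to positions $i\pm\half$ are interchanged, which leaves the first quantity unchanged because it is symmetric in the two parities.

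For the second identity with $i\neq\pm\pt$, the index sets $\{i\pm\half\}$ and $\{-i\pm\half\}$ are disjoint. They are also both either outside the black block, or (for $i$ adjacent, i.e.\ $|i|=\pt+1$) they touch it only through $w_\bu$-fixed points. In fact, for $|i|>\pt$ we have $w_\bu\alpha_{-i,\I}=\alpha_{-i,\I}$, since $w_\bu$ only moves indices with $|a|<\pt$. Then $(\alpha_{i,\I},\alpha_{-i,\I})=0$ because the supports are disjoint. The same holds in $\J$, so both sides vanish.

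The main obstacle is bookkeeping rather than substance. First, I must make sure that the labeling convention after applying $s_is_{-i}$ really identifies $\alpha_{i,\J}$ with the simple root at the same position. Second, the sign ambiguity when $m=n$ (the $\{d_i\}$ versus $\{-d_i\}$ choice) must be consistent between $\I$ and $\J$. This follows because an odd reflection transports the bilinear form itself, so the global sign is unchanged. Alternatively, one can avoid coordinates and use invariance of $(\cdot,\cdot)$ under the groupoid: $\bs_i$ sends $\alpha_{i,\I}\mapsto-\alpha_{i,\J}$ and $\alpha_{-i,\I}\mapsto-\alpha_{-i,\J}$, and it commutes with $w_\bu$. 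That gives both identities at once.
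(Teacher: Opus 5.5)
Your proof is correct, but it is organized differently from the paper's.

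\textbf{How the two arguments differ.} The paper handles the first identity by parity. If $|i,\I|=\overline{0}$, then $\J=\I$, since even reflections do not change the diagram. If $|i,\I|=\overline{1}$, then $\alpha_{i,\I}$ and $\alpha_{i,\J}$ are both isotropic, so both sides are $0$. For the second identity, the paper only asserts a reduction to $i=\pt$, followed by an unspecified ``direct computation''.

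You split by position instead of parity:
\begin{itemize}
\item For $i=\pm\pt$, you invoke \cref{bsiI} to get $\J=\I$, which settles both identities at once.
\item For $i\neq\pm\pt$, you work in $\epsilon$-coordinates. There $\bs_i=s_is_{-i}$ just interchanges the entries at positions $i\pm\half$ and at $-i\pm\half$, so $\alpha_{\pm i,\J}=-\alpha_{\pm i,\I}$ as vectors in $\h^*$. This gives the first identity. For the second, $w_\bu\alpha_{-i,\I}=\alpha_{-i,\I}$ has support disjoint from that of $\alpha_{i,\I}$, so both sides vanish.
\end{itemize}

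\textbf{What each route gives.} The paper's parity argument for the first identity is shorter and avoids coordinates. Your version makes explicit why the second identity reduces to $i=\pt$: it is $0=0$ otherwise. It also shows the remaining case needs no computation at all, because $\bs_\pt(\I)=\I$.

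\textbf{A caveat on your closing remark.} Your ``alternatively'' remark is accurate only for $i\neq\pm\pt$. For $i=\pm\pt$ with $\pt\geq 1$, $\bs_\pt$ interchanges the positions $\pm(\pt+\half)$. One then gets $\alpha_{\pt,\J}=-w_\bu\alpha_{-\pt,\I}$ as vectors in $\h^*$, not $-\alpha_{\pt,\I}$. So the uniform claim that $\bs_i$ sends $\alpha_{i,\I}\mapsto-\alpha_{i,\J}$ fails there. What actually drives the shortcut for $i\neq\pm\pt$ is only that $\alpha_{\pm i,\J}=-\alpha_{\pm i,\I}$ and that $w_\bu$ is unchanged. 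Since your main argument treats $i=\pm\pt$ separately, this does not affect its correctness.
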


\begin{proof}
Note that $|i,\I|=|i,\J|$. If $|i,\I|=\overline{0}$, then we have $\J=\I$. If $|i,\I|=\overline{1}$, then we have $(\alpha_{i,\I},\alpha_{i,\I})=(\alpha_{i,\J},\alpha_{i,\J})=0$. For the second identity, it suffices to show it for $i=\pt$. This case can be checked by a direct computation.
\end{proof}

For later use, we also record the following results.
\begin{lemma}[\textnormal{cf. \cite[Proposition~4.13]{BW18QSP}}]
\label{lem:blackfix}
Let $j\in \I_\bu$. Then
\[
T_j(\up)=\up,
\qquad
T_j(\up_{i,\I})=\up_{i,\I}
\quad\text{for all } i\in \I_\bu.
\]
\end{lemma}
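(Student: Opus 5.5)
The plan is to use the uniqueness characterization of the quasi $K$-matrix in \cref{Kmatrix}: $\up_\I$ is the unique element of the completed $\U^+(\I)$ with constant term $1$ satisfying \eqref{intertwine} and \eqref{xup=upx}. I will show that $T_j(\up)$ also has these properties, and therefore equals $\up$. The statement for $\up_{i,\I}$ then follows by applying the same argument inside the rank-one subdiagram $(\I_{\bullet,i},\tau)$. That subdiagram contains $\I_\bu$, so $j\in\I_\bu$ is still a black node there.

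First, since $j\in\I_\bu\subset\Ieven$, the symmetry $T_j$ is an even reflection. Thus $s_j(\I)=\I$ and $T_j$ is an automorphism of $\U(\I)$. Next I will check that $T_j(\up)$ lies in the completion of $\U^+$ with constant term $1$. For this I use the standard fact that $\up_\mu$ is $W_\bu$-invariant in the appropriate sense; concretely, I will argue as in \cite[Proposition 4.13]{BW18QSP}. Write $\up=\sum_\mu\up_\mu$. By \eqref{xup=upx}, $\up$ commutes with $E_j,F_j,K_j$. Hence each $\up_\mu$ commutes with $F_j$, which means that $r_j(\up_\mu)=0={}_j r(\up_\mu)$. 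In particular $\up_\mu\in \U^+[j]=\U^+\cap T_j(\U^+)$, and so $T_j(\up_\mu)\in\U^+_{s_j\mu}$. Moreover, $\up$ commutes with the $\U_q(\mathfrak{sl}_2)$-subalgebra attached to $j$. By the usual description of $T_j$ on a locally finite module, $T_j$ acts on a vector $v$ of weight $\lambda$ that is killed by $E_j,F_j$ as multiplication by a sign and power of $q$. Here $\langle h_j,\mu\rangle=0$, because $\up_\mu$ commutes with $K_j$ and is nonzero only when $w_\bu\tau\mu=\mu$, together with weight considerations. So I expect $T_j(\up_\mu)=\up_\mu$ directly, possibly up to checking that the sign factor is trivial.

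The more robust route, which I would actually write out, is the uniqueness argument. Apply $T_j$ to \eqref{intertwine} and \eqref{xup=upx}. Using $T_j\tau=\tau T_{\tau j}$, the commutation $\sigma T_j\sigma=T_j^{-1}$ from \eqref{eq:sTs}, and the fact that $T_j$ preserves $\U^{\imath0}\Ub$, we need to see that $T_j(B_i)$ equals $B_i$ up to an action of $\Ub$. This uses $T_jT_{w_\bu}=T_{w_\bu}T_{\tau_\bu j}$ together with $T_j(F_i)$ being built from $\Ub$ and $F_i$. This shows that $T_j(\up)$ satisfies the same intertwining relations, possibly with respect to a twisted family of relations that can be rewritten back to the original ones.

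The main obstacle is the sign bookkeeping. In the super setting, the $(-1)^{|i||j|}$ twists in $T_j(E_i)$ and in $\tau(E_i)$ must match, and $\sigma$ introduces $(-1)^{|i|}$ on $K_i$. Since $j$ is even, these signs should reduce to the non-super ones. I would verify this in the two adjacency cases $j\sim i$ and $j\nsim i$, where $i$ ranges over $\I_\circ$ with $i=\pm\pt$.
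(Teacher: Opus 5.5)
The paper gives no argument of its own here. It only asserts that the non-super proof of \cite[Proposition~4.13]{BW18QSP} carries over, presumably because $j\in\I_\bu\subset\Ieven$, so that $s_j(\I)=\I$ and $T_j$ carries no super signs. Your uniqueness argument is a genuinely different route, and it is self-contained within this paper. It uses three ingredients:
\begin{itemize}
\item the uniqueness statement in \cref{Kmatrix};
\item the stability of $\Ui$ under $T_j^{\pm1}$ \cite[Theorem~5.2]{SWsuper};
\item your observation that $r_j(\up_\mu)={}_jr(\up_\mu)=0$ and $(\alpha_j,\mu)=0$. By the even-node version of Lusztig's characterization of $\{x\in\U^+ : T_j^{\pm1}(x)\in\U^+\}$, this gives $T_j(\up_0)=1$ and $T_j(\up_\mu)\in\U^+_\mu$.
\end{itemize}
What this route buys is that the one genuinely super feature, the sign in $\tau(E_i)$ for odd $i=\pm\pt$, never has to be tracked, since everything is phrased through $\sigma\tau$ itself. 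Your first heuristic should be dropped or reformulated. The element $\up_\mu$ lies in the algebra, not in a module. To use Lusztig's formula on a weight-zero vector killed by $E_j,F_j$, you would need $T_j(u)=T_j\circ u\circ T_j^{-1}$ on integrable modules together with a faithfulness statement. That is a different proof, not a shortcut.

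The decisive step is not yet an argument, and this is the gap. Applying $T_j$ to \eqref{intertwine} gives a relation involving $T_j(B_{\tau i})$ and $T_{\tau j}^{-1}(B_i)$. Stripping off the $\Ub$-factors does not formally return \eqref{intertwine}. Your ``twisted family of relations that can be rewritten back'' is exactly what is missing, and the issue is structural rather than sign bookkeeping.

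Here is one way to close it.
\begin{enumerate}
\item Put $\Phi(y):=\up\,\sigma\tau(y)\,\up^{-1}$. By \eqref{intertwine}, \eqref{xup=upx} and anti-multiplicativity, $\Phi(\Ui)\subseteq\Ui$.
\item From $T_j\sigma\tau=\sigma\tau\, T_{\tau j}^{-1}$ you get $T_j(\up)\,\sigma\tau(y)\,T_j(\up)^{-1}=T_j\Phi T_{\tau j}(y)$ for $y\in\Ui$. So it suffices to show that $Y:=T_j\Phi T_{\tau j}(B_i)-B_{\tau i}$ vanishes.
\item On the one hand $Y\in\Ui$. On the other hand, writing $Y=T_j(\up)\,\sigma\tau(B_i)\,T_j(\up)^{-1}-B_{\tau i}$ shows $Y\in\U^+\U^0$. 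Its weight components have the form $\nu-\alpha_{\tau i}$ with $\nu\neq 0$, or $w_\bu\alpha_i+\nu$, where $\nu$ is a sum of weights of $\up$ and hence $w_\bu\tau$-fixed.
\item The leading-term argument with \cref{Iwasawabasis} gives $\Ui\cap\U^+\U^0=\Ub^+\U^{\imath0}$, whose weights lie in $\N\I_\bu$.
\item Compare coefficients of $\alpha_i$, using $w_\bu\tau\nu=\nu$ and $i\neq\tau i$ (automatic when $\I_\bu\neq\emptyset$). No weight of $Y$ lies in $\N\I_\bu$, so $Y=0$.
\end{enumerate}
Hence $T_j(\up)$ satisfies both \eqref{intertwine} and \eqref{xup=upx}, and uniqueness gives $T_j(\up)=\up$. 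The same argument inside $(\I_{\bullet,i},\tau)$ handles $\up_{i,\I}$ for $i\in\I_\circ$, which is the correct reading of the second claim, as you assumed. No case-by-case check of the adjacency cases is needed.
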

Although \cite[Proposition~4.13]{BW18QSP} is stated for purely even setting, the same argument applies in the present setting.

\begin{proposition}[\textnormal{\cite[Theorem 5.2]{SWsuper}}]
For any $j\in \I_\bu$, braid group symmetries $T_j,T_j'$ restrict to algebra automorphisms on $\Ui$.
\end{proposition}

\subsection{Some intertwining relations}
\label{sec:intertwrel}
Let $\I\in \DiAIII_\pt$ and $i\in \I_\circ$. Set $\J=\bs_{i}(\I)$. Following \cite[Theorem 6.1]{WZ23}, we define
\begin{align}\label{def:up''}
\up''_{i,\J} := T_{\bs_i} \big( \up_{i,\I}^{-1} \big).
\end{align}
The element \( \up''_{i,\J} \) will play a central role in establishing the relative braid group symmetries in the remaining part of this section.

\begin{lemma}\label{lem:rktwo}
Let $\I,\J\in \DiAIII_\pt$ such that $\J=\bs_{i}(\I)$.
For any $j\in \I_\circ$ such that $j\neq i,\tau i$, we have
\begin{align}\label{eq:rktwo1}
F_{j,\I} \up_{i,\I}&=\up_{i,\I}F_{j,\I},
\\\label{eq:rktwo2}
T_{w_\bu} (E_{\tau j,\J}) K_{j,\J}^{-1}\up''_{i,\J}&=\up''_{i,\J}T_{w_\bu} (E_{\tau j,\J}) K_{j,\J}^{-1}.
\end{align}
\end{lemma}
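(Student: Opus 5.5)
Both identities come down to weight and commutation bookkeeping: each rank-one quasi $K$-matrix lives in a subalgebra generated by $E$'s indexed by $\bIi$, and we only need that these commute (in the appropriate super sense) with the relevant root vectors for $j$.

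\emph{First identity \eqref{eq:rktwo1}.} By definition $\up_{i,\I}$ lies in the completion of $\U^+_{\bIi}(\I)$, the subalgebra generated by $E_{k,\I}$ with $k\in\bIi=\I_\bu\cup\{i,\tau i\}$. Since $j\in\I_\circ\setminus\{i,\tau i\}$, we have $j\notin\bIi$. Relation \eqref{EF} then gives $E_{k,\I}F_{j,\I}=(-1)^{|k||j|}F_{j,\I}E_{k,\I}$ for every such $k$. Hence for a homogeneous element $u\in\U^+_\mu(\I)$ with $\mu\in\Z\bIi$ we get $F_{j,\I}u=(-1)^{|\mu||j|}uF_{j,\I}$. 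By \cref{Kmatrix} applied to the rank-one subdiagram, $\up_{i,\I}$ is supported on weights $\mu$ with $|\mu|=\overline 0$. So every component commutes with $F_{j,\I}$ without sign, which proves \eqref{eq:rktwo1}.

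\emph{Second identity \eqref{eq:rktwo2}, reduction.} I would pull everything back through $T_{\bs_i}$. We have $\up''_{i,\J}=T_{\bs_i}(\up^{-1}_{i,\I})$, and $\up_{i,\I}^{-1}$ again lies in the completed even part of $\U^+_{\bIi}(\I)$. It therefore suffices to identify $T_{\bs_i}^{-1}\big(T_{w_\bu}(E_{\tau j,\J})K_{j,\J}^{-1}\big)$ as an element that commutes with $\U^+_{\bIi}(\I)$ up to parity signs. First observe that $\bs_i$ restricted to $\bIi$ is the longest element $w_{\bu,i}$ of the (groupoid) Weyl group of $\bIi$ composed with $w_\bu$: it is $w_\bu w_{\bu,i}$ (or $w_{\bu,i}w_\bu$), and this combination is trivial on the complement. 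Next, $T_{w_\bu}(E_{\tau j})$ is a root vector of weight $w_\bu\alpha_{\tau j}$. For $j$ not adjacent to $\bIi$, $T_{\bs_i}$ fixes $E_{\tau j}$, $K_j$ and the elements $T_{w_\bu}(E_{\tau j})$, and the claim reduces to the commutation of $E_{\tau j}$-type elements with $\U^+_{\bIi}$; the weight $K$-twist appears only with $\up$'s $w_\bu\tau$-invariant weights $\mu$.

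\emph{Main obstacle.} The real difficulty is the case where $j$ is adjacent to $\bIi$. For $i\ne\pm\pt$ this means $j=i\pm1$; for $i=\pm\pt$ it means $j=\pm(\pt+1)$. In this case $T_{\bs_i}$ does not fix the $j$-generators. Here I would use the Levi-type argument of \cite[Lemma 6.2/Theorem 6.1]{WZ23}. We write $T_{w_\bu}(E_{\tau j,\J})K^{-1}_{j,\J}$ as an element of $\U^-$-type through $T_{\bs_i}$ and the relation $T_{w_{\bu,i}}(E_k)\sim F_{k'}K_{k'}$, using the formulas of \cref{braidoperator}. We then check that it has weight orthogonal to every $\mu$ in the support of $\up''_{i,\J}$ (those are $w_\bu\tau$-fixed in $\Z\bIi$). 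The sign $(-1)^{|\mu||\cdot|}$ is trivial since $|\mu|=\overline0$, and the $K^{-1}_{j}$-factor produces $q^{(\mu,\alpha_j+w_\bu\alpha_{\tau j})}$. The crucial step is to verify that this pairing vanishes, together with the fact that the underlying $E$-part is a lowest-type vector with respect to $\bIi$ after applying $T^{-1}_{\bs_i}$. That vanishing is a case check: odd versus even $i$, and the groupoid shift of parities from \cref{paritylemma} and \cref{lem:iparity}. I expect this check to be the delicate part.
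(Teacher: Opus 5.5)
Your proof of \eqref{eq:rktwo1} is correct and is the paper's argument. $\up_{i,\I}$ lies in the completion of $\U^+_{\bIi}(\I)$, $F_{j,\I}$ supercommutes with each $E_{k,\I}$ for $k\in\bIi$, and the sign $(-1)^{|\mu||j|}$ disappears because every weight of $\up_{i,\I}$ is even.

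For \eqref{eq:rktwo2} there is a genuine gap.

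\emph{The stated reduction is false.} The element $T_{\bs_i}^{-1}\big(T_{w_\bu}(E_{\tau j,\J})K_{j,\J}^{-1}\big)$ does not commute with $\U^+_{\bIi}(\I)$ up to parity signs. Take $i\neq\pm\pt$, $j\sim i$, $j\nsim\tau i$, with $j,\tau j$ not adjacent to $\I_\bu$. Then the pulled-back element is, up to a scalar, a $q$-commutator of $E_{\tau j,\I}$ and $E_{\tau i,\I}$ times $\pm K_{i,\I}^{-1}K_{j,\I}^{-1}$. It therefore $q$-commutes with $E_{i,\I}$ with the factor $q^{-(\alpha_{i,\I},\alpha_{i,\I}+\alpha_{j,\I})}\neq 1$. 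Such factors cancel only on balanced weights.

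\emph{The adjacent case is not proved, and the proposed pairing is the wrong one.} You leave the adjacent case to an unproved ``Levi-type'' lowest-weight claim. The pairing you plan to show vanishes does not vanish. Since $w_\bu\tau\mu=\mu$ and the form is $w_\bu$- and $\tau$-invariant, $(\mu,\alpha_j+w_\bu\alpha_{\tau j})=2(\mu,\alpha_j)$, which is nonzero for $\mu\neq 0$ when $j\sim i$. The two contributions, from $K_{j,\J}^{-1}$ passing the $F$'s and from the $E$-part passing the $K$'s, have opposite signs. The quantity that vanishes is $(\mu,\alpha_j-w_\bu\alpha_{\tau j})$.

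\emph{The missing idea.} It is $\up''_{i,\J}$, not the element commuted with it, that should be put in lowering form; then no pull-back is needed.
\begin{itemize}
\item By \cref{lem:blackfix} and the length-additive identity $\bs_iw_\bu=w_{\bu,i}$ (the longest element for $\bIi$), one has $\up''_{i,\J}=T_{w_{\bu,i}}(\up_{i,\I}^{-1})$.
\item $T_{w_{\bu,i}}$ sends $E_{k,\I}$, $k\in\bIi$, to $\pm F_{k',\J}K_{k',\J}$ by the rank-one analogue of \eqref{Twi}. For $i\neq\pm\pt$ only $k\in\{i,\tau i\}$ occur, because $w_\bu\alpha_{\tau i}=\alpha_{\tau i}$, and this is just \cref{braidoperator}.
\item Hence $\up''_{i,\J}$ is a completed sum of components of even weight $-\nu$, with $\nu\in\N(\alpha_{i,\J}+w_\bu\alpha_{\tau i,\J})$, lying in the algebra generated by these $F_{k,\J}K_{k,\J}$.
\item $T_{w_\bu}(E_{\tau j,\J})$ involves only $E_{\tau j,\J}$ and $E_{k,\J}$ with $k\in\I_\bu$. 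When $i=\pm\pt$ it equals $E_{\tau j,\J}$, since $\tau j\neq\pm\pt$ is not adjacent to $\I_\bu$.
\item So in every case its $E$-part supercommutes with every $F$ occurring in $\up''_{i,\J}$. Moving $T_{w_\bu}(E_{\tau j,\J})K_{j,\J}^{-1}$ across a component of weight $-\nu$ then produces only the scalar $(-1)^{|\nu||\tau j|}q^{(\alpha_{j,\J}-w_\bu\alpha_{\tau j,\J},\,\nu)}=1$.
\end{itemize}
Adjacency of $j$ to $\bIi$ plays no role in this argument. Its only super input is that $\nu$ is even, in line with the paper's remark that the non-super proof goes through once the weights are known to be even.
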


\begin{proof}
The proof is the same as its non-super counterpart in \cite[Proof of Lemma 5.1]{WZ23}.
Note that by definition the weight of any component of $\up_{i,\I}$ is of parity $\overline{0}$.
\end{proof}

\begin{lemma}
\label{lem:rkzero}
Let $\I,\J\in \DiAIII_\pt$ such that $\J=\bs_{i}(\I)$. For any $x_\I \in \U^{\imath0}(\I)\U_\bu(\I)$, the elements $T'_{\bs_i}(x_\I)$, $T_{\bs_i}(x_\I)$ commute with $\up_{i,\J}$ and $\up_{i,\J}''$.
\end{lemma}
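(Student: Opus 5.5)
Here is the plan; it follows the non-super argument of \cite[Lemma~5.2]{WZ23}. By \eqref{xup=upx} applied to the rank-one super Satake subdiagram $(\I_{\bu,i},\tau)$ of $\J$, the rank-one quasi $K$-matrix $\up_{i,\J}$ commutes with $\U^{\imath0}(\J)\U_\bu(\J)$. This still holds after adjoining $K_{j,\J}^{\pm1}$ for $j\notin \I_{\bu,i}$ with $\alpha_{j,\J}$ orthogonal to the weights of $\up_{i,\J}$, and the elements $E_{j,\J},F_{j,\J}$ with $j\notin\I_{\bu,i}\cup\{\text{neighbours}\}$ commute with $\up_{i,\J}$ trivially. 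The same holds for $\up''_{i,\J}=T_{\bs_i}(\up_{i,\I}^{-1})$: since $\up_{i,\I}$ commutes with $\U^{\imath0}(\I)\U_\bu(\I)$ by \eqref{xup=upx} (rank-one version), applying $T_{\bs_i}$ shows $\up''_{i,\J}$ commutes with $T_{\bs_i}(\U^{\imath0}(\I)\U_\bu(\I))$. So the statement reduces to identifying the images $T_{\bs_i}(x_\I)$ and $T'_{\bs_i}(x_\I)$ on generators, and checking that they lie in $\U^{\imath0}(\J)\U_\bu(\J)$, up to factors that commute with both quasi $K$-matrices for weight reasons.

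\textbf{Black generators.} Take $x_\I\in\{E_{j,\I},F_{j,\I},K_{j,\I}^{\pm1}\}$ with $j\in\I_\bu$, or $x_\I=\new_\I$. Here $\bs_i$ restricted to the rank-one subdiagram $\I_{\bu,i}$ equals $w_{\bu,i}w_\bu$, where $w_{\bu,i}$ is the longest element for $\I_{\bu,i}$ (in the groupoid sense; for $i\neq\pm\pt$ it is $s_is_{-i}$, which commutes with $w_\bu$ as the nodes are disconnected or handled separately). Using the braid relations of \cref{braidoperator} and the analogue of $T_{w_\bu}$ permuting black generators via $\tau_\bu$, I would show that $T_{\bs_i}$ and $T'_{\bs_i}$ map $\U_\bu(\I)$ onto $\U_\bu(\J)$, up to signs and powers of $K$'s in $\U^0_\bu$. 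Concretely:
\begin{itemize}
\item if $i\neq\pm\pt$ and $j\in\I_\bu$, then $s_{\pm i}$ fix $E_{j}$ unless $j$ is adjacent to $\pm i$, which cannot happen since $i\ne\pm\pt$;
\item if $i=\pm\pt$, then $T_{\bs_i}=T_{w_{\bu,i}}T_{w_\bu}^{-1}$ and each factor sends black generators to black generators (up to $K$-factors), as in the non-super case.
\end{itemize}
The element $\new_\I$ maps to $\new_\J$, which commutes with both quasi $K$-matrices because their weights have even parity (\cref{Kmatrix}).

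\textbf{Cartan part.} For $x_\I=k_{j,\I}$ or $K_{j,\I}$ with $j\in\I_\bu$, the image is a product of $K_{l,\J}^{\pm1}$ (with signs) whose weight pairing with any component weight $m(\alpha_{i,\J}+w_\bu\alpha_{\tau i,\J})$ vanishes. The reason is that $x_\I$ pairs to zero with the $\tau$-invariant-type weights, as is visible from \eqref{xup=upx} for $\I$, and $T_{\bs_i}$ carries the weight $\alpha_{i,\I}+w_\bu\alpha_{\tau i,\I}$ to minus the corresponding weight for $\J$, using \cref{lem:iparity}. Hence the image commutes with $\up_{i,\J}$ and $\up''_{i,\J}$.

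\textbf{Main obstacle.} The hardest step is the case $i=\pm\pt$ with $|i,\I|=\overline1$, where $\bs_i$ involves odd reflections and the images carry signs $(-1)^{|i||j|}$ from \cref{braidoperator}. There I would check by hand, on the rank-one diagram $\gl(2|2\pt)$, that $T_{\bs_i}$ preserves $\U_\bu$ and the relevant Cartan part. \cref{lem:blackfix} handles commutation of $T_j$ with the quasi $K$-matrices, and \cref{bsiI} gives $\J=\I$ in this case, which simplifies the bookkeeping.
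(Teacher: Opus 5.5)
Your proposal is correct and takes essentially the same route as the paper. The paper's proof consists of the observation that $T_{\bs_i}$ carries $\U^{\imath0}(\I)\U_\bu(\I)$ into $\U^{\imath0}(\J)\U_\bu(\J)$, together with the commutation property \eqref{xup=upx} of \cref{Kmatrix} (transported by $T_{\bs_i}$ for $\up''_{i,\J}=T_{\bs_i}(\up_{i,\I}^{-1})$); your generator-by-generator check fills in what the paper leaves as ``note that''. Two points should be stated explicitly: the map is onto (run the same check for $T_{\bs_i,\I}^{-1}=T'_{\bs_i,\J}$), which is what lets $T'_{\bs_i}(x_\I)$ commute with $\up''_{i,\J}$; and the Cartan elements whose weights are orthogonal to $m(\alpha_{i,\J}+w_\bu\alpha_{\tau i,\J})$ are the $k_{l,\J}$, $l\in\J_\circ$, not the individual $K_{l,\J}$.
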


\begin{proof}
For any $x_\I \in \U^{\imath0}(\I)\U_\bu(\I)$, note that $T_{\bs_i}(x_\I)\in \U^{\imath0}(\J)\U_\bu(\J)$. Then the desired statement follows by \cref{Kmatrix}.
\end{proof}

Set $\cK_{i,\J}:=K_{i,\J} T_{w_\bu}(K_{-i,\J}^{-1})$ for $i\in \J_\circ$. Note that $\cK_{i,\J}\in \Ui(\J)$. For $i\in \J_\circ$, set $\tau_i=1$ if $i\neq \pm \pt$; $\tau_i=\tau$ if $i=\pm \pt$.

\begin{proposition}
\label{prop:qsrkone}
Let $\I,\J\in \DiAIII_\pt$ such that $\J=\bs_{i}(\I)$. 
We have 
\begin{align}
\label{eq:rkone2}
\Big( (-1)^{|i,\J|}o_{i,\J} q^{(\alpha_{i,\J},w_\bu\alpha_{-i,\J}-\alpha_{i,\J})/2} T_{w_\bu}^{-2}(B_{-\tau_i(i),\J}) \cK_{-\tau_i(i),\J} \Big)\up''_{i,\J}&=\up''_{i,\J} T_{\bs_{i},\I}(B_{i,\I}),
\\\label{eq:rkone3}
\Big( (-1)^{|i,\J|}o_{i,\J} q^{(\alpha_{i,\J},w_\bu\alpha_{-i,\J}-\alpha_{i,\J})/2} T_{w_\bu}^{-2}(B_{\tau_i(i),\J}) \cK_{\tau_i(i),\J} \Big)\up''_{i,\J}&=\up''_{i,\J} T_{\bs_{i},\I}(B_{-i,\I}).
\end{align}
\end{proposition}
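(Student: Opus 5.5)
The identity lives entirely inside the rank-one subdiagram $\bIi=\I_\bu\cup\{i,\tau i\}$, so the plan is to reduce to that subdiagram and there transport the defining intertwining property of $\up_{i,\I}$ (Theorem~\ref{Kmatrix}) through $T_{\bs_i}$.

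\emph{Reduction to rank one.} All ingredients ($B_{\pm i}$, $\cK_{\pm i}$, $T_{w_\bu}$, $T_{\bs_i}$, $\up_{i,\I}$) lie in the subalgebra generated by $\U_\bu$ and the generators indexed by $\pm i$; the $T_{\bs_i}$ are products of $T_j$ with $j\in\bIi$. Hence it suffices to prove the statement for the super Satake diagram $(\bIi,\tau)$, one of the four types listed in \S3.2: $\gl(2\pt+2)$, $\gl(2|2\pt)$, $\gl(2)\times\gl(2\pt)\times\gl(2)$, $\gl(1|1)\times\gl(2\pt)\times\gl(1|1)$. In the last two types $\I_\bu$ is disconnected from $\{i,-i\}$, and $\up_{i,\I}$ commutes with $\U_\bu$. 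The problem therefore splits into the $\gl(2)$ or $\gl(1|1)$ factors, with $\bs_i=s_is_{-i}$ and $\tau_i=1$.

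\emph{Main argument.} Apply $T_{\bs_i}$ to the intertwining relation \eqref{intertwine} for the rank-one quasi $K$-matrix,
\[
B_{\tau i,\I}\up_{i,\I}=\up_{i,\I}\,\sigma\tau(B_{i,\I}).
\]
Then multiply on both sides by $\up''_{i,\J}=T_{\bs_i}(\up_{i,\I}^{-1})$. This gives
\[
\up''_{i,\J}\,T_{\bs_i}(B_{\tau i,\I})=T_{\bs_i}\sigma\tau(B_{i,\I})\,\up''_{i,\J}.
\]
It remains to identify $T_{\bs_i}\sigma\tau(B_{i,\I})$ with the bracketed element in \eqref{eq:rkone2}, using $\sigma T_w\sigma=T_{w^{-1}}'$ and the relations $\bs_i=w_\bu w_{\bIi}$ and $T_{w_\bu}^{-2}$. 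For this I would write
\[
\sigma\tau(B_{i,\I})=F_{-i}+\va_{i}^\dm(\pm)K_{-i}^{-1}\,\sigma T_{w_\bu}(E_{i})
\]
and compute $T_{\bs_i}$ on each term. The $F$-term maps to a multiple of $T_{w_\bu}^{-2}$ applied to an $E$-type element times $K$'s, and the $E$-term maps to an $F$-type element. Recombining the pieces should produce
\[
T_{w_\bu}^{-2}(B_{-\tau_i(i),\J})\,\cK_{-\tau_i(i),\J}
\]
up to the scalar $(-1)^{|i,\J|}o_{i,\J}q^{(\alpha_{i,\J},w_\bu\alpha_{-i,\J}-\alpha_{i,\J})/2}$.

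\emph{Bookkeeping and the second identity.} Lemma~\ref{lem:iparity} is used to match exponents computed over $\I$ with those over $\J$. The distinguished parameter \eqref{balance} is exactly what makes the scalar in front of the $E$-part and the $F$-part agree, giving the single overall constant $o_{i,\J}$, a sign. For the $\tau_i=\tau$ case ($i=\pm\pt$), note that $w_{\bIi}$ realizes $\tau$ on the rank-one diagram, which explains $\tau_i(i)$. Identity \eqref{eq:rkone3} follows by the same argument with $i$ replaced by $-i$, using $\up_{i,\I}=\up_{-i,\I}$ and $\bs_i=\bs_{-i}$.

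\emph{Expected obstacle.} The hard step is the explicit evaluation of $T_{\bs_i}$ on $F_{\mp i}$ and $T_{w_\bu}(E_{\pm i})$ in the super cases. Each odd reflection introduces signs $(-1)^{|i||j|}$ and changes parities, by Lemma~\ref{paritylemma}, along the path through intermediate diagrams. Tracking these, together with the $\new$-twists in $\sigma$, to get the precise sign $o_{i,\J}$ is delicate. I would first carry it out for $\gl(1|1)$ and $\gl(2|2\pt)$ with $\pt=1$ by direct computation, then induct on $\pt$ using $T_{w_\bu}$-equivariance (Lemma~\ref{lem:blackfix}), mirroring \cite[\S5]{WZ23}.
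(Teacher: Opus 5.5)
Your proposal follows the paper's proof: transporting \eqref{intertwine} for $\up_{i,\I}$ through the algebra map $T_{\bs_i,\I}$ reduces everything to computing $T_{\bs_i,\I}\big(\sigma\tau(B_{\mp i,\I})\big)$ term by term. This uses the longest-element formulas for $\bIi$ (which is where $\tau_i$ comes from) together with $T_{w_\bu}$, and the distinguished parameter makes the $E$- and $F$-parts recombine into $T_{w_\bu}^{-2}(B_{\cdot,\J})\cK_{\cdot,\J}$; the rank-one reduction and the induction on $\pt$ you anticipate are not needed.

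One labeling slip: the relation you derive, $T_{\bs_i,\I}\big(\sigma\tau(B_{i,\I})\big)\,\up''_{i,\J}=\up''_{i,\J}\,T_{\bs_i,\I}(B_{-i,\I})$, is \eqref{eq:rkone3}. Your computation therefore yields $T_{w_\bu}^{-2}(B_{\tau_i(i),\J})\cK_{\tau_i(i),\J}$, and \eqref{eq:rkone2} is its $i\leftrightarrow -i$ counterpart; this is harmless, since you obtain both identities by that symmetry.
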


\begin{proof}
Note that $|i,\I|=|i,\J|$ and then $o_{i,\I}=o_{i,\J}$. By \cref{braidoperator} and \cref{Kmatrix}, we have
\begin{align*}
\up''_{i,\J} T_{\bs_{i},\I}(B_{i,\I}) \big(\up''_{i,\J}\big)^{-1}
&=T_{\bs_{i},\I}\big(\up_{i,\I}^{-1}B_{i,\I}\up_{i,\I}\big)
=T_{\bs_{i},\I}\big(\sigma\tau (B_{-i,\I})\big)
\\
&=T_{\bs_{i},\I}\Big(F_{i,\I}-o_{i,\I}q^{(\alpha_{i,\I},w_\bu\alpha_{-i,\I}-\alpha_{i,\I})/2} T_{w_\bu}^{-1}(E_{-i,\I}) K_{i,\I}\Big)
\\
&=-q^{-(\alpha_{i,\J},\alpha_{i,\J})}T_{w_\bu}^{-1}\big(E_{\tau_i(i),\J} K_{\tau_i(i),\J}^{-1}\big)
\\
&\quad +(-1)^{|i,\J|}o_{i,\I} q^{(\alpha_{i,\I},w_\bu\alpha_{-i,\I}-\alpha_{i,\I})/2} T_{w_\bu}^{-2}(F_{-\tau_i(i),\J} K_{-\tau_i(i),\J}) T_{w_\bu}^{-1}(K_{\tau_i(i),\J}^{-1})
\\
&=(-1)^{|i,\J|}o_{i,\J} q^{(\alpha_{i,\J},w_\bu\alpha_{-i,\J}-\alpha_{i,\J})/2} T_{w_\bu}^{-2}(B_{-\tau_i(i),\J}) 
K_{-\tau_i(i),\J} T_{w_\bu}(K_{\tau_i(i),\J}^{-1}),
\end{align*}
where we used \cref{lem:iparity} in the last equality. This proves \eqref{eq:rkone2}. The second identity \eqref{eq:rkone3} can be proved similarly.
\end{proof}

\begin{proposition}
\label{prop:qsrkone'}
Let $\I,\J\in \DiAIII_\pt$ such that $\J=\bs_{i}(\I)$. 
We have 
\begin{align}\label{eq:rkone4}
\Big(o_{i,\J} q^{(\alpha_{i,\J},\alpha_{i,\J}-w_\bu\alpha_{-i,\J})/2} T_{w_\bu}^2(B_{-\tau_i(i),\J}) \cK_{-\tau_i(i),\J}^{-1}\Big)\up_{i,\J}&= \up_{i,\J} T'_{\bs_i,\I}(B_{i,\I}),
\\\label{eq:rkone5}
\Big(o_{i,\J} q^{(\alpha_{i,\J},\alpha_{i,\J}-w_\bu\alpha_{-i,\J})/2} T_{w_\bu}^2(B_{\tau_i(i),\J}) \cK_{\tau_i(i),\J}^{-1}\Big)\up_{i,\J}&= \up_{i,\J} T'_{\bs_i,\I}(B_{-i,\I}).
\end{align}
\end{proposition}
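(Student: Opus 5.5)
We will derive this from \cref{prop:qsrkone} by conjugating with the anti-involution $\sigma$ (composed with $\tau$), which is the standard way the $T'$ version of the intertwining relation is obtained from the $T$ version; cf. \cite[Proposition 6.4]{WZ23}.

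The plan is to first establish the rank-one analogue of \cref{Kmatrix} in the form
\[
\up_{i,\J}^{-1} B_{\tau i,\J}\,\up_{i,\J}
\quad\text{versus}\quad \sigma\tau(B_{i,\J}).
\]
Then we relate $\up_{i,\J}$ to $\up''_{i,\J}$. Concretely, we first check that $\sigma\tau\,T_{\bs_i}\,\sigma\tau = T'_{\bs_i}$ up to the sign conventions in \eqref{eq:sTs} and \eqref{tau}; the super signs here are harmless because $\tau$ commutes with $\sigma$ on generators up to $(-1)^{|i|}$. We then use the identity
\[
\sigma\tau(\up''_{i,\J}) = \up_{i,\J}^{-1}
\quad\bigl(\text{equivalently } \sigma\tau\,T_{\bs_i}(\up_{i,\I}^{-1}) = \up_{i,\J}^{-1}\bigr),
\]
which is the super analogue of the non-super fact that the rank-one quasi $K$-matrix is $\sigma\tau$-invariant and is related by $T_{w_{\bu,i}}$. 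This identity is the main obstacle. To prove it, we show that $\sigma\tau(\up''_{i,\J})^{-1}$ satisfies the defining intertwining relations \eqref{intertwine} and \eqref{xup=upx} for the rank-one diagram $\I_{\bu,i}$ in $\J$, and we use uniqueness in \cref{Kmatrix}. The intertwining with $B_{\pm i,\J}$ is exactly \cref{prop:qsrkone} after applying $\sigma\tau$. Commutation with $\U^{\imath0}\U_\bu$ comes from \cref{lem:rkzero}, and the weight and normalization conditions follow because $T_{\bs_i}$ sends $\U^+_{\bIi}$ components of weight $m(\alpha_i+w_\bu\alpha_{\tau i})$ to weights $-m(\ldots)$, which $\sigma\tau$ does not flip. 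So we may instead argue directly in the style of \cref{prop:qsrkone}, as follows.

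Compute $\up_{i,\J}\,T'_{\bs_i}(B_{i,\I})\,\up_{i,\J}^{-1}$ by writing
\[
T'_{\bs_i}(B_{i,\I}) = T'_{\bs_i}\bigl(F_{i,\I} + \va^\dm_{i} T_{w_\bu}(E_{-i,\I})K_{i,\I}^{-1}\bigr)
\]
explicitly via \cref{braidoperator} and \eqref{eq:sTs}. Because $\bs_i$ acts on the rank-one subdiagram as its longest element times $w_\bu^{-1}$, this evaluates to a combination of $T_{w_\bu}^{2}(E_{\tau_i(i),\J})K$-type terms and $T_{w_\bu}(F_{-\tau_i(i),\J})$-type terms. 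Recognize this combination as $\sigma\tau(B_{\cdot,\J})$ times a Cartan factor, and then apply \eqref{intertwine} in $\J$ for the rank-one quasi $K$-matrix $\up_{i,\J}$ to move it across. The prefactor $o_{i,\J}q^{(\alpha_{i,\J},\alpha_{i,\J}-w_\bu\alpha_{-i,\J})/2}$ arises from the distinguished parameter \eqref{balance} combined with \cref{lem:iparity}, used exactly as in the last step of the proof of \cref{prop:qsrkone}. The sign $(-1)^{|i,\J|}$ present in \eqref{eq:rkone2} disappears here because $\sigma(K_i)=(-1)^{|i|}K_i^{-1}$ absorbs it.

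The second identity \eqref{eq:rkone5} follows by the same computation with $i$ replaced by $-i$, or by applying $\tau$. The delicate points are bookkeeping the super signs from $\tau(E_i)=(-1)^{|i|}E_{\tau i}$ and $\sigma(K_i)$, and the case $i=\pm\pt$, where $\tau_i=\tau$ swaps the indices. We would check the case $i=\pm\pt$ with $|i|=\one$ (type $\gl(2|2\pt)$) separately by a direct rank-one computation.
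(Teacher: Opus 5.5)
Your direct argument is the paper's route: write $T'_{\bs_i,\I}=T_{w_\bu}T'_{\bs_iw_\bu,\I}$ with $\bs_iw_\bu$ the longest element of $\I_{\bu,i}$, compute $T'_{\bs_i,\I}(B_{i,\I})$ explicitly, recognize a $\sigma\tau$-image of a generator up to a Cartan factor, and conjugate by $\up_{i,\J}$ using \cref{Kmatrix}.

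Your opening plan should be discarded, not just bypassed, because the identity $\sigma\tau(\up''_{i,\J})=\up_{i,\J}^{-1}$ is false. Your own weight remark shows this. Since $\up_{i,\I}\neq 1$, the element $\up_{i,\I}^{-1}$ has nonzero components of weight $m(\alpha_{i,\I}+w_\bu\alpha_{\tau i,\I})$ with $m\geq 1$, and $T_{\bs_i}$ sends these to negative weights. The map $\sigma$ preserves weights and $\tau$ only permutes simple roots. So $\sigma\tau(\up''_{i,\J})$ has nonzero components of negative weight, whereas $\up_{i,\J}^{-1}$ lies in a completion of $\U^+(\J)$.

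The direct argument, as you state it, has a gap at the conjugation step. $T'_{\bs_i,\I}(B_{i,\I})$ is not $\sigma\tau(B_{\cdot,\J})$ times a Cartan factor. Its $E$-part is $-(-1)^{|i,\J|}T_{w_\bu}(E_{\tau_i(i),\J}K_{\tau_i(i),\J})$, and its $F$-part is a multiple of $T_{w_\bu}^2(K^{-1}_{-\tau_i(i),\J}F_{-\tau_i(i),\J})\,T_{w_\bu}(K_{\tau_i(i),\J})$; you have the powers of $T_{w_\bu}$ interchanged. Hence
\[
T'_{\bs_i,\I}(B_{i,\I})=o_{i,\J}\, q^{(\alpha_{i,\J},\alpha_{i,\J}-w_\bu\alpha_{-i,\J})/2}\, T_{w_\bu}^2\big(\sigma\tau(B_{\tau_i(i),\J})\big)\,\cK_{-\tau_i(i),\J}^{-1}.
\]
Applying \eqref{intertwine} for $\up_{i,\J}$ to this expression does not by itself give $T_{w_\bu}^2(B_{-\tau_i(i),\J})$. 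You need two further inputs:
\begin{enumerate}
\item $T_{w_\bu}(\up_{i,\J})=\up_{i,\J}$, from \cref{lem:blackfix}. This gives $\up_{i,\J}\,T_{w_\bu}^2(Y)\,\up_{i,\J}^{-1}=T_{w_\bu}^2\big(\up_{i,\J}Y\up_{i,\J}^{-1}\big)$.
\item $\cK_{-\tau_i(i),\J}\in\U^{\imath0}(\J)\U_\bu(\J)$ commutes with $\up_{i,\J}$, by \eqref{xup=upx}.
\end{enumerate}
This is exactly the step the paper uses.

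It is also the only place where the case $i=\pm\pt$, $\pt\neq 0$ differs from the others. For $i\neq\pm\pt$, $T_{w_\bu}$ fixes $E_{\pm i},F_{\pm i},K_{\pm i}$, so your wording is literally correct there. Once the invariance is added, the computation is uniform in the parity of $i$, and no separate rank-one computation for $\gl(2|2\pt)$ is needed.
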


\begin{proof}
Recall that $\bs_i w_\bu$ is the longest element in the subdiagram $\{i,\tau i\}\cup \I_\bu$. By \cref{lem:iparity}, we have
\begin{align*}
T'_{\bs_i,\I}(B_{i,\I})&=T_{w_\bu} T'_{\bs_i w_\bu,\I}(B_{i,\I})
\\
&= -(-1)^{|i,\J|} T_{w_\bu}(E_{\tau_i(i),\J}K_{\tau_i(i),\J})
\\
&\quad +o_{i,\J} q^{-(\alpha_{i,\J},\alpha_{i,\J}+w_\bu\alpha_{-i,\J})/2}
T_{w_\bu}^2(K_{-\tau_i(i),\J}^{-1}F_{-\tau_i(i),\J}) T_{w_\bu}(K_{\tau_i(i),\J})
\\
&=o_{i,\J} q^{(\alpha_{i,\J},\alpha_{i,\J}-w_\bu\alpha_{-i,\J})/2}
T_{w_\bu}^2( F_{-\tau_i(i),\J}) \cK_{-\tau_i(i),\J}^{-1}
\\
&\quad -(-1)^{|i,\J|} T_{w_\bu}(E_{\tau_i(i),\J}K_{\tau_i(i),\J})
\\
&=o_{i,\J} q^{(\alpha_{i,\J},\alpha_{i,\J}-w_\bu\alpha_{-i,\J})/2}
T_{w_\bu}^2\left( \sigma \tau (B_{-\tau_i(i),\J})\right) \cK_{-\tau_i(i),\J}^{-1}.
\end{align*}
Note that $\up_{i,\J}$ is fixed by $T_{w_\bu}$. Hence, by \cref{Kmatrix}, we have 
\begin{align*}
\up_{i,\J}T'_{\bs_i,\I}(B_{i,\I}) \Big(\up_{i,\J}\Big)^{-1}
=o_{i,\J} q^{(\alpha_{i,\J},\alpha_{i,\J}-w_\bu\alpha_{-i,\J})/2}
T_{w_\bu}^2\big(  B_{-\tau_i(i),\J}\big) \cK_{-\tau_i(i),\J}^{-1}.
\end{align*}
This proves the first identity. The second identity \eqref{eq:rkone5} can be proved similarly.
\end{proof}

\section{Main results}\label{sec:main}
In this section, using the quasi \( K \)-matrix, we construct explicitly the relative braid group operators \( \TT_{i,\I} \) for \(i\in \I_\circ\) on \( \Ui(\I) \) via an intertwining property analogous to \cite[Theorems 4.7 and 6.1]{WZ23}.

\subsection{New symmetries on iquantum supergroups}

For $\I\in \DiAIII_\pt$, let $\Id_{\I}$ denote the identity map on $\Ui(\I)$. Recall from \eqref{eq:TiLus} that $T_{w,\I},T'_{w,\I}$ denote braid group symmetries on $\U$.

\begin{theorem}
\label{thm:ibraid}
Let $\I,\J\in \DiAIII_\pt$ such that $\J=\bs_{i}(\I)$. 
\begin{itemize}
\item[(1)] For any $x\in \Ui(\I)$, there exists a unique element $x'\in \Ui(\J)$ such that 
\begin{equation}\label{eq:upTT-1}
x' \up_{i,\J} = \up_{i,\J} T_{\bs_{i},\I}'(x).
\end{equation}
Moreover, the map $x\mapsto x'$ defines an algebra isomorphism $\TT'_{i,\I}:\Ui(\I)\rightarrow \Ui(\J)$.

\item[(2)] For any $x\in \Ui(\I)$, there exists a unique element $x''\in \Ui(\J)$ such that 
\begin{equation}\label{eq:upTT}
x'' T_{\bs_{i},\I}(\up_{i,\I}^{-1}) = T_{\bs_{i},\I}(\up_{i,\I}^{-1}) T_{\bs_{i},\I}(x).
\end{equation}
Moreover, the map $x\mapsto x''$ defines an algebra isomorphism $\TT_{i,\I}:\Ui(\I)\rightarrow \Ui(\J)$.

\item[(3)] We have $\TT'_{i,\J} \TT_{i,\I}= \TT_{i,\J}\TT'_{i,\I}=\Id_{\I}$ 
and $\TT'_{i,\I} \TT_{i,\J}= \TT_{i,\I}\TT'_{i,\J}=\Id_{\J}$.

\end{itemize}
\end{theorem}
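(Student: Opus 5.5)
We follow the strategy of \cite[Theorems 4.7 and 6.1]{WZ23}. The key point is that uniqueness is automatic: $\up_{i,\J}$ and $\up''_{i,\J}=T_{\bs_i,\I}(\up_{i,\I}^{-1})$ are invertible in a completion of $\U(\J)$, since their constant term is $1$. So \eqref{eq:upTT-1} forces $x'=\up_{i,\J}T'_{\bs_i,\I}(x)\up_{i,\J}^{-1}$, and \eqref{eq:upTT} forces $x''=\up''_{i,\J}T_{\bs_i,\I}(x)(\up''_{i,\J})^{-1}$. These formulas give algebra homomorphisms $\U(\I)\to \widehat{\U}(\J)$, namely Lusztig-type symmetries composed with inner conjugation. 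Hence the only real content of (1) and (2) is that these conjugated maps send $\Ui(\I)$ into $\Ui(\J)$.

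Since the maps are algebra homomorphisms, it suffices to check this on the generators of $\Ui(\I)$.
\begin{itemize}
\item For $x\in \U^{\imath0}(\I)\U_\bu(\I)$: by \cref{lem:rkzero}, $T_{\bs_i}(x)$ and $T'_{\bs_i}(x)$ lie in $\U^{\imath0}(\J)\U_\bu(\J)\subset\Ui(\J)$ and commute with both $\up_{i,\J}$ and $\up''_{i,\J}$. So conjugation acts trivially on them.
\item For $B_{i,\I}$ and $B_{-i,\I}$: \cref{prop:qsrkone} and \cref{prop:qsrkone'} give the images explicitly as scalar multiples of $T_{w_\bu}^{\mp2}(B_{\mp\tau_i(i),\J})\cK^{\pm1}_{\mp\tau_i(i),\J}$. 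These lie in $\Ui(\J)$, because $T_{w_\bu}^{\pm1}$ preserves $\Ui(\J)$ (it is a product of $T_j$, $j\in\J_\bu$, and each such $T_j$ preserves $\Ui(\J)$ by \cite[Theorem 5.2]{SWsuper}), and $\cK_{\cdot,\J}\in\Ui(\J)$.
\item For $B_{j,\I}$ with $j\ne \pm i$: the image must be computed separately. This is the main obstacle, and it is a genuinely rank-two computation.
\end{itemize}

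For the third case, we split $B_{j,\I}=F_{j,\I}+\va_{j,\I}T_{w_\bu}(E_{-j,\I})K_{j,\I}^{-1}$ and proceed as follows.
\begin{enumerate}
\item If $j$ is not connected to $\I_{\bu,i}$, then $T_{\bs_i}$ fixes $B_{j}$ up to relabelling, and by \cref{lem:rktwo} the conjugation does nothing.
\item If $j$ is connected to $\I_{\bu,i}$ (that is, $j=\pm(i\pm1)$, or $i=\pm\pt$ with $j=\pm(\pt+1)$), we use the two commutation relations \eqref{eq:rktwo1} and \eqref{eq:rktwo2} to move $\up$ past the $F$-part and the $E$-part separately. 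We then express the result as an explicit polynomial, built from $q$-commutators of $B_{j,\J}$ with $B_{\pm i,\J}$ and elements of $\U_\bu(\J)$, that lies in $\Ui(\J)$.
\end{enumerate}
We would verify step 2 by reducing to the rank-two super Satake subdiagram $\I_\bu\cup\{\pm i,\pm j\}$. There we check, case by case over the parities $|i,\I|,|j,\I|$, that the resulting element equals the candidate formula. This is done by comparing leading $F$-terms via the basis of \cref{Iwasawabasis} and then using the intertwining uniqueness of \cref{Kmatrix}. This step is the hardest, because of the odd reflections: $\I\ne\J$ in general, and the super-signs from \cref{braidoperator} must be tracked.

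Once (1) and (2) give homomorphisms $\Ui(\I)\to\Ui(\J)$, part (3) follows formally. We have
\[
\TT'_{i,\J}\TT_{i,\I}(x)=\up_{i,\I}\,T'_{\bs_i,\J}\big(\up''_{i,\J}T_{\bs_i,\I}(x)(\up''_{i,\J})^{-1}\big)\,\up_{i,\I}^{-1}.
\]
Since $T'_{\bs_i,\J}T_{\bs_i,\I}=\Id$ and $T'_{\bs_i,\J}(\up''_{i,\J})=\up_{i,\I}^{-1}$, this equals $x$. Here we use $\bs_{i,\J}\bs_{i,\I}=e_\I$ and the fact that $T'_{\bs_i,\J}$ is the inverse of $T_{\bs_i,\I}$ (from $T_i'=T_i^{-1}$ along a reduced word). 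The other three identities follow symmetrically. Consequently each $\TT$ is bijective, so it is an algebra isomorphism.
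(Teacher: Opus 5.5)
Your proposal is correct and follows the paper's proof. Uniqueness comes from invertibility of $\up_{i,\J}$ and $T_{\bs_i,\I}(\up_{i,\I}^{-1})$. Existence is checked on generators: \cref{lem:rkzero} handles $\U^{\imath0}(\I)\U_\bu(\I)$, Propositions~\ref{prop:qsrkone}--\ref{prop:qsrkone'} handle $B_{\pm i,\I}$, and rank-two computations handle $j$ adjacent to $i$ or $\tau i$. Part (3) is then read off from the intertwining relations.

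The one adjustment concerns the rank-two step (Propositions~\ref{prop:Tirktwo1}--\ref{prop:Tirktwo4}, \ref{prop:TT0}, \ref{prop:qs}, \ref{prop:qsqs}). In the paper this is a direct computation: it conjugates the $F$-part and $E$-part of an explicit $q$-commutator candidate, using \cref{lem:rktwo} together with rank-one conjugation formulas such as \eqref{eq:diag0} for $B_{\pm i,\J}$. So you do not need to compare leading terms in the basis of \cref{Iwasawabasis} or appeal to the uniqueness in \cref{Kmatrix}. That comparison would in any case presuppose that the conjugate already lies in $\Ui(\J)$.
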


\begin{proof}
A complete proof requires developments in Sections~\ref{sec:main}-\ref{sec:rktwo} and an outline of the proof is given below. The strategy in the proof is similar to the proof of \cite[Theorem 4.7]{WZ23} (see also \cite[Theorem 3.1]{Zha23}).

Firstly, since $\up_{i,\I}$ is invertible, the uniqueness of $x'$ and $x''$ is clear. 

We show the existence of $x',x''$ for all generators of $\Ui(\I)$. The existence of $x',x''$ for $x\in \U^{\imath0}(\I)\Ub(\I)$ is established in \cref{lem:rkzero}.
We next consider the case $x=B_{j,\I},j\in \I_\circ$. If $j=i,\tau i$, then the existence of $x',x''$ for $x=B_{j,\I}$ is established in Propositions~\ref{prop:qsrkone}-\ref{prop:qsrkone'}. Hence, it remains to consider $j\neq i,\tau i$. If $j\not \sim i$ and $j\not \sim \tau i$, then $(B_{j,\I})'=(B_{j,\I})''=B_{j,\I}$ clearly satisfy \eqref{eq:upTT-1}-\eqref{eq:upTT}.
Thus we assume $j \sim i$ or $j\sim \tau i$ in the remaining cases; these cases are separated into 3 classes depending on the $\tau$-orbit of $i$:

\begin{itemize}
\item[(i)] If $i\neq \pm \pt$, then the existence of $(B_{j,\I})''$ for $j \sim i$ or $j\sim \tau i$ is established in Propositions~\ref{prop:Tirktwo1}-\ref{prop:Tirktwo2}; the existence of $(B_{j,\I})'$ for $j \sim i$ or $j\sim \tau i$ is established in Propositions~\ref{prop:Tirktwo3}-\ref{prop:Tirktwo4}. 

\item[(ii)] If $i=0$, then the existence of $x',x''$ for $x=B_j,j\sim 0$ is established in \cref{prop:TT0}. 

\item[(iii)] If $i=\pm \pt,a\neq 0$, then the existence of $(B_{j,\I})''$ for $j\neq i,\tau i$ is established in \cref{prop:qs}; the existence of $(B_{j,\I})'$ for $j\neq i,\tau i$ is established in \cref{prop:qsqs}. 
\end{itemize}

Hence, we have showed the existence of $x',x''$ for all generators of $\Ui(\I)$.

Assume that for $x,y\in \Ui(\I)$, there exist $x'',y''\in \Ui(\J)$ satisfying \eqref{eq:upTT}. Then $(xy)'':=x'' y''$ satisfies \eqref{eq:upTT} for $xy$. This shows that $x''$ exist for any $x\in \Ui(\I)$. Similar for $x'$. Moreover, by the above construction, both $x\mapsto x''$ and  $x\mapsto x'$ are well-defined algebra homomorphisms from $\Ui(\I)$ to $\Ui(\J)$, which we denote by $\TT_{i,\I}$ and $\TT'_{i,\I}$ respectively. By definition, they satisfy
\begin{align}\label{eq:intertw}
\begin{split}
\TT'_{i,\I}(x) \up_{i,\J} &= \up_{i,\J} T_{\bs_{i},\I}'(x),
\\
\TT_{i,\I}(x)  \,T_{\bs_{i},\I}(\up_{i,\I}^{-1})&= T_{\bs_{i},\I}(\up_{i,\I}^{-1}) \,T_{\bs_{i},\I}(x).
\end{split}
\end{align}

Finally, we deduce that identities in part (3) hold. Indeed, these identities directly follow from the intertwining relation \eqref{eq:intertw}. In particular, (3) implies that $\TT_{i,\I}$ and $\TT'_{i,\I}$ are algebra isomorphisms.
\end{proof}

\begin{corollary}\label{cor:iso}
Suppose that $\I,\J\in \DiAIII_a$ lying in the same $\reW$-orbits. Then $\Ui(\I)$ is isomorphic to $\Ui(\J)$ as algebras.
\end{corollary}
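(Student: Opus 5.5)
The corollary follows by composing the relative braid group symmetries of \cref{thm:ibraid} along a path in the relative Coxeter groupoid. First I make precise the hypothesis that $\I$ and $\J$ lie in the same $\reW$-orbit. It means there is a nonzero element $\vartheta\in\reW$ with $\vartheta(\I)=\J$. Since $\reW$ is generated by the idempotents $e_\I$ and the $\bs_{i,\I}$, we may write $\vartheta=\bs_{i_1}\cdots\bs_{i_{k-1}}\bs_{i_k,\I}$. By \cref{prop:orbit}, this is equivalent to $\text{pari}(\I)=\text{pari}(\J)$, though that criterion is not needed here. So I fix a sequence of diagrams
\[
\I=\I_k,\quad \I_{k-1}=\bs_{i_k}(\I_k),\quad \ldots,\quad \I_0=\bs_{i_1}(\I_1)=\J,
\]
with $i_l\in(\I_l)_\circ$. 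By \cref{bsiI}, each $\I_l$ lies in $\DiAIII_\pt$, so every step is a legitimate input for \cref{thm:ibraid}. If $k=0$, then $\I=\J$ and there is nothing to prove.

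Next I apply \cref{thm:ibraid}(1),(2) to each step $\I_l\to\I_{l-1}=\bs_{i_l}(\I_l)$. This gives algebra homomorphisms $\TT_{i_l,\I_l}\colon\Ui(\I_l)\to\Ui(\I_{l-1})$. By \cref{thm:ibraid}(3), each has a two-sided inverse, namely $\TT'_{i_l,\I_{l-1}}$; this uses that $\bs_{i_l}(\I_{l-1})=\I_l$, which is the groupoid relation $\bs_{i,\bs_i(\I)}\bs_{i,\I}=e_\I$ from \cref{relCgroupid}. Hence each $\TT_{i_l,\I_l}$ is an algebra isomorphism.

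Finally, the composite
\[
\TT_{i_1,\I_1}\circ\cdots\circ\TT_{i_k,\I_k}\colon \Ui(\I)\longrightarrow\Ui(\J)
\]
is a composition of algebra isomorphisms, and so it is an algebra isomorphism.

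The only point that needs care is the bookkeeping: each generator $\bs_{i_l}$ must be applied to a diagram in $\DiAIII_\pt$, and the source and target of consecutive maps must match. Both are guaranteed by \cref{bsiI} and the groupoid conventions, so I expect no real obstacle. I would also add a remark that the isomorphism depends a priori on the chosen path, and that its independence of the reduced expression is the content of the later braid-relation theorems, which this corollary does not need.
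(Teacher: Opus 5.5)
Your proposal is correct and follows the paper's implicit argument: the corollary is stated right after \cref{thm:ibraid} with no separate proof, and it follows by composing the isomorphisms $\TT_{i,\I}$ (whose inverses are $\TT'_{i,\bs_i(\I)}$ by part (3)) along a chain of generators $\bs_i$ of $\reW$ linking $\I$ to $\J$. The only care needed is the bookkeeping you already handle, namely \cref{bsiI} and the groupoid relation $\bs_{i,\bs_i(\I)}\bs_{i,\I}=e_\I$.
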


\subsection{Formulas of symmetries $\TT'_{i}, \TT_{i}$}
\label{sec:formula}
In the type sAIII setting (see \eqref{AIIIdiagram}), the white vertices $\I_\circ$ can be separated into the following 3 classes.
\begin{itemize}
\item[(i)] $i\neq \pm \pt$.

\item[(ii)] $i=\pt=0$.

\item[(iii)] $i=\pm \pt,\pt\neq 0$.
\end{itemize}
In this subsection, we will summarize formulas for $\TT'_{i,\I},\TT_{i,\I}$ in all these 3 classes.

In what follows, we denote
\begin{equation}
    \label{qijJ}
    \begin{split}
    q_{i,j,\J}&=q^{d_{i,\J}a_{ij,\J}}=q^{(\alpha_{i,\J},\alpha_{j,\J})},
    \qquad
    [a_{ij,\J}]_i=\frac{q^{a_{ij,\J}}_{i,\J}-q_{i,\J}^{-a_{ij,\J}}}{q_{i,\J}-q_{i,\J}^{-1}},
    \qquad \forall i,j\in \J.
    \end{split}
\end{equation}
We also define
\[
    [x,y]_\pt := xy-(-1)^{|x||y|} a yx, \quad \forall a \in \bF,
    \qquad \text{and} \qquad [x,y] := [x,y]_1.
\]

\begin{theorem}\label{thm:formula}
 Let $\I,\J\in \DiAIII_\pt$ such that $\J=\bs_{i}(\I)$. 
\begin{itemize}
\item [(0)] The actions of $\TT_{i,\I}$ for $i\in\I_\circ$ on $\U_\bu(\I)\U^{\imath0}$ are given by
\begin{align}
\label{eq:TTxk}
&\TT_{i,\I}(x_\I)=x_\J,\quad \forall x\in  \U_\bu(\I);
\qquad
\TT_{i,\I}(k_j)=T_{\bs_i}(k_j),\quad \forall j\in \I_\circ.
\end{align}
\item [(i)] Let $i\neq a$. The actions of $\TT_{i,\I}$ on $B_{j,\I}$ are given by 
\begin{align}\label{eq:TiBj}
&\TT_{i,\I}(B_{j,\I})=
\begin{cases}
(-1)^{|i,\J|}o_{i,\J} q^{-(\alpha_{i,\J},\alpha_{i,\J})/2} B_{\tau i,\J} k_{\tau i,\J}, & \text{ if } j=i, 
\\
(-1)^{|i,\J|}o_{i,\J} q^{-(\alpha_{\tau i,\J},\alpha_{\tau i,\J})/2} B_{i,\J} k_{ i,\J}, & \text{ if } j={\tau i}, 
\\
[B_{j,\J},B_{i,\J}]_{q_{i,j,\J}^{-1}},& \text{ if }  j\sim i,j\not\sim \tau i,
\\
[B_{j,\J},B_{\tau i,\J}]_{q_{\tau i,j,\J}^{-1}},& \text{ if } j\sim \tau i,j\not\sim i,
\vspace{0.1in}
\\
\makecell{\big[[B_{j,\J},B_{i,\J}]_{q_{i,j,\J}^{-1}},B_{\tau i,\J}\big]_{q_{i,j,\J}^{-1}} \qquad \\
-o_{i,\J} (-1)^{|i,\J|} [a_{ij,\J}]_i q^{-(\alpha_{i,\J},\alpha_{i,\J})/2-(\alpha_{i,\J},\alpha_{j,\J})}B_{j,\J} k_{i,\J},}
& \text{ if } \tau i\sim j\sim i.
\end{cases}
\end{align}

\item[(ii)] The action of $\TT_{0,\I}$ on $B_{j,\I}$ is given by 
\begin{align}\label{eq:T0Bj}
\TT_{0,\I}(B_{j,\I})=
\begin{cases}
B_{j,\I}, & \text{ if } j=0 \text{ or } j \not\sim 0,
\\
[B_{j,\I},B_{i,\I}]_{q_{i,j,\I}^{-1}}, & \text{ if } j\sim 0.
\end{cases}
\end{align}

\item[(iii)] The actions of $\TT_{\pt,\I}$ on $B_{j,\I}$ are given by 
\begin{align}\label{eq:TaBj}
&\TT_{\pt,\I}(B_{j,\I})=
\begin{cases}
(-1)^{|\pt,\J|}o_{\pt,\J} q^{(\alpha_{\pt,\I},w_\bu\alpha_{-\pt,\I}-\alpha_{\pt,\I})/2} T_{w_\bu}^{-2}(B_{\pt,\J}) \cK_{\pt,\J}, & \text{ if } j=\pt, 
\\
(-1)^{|\pt,\J|}o_{\pt,\J} q^{(\alpha_{\pt,\I},w_\bu\alpha_{-\pt,\I}-\alpha_{\pt,\I})/2} T_{w_\bu}^{-2}(B_{-\pt,\J}) \cK_{-\pt,\J}, & \text{ if } j=-\pt, 
\vspace{0.1in}
\\
\vspace{0.1in}
\makecell{\big[[B_{\pt+1,\J},B_{\pt,\J}]_{q_{\pt,\pt+1,\J}^{-1}}, 
T_{w_\bu}^{-1}(B_{-\pt,\J})\big]_{q_{\pt,\pt+1,s_\pt(\I)}^{-1}}
\\
-o_{\pt,\J} (-1)^{|\pt,\J|} q^{-(\alpha_{\pt,\J},\alpha_{\pt,\J}+w_\bu \alpha_{-\pt,\J})/2} [c_{\pt,\pt+1,\J}]_\pt \cK_{\pt,\J} B_{\pt+1,\J}},
& \text{ if } j=\pt+1, 
\\
\vspace{0.1in}
\makecell{\big[[B_{-\pt-1,\J},B_{-\pt,\J}]_{q_{\pt,\pt+1,\J}^{-1}}, 
T_{w_\bu}^{-1}(B_{\pt,\J})\big]_{q_{\pt,\pt+1,s_\pt(\I)}^{-1}}
\\
-o_{\pt,\J} q^{-(\alpha_{\pt,\J},\alpha_{\pt,\J}+w_\bu \alpha_{-\pt,\J})/2} [c_{\pt,\pt+1,\J}]_\pt\cK_{-\pt,\J} B_{-\pt-1,\J}},
& \text{ if } j=-\pt-1, 
\\
B_{j,\J}, & \text{ otherwise.}
\end{cases}
\end{align}
\end{itemize}
\end{theorem}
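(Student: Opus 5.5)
The plan is to read off each formula from the defining intertwining relation \eqref{eq:intertw}. Since $T_{\bs_i,\I}(\up_{i,\I}^{-1})=\up''_{i,\J}$ is invertible, it is enough to exhibit, for each generator $x$ of $\Ui(\I)$, an element $y\in\Ui(\J)$ with $y\,\up''_{i,\J}=\up''_{i,\J}\,T_{\bs_i,\I}(x)$. Uniqueness in \cref{thm:ibraid}(2) then gives $\TT_{i,\I}(x)=y$.

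Part (0) follows from \cref{lem:rkzero}. For $x\in\U_\bu(\I)\U^{\imath0}(\I)$, the element $T_{\bs_i}(x)$ commutes with $\up''_{i,\J}$, so $\TT_{i,\I}(x)=T_{\bs_i}(x)$. Since $\bs_i$ restricts to the longest element of $\I_{\bu,i}$ times $w_\bu$, it fixes the black part up to relabeling, and one checks $T_{\bs_i}(x_\I)=x_\J$ for $x\in\U_\bu$.

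The cases $j=i,\tau i$ in (i) and $j=\pm\pt$ in (iii) are exactly \cref{prop:qsrkone}. To get the stated forms, simplify its left-hand side. For $i\neq\pm\pt$ we have $\tau_i=1$ and $T_{w_\bu}$ acts trivially on $B_{\pm i}$ when $i\nsim\I_\bu$. In that case $\cK_{-i,\J}=K_{-i}K_i^{-1}=k_{-i}$, and the power of $q$ collapses to $-(\alpha_i,\alpha_i)/2$ because $(\alpha_i,w_\bu\alpha_{-i})=0$. For $i=0$ we have $\bs_0=s_0$. If $|0|=0$ the diagram is fixed; if $|0|=1$ one uses $\I_\bu\neq\varnothing$. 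In either case the rank-one factor reduces so that $\TT_0(B_0)=B_0$, and the neighbours are handled as in the non-super case of \cite{WZ23}.

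The cases $j\neq i,\tau i$ are the real work. If $j\nsim i,\tau i$ then $T_{\bs_i}(B_j)=B_j$, and \cref{lem:rktwo} gives $\TT(B_j)=B_j$. If $j\sim i$ only, the plan has three steps:
\begin{itemize}
\item Write $T_{\bs_i}(B_{j,\I})=T_iT_{-i}(F_j)+\va_j T_iT_{-i}(E_{-j})K_j^{-1}$ using \cref{braidoperator}. The $F$-part becomes the $q$-commutator $[F_j,F_i]_{q^{-1}_{i,j}}$, with the sign $(-1)^{|i||j|}$ taken in $\J$.
\item Compare this with $[B_{j,\J},B_{i,\J}]_{q_{i,j,\J}^{-1}}$, expanded via \eqref{eq:Uibvs}.
\item Show that the difference, once conjugated through $\up''_{i,\J}$, vanishes.
\end{itemize}
For the last step I would use \eqref{eq:rktwo2} together with the intertwining \eqref{eq:rkone2} applied to $B_i$. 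This gives $[B_j,B_i]\up''=\up''[T(B_j)\text{-part},T(B_i)]$, and the only remaining task is to match the $E$-parts.

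When $\tau i\sim j\sim i$ (i.e.\ $j=\pm\pt$ with $i=\pm(\pt+1)$ and $\pt=0$, or the $(iii)$ configuration), iterate the commutator. Commuting $B_{\tau i}$ past $F_i$ produces a Cartan term $[E_{\tau i},F_i]$-type cross term, which accounts for the correction $[a_{ij}]_i B_jk_i$. The $\pt+1$ case in (iii) is the same computation, with $T_{w_\bu}^{-1}(B_{-\pt})$ in place of $B_{\tau i}$.

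The main obstacle is (iii). There $\bs_\pt$ is a long word through the black nodes, and $T_{\bs_\pt}(F_{\pt+1})$ is an iterated commutator involving $T_{w_\bu}^{-1}$. I would reduce it, as in \cite[\S 5]{WZ23}, to the rank-two subdiagram $\{\pt+1,\pm\pt\}\cup\I_\bu$ and use the factorization $\bs_\pt w_\bu$. The correction coefficient then comes from a single Cartan commutator $[E_{-\pt},F_\pt]$ inside $T_{w_\bu}$, with the super signs tracked using \cref{paritylemma} and \cref{lem:iparity}. Checking that the parity signs $(-1)^{|\pt,\J|}$ and $o_{\pt,\J}$ line up is where I expect the delicate bookkeeping to lie.
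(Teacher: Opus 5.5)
Your plan follows the paper's proof. Each formula is read off from the intertwining relation \eqref{eq:intertw} by uniqueness, using \cref{lem:rkzero} for (0), \cref{prop:qsrkone} for $j=\pm i$ (and $j=0$ in (ii)), and \cref{lem:rktwo} together with rank-two commutator computations for the neighbours of $i$; these computations are exactly Propositions~\ref{prop:Tirktwo1}, \ref{prop:Tirktwo2}, \ref{prop:TT0} and \ref{prop:qs} in the paper.

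A few slips need fixing:
\begin{itemize}
\item For $i=0$ one has $\pt=0$, so $\I_\bu=\varnothing$ and admissibility forces node $0$ to be even; there is no case $|0|=\bar 1$.
\item Conjugating $B_{i,\J}$ by $\up''_{i,\J}$ gives a multiple of $T_{\bs_i,\I}(B_{\tau i,\I})k_{\tau i,\J}$, which comes from \eqref{eq:rkone3}, not a multiple of $T(B_i)$.
\item The Cartan corrections come from $[E_i,F_i]$ and $[E_\pt,F_\pt]$, and from $T_{w_\bu}([E_{-\pt},F_{-\pt}])$ on the $E$-side. They do not come from $[E_{\tau i},F_i]$ or $[E_{-\pt},F_\pt]$, both of which vanish.
\end{itemize}
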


\begin{proof}
The formula \eqref{eq:TiBj} of the action of $\TT_{i,\I},i\neq \pm \pt$ follows from \cref{prop:qsrkone} and Propositions~\ref{prop:Tirktwo1}-\ref{prop:Tirktwo2}. The formula \eqref{eq:T0Bj} follows from \cref{prop:qsrkone} and \cref{prop:TT0}. The formula \eqref{eq:TaBj} follows from \cref{prop:qsrkone} and \cref{prop:qs}.
\end{proof}

\begin{theorem}\label{thm:formula2}
 Let $\I,\J\in \DiAIII_\pt$ such that $\J=\bs_{i}(\I)$. 
\begin{itemize}
\item [(0)] The actions of $\TT'_{i,\I}$ for $i\in\I_\circ$ on $\U_\bu(\I)\U^{\imath0}$ are given by
\begin{align}
\label{eq:TTxk2}
&\TT'_{i,\I}(x_\I)=x_\J,\quad \forall x\in  \U_\bu(\I);
\qquad
\TT'_{i,\I}(k_j)=T'_{\bs_i}(k_j),\quad \forall j\in \I_\circ.
\end{align}
\item [(i)] Let $i\neq a$. The actions of $\TT'_{i,\I}$ on $B_{j,\I}$ are given by 
\begin{align}\label{eq:TiBj2}
&\TT'_{i,\I}(B_{j,\I})=
\begin{cases}
o_{i,\J} q^{(\alpha_{i,\J},\alpha_{i,\J})/2} B_{\tau i,\J} k_{\tau i,\J}^{-1}, 
& \text{ if } j=i, 
\\
o_{i,\J} q^{(\alpha_{\tau i,\J},\alpha_{\tau i,\J})/2} B_{i,\J} k_{i,\J}^{-1}, 
& \text{ if } j={\tau i}, 
\\
[B_{i,\J},B_{j,\J}]_{q_{i,j,\J}^{-1}},& \text{ if }  j\sim i,j\not\sim \tau i,
\\
[B_{\tau i,\J},B_{j,\J}]_{q_{\tau i,j,\J}^{-1}},& \text{ if } j\sim \tau i,j\not\sim i,
\vspace{0.1in}
\\
\makecell{
 \big[B_{\tau i,\J},[B_{i,\J},B_{j,\J}]_{q_{i,j,\J}^{-1}}\big]_{q_{i,j,\J}^{-1}} 
 \\
-o_{i,\J} [a_{ij,\J}]_i q^{-(\alpha_{i,\J},\alpha_{i,\J})/2-(\alpha_{i,\J},\alpha_{j,\J})}B_{j,\J} k_{\tau i,\J} ,}
& \text{ if } \tau i\sim j\sim i.
\end{cases}
\end{align}

\item[(ii)] The action of $\TT'_{0,\I}$ on $B_{j,\I}$ is given by 
\begin{align}\label{eq:T0Bj2}
\TT'_{0,\I}(B_{j,\I})=
\begin{cases}
B_{j,\I}, & \text{ if } j=0 \text{ or } j \not\sim 0,
\\
[B_{i,\I},B_{j,\I}]_{q_{i,j,\I}^{-1}}, & \text{ if } j\sim 0.
\end{cases}
\end{align}

\item[(iii)] The actions of $\TT'_{\pt,\I}$ on $B_{j,\I}$ are given by 
\begin{align}\label{eq:TaBj2}
&\TT'_{\pt,\I}(B_{j,\I})=
\begin{cases}
o_{\pt,\J} q^{(\alpha_{\pt,\I},\alpha_{\pt,\I}-w_\bu\alpha_{-\pt,\I})/2} T_{w_\bu}^{2}(B_{\pt,\J}) \cK_{\pt,\J}^{-1}, & \text{ if } j=\pt, 
\\
o_{\pt,\J} q^{(\alpha_{\pt,\I},\alpha_{\pt,\I}-w_\bu\alpha_{-\pt,\I})/2} T_{w_\bu}^{2}(B_{-\pt,\J}) \cK_{-\pt,\J}^{-1}, & \text{ if } j=-\pt, 
\vspace{0.1in}
\\
\makecell{ \big[T_{w_\bu}(B_{-\pt,\J}), [B_{\pt,\J},B_{\pt+1,\J}]_{q_{\pt,\pt+1,\J}^{-1}}\big]_{q_{\pt,\pt+1,s_\pt(\I)}^{-1}}
\\
-o_{\pt,\J}  q^{-(\alpha_{\pt,\J},\alpha_{\pt,\J}+w_\bu \alpha_{-\pt,\J})/2} [c_{\pt,\pt+1,\J}]_\pt  B_{\pt+1,\J}\cK_{\pt,\J}^{-1},} 
& \text{ if } j=\pt+1, 
\vspace{0.1in}
\\
\vspace{0.1in}
\makecell{ \big[T_{w_\bu}(B_{\pt,\J}), [B_{-\pt,\J},B_{-\pt-1,\J}]_{q_{\pt,\pt+1,\J}^{-1}}\big]_{q_{\pt,\pt+1,s_\pt(\I)}^{-1}}
\\
-o_{\pt,\J}  (-1)^{|\pt,\J|} q^{-(\alpha_{\pt,\J},\alpha_{\pt,\J}+w_\bu \alpha_{-\pt,\J})/2} [c_{\pt,\pt+1,\J}]_\pt  B_{-\pt-1,\J}\cK_{-\pt,\J}^{-1},} 
& \text{ if } j=-\pt-1, 
\\
B_{j,\J}, & \text{ otherwise. }
\end{cases}
\end{align}
\end{itemize}
\end{theorem}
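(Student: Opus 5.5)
The plan is to derive each formula for $\TT'_{i,\I}$ from the defining intertwining relation \eqref{eq:upTT-1} in \cref{thm:ibraid}(1). By uniqueness, it is enough to show that each proposed right-hand side $y$ satisfies $y\,\up_{i,\J}=\up_{i,\J}\,T'_{\bs_i,\I}(x)$ for the corresponding generator $x$. There are two possible routes. One is to verify the intertwining directly, mirroring the proof of \cref{thm:formula}. The other is cheaper: use \cref{thm:ibraid}(3), which says $\TT'_{i,\I}=\TT_{i,\J}^{-1}$. Then it suffices to check that applying $\TT_{i,\J}$, via the formulas of \cref{thm:formula} with the roles of $\I,\J$ swapped (note $\bs_i(\J)=\I$), to each proposed image returns the original generator. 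I would use the direct route for parts (0) and the rank-one cases, and the inversion route as a consistency check for the rank-two cases.

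For part (0), \cref{lem:rkzero} shows that $T'_{\bs_i}(x_\I)$ commutes with $\up_{i,\J}$. Hence $\TT'_{i,\I}(x_\I)=T'_{\bs_i}(x_\I)$ for all $x_\I\in\U^{\imath0}(\I)\U_\bu(\I)$. It remains to identify $T'_{\bs_i}$ on $\U_\bu$ with the identity relabelling $x_\I\mapsto x_\J$. This follows from the fact that $\bs_i w_\bu$ is the longest element of $\bIi$, so $T'_{\bs_i}$ acts on $\U_\bu$ as the diagram automorphism induced by conjugation. Combined with \cref{lem:blackfix}, this gives the stated result.

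The cases $j=i,\tau i$ in (i), and $j=\pm\pt$ in (iii), are read off from \cref{prop:qsrkone'}. In case (i), $\tau_i=1$ and $T_{w_\bu}$ fixes $B_{\pm i,\J}$ up to the identification; the factor $\cK^{-1}_{-i,\J}$ collapses to $k_{\tau i,\J}^{-1}$ because $i\nsim\I_\bu$ when $i\neq\pm\pt$. Case (ii) is the degenerate version of (iii) in which there are no black nodes.

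For $j\notin\{i,\tau i\}$ with $j\nsim i$ and $j\nsim\tau i$, the element $T'_{\bs_i}(B_{j,\I})=B_{j,\J}$ commutes with $\up_{i,\J}$ by \eqref{eq:rktwo1}, so the image is $B_{j,\J}$.

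The main obstacle is the rank-two cases $j\sim i$ or $j\sim\tau i$. First, expand $T'_{\bs_i}(B_{j,\I})$ using the Yamane–Lusztig formulas of \cref{braidoperator}, transported through $\sigma$ via \eqref{eq:sTs}. This gives $q$-commutators such as $[F_{i,\J},F_{j,\J}]_{q^{-1}_{i,j,\J}}$ together with the corresponding $E$-parts. Then commute the proposed $q$-commutators of $B$'s past $\up_{i,\J}$, using \eqref{eq:rktwo1}, \eqref{eq:rktwo2} and \cref{prop:qsrkone'}. When $\tau i\sim j\sim i$, the double commutator produces an extra Cartan-type term from $[E_{i},F_{i}]$, and this term must be cancelled by the correction $B_{j,\J}k_{\tau i,\J}$ with coefficient $[a_{ij,\J}]_i$. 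The signs $(-1)^{|i,\J||j,\J|}$ change under odd reflections by \cref{paritylemma}, so they need careful tracking. To keep this bookkeeping manageable, I would instead apply $\TT_{i,\J}$ (from \cref{thm:formula}) to the proposed element and simplify back to $B_{j,\I}$, using $\TT_{i,\J}(k_{\tau i,\J})=T_{\bs_i}(k_{\tau i,\J})$ and the rank-one formulas. For (iii), the same computation in the rank-three subdiagram $\{\pt+1\}\cup\bIi$ yields the stated expression involving $T_{w_\bu}(B_{-\pt,\J})$. I expect the delicate point to be the matching of the scalar $q^{-(\alpha_{\pt,\J},\alpha_{\pt,\J}+w_\bu\alpha_{-\pt,\J})/2}$, which should follow from \cref{lem:iparity}.
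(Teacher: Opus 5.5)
Your treatment of part (0), of the rank-one cases via \cref{prop:qsrkone'}, of the vertices $j$ not adjacent to $i,\tau i$, and the direct route you first sketch for the rank-two cases is exactly the paper's proof (\cref{prop:Tirktwo3}, \cref{prop:Tirktwo4}, \cref{prop:TT0}, \cref{prop:qsqs} with \cref{lem:qsqs}), and that route closes. For (0), the relevant fact is not \cref{lem:blackfix}. It is that $w_\bu$ and the longest element $\bs_i w_\bu$ of $\bIi$ induce the same involution $j\mapsto -j$ of $\I_\bu$, so $T'_{\bs_i}$ is the identity on $\U_\bu$.

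There is an actual error in how you dispose of case (ii): it is not a degenerate instance of (iii). For $\pt=0$ one has $\bs_{0,\I}=s_{0,\I}$, a single even reflection. So for $j\sim 0$, $T'_{s_0}(F_{j,\I})=[F_{0,\I},F_{j,\I}]_{q_{0,j,\I}^{-1}}$, and the image is the single commutator $[B_{0,\I},B_{j,\I}]_{q_{0,j,\I}^{-1}}$. The nested commutator with $T_{w_\bu}(B_{-\pt,\J})$ in (iii) comes from $\bs_\pt$ containing the two distinct reflections $s_\pt$ and $s_{-\pt}$ (see \cref{lem:qsqs}). Putting $\pt=0$ there gives a commutator quadratic in $B_{0}$, which is wrong. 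Only the rank-one value $\TT'_{0,\I}(B_{0,\I})=B_{0,\I}$ specializes correctly, from \cref{prop:qsrkone'} with $\cK_{0,\I}=1$. The case $j\sim 0$ needs its own intertwining check, which is routine because node $0$ is even.

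The inversion route you say you would actually use for the rank-two cases also does not close as described.

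First, citing \cref{thm:ibraid}(3) is circular. The existence of $\TT'_{i,\I}$ on generators in \cref{thm:ibraid}(1) is proved by exactly the propositions behind this statement. This is repairable: if $\TT_{i,\J}(y)=x$, applying $T'_{\bs_i,\I}=T_{\bs_i,\J}^{-1}$ to the intertwining relation of $\TT_{i,\J}$ gives $y\,\up_{i,\J}=\up_{i,\J}T'_{\bs_i,\I}(x)$ directly.

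The real problem is the step ``simplify back to $B_{j,\I}$''. The rank-one formulas and $\TT_{i,\J}(k_{\tau i,\J})$ do not achieve it.
\begin{itemize}
\item Already for $j\sim i$, $j\not\sim\tau i$, $\TT_{i,\J}\big([B_{i,\J},B_{j,\J}]_{q_{i,j,\J}^{-1}}\big)$ is a scalar times a $q$-commutator of $B_{\tau i,\I}k_{\tau i,\I}$ with $[B_{j,\I},B_{i,\I}]_{q_{i,j,\I}^{-1}}$. Reducing it to $B_{j,\I}$ needs a relation in $\Ui(\I)$ expressing the super-commutator of $B_{i,\I}$ and $B_{\tau i,\I}$ through $k_{i,\I}^{\pm1}$, with the exact coefficient.
\item For $\tau i\sim j\sim i$ you get a degree-five expression in $B_{\pm i,\I},B_{j,\I}$, which can only be collapsed by Serre-type relations in $\Ui(\I)$.
\item Case (iii) is worse still.
\end{itemize}
None of these relations is available, since there is no presentation of $\Ui(\I)$ (cf.\ \cref{rmk:par}). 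Each would have to be derived by hand in $\U(\I)$, which means more bookkeeping, not less.

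The direct route needs no relations in $\Ui$ at all. You conjugate by $\up_{i,\J}$ through \cref{Kmatrix}. You then use that $F_{j,\J}$ and $T'_{\bs_i,\I}\big(T_{w_\bu}(E_{\tau j,\I})K_{j,\I}^{-1}\big)$ commute with $\up_{i,\J}$; the latter is \eqref{eq:rktwo2} for the pair $(\J,\I)$, transported by $T'_{\bs_i,\I}$. Everything then reduces to $[E_i,F_i]$ and the explicit formulas for $T'_{\bs_i}$ in $\U(\J)$. Use this route for all rank-two cases, including (ii).
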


\begin{proof}
The formula \eqref{eq:TiBj2} of the action of $\TT'_{i,\I},i\neq \pm \pt$ follows from \cref{prop:qsrkone'} and Propositions~\ref{prop:Tirktwo3}-\ref{prop:Tirktwo4}. The formula \eqref{eq:T0Bj2} follows from \cref{prop:qsrkone'} and \cref{prop:TT0}. The formula \eqref{eq:TaBj2} follows from \cref{prop:qsrkone'} and \cref{prop:qsqs}.
\end{proof}

\begin{remark}
On the level of generators, the two relative braid group symmetries
$\TT_i$ and $\TT'_i$ are related by the assignment
\[
B_i\mapsto B_i,\qquad 
k_i\mapsto (-1)^{|i,\I|}k_i^{-1},\qquad
x\mapsto \sigma(x),\quad x\in \U_\bu(\I).
\]

At present, however, we cannot verify directly that this assignment extends to a well-defined anti-involution of $\Ui(\I)$ since we do not have a presentation of $\Ui(\I)$ by generators and relations in the super setting yet. Also, unlike the non-super case \cite[Proposition 3.12]{WZ23}, it seems hard to establish this anti-involution using the quasi $K$-matrix. Note that by \cref{QGoperators}, $\sigma(k_i)=k_i^{-1}$ does not have an additional sign since $k_i=K_i K_{\tau i}^{-1}$.

If such $\sigma^\imath$ exists, then one has
$\TT'_{i}= \sigma^\imath \circ \TT_{i} \circ \sigma^\imath.$
\end{remark}

\section{Verification of formulas of symmetries $\TT'_{i},\TT_{i}$}\label{sec:rktwo}
In this section, we show the existence of $x',x''$ in Theorem~\ref{thm:ibraid} for $x=B_{j,\I},j\neq i,\tau i$ and complete the proof of Theorem~\ref{thm:ibraid}. In the meantime, we obtain nontrivial rank-two formulas of symmetries $\TT'_{i},\TT_{i}$ and complete the proof of \cref{thm:formula} and \cref{thm:formula2}.

\subsection{Symmetries $\TT'_{i},\TT_{i},i\neq \pm \pt$}
In this subsection, we fix $i\in [a+1,a+r-1]$. In this case, $\bs_{i,\I}=s_{i}s_{\tau i,\I}$. Set $o_{i,\I}=o_{\tau i,\I}=\sqrt{(-1)^{|i,\I|}}$. By \eqref{balance}, $B_{i,\I}=F_{i,\I}-o_{i,\I} q^{-(\alpha_{i,\I},\alpha_{i,\I})/2} E_{\tau i,\I} K_{i,\I}^{-1}$. By \cref{QGoperators} and \cref{tau}, we have
\[
\sigma\tau(B_{\tau i,\I})=F_{i,\I}-o_{i,\I} q^{-(\alpha_{i,\I},\alpha_{i,\I})/2}  E_{\tau i,\I} K_{i,\I}.
\]

Let \( \I, \J \in \DiAIII \) such that \( \J = \bs_{i,\I}(\I) \). Then we have \( \I_\bu = \J_\bu \), and consequently \( \Ub(\I)=\Ub(\J) \). For any \( x_\I \in \Ub(\I) \), we denote by \( x_\J \) the identical element in \( \Ub(\J) \), with the notation highlighting its presentation relative to \( \J \).

Recall $q_{i,j,\J}$ from \eqref{qijJ}.
\begin{proposition}
\label{prop:Tirktwo1}
Let $\I,\J\in \DiAIII_\pt$ such that $\J=\bs_{i}(\I)$. 
For any $j\in \I_\circ$ such that $j\sim i$ and $j\not\sim \tau i$, we have 

\begin{itemize}
\item[(1)] $(\alpha_{i,\J},\alpha_{i,\J}+2\alpha_{j,\J})
=(\alpha_{j,\I},\alpha_{j,\I})-(\alpha_{j,\J},\alpha_{j,\J});$

\item[(2)]
$[B_{j,\J},B_{i,\J}]_{q_{i,j,\J}^{-1}} \up''_{i,\J} = \up''_{i,\J} T_{\bs_{i},\I}(B_{j,\I}).$
\end{itemize}
\end{proposition}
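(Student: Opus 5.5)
Part (1) is a root-lattice identity, which I would settle first. Here $i\in[\pt+1,\pt+r-1]$, so $\bs_{i,\I}=s_is_{\tau i,\I}$. Since $j\sim i$ and $j\not\sim\tau i$, the simple root $\alpha_{j,\I}$ depends only on the reflection $s_i$. Applying the (even or odd) reflection rule, we get $\alpha_{j,\I}=\alpha_{j,\J}+\alpha_{i,\J}$; in the odd case one uses the description of $\Pi_\alpha$ in the odd-reflection lemma, with $\alpha_{i,\J}=-\alpha_{i,\I}$. Expanding $(\alpha_{j,\J}+\alpha_{i,\J},\alpha_{j,\J}+\alpha_{i,\J})$ then gives (1) immediately. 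I would treat the two parities of $i$ uniformly this way, and invoke \cref{paritylemma} to track $|j,\J|=|j,\I|+|i,\I|$.

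For (2), I follow \cite[Proposition 5.2]{WZ23}. First I compute $T_{\bs_i,\I}(B_{j,\I})$ explicitly. Since $j\not\sim\tau i$ and $j\in\I_\circ$ is away from $\I_\bu$, we have $T_{w_\bu}(E_{\tau j,\I})=E_{\tau j,\I}$. Moreover $T_{s_{\tau i}}$ fixes $F_{j}$ and $K_j$, and $T_{s_i}$ sends $F_{j,\I}$ to $[F_{j,\J},F_{i,\J}]_{q_{i,j,\J}^{-1}}$ by \cref{braidoperator}. The $E_{\tau j}$ term is acted on by $T_{s_{\tau i}}$, giving a $q$-commutator of $E_{\tau i,\J}$ and $E_{\tau j,\J}$. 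So $T_{\bs_i,\I}(B_{j,\I})$ is a sum of $[F_{j,\J},F_{i,\J}]_{q^{-1}}$ and a scalar multiple of $[E_{\tau i,\J},E_{\tau j,\J}]\,K_{j,\J}^{-1}K_{i,\J}^{-1}$ (up to sign). The scalar is controlled by (1), so that it matches the parameter $\va^\dm_{j,\J}$ combined with $\va^\dm_{i,\J}$.

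Next I expand $[B_{j,\J},B_{i,\J}]_{q_{i,j,\J}^{-1}}$. It equals the same $F$-commutator, plus the matching $EK^{-1}$-commutator term, plus two cross terms of the form $F_{j}E_{\tau i}K_i^{-1}$ and $E_{\tau j}K_j^{-1}F_i$ (suitably $q$-commuted). The cross term involving $F_j$ and $E_{\tau i}$ becomes $(\ldots)F_jE_{\tau i}K_i^{-1}$ with $E_{\tau i}$ and $F_j$ supercommuting. I would then use \cref{lem:rktwo}, in its \eqref{eq:rktwo2} form, to commute $E_{\tau j,\J}K_{j,\J}^{-1}$ past $\up''_{i,\J}$, and \eqref{eq:rktwo1} transported by $T_{\bs_i}$. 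The aim is to reduce (2) to showing that
\[
\up''_{i,\J}\,T_{\bs_i,\I}(F_{j,\I})\,(\up''_{i,\J})^{-1}-T_{\bs_i,\I}(F_{j,\I})
\]
equals the cross terms.

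The strategy for this reduction is to write $T_{\bs_i,\I}(F_{j,\I})=[F_{j,\J},F_{i,\J}]_{q^{-1}}$. The factor $F_{j,\J}$ commutes with $\up''_{i,\J}$, since $\up''_{i,\J}$ lies in the subalgebra generated by $E_{i,\J}$ and $E_{\tau i,\J}$, which is disjoint from $j$. The conjugate of $F_{i,\J}$ by $\up''_{i,\J}$ is read off from \cref{prop:qsrkone}: conjugation sends $T_{\bs_i}(B_{i,\I})$ to a known element. Equivalently, I would use the rank-one identity $\up''B=\ldots$ to express $\up'' F_{i,\J}(\up'')^{-1}=F_{i,\J}+(\text{term in }E_{\tau i,\J}K^{\pm1})$. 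Substituting this into the $q$-commutator with $F_{j,\J}$ yields exactly the cross terms.

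\textbf{Main obstacle.} The delicate step is the bookkeeping of signs $(-1)^{|\cdot||\cdot|}$ and $q$-powers in the super $q$-commutators, including the $\new$-twists in the commutation of $E_{\tau i}$ with $F_j$. Matching these requires part (1) and \cref{lem:iparity}. I would first do the even case, which reproduces \cite{WZ23}, and then redo the computation with $|i,\J|=\one$, tracking parities via \cref{paritylemma}.
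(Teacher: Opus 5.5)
Part (1) is fine, and cleaner than the paper's case check. Since $s_{\tau i}$ fixes $\alpha_i$ and $\alpha_j$, the reflection $s_i$ (even or odd) gives $\alpha_{j,\I}=\alpha_{j,\J}+\alpha_{i,\J}$. Identity (1) then follows by expanding the norm in the common supertrace form.

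Part (2) has a genuine gap. Your reduction rests on the claim that $\up''_{i,\J}$ lies in the subalgebra generated by $E_{i,\J},E_{\tau i,\J}$. That is true of $\up_{i,\J}$, not of $\up''_{i,\J}=T_{\bs_i}(\up_{i,\I}^{-1})$. Since $T_{\bs_i,\I}(E_{i,\I})=-F_{i,\J}K_{i,\J}$ and $T_{\bs_i,\I}(E_{\tau i,\I})=-F_{\tau i,\J}K_{\tau i,\J}$, the element $\up''_{i,\J}$ is a series in $F_{i,\J}K_{i,\J}$ and $F_{\tau i,\J}K_{\tau i,\J}$. Two consequences follow:
\begin{itemize}
\item $F_{j,\J}$ does not commute with $\up''_{i,\J}$, because $j\sim i$.
\item $\up''_{i,\J}F_{i,\J}(\up''_{i,\J})^{-1}$ is not $F_{i,\J}$ plus a term in $E_{\tau i,\J}K^{\pm1}$.
\end{itemize}
More decisively, the identity you aim for is false. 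By \eqref{eq:rktwo1} transported by $T_{\bs_i}$, which you invoke yourself, $\up''_{i,\J}T_{\bs_i,\I}(F_{j,\I})(\up''_{i,\J})^{-1}-T_{\bs_i,\I}(F_{j,\I})=0$. The cross terms of $[B_{j,\J},B_{i,\J}]_{q_{i,j,\J}^{-1}}$ do not vanish, however. The one built from $F_{j,\J}$ and $E_{\tau i,\J}K_{i,\J}^{-1}$ is zero, but the other is not:
\[
[T_{w_\bu}(E_{\tau j,\J})K_{j,\J}^{-1},F_{i,\J}]_{q_{i,j,\J}^{-1}}=(-1)^{|i,\J||j,\J|}(q_{i,j,\J}-q_{i,j,\J}^{-1})\,F_{i,\J}\,T_{w_\bu}(E_{\tau j,\J})K_{j,\J}^{-1}\neq 0 .
\]

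So the roles of the two parts of $T_{\bs_i,\I}(B_{j,\I})$ are the reverse of what you describe. Its $F$-part $T_{\bs_i,\I}(F_{j,\I})$ commutes with $\up''_{i,\J}$. Its $E$-part $T_{\bs_i,\I}(T_{w_\bu}(E_{\tau j,\I})K_{j,\I}^{-1})$ contains $E_{\tau i,\J}$ and does not commute with $\up''_{i,\J}$. Conjugating this $E$-part is what produces the nonzero cross term above together with the $EE$-term. Your picture is the right one for $\TT'_i$ and $\up_{i,\J}$, as in \cref{prop:Tirktwo3}.

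The working route splits only the first slot of the $q$-commutator. The first piece is exact: $[F_{j,\J},B_{i,\J}]_{q_{i,j,\J}^{-1}}=[F_{j,\J},F_{i,\J}]_{q_{i,j,\J}^{-1}}=T_{\bs_i,\I}(F_{j,\I})$. For the second piece $[T_{w_\bu}(E_{\tau j,\J})K_{j,\J}^{-1},B_{i,\J}]_{q_{i,j,\J}^{-1}}$, conjugate by $(\up''_{i,\J})^{-1}$:
\begin{itemize}
\item move the first entry through using \eqref{eq:rktwo2};
\item replace $B_{i,\J}$ by $o_{i,\I}q^{(\alpha_{i,\J},\alpha_{i,\J})/2}T_{\bs_i,\I}(B_{\tau i,\I})k_{\tau i,\J}$ using \cref{prop:qsrkone}.
\end{itemize}
The $F_{i,\J}$-component then drops out of the $q$-commutator. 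The remaining factor $T_{\bs_i,\I}(F_{\tau i,\I})=-K_{\tau i,\J}^{-1}E_{\tau i,\J}$ yields $T_{\bs_i,\I}(T_{w_\bu}(E_{\tau j,\I})K_{j,\I}^{-1})$, with the scalar fixed by (1).

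A smaller slip: $T_{w_\bu}(E_{\tau j,\I})=E_{\tau j,\I}$ fails for $i=\pt+1$, $j=\pt\neq 0$, since $-\pt$ is adjacent to $\I_\bu$. Keep $T_{w_\bu}(E_{\tau j})$ as a block.
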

 


\begin{proof}
The identity (1) can be verified case by case. Note that, by Lemma~\ref{paritylemma}, if $|i,\J|=\bar{1}$, then $|j,\I|$ and $|j,\J|$ must have different parities.

We show the relation (2). It suffices to show the following two identities:
\begin{align}
\label{eq:diag1}
[F_{j,\J},B_{i,\J}]_{q_{i,j,\J}^{-1}} \up''_{i,\J} &= \up''_{i,\J} T_{\bs_{i},\I}(F_{j,\I}),
\\\label{eq:diag2}
 \big[T_{w_\bu}(E_{\tau j,\J})
K_{j,\J}^{-1},B_{i,\J}\big]_{q_{i,j,\J}^{-1}} \up''_{i,\J}
& = 
o_{i,\I}  q^{(\alpha_{j,\J},\alpha_{j,\J})/2-(\alpha_{j,\I},\alpha_{j,\I})/2}  \up''_{i,\J} T_{\bs_{i},\I}\Big(T_{w_\bu}(E_{\tau j,\I})K_{j,\I}^{-1}\Big).
\end{align}
(Note that $o_{i,\I}$ appears in \eqref{eq:diag2} since $|i,\I| +|j,\I|=|j,\J|$.)

Let us prove \eqref{eq:diag1}. By a direct computation, we have 
\begin{align*}
[F_{j,\J},B_{i,\J}]_{q_{i,j,\J}^{-1}}&=[F_{j,\J},F_{i,\J}]_{q_{i,j,\J}^{-1}}
-o_{i,\J} q^{-(\alpha_{i,\J},\alpha_{i,\J})/2} [F_{j,\J},E_{\tau i,\J} K_{i,\J}^{-1}]_{q_{i,j,\J}^{-1}}
\\
&=[F_{j,\J},F_{i,\J}]_{q_{i,j,\J}^{-1}}
-o_{i,\J} q^{-(\alpha_{i,\J},\alpha_{i,\J})/2} [F_{j,\J},E_{\tau i,\J}] K_{i,\J}^{-1}
\\
&= [F_{j,\J},F_{i,\J}]_{q_{i,j,\J}^{-1}}=T_{\bs_{i},\I}(F_{j,\I}).
\end{align*}
Thus, \eqref{eq:diag1} is equivalent to \eqref{eq:rktwo1}, and then \eqref{eq:diag1} follows.

Let us prove \eqref{eq:diag2}. By Theorem~\ref{Kmatrix}, $k_{\tau i,\J}$ commutes with $\up''_{i,\J}$. By Proposition~\ref{prop:qsrkone}, we have
\begin{align}\label{eq:diag0}
(\up''_{i,\J})^{-1} B_{i,\J}  \up''_{i,\J} =  o_{i,\I} q^{(\alpha_{i,\J},\alpha_{i,\J})/2} T_{\bs_{i},\I}(B_{\tau i,\I})k_{\tau i,\J} .
\end{align}
Using this formula and Lemma~\ref{lem:rktwo}, we obtain
\begin{align*}
&\quad (\up''_{i,\J})^{-1} \big[T_{w_\bu}(E_{\tau j,\J})K_{j,\J}^{-1},B_{i,\J}\big]_{q_{i,j,\J}^{-1}}  \up''_{i,\J} 
\\
&= o_{i,\I} q^{(\alpha_{i,\J},\alpha_{i,\J})/2} \big[T_{w_\bu}(E_{\tau j,\J})K_{j,\J}^{-1},T_{\bs_{i},\I}(B_{\tau i,\I})k_{\tau i,\J}\big]_{q_{i,j,\J}^{-1}}
\\
&= o_{i,\I} q^{(\alpha_{i,\J},\alpha_{i,\J})/2} \big[T_{w_\bu}(E_{\tau j,\J})K_{j,\J}^{-1},T_{\bs_{i},\I}(F_{\tau i,\I})k_{\tau i,\J}\big]_{q_{i,j,\J}^{-1}}
\\
&= -o_{i,\I} q^{-(\alpha_{i,\J},\alpha_{i,\J} )/2} \big[T_{w_\bu}(E_{\tau j,\J}), E_{\tau i,\J}\big]_{q_{i,j,\J}^{-1}}K_{j,\J}^{-1} K_{\tau i,\J}^{-1}k_{\tau i,\J}
\\
&=(-1)^{|i,\J||j,\J|}o_{i,\I}q^{-(\alpha_{i,\J},\alpha_{i,\J}+2\alpha_{j,\J})/2} \big[E_{\tau i,\I}, T_{w_\bu}(E_{\tau j,\J})\big]_{q_{i,j,\J}}K_{j,\J}^{-1} K_{i,\J}^{-1}
\\
&=o_{i,\I} q^{(\alpha_{j,\J},\alpha_{j,\J})/2-(\alpha_{j,\I},\alpha_{j,\I})/2}
T_{\bs_{i},\I}\Big(T_{w_\bu}(E_{\tau j,\J})K_{j,\J}^{-1}\Big)
\end{align*}
where we used the identity (1) in the last equality.

The desired \eqref{eq:diag2} follows from the above computation directly.
\end{proof}

\begin{proposition}
\label{prop:Tirktwo2}
Let $\I,\J\in \DiAIII_\pt$ such that $\J=\bs_{i}(\I)$. For any $j\in \I_\circ$ such that $\tau i\sim j\sim i$, we have 
\begin{align*} 
&\up''_{i,\J} T_{\bs_{i},\I}(B_{j,\I})\big(\up''_{i,\J}\big)^{-1}
\\
=& \big[[B_{j,\J},B_{i,\J}]_{q_{i,j,\J}^{-1}},B_{\tau i,\J}\big]_{q_{i,j,\J}^{-1}} 
-o_{i,\J}(-1)^{|i,\J|}[a_{ij,\J}]_i q^{-(\alpha_{i,\J},\alpha_{i,\J})/2-(\alpha_{i,\J},\alpha_{j,\J})}B_{j,\J} k_{i,\J} .
\end{align*}
\end{proposition}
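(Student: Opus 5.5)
We will follow the strategy of \cref{prop:Tirktwo1} and \cite[Proposition 5.4]{WZ23}: compute the conjugate $\up''_{i,\J} T_{\bs_i,\I}(B_{j,\I})(\up''_{i,\J})^{-1}$ from the known conjugation rules for its building blocks. Write $B_{j,\I}=F_{j,\I}+\va_{j,\I}^\dm T_{w_\bu}(E_{\tau j,\I})K_{j,\I}^{-1}$. Since $\bs_{i,\I}=s_is_{\tau i,\I}$ and $i\nsim\tau i$, \cref{braidoperator} gives
\[
T_{\bs_i,\I}(F_{j,\I})=\big[[F_{j,\J},F_{i,\J}]_{q_{i,j,\J}^{-1}},F_{\tau i,\J}\big]_{q_{\tau i,j,\J}^{-1}},
\]
up to the identification of the exponents, and a similar double $q$-commutator of $E_{\tau i,\J}$, $E_{i,\J}$ and $T_{w_\bu}(E_{\tau j,\J})$ times a Cartan factor for the second summand. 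Since $j=\tau j$ is the only way $j$ can be adjacent to both $i$ and $\tau i$ in \eqref{AIIIdiagram} when $i\neq\pm\pt$ (this forces $\pt=0$, $r$ small, or $j$ in the middle), this case is genuinely rank three. I will first record the parity and bilinear-form bookkeeping analogous to \cref{prop:Tirktwo1}(1), using \cref{paritylemma}.

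The main computation expands the right-hand side. Substituting $B_{k,\J}=F_{k,\J}+\va^\dm_{k,\J}T_{w_\bu}(E_{\tau k,\J})K_{k,\J}^{-1}$ into $\big[[B_{j,\J},B_{i,\J}]_{q_{i,j,\J}^{-1}},B_{\tau i,\J}\big]_{q_{i,j,\J}^{-1}}$ yields eight terms. The pure-$F$ term is $T_{\bs_i,\I}(F_{j,\I})$. The term with $E$'s in all three slots combines, via the super Serre-type identities, into the image of the $E$-part of $B_{j,\I}$. The mixed terms are handled with \eqref{EF}: each commutator $[F_{k,\J},E_{k,\J}]$ produces a Cartan element. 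Most mixed terms cancel, but one term does not, namely the one pairing $F_{i,\J}$ with the $E_{i,\J}$ coming from $B_{\tau i,\J}$. It leaves a multiple of $B_{j,\J}k_{i,\J}$ with coefficient $[a_{ij,\J}]_i$. This is exactly the correction term, and it has to be subtracted.

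Rather than conjugating the expanded expression directly, I will conjugate factor by factor. By \eqref{eq:diag0} and its $\tau$-analogue from \cref{prop:qsrkone}, $(\up''_{i,\J})^{-1}B_{i,\J}\up''_{i,\J}$ and $(\up''_{i,\J})^{-1}B_{\tau i,\J}\up''_{i,\J}$ are $T_{\bs_i,\I}(B_{\tau i,\I})k_{\tau i,\J}$ and $T_{\bs_i,\I}(B_{i,\I})k_{i,\J}$ up to scalars. The element $B_{j,\J}$ does not commute with $\up''_{i,\J}$ here, because $j$ is adjacent to both $i$ and $\tau i$, so \cref{lem:rktwo} does not apply. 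I will therefore instead proceed as in \cref{prop:Tirktwo1}: split $B_{j,\I}$ into its $F$-part and its $E$-part, and verify the two identities corresponding to \eqref{eq:diag1}--\eqref{eq:diag2} separately. For the $F$-part, I will write $T_{\bs_i,\I}(F_{j,\I})$ as the double commutator with $B_{i,\J}$, $B_{\tau i,\J}$ plus explicit correction terms. I will then use $F_{j,\J}$-weight arguments, which rest on the fact that $\up''_{i,\J}$ lies in $\U^+$ of weights in $\N(\alpha_i+\alpha_{\tau i})$ (here $w_\bu$ is trivial or acts away), and on \eqref{eq:rktwo1} transported by $T_{\bs_i}$.

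The hard step is the $F$-part, because $F_{j,\J}$ fails to commute with $\up''_{i,\J}$. I would first try to reduce it to a rank-two statement. Write $T_{\bs_i,\I}(F_{j,\I})=T_{s_i}\big(T_{s_{\tau i}}(F_{j,\I})\big)$ and $\up''_{i,\J}=T_{\bs_i}(\up_{i,\I}^{-1})$. Then the $F$-part identity is equivalent, after applying $T_{\bs_i}^{-1}$, to an explicit formula for $\up_{i,\I}^{-1}F_{j,\I}\up_{i,\I}$. This is computable from the explicit rank-one quasi $K$-matrix of type $\gl(1|1)\times\gl(1|1)$ or $\gl(2)\times\gl(2)$. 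In this rank-one setting $\up_{i,\I}$ is an explicit series in $E_iE_{\tau i}$, or it truncates in the odd case since $E_i^2=0$. Its commutator with $F_{j}$ involves only $[F_j,E_i]=0$ and $q$-commutations, which reduces everything to the Serre relations \eqref{Urel}. This explicit check is the main computational obstacle, especially keeping track of the signs $(-1)^{|i,\J||j,\J|}$ and of $o_{i,\J}$.
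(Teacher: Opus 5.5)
Your overall split of $B_{j,\I}$ into an $F$-part and an $E$-part, and your use of \eqref{eq:diag0}, \eqref{eq:diag5}, match the paper. The problem is your reading of \cref{lem:rktwo}. That lemma requires only $j\neq i,\tau i$; adjacency plays no role, and the lemma is exactly the input that drives both halves of the proof. That $F_{j,\J}$ fails to commute with $\up''_{i,\J}$ is beside the point; the same happens in \cref{prop:Tirktwo1}. Applying $T_{\bs_i,\I}$ to \eqref{eq:rktwo1} shows that $T_{\bs_i,\I}(F_{j,\I})$ commutes with $\up''_{i,\J}$. So the $F$-part of the left-hand side is just $T_{\bs_i,\I}(F_{j,\I})$, and no conjugation is needed. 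You simply expand the right-hand side. There $[F_{j,\J},B_{i,\J}]_{q_{i,j,\J}^{-1}}=[F_{j,\J},F_{i,\J}]_{q_{i,j,\J}^{-1}}$ because $[F_{j,\J},E_{\tau i,\J}]=0$. The $E_{i,\J}$-summand of $B_{\tau i,\J}$ then yields, via \eqref{EF}, precisely the multiple of $[a_{ij,\J}]_iF_{j,\J}k_{i,\J}$ that the correction term removes. Your fallback through $\up_{i,\I}^{-1}F_{j,\I}\up_{i,\I}$ and an explicit rank-one quasi $K$-matrix just returns $F_{j,\I}$, i.e.\ \eqref{eq:rktwo1}, so the step you call the main obstacle is trivial. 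Also, $\up''_{i,\J}$ is not in $\U^+$. The map $T_{\bs_i,\I}$ sends $E_{i,\I},E_{\tau i,\I}$ to $-F_{i,\J}K_{i,\J}$ and $-F_{\tau i,\J}K_{\tau i,\J}$. Hence $\up''_{i,\J}$ lies in a completion of the subalgebra these generate, and weight arguments premised on $\up''_{i,\J}\in\U^+$ are unfounded.

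The substantive part is the $E$-part, and there your proposal has no working argument, since you have set aside the tool it needs. Here $\I_\bu=\emptyset$ and $\tau j=j$, so \eqref{eq:rktwo2} says that $E_{j,\J}K_{j,\J}^{-1}$ commutes with $\up''_{i,\J}$, as does $k_{i,\J}$. Only this lets you compute $(\up''_{i,\J})^{-1}\big[[E_{j,\J}K_{j,\J}^{-1},B_{i,\J}]_{q_{i,j,\J}^{-1}},B_{\tau i,\J}\big]_{q_{i,j,\J}^{-1}}\up''_{i,\J}$ factor by factor from \eqref{eq:diag0} and \eqref{eq:diag5}. In the result, $T_{\bs_i,\I}(F_{i,\I})=-K_{i,\J}^{-1}E_{i,\J}$ and $T_{\bs_i,\I}(F_{\tau i,\I})=-K_{\tau i,\J}^{-1}E_{\tau i,\J}$ give an $EEE$-term, which is matched with $T_{\bs_i,\I}(E_{j,\I}K_{j,\I}^{-1})$ via \eqref{eq:diag6}. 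Meanwhile $T_{\bs_i,\I}(E_{\tau i,\I}K_{i,\I}^{-1})$, which is $F_{\tau i,\J}$ times a Cartan element, pairs with the $E_{\tau i,\J}$ coming from $T_{\bs_i,\I}(F_{\tau i,\I})$. This produces the $[a_{ij,\J}]_iE_{j,\J}K_{j,\J}^{-1}k_{i,\J}$ half of the correction. So your claim that the whole correction $B_{j,\J}k_{i,\J}$ comes from pairing $F_{i,\J}$ with $E_{i,\J}$ is wrong; that pairing gives only the $F_{j,\J}k_{i,\J}$ half. Nor can the unconjugated expansion simply reassemble into $T_{\bs_i,\I}(B_{j,\I})$. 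The $E$-part of the left-hand side is the $\up''_{i,\J}$-conjugate of $T_{\bs_i,\I}(E_{j,\I}K_{j,\I}^{-1})$, and it differs from $T_{\bs_i,\I}(E_{j,\I}K_{j,\I}^{-1})$ because $E_{j,\I}K_{j,\I}^{-1}$ does not commute with $\up_{i,\I}$.
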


\begin{proof}
Note that the case $\tau i\sim j\sim i$ occurs only if $\I_\bu=\emptyset,\tau j=j$. In this case, we have $d_i a_{ij}=d_{\tau i}a_{\tau i,j}$ and $|j|=\bar{0}$. It suffices to prove the following two relations
\begin{align}\label{eq:diag3}
& \up''_{i,\J} T_{\bs_{i},\I}(F_{j,\I})\big(\up''_{i,\J}\big)^{-1}
\\\notag
=& \big[[F_{j,\J},B_{i,\J}]_{q_{i,j,\J}^{-1}},B_{\tau i,\J}\big]_{q_{i,j,\J}^{-1}} 
-o_{i,\J}(-1)^{|i,\J|} [a_{ij,\J}]_i q^{-(\alpha_{i,\J},\alpha_{i,\J})/2-(\alpha_{i,\J},\alpha_{j,\J})} F_{j,\J} k_{i,\J} ,
\\\label{eq:diag4}
 &\up''_{i,\J} T_{\bs_{i},\I}\big(E_{j,\I}K_{j,\I}^{-1}\big)\big(\up''_{i,\J}\big)^{-1}
 \\\notag
 =& q^{(\alpha_{j,\I},\alpha_{j,\I})-(\alpha_{j,\J},\alpha_{j,\J})} \big[[E_{j,\J}K_{j,\J}^{-1},B_{i,\J}]_{q_{i,j,\J}^{-1}},B_{\tau i,\J}\big]_{q_{i,j,\J}^{-1}} 
 \\\notag
 &-o_{i,\J}(-1)^{|i,\J|} [a_{ij,\J}]_i q^{(\alpha_{j,\I},\alpha_{j,\I})-(\alpha_{j,\J},\alpha_{j,\J})-(\alpha_{i,\J},\alpha_{i,\J})/2-(\alpha_{i,\J},\alpha_{j,\J})} E_{j,\J}K_{j,\J}^{-1} k_{i,\J} .
\end{align}

Let us prove \eqref{eq:diag3}. We rewrite the first term in the right-hand side of \eqref{eq:diag3} as below
\begin{align*}
&\quad\big[[F_{j,\J},B_{i,\J}]_{q_{i,j,\J}^{-1}},B_{\tau i,\J}\big]_{q_{i,j,\J}^{-1}}
\\
&=\big[[F_{j,\J},F_{i,\J}]_{q_{i,j,\J}^{-1}},B_{\tau i,\J}\big]_{q_{i,j,\J}^{-1}}
\\
&=\big[[F_{j,\J},F_{i,\J}]_{q_{i,j,\J}^{-1}},F_{\tau i,\J}\big]_{q_{i,j,\J}^{-1}}
-o_{i,\J}q^{-(\alpha_{i,\J},\alpha_{i,\J})/2}\big[[F_{j,\J},F_{i,\J}]_{q_{i,j,\J}^{-1}},E_{i,\J}\big] K_{\tau i,\J}^{-1}
\\
&=T_{\bs_{i},\I}(F_{j,\I}) +o_{i,\J} (-1)^{|i,\J|} q^{-(\alpha_{i,\J},\alpha_{i,\J})/2} \left[F_{j,\J},\frac{K_{i,\J}-K_{i,\J}^{-1}}{q_i-q_i^{-1}}\right]_{q_{i,j,\J}^{-1}} K_{\tau i,\J}^{-1}
\\
&=T_{\bs_{i},\I}(F_{j,\I})+o_{i,\J}(-1)^{|i,\J|} [a_{ij,\J}]_i
q^{-(\alpha_{i,\I},\alpha_{i,\I})/2-(\alpha_{i,\J},\alpha_{j,\J})} F_{j,\J}  k_{i,\J}.
\end{align*}
Thus, \eqref{eq:diag3} is equivalent to \eqref{eq:rktwo1}, and hence \eqref{eq:diag3} follows.

Let us prove \eqref{eq:diag4}. By Proposition~\ref{prop:qsrkone}, we have \eqref{eq:diag0} and the following identity
\begin{align}\label{eq:diag5}
(\up''_{i,\J})^{-1} B_{\tau i,\J}  \up''_{i,\J} =  o_{i,\J} q^{(\alpha_{i,\J},\alpha_{i,\J})/2} T_{\bs_{i},\I}(B_{i,\I})k_{i,\J} .
\end{align}
Via a case-by-case checking, we have
\begin{align}\label{eq:diag6}
(\alpha_{i,\J},\alpha_{i,\J})+2(\alpha_{i,\J}, \alpha_{j,\J})
=\big((\alpha_{j,\I},\alpha_{j,\I})-(\alpha_{j,\J},\alpha_{j,\J})\big)/2.
\end{align}
By \cref{paritylemma}, $o_{i,\I}=o_{i,\J}$. Using \eqref{eq:rktwo2} and \eqref{eq:diag0}, \eqref{eq:diag5}-\eqref{eq:diag6}, we have
\begin{align*}
& (\up''_{i,\J})^{-1} \big[[E_{ j,\J}K_{j,\J}^{-1},B_{i,\J}]_{q_{i,j,\J}^{-1}},B_{\tau i,\J}\big]_{q_{i,j,\J}^{-1}} \up''_{i,\J} 
\\
=& (-1)^{|i,\J|}q^{(\alpha_{i,\J},\alpha_{i,\J})} 
\big[[E_{j,\J}K_{j,\J}^{-1},T_{\bs_{i},\I}(B_{\tau i,\I})k_{\tau i,\J}]_{q_{i,j,\J}^{-1}},
T_{\bs_{i},\I}(B_{ i,\I})k_{i,\J}\big]_{q_{i,j,\J}^{-1}}
\\
=& (-1)^{|i,\J|} \big[[E_{j,\J} K_{j,\J}^{-1},T_{\bs_{i},\I}(B_{\tau i,\I}) ]_{q_{i,j,\J}^{-1}},
T_{\bs_{i},\I}(B_{ i,\I}) \big]_{q_{i,j,\J}^{-1}}
\\
=& (-1)^{|i,\J|}
\big[[E_{ j,\J}K_{j,\J}^{-1},T_{\bs_{i},\I}(F_{\tau i,\I})]_{q_{i,j,\J}^{-1}},
T_{\bs_{i},\I}(F_{ i,\I})\big]_{q_{i,j,\J}^{-1}}
\\
&-(-1)^{|i,\J|} o_{i,\J} q^{-(\alpha_{i,\J},\alpha_{i,\J})/2} 
\Big[\big[E_{ j,\J}K_{j,\J}^{-1},T_{\bs_{i},\I}(F_{\tau i,\I})\big]_{q_{i,j,\J}^{-1}},
T_{\bs_{i},\I}(E_{\tau i,\I}K_{i,\I}^{-1})\Big]_{q_{i,j,\J}^{-1}}
\\
=& (-1)^{|i,\J|}q^{-2(\alpha_{i,\J},\alpha_{i,\J})} 
\big[[E_{ j,\J}K_{j,\J}^{-1},E_{\tau i,\J} K_{\tau i,\J}^{-1}]_{q_{i,j,\J}^{-1}},
E_{ i,\J} K_{i,\J}^{-1} \big]_{q_{i,j,\J}^{-1}}
\\
&-  o_{i,\J} q^{-(\alpha_{i,\J},\alpha_{i,\J})/2} 
\Big[\big[E_{ j,\J},E_{\tau i,\J} \big]_{q_{i,j,\J}^{-1}},
F_{\tau i,\J} \Big]  K_{j,\J}^{-1}  K_{i,\J}
\\
=& q^{-2(\alpha_{i,\J},\alpha_{i,\J})-4(\alpha_{i,\J},\alpha_{j,\J})} 
\big[E_{ i,\J}, [E_{\tau i,\J}, E_{ j,\J}]_{q_{i,j,\J}} \big]_{q_{i,j,\J} } 
K_{j,\J}^{-1}K_{\tau i,\J}^{-1}K_{i,\J}^{-1}
\\
&-o_{i,\J} q^{-(\alpha_{i,\J},\alpha_{i,\J})/2} 
 \bigg[E_{ j,\J},\frac{K_{\tau i,\J}-K_{\tau i,\J}^{-1}}{q_{\tau i,\J}-q_{\tau i,\J}^{-1}} \bigg]_{q_{i,j,\J}^{-1}}
   K_{j,\J}^{-1}  K_{i,\J}
\\
=& q^{-(\alpha_{j,\I},\alpha_{j,\I})+(\alpha_{j,\J},\alpha_{j,\J})} 
\big[E_{ i,\J}, [E_{\tau i,\J}, E_{ j,\J}]_{q_{i,j,\J}} \big]_{q_{i,j,\J} } 
K_{j,\J}^{-1}K_{\tau i,\J}^{-1}K_{i,\J}^{-1}
\\
&+o_{i,\J}(-1)^{|i,\J|}[a_{ij,\J}]_i q^{-(\alpha_{i,\J},\alpha_{i,\J})/2-(\alpha_{i,\J},\alpha_{j,\J})} E_{j,\J}K_{j,\J}^{-1} k_{i,\J}.
\end{align*}
This computation shows that \eqref{eq:diag4} holds. The proof is completed.  
\end{proof}


\begin{proposition}
\label{prop:Tirktwo3}
Let $\I,\J\in \DiAIII$ such that $\J=\bs_{i,\I}(\I)$. For any $j\in \I_\circ$ such that $\tau i\sim j\sim i$, we have 
\begin{align*} 
&\up_{i,\J} \, T'_{\bs_{i},\I}(B_{j,\I})\, \up_{i,\J}^{-1}
\\
=& \big[B_{\tau i,\J},[B_{i,\J},B_{j,\J}]_{q_{i,j,\J}^{-1}}\big]_{q_{i,j,\J}^{-1}} 
-o_{i,\J} [a_{ij,\J}]_i q^{-(\alpha_{i,\J},\alpha_{i,\J})/2-(\alpha_{i,\J},\alpha_{j,\J})}B_{j,\J} k_{\tau i,\J}.
\end{align*}
\end{proposition}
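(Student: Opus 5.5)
We will mirror the proof of \cref{prop:Tirktwo2}, now with $\up_{i,\J}$ and $T'_{\bs_i,\I}$ in place of $\up''_{i,\J}$ and $T_{\bs_i,\I}$, and with \cref{prop:qsrkone'} replacing \cref{prop:qsrkone}. As noted there, the configuration $\tau i\sim j\sim i$ forces $\I_\bu=\emptyset$, $\tau j=j$ and $|j|=\bar 0$. Hence $T_{w_\bu}=\Id$, $\bs_{i,\I}=s_is_{\tau i,\I}$, and $B_{j,\I}=F_{j,\I}-o_{j,\I}q^{-(\alpha_{j,\I},\alpha_{j,\I})/2}E_{j,\I}K_{j,\I}^{-1}$. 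We split $B_{j,\I}$ into its $F$-part and its $EK^{-1}$-part and prove two identities separately, each of the form $\up_{i,\J}T'_{\bs_i,\I}(\,\cdot\,)\up_{i,\J}^{-1}=(\text{explicit element})$. The target is then recovered as the appropriate linear combination, after matching the scalars via the case-by-case identity \eqref{eq:diag6} and \cref{paritylemma}.

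\emph{$F$-part.} Since $T'_{\bs_i}=T'_{s_i}T'_{s_{\tau i}}$ and $i\nsim\tau i$, we have $T'_{\bs_i,\I}(F_{j,\I})=[F_{\tau i,\J},[F_{i,\J},F_{j,\J}]_{q_{i,j,\J}^{-1}}]_{q_{i,j,\J}^{-1}}$, up to the ordering conventions of $T'=\sigma T\sigma$. Now expand $[B_{\tau i,\J},[B_{i,\J},F_{j,\J}]_{q^{-1}}]_{q^{-1}}$ using $B_{i,\J}=F_{i,\J}-o\,q^{-(\alpha_i,\alpha_i)/2}E_{\tau i,\J}K_{i,\J}^{-1}$. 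In the inner bracket, $E_{\tau i,\J}$ supercommutes with $F_{j,\J}$. In the outer bracket, the $E_{i,\J}K_{\tau i,\J}^{-1}$ term acts on $[F_{i,\J},F_{j,\J}]$ through \eqref{EF}, producing $[F_{j,\J},(K_i-K_i^{-1})/(q_i-q_i^{-1})]$. This yields $[a_{ij,\J}]_i\,F_{j,\J}k_{\tau i,\J}$ with exactly the stated coefficient. Thus the $F$-part identity reduces to $F_{j,\J}$ commuting with $\up_{i,\J}$, which is \eqref{eq:rktwo1} applied on $\J$.

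\emph{$EK^{-1}$-part.} Here $T'_{\bs_i,\I}(E_{j,\I}K_{j,\I}^{-1})$ is a nested $q$-commutator of $E_{j,\J}$ with $E_{i,\J}$ and $E_{\tau i,\J}$, times Cartan elements. We conjugate the right-hand side by $\up_{i,\J}$. By \eqref{eq:rkone4}--\eqref{eq:rkone5} (with $T_{w_\bu}=\Id$), $\up_{i,\J}^{-1}B_{i,\J}\up_{i,\J}$ and $\up_{i,\J}^{-1}B_{\tau i,\J}\up_{i,\J}$ are scalar multiples of $T'_{\bs_i,\I}(B_{\tau i,\I})k_{\tau i,\J}^{\pm1}$ and $T'_{\bs_i,\I}(B_{i,\I})k_{i,\J}^{\pm1}$. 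Also $E_{j,\J}K_{j,\J}^{-1}$ commutes with $\up_{i,\J}$, by the analogue of \eqref{eq:rktwo2}, which holds for $\up_{i,\J}$ by the same weight argument as in \cref{lem:rktwo}. So we may compute $\up_{i,\J}^{-1}[B_{\tau i,\J},[B_{i,\J},E_{j,\J}K_{j,\J}^{-1}]]\up_{i,\J}$ directly inside $\U(\J)$. We then expand $T'_{\bs_i,\I}(B_{\tau i,\I})$ and $T'_{\bs_i,\I}(B_{i,\I})$ into their $E$-type and $F$-type pieces.

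\emph{Main obstacle.} The hard step is this last expansion. The pure $E$-terms must reassemble into $T'_{\bs_i,\I}(E_{j,\I}K_{j,\I}^{-1})$. The mixed $E$--$F$ term must collapse via \eqref{EF} into $[a_{ij,\J}]_i\,E_{j,\J}K_{j,\J}^{-1}k_{\tau i,\J}$, which cancels against the correction term. Carrying this out requires tracking the super signs $(-1)^{|i,\J||j,\J|}$, the $K$-commutation powers, and the exponent identity \eqref{eq:diag6} relating $(\alpha_{j,\I},\alpha_{j,\I})$ and $(\alpha_{j,\J},\alpha_{j,\J})$. To organize this, we will reuse the bracket manipulations from the proof of \cref{prop:Tirktwo2} essentially verbatim, with the roles of $i$ and $\tau i$ swapped and $q$-exponents inverted. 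A consistency check at the end is that the answer must be the image of \cref{prop:Tirktwo2}'s formula under the formal $\sigma$-type correspondence described in the remark after \cref{thm:formula2}.
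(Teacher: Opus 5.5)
Splitting $B_{j,\I}$ into its $F$-part and its $EK^{-1}$-part is the right start. However, mirroring \cref{prop:Tirktwo2} verbatim fails, because $\up_{i,\J}$ lies in $\U^+(\J)$, whereas $\up''_{i,\J}$ is a $T_{\bs_i}$-image of an element of $\U^+(\I)$. As a result, the commutation facts you invoke are attached to the wrong halves.

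\emph{The $F$-part.} You expand the \emph{unconjugated} bracket using $B_{i,\J}=F_{i,\J}-o_{i,\J}q^{-(\alpha_{i,\J},\alpha_{i,\J})/2}E_{\tau i,\J}K_{i,\J}^{-1}$. Although $E_{\tau i,\J}$ supercommutes with $F_{j,\J}$, the factor $K_{i,\J}^{-1}$ gives $[E_{\tau i,\J}K_{i,\J}^{-1},F_{j,\J}]_{q_{i,j,\J}^{-1}}=(q_{i,j,\J}-q_{i,j,\J}^{-1})F_{j,\J}E_{\tau i,\J}K_{i,\J}^{-1}\neq 0$. Likewise, the $E_{i,\J}K_{\tau i,\J}^{-1}$-term does not reduce to a supercommutator in the outer bracket. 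So the expansion does not yield $T'_{\bs_i,\I}(F_{j,\I})$ plus the stated correction. It would not help even if it did: an unconjugated identity reduces the claim to $T'_{\bs_i,\I}(F_{j,\I})$ commuting with $\up_{i,\J}$, which is false, and not to \eqref{eq:rktwo1}.

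What works is to conjugate first. By \cref{Kmatrix} for the rank-one pair on $\J$, $\up_{i,\J}^{-1}B_{\tau i,\J}\up_{i,\J}=\sigma\tau(B_{i,\J})$ and $\up_{i,\J}^{-1}B_{i,\J}\up_{i,\J}=\sigma\tau(B_{\tau i,\J})=F_{i,\J}-o_{i,\J}q^{-(\alpha_{i,\J},\alpha_{i,\J})/2}E_{\tau i,\J}K_{i,\J}$. Also $F_{j,\J}$ commutes with $\up_{i,\J}$ by \eqref{eq:rktwo1}. With $K_{i,\J}$ in place of $K_{i,\J}^{-1}$, the inner $q^{-1}$-bracket with $F_{j,\J}$ now vanishes. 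The outer one becomes $q^{-(\alpha_{i,\J},\alpha_{j,\J})}[E_{i,\J},[F_{i,\J},F_{j,\J}]_{q_{i,j,\J}^{-1}}]K_{\tau i,\J}$, and \eqref{EF} turns this into the $[a_{ij,\J}]_iF_{j,\J}k_{\tau i,\J}$ correction.

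\emph{The $EK^{-1}$-part.} Your claim that $E_{j,\J}K_{j,\J}^{-1}$ commutes with $\up_{i,\J}$ is false. The component of $\up_{i,\J}$ in weight $\alpha_{i,\J}+\alpha_{\tau i,\J}$ is a nonzero multiple of $E_{i,\J}E_{\tau i,\J}$ (forced by the intertwining relation). Against it, $E_{j,\J}K_{j,\J}^{-1}E_{i,\J}E_{\tau i,\J}=q^{-2(\alpha_{i,\J},\alpha_{j,\J})}E_{j,\J}E_{i,\J}E_{\tau i,\J}K_{j,\J}^{-1}$, which differs from $E_{i,\J}E_{\tau i,\J}E_{j,\J}K_{j,\J}^{-1}$.

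The identity \eqref{eq:rktwo2} is a statement about $\up''$. Its correct transport to $\up_{i,\J}$ is obtained by applying $T'_{\bs_i,\I}$ to \eqref{eq:rktwo2} with the roles of $\I,\J$ exchanged, using $T'_{\bs_i,\I}(\up''_{i,\I})=\up_{i,\J}^{-1}$. It says that $T'_{\bs_i,\I}(E_{j,\I}K_{j,\I}^{-1})$ commutes with $\up_{i,\J}$. So for this half the conjugation simply drops out of the left-hand side. What remains is an unconjugated identity in $\U(\J)$: $q^{(\alpha_{j,\J},\alpha_{j,\J})-(\alpha_{j,\I},\alpha_{j,\I})}T'_{\bs_i,\I}(E_{j,\I}K_{j,\I}^{-1})$ equals $[B_{\tau i,\J},[B_{i,\J},E_{j,\J}K_{j,\J}^{-1}]_{q_{i,j,\J}^{-1}}]_{q_{i,j,\J}^{-1}}$ minus the corresponding correction term.

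This is proved by direct expansion, which does work here. Indeed, $[F_{i,\J},E_{j,\J}K_{j,\J}^{-1}]_{q_{i,j,\J}^{-1}}=0$, and the $F_{\tau i,\J}$-piece of $B_{\tau i,\J}$ produces the $[a_{ij,\J}]_i$-term via \eqref{EF}.

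In short, the argument needs conjugation via \cref{Kmatrix} for the $F$-part and direct expansion for the $EK^{-1}$-part. This is the reverse of the pattern in \cref{prop:Tirktwo2}.
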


\begin{proof}
Recall that the case $\tau i\sim j\sim i$ occurs only if $\I_\bu=\emptyset,\tau j=j$. It suffices to prove the following two relations
\begin{align}\label{eq:diag7}
&\up_{i,\J} \, T'_{\bs_{i},\I}(F_{j,\I})\, \up_{i,\J}^{-1}
\\\notag
=& \big[B_{\tau i,\J},[B_{i,\J},F_{j,\J}]_{q_{i,j,\J}^{-1}}\big]_{q_{i,j,\J}^{-1}} 
-o_{i,\J} [a_{ij,\J}]_i q^{-(\alpha_{i,\J},\alpha_{i,\J})/2-(\alpha_{i,\J},\alpha_{j,\J})}F_{j,\J} k_{\tau i,\J},
\\\label{eq:diag8}
&q^{(\alpha_{j,\J},\alpha_{j,\J})-(\alpha_{j,\I},\alpha_{j,\I})} \up_{i,\J} \, T'_{\bs_{i},\I}(E_{j,\I} K_{j,\I}^{-1})\, \up_{i,\J}^{-1}
\\\notag
=&\big[B_{\tau i,\J},[B_{i,\J},E_{j,\J}K_{j,\J}^{-1}]_{q_{i,j,\J}^{-1}}\big]_{q_{i,j,\J}^{-1}} 
\\\notag
&-o_{i,\J} [a_{ij,\J}]_i q^{-(\alpha_{i,\J},\alpha_{i,\J})/2-(\alpha_{i,\J},\alpha_{j,\J})}E_{j,\J}K_{j,\J}^{-1} k_{\tau i,\J}.
\end{align}

We show \eqref{eq:diag7}. By \cref{Kmatrix} and \cref{lem:rktwo}, we have
\begin{align*}
&\quad \up_{i,\J}^{-1} \big[B_{\tau i,\J},[B_{i,\J},F_{j,\J}]_{q_{i,j,\J}^{-1}}\big]_{q_{i,j,\J}^{-1}} \up_{i,\J}
\\
&= \big[\sigma \tau (B_{i,\J}),[\sigma \tau (B_{\tau i,\J}),F_{j,\J}]_{q_{i,j,\J}^{-1}}\big]_{q_{i,j,\J}^{-1}}
\\
&=\big[F_{\tau i,\J},[F_{i,\J},F_{j,\J}]_{q_{i,j,\J}^{-1}}\big]_{q_{i,j,\J}^{-1}}
 -o_{i,\J} q^{-(\alpha_{i,\J},\alpha_{i,\J})/2}\big[E_{i,\J} K_{\tau i,\J} ,[F_{i,\J},F_{j,\J}]_{q_{i,j,\J}^{-1}}\big]_{q_{i,j,\J}^{-1}}
 \\
&= T'_{\bs_{i},\I}(F_{j,\I})-o_{i,\J} 
q^{-(\alpha_{i,\J},\alpha_{i,\J})/2-(\alpha_{i,\J},\alpha_{j,\J})}
\big[E_{i,\J}  ,[F_{i,\J},F_{j,\J}]_{q_{i,j,\J}^{-1}}\big]K_{\tau i,\J}
\\
&=T'_{\bs_{i},\I}(F_{j,\I})-o_{i,\J} 
q^{-(\alpha_{i,\J},\alpha_{i,\J})/2-(\alpha_{i,\J},\alpha_{j,\J})}
 \left[\frac{K_{i,\J}-K_{i,\J}^{-1}}{q_{i,\J}-q_{i,\J}^{-1}},F_{j,\J}\right]_{q_{i,j,\J}^{-1}} K_{\tau i,\J}
 \\
&=T'_{\bs_{i},\I}(F_{j,\I})+o_{i,\J} 
q^{-(\alpha_{i,\J},\alpha_{i,\J})/2-(\alpha_{i,\J},\alpha_{j,\J})} [a_{ij,\J}]_{i} F_{j,\J} k_{\tau i,\J}.
\end{align*}
This proves \eqref{eq:diag7}.

We next show \eqref{eq:diag8}. By \cref{lem:rktwo}, we have $T'_{\bs_{i},\I}(E_{j,\I} K_{j,\I}^{-1})$ commutes with $\up_{i,\J}$ and hence it suffices to show that
\begin{align}\label{eq:diag9}
q^{(\alpha_{j,\J},\alpha_{j,\J})-(\alpha_{j,\I},\alpha_{j,\I})} T'_{\bs_{i},\I}(E_{j,\I} K_{j,\I}^{-1})
=&\big[B_{\tau i,\J},[B_{i,\J},E_{j,\J}K_{j,\J}^{-1}]_{q_{i,j,\J}^{-1}}\big]_{q_{i,j,\J}^{-1}} 
\\\notag
&-o_{i,\J} [a_{ij,\J}]_i q^{-(\alpha_{i,\J},\alpha_{i,\J})/2-(\alpha_{i,\J},\alpha_{j,\J})}E_{j,\J}K_{j,\J}^{-1} k_{\tau i,\J}.
\end{align}
By a direct computation, we have
\begin{align*}
&\quad \big[B_{\tau i,\J},[B_{i,\J},E_{j,\J}K_{j,\J}^{-1}]_{q_{i,j,\J}^{-1}}\big]_{q_{i,j,\J}^{-1}}
\\
&=(-1)^{|i,\J|} q^{-(\alpha_{i,\J},\alpha_{i,\J})}
\big[E_{ i,\J} K_{\tau i,\J}^{-1},[E_{\tau i,\J} K_{i,\J}^{-1},
E_{j,\J}K_{j,\J}^{-1}]_{q_{i,j,\J}^{-1}}\big]_{q_{i,j,\J}^{-1}}
\\
&\quad -o_{i,\J} q^{-(\alpha_{i,\J},\alpha_{i,\J})/2}
\big[F_{\tau i},[E_{\tau i,\J} K_{i,\J}^{-1},
E_{j,\J}K_{j,\J}^{-1}]_{q_{i,j,\J}^{-1}}\big]_{q_{i,j,\J}^{-1}}
\\
&= q^{-2(\alpha_{i,\J},\alpha_{i,\J})-4(\alpha_{i,\J},\alpha_{j,\J})}
\big[[ E_{j,\J},E_{\tau i,\J} ]_{q_{i,j,\J}},E_{ i,\J} \big]_{q_{i,j,\J}}
K_{\tau i,\J}^{-1}K_{i,\J}^{-1}K_{j,\J}^{-1}
\\
&\quad -o_{i,\J} q^{-(\alpha_{i,\J},\alpha_{i,\J})/2-(\alpha_{i,\J},\alpha_{j,\J})}
\big[F_{\tau i,\J},[E_{\tau i,\J} ,
E_{j,\J}]_{q_{i,j,\J}^{-1}}\big]K_{i,\J}^{-1}K_{j,\J}^{-1}
\\
&= q^{(\alpha_{j,\J},\alpha_{j,\J})-(\alpha_{j,\I},\alpha_{j,\I})}
T'_{\bs_{i},\I}(E_{j,\I} K_{j,\I}^{-1})
+o_{i,\J} [a_{ij,\J}]_i q^{-(\alpha_{i,\J},\alpha_{i,\J})/2-(\alpha_{i,\J},\alpha_{j,\J})}E_{j,\J}K_{j,\J}^{-1} k_{\tau i,\J}.
\end{align*}
This proves \eqref{eq:diag9}. The proof is completed.
\end{proof}

\begin{proposition}
\label{prop:Tirktwo4}
Let $\I,\J\in \DiAIII_\pt$ such that $\J=\bs_{i}(\I)$. 
For any $j\in \I_\circ$ such that $j\sim i$ and $j\not\sim \tau i$, we have 
\[
[B_{i,\J},B_{j,\J}]_{q_{i,j,\J}^{-1}} \up_{i,\J} = \up_{i,\J} T'_{\bs_{i},\I}(B_{j,\I}).
\]
\end{proposition}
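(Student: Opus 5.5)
We follow the pattern of \cref{prop:Tirktwo1} and \cref{prop:Tirktwo3}. Since $B_{j,\I}=F_{j,\I}+\va^\dm_{j,\I}T_{w_\bu}(E_{\tau j,\I})K^{-1}_{j,\I}$, we split the claimed identity into two identities, one for each summand. Both sides are linear in $B_{j}$, and the structure constants match once we use $|i,\I|+|j,\I|=|j,\J|$ (\cref{paritylemma}) and the identity (1) of \cref{prop:Tirktwo1}. Concretely, we will prove
\begin{align*}
[B_{i,\J},F_{j,\J}]_{q_{i,j,\J}^{-1}}\,\up_{i,\J}&=\up_{i,\J}\,T'_{\bs_i,\I}(F_{j,\I}),\\
[B_{i,\J},T_{w_\bu}(E_{\tau j,\J})K_{j,\J}^{-1}]_{q_{i,j,\J}^{-1}}\,\up_{i,\J}&=c\,\up_{i,\J}\,T'_{\bs_i,\I}\big(T_{w_\bu}(E_{\tau j,\I})K_{j,\I}^{-1}\big).
\end{align*}
Here $c=o_{i,\I}^{\pm1}q^{\pm((\alpha_{j,\J},\alpha_{j,\J})-(\alpha_{j,\I},\alpha_{j,\I}))/2}$ is chosen so that the two pieces recombine into $B_{j,\J}$ with the distinguished parameter \eqref{balance}.

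\emph{First identity.} Conjugate by $\up_{i,\J}$. By \cref{lem:rktwo}, $F_{j,\J}$ commutes with $\up_{i,\J}$. By \cref{Kmatrix}, $\up_{i,\J}^{-1}B_{i,\J}\up_{i,\J}=\sigma\tau(B_{\tau i,\J})=F_{i,\J}-o_{i,\J}q^{-(\alpha_{i,\J},\alpha_{i,\J})/2}E_{\tau i,\J}K_{i,\J}$. Since $j\nsim\tau i$ and $j\neq\tau i$, the $E_{\tau i,\J}$-term super-commutes with $F_{j,\J}$. Its $q$-commutator with $F_{j,\J}$ therefore vanishes, because the $K_{i,\J}$ factor produces exactly the twist $q_{i,j,\J}^{-1}$. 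What remains is $[F_{i,\J},F_{j,\J}]_{q_{i,j,\J}^{-1}}$. We then identify this with $T'_{\bs_i,\I}(F_{j,\I})=T'_{i}T'_{\tau i}(F_{j,\I})$, using the formulas of \cref{braidoperator} transported via \eqref{eq:sTs}. Since $\tau i\nsim j$, the map $T'_{\tau i}$ acts trivially on $F_j$.

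\emph{Second identity.} Here we argue as in \eqref{eq:diag9}. By \cref{lem:rktwo} (its analogue for $T'$ and $\up_{i,\J}$, which follows from weight considerations as in \cite[Lemma 5.1]{WZ23}), $T_{w_\bu}(E_{\tau j,\J})K_{j,\J}^{-1}$ commutes with $\up_{i,\J}$. So does $T'_{\bs_i,\I}(T_{w_\bu}(E_{\tau j,\I})K_{j,\I}^{-1})$, which is of the same form. It therefore suffices to check the identity without $\up_{i,\J}$. Expand $B_{i,\J}=F_{i,\J}-o_{i,\J}q^{-(\alpha_{i,\J},\alpha_{i,\J})/2}E_{\tau i,\J}K_{i,\J}^{-1}$. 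The $F_{i,\J}$ part super-commutes with $T_{w_\bu}(E_{\tau j,\J})$, since $\tau j\neq i$ lies far from $i$. The $K$-factors then force the $q$-commutator to vanish. What remains is a scalar multiple of $[E_{\tau i,\J},T_{w_\bu}(E_{\tau j,\J})]_{q}K^{-1}_{i,\J}K^{-1}_{j,\J}$. We match this with $T'_{\tau i}$ applied to $T_{w_\bu}(E_{\tau j,\I})K_{j,\I}^{-1}$ (here $T'_i$ acts trivially), after moving $K$'s to the right and invoking identity (1) of \cref{prop:Tirktwo1} for the exponent.

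The main obstacle is this last bookkeeping. We must verify that the sign $(-1)^{|i,\J||j,\J|}$ coming from super-commuting past $E_{\tau i,\J}$, together with the $q$-powers produced by reordering the $K$'s and the factor $o_{i,\J}$, reproduces exactly the scalar $\va^\dm_{j,\J}/\va^\dm_{j,\I}$. We would handle this case by case on the parities $|i,\J|,|j,\I|$, exactly as in the proof of \cref{prop:Tirktwo1}.
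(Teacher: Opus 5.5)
Your route is the one the paper intends; its proof only points to the strategy of \cref{prop:Tirktwo3}. You split $B_{j,\I}$ into its $F_j$-part and its $T_{w_\bu}(E_{\tau j})K_j^{-1}$-part. The first you handle by conjugating with $\up_{i,\J}$ via \cref{Kmatrix} and \eqref{eq:rktwo1}. For the second you move $\up_{i,\J}$ past the image under $T'_{\bs_i,\I}$ and compute directly. Your first identity is fine as written.

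In the second identity, however, one assertion is false, and it is the stated reason for the key reduction. It is not true that $X_\J:=T_{w_\bu}(E_{\tau j,\J})K_{j,\J}^{-1}$ commutes with $\up_{i,\J}$. For $i\neq\pm\pt$, $\up_{i,\J}$ lies in the subalgebra generated by $E_{i,\J},E_{\tau i,\J}$. The intertwining relation \eqref{intertwine} forces its degree-one component to be a nonzero multiple of $E_{i,\J}E_{\tau i,\J}$. But $\tau j\sim\tau i$, so $X_\J$ does not $q$-commute with $E_{\tau i,\J}$.

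Conversely, if $X_\J$ did commute with $\up_{i,\J}$, then together with the identity you want and \cref{Kmatrix}, you would get $[E_{\tau i,\J}(K_{i,\J}-K_{i,\J}^{-1}),X_\J]_{q_{i,j,\J}^{-1}}=0$, which fails.

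Nor is $T'_{\bs_i,\I}(X_\I)$ ``of the same form'' as $X_\J$. Up to a scalar, it is $T_{w_\bu}$ of a $q$-commutator of $E_{\tau j,\J}$ and $E_{\tau i,\J}$, times $K_{i,\J}^{-1}K_{j,\J}^{-1}$. The factor $K_{i,\J}^{-1}$ comes from $T'_i(K_j^{-1})$, so $T'_i$ does not act trivially on this element.

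What you actually need is only that $T'_{\bs_i,\I}(X_\I)$ commutes with $\up_{i,\J}$, and that is true. Apply \eqref{eq:rktwo2} to the pair $(\J,\I)$, which is allowed since $\I=\bs_i(\J)$. It says $X_\I$ commutes with $T_{\bs_i,\J}(\up_{i,\J}^{-1})$. Then apply $T'_{\bs_i,\I}$, the inverse of $T_{\bs_i,\J}$. This is how the paper argues in the proof of \cref{prop:Tirktwo3}.

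With this justification the reduction to $[B_{i,\J},X_\J]_{q_{i,j,\J}^{-1}}=c\,T'_{\bs_i,\I}(X_\I)$ is valid. No commutation property of $X_\J$ itself is needed, because $\up_{i,\J}$ already sits on the right of your left-hand side. Your remaining computation and the scalar bookkeeping then go through as you describe.
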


\begin{proof} 
The strategy in the proof of this proposition is similar to the one of \cref{prop:Tirktwo3}; hence, we omit the details.
\end{proof}

\subsection{Symmetries $\TT'_0,\TT_0$}
In this subsection, we consider the case $a=0$. In this case, $\I_\bullet=\emptyset$, $\bs_{0,\I}=s_{0,\I}$ and the parity for the node $0$ is even for any $\I$. Hence, $s_{0}(\I)=\I$. We will construct the symmetry $\TT_0$ associated to the node $0$.

 \begin{proposition}
 \label{prop:TT0}
Let $\I\in \DiAIII_\pt$ and $j\in \I$ such that $j\sim i$. We have
\begin{align}
[B_{i,\I},B_{j,\I}]_{q_{i,j,\I}^{-1}} \up_{0,\I} & = \up_{0,\I} T'_{0,\I}(B_{j,\I}),
\\
[B_{j,\I},B_{i,\I}]_{q_{i,j,\I}^{-1}} \up''_{0,\I} & = \up''_{0,\I} T_{0,\I}(B_{j,\I}).
\end{align}

\end{proposition}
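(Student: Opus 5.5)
We treat the two identities along the lines of \cref{prop:Tirktwo1} and \cref{prop:Tirktwo4}, specialized to $\pt=0$. Here $\I_\bu=\emptyset$, $i=0$, $\tau(0)=0$, $|0,\I|=\overline 0$, and $\bs_{0,\I}=s_{0,\I}$ with $s_0(\I)=\I$. So $T_{\bs_0}=T_0$ is an automorphism of $\U(\I)$ and $\up''_{0,\I}=T_0(\up_{0,\I}^{-1})$. The neighbours of $0$ are $j=\pm1$, with $\tau j=-j\neq j$ and $-j\not\sim 0$ in the non-adjacent sense. We have $B_{j,\I}=F_{j,\I}+\va_j E_{-j,\I}K_{j,\I}^{-1}$, where $E_{-j}$ commutes with $F_0$ and $E_0$ up to $K$-factors, because $-j\neq 0$ and $[E_{-j},F_0]=0$. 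Also $B_{0,\I}=F_0-q^{-1}E_0K_0^{-1}$, using $o_0=1$ and $(\alpha_0,\alpha_0)=2$ once the normalization is fixed.

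For the second identity we split $B_{j,\I}$ into its two summands and prove
\begin{align*}
[F_{j},B_{0}]_{q_{0,j}^{-1}}\up''_{0}&=\up''_{0}T_0(F_j),\\
[E_{-j}K_j^{-1},B_0]_{q_{0,j}^{-1}}\up''_0&=\up''_0 T_0(E_{-j}K_j^{-1}).
\end{align*}
For the first, we expand $[F_j,B_0]_{q^{-1}_{0,j}}$. The $E_0K_0^{-1}$ part contributes $[F_j,E_0]K_0^{-1}=0$, so the bracket reduces to $[F_j,F_0]_{q_{0,j}^{-1}}=T_0(F_j)$ by \cref{braidoperator}. The identity then becomes the commutation of $F_j$ with $\up''_0$, which is \eqref{eq:rktwo1} transported by $T_0$; equivalently it holds because $\up''_0\in\U^+_{\N\alpha_0}$ commutes with $F_j$ for $j\neq 0$. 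For the second, $T_0(E_{-j}K_j^{-1})=E_{-j}K_j^{-1}T_0(K_{?})$-type expressions simplify, since $-j\nsim 0$ gives $T_0(E_{-j})=E_{-j}$ while $T_0(K_j^{-1})=K_0^{-1}K_j^{-1}$. We then move $\up''_0$ across $B_0$ using \eqref{eq:diag0}, which here reads $(\up''_0)^{-1}B_0\up''_0=c\,T_0(B_0)$ with $T_0(B_0)=-K_0^{-1}E_0+\dots$ from \cref{prop:qsrkone}. We also use that $E_{-j}K_j^{-1}$ commutes with $\up''_0$, by \eqref{eq:rktwo2}. The resulting bracket reduces to $[E_{-j}K_j^{-1},F_0K_0]$-type terms, which vanish up to a scalar that must match. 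Checking that the scalar coincides, using $(\alpha_0,\alpha_j)=-1$ and the $q$-power from $\va^\dm_0$, is the main bookkeeping step.

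The first identity is proved in the same way. Because $\up_{0,\I}$ intertwines $B_0$ with $\sigma\tau(B_0)=F_0-q^{-1}E_0K_0$ (\cref{Kmatrix}), conjugating $[B_0,B_j]_{q_{0,j}^{-1}}$ by $\up_0^{-1}$ gives $[F_0-q^{-1}E_0K_0,\,B_j]_{q_{0,j}^{-1}}$. The $E_0K_0$ term commutes past both summands of $B_j$ up to the $q$-twist absorbed by the bracket, and contributes zero. The remaining $[F_0,F_j]_{q_{0,j}^{-1}}+\va_j[F_0,E_{-j}K_j^{-1}]_{q_{0,j}^{-1}}$ equals $T'_0(F_j)+\va_jT'_0(E_{-j}K_j^{-1})=T'_0(B_j)$ by \cref{braidoperator} and \eqref{eq:sTs}.

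The main obstacle is verifying that the scalars from $\va^\dm$, the $q$-powers in the twisted commutators, and the $K$-commutations agree exactly. We expect this to follow from $d_0c_{0j}=-1$ and the evenness of node $0$, which removes all super signs.
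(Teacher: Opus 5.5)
Your plan (split $B_{j}=F_{j}+\va_{j}E_{-j}K_{j}^{-1}$ and verify each piece directly, rather than appealing to the non-super computation in \cite{WZ23} as the paper does) is viable. The $F_j$-pieces are fine: $[F_j,B_0]_{q_{0,j}^{-1}}=T_0(F_j)$, and $\up_{0}^{-1}[B_0,F_j]_{q_{0,j}^{-1}}\up_{0}=[\sigma\tau(B_0),F_j]_{q_{0,j}^{-1}}=T'_0(F_j)$ by \eqref{eq:rktwo1}. Note, however, that $\up''_{0}=T_0(\up_{0}^{-1})$ is a series in $F_0K_0$, since $T_0(E_0)=-F_0K_0$. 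So it is not in $\U^+$ and does not commute with $F_j$; what you need is that $T_0(F_j)$ commutes with $\up''_0$.

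The $E_{-j}K_j^{-1}$-pieces rest on a false claim. For $\pt=0$ the node $0$ is adjacent to both $1$ and $-1$, so $-j\sim 0$. Hence $T_0(E_{-j})=[E_0,E_{-j}]_{q_{0,j}}$ (note $q_{0,-j}=q_{0,j}$), not $E_{-j}$. The target in your second displayed identity is therefore $T_0(E_{-j}K_j^{-1})=[E_0,E_{-j}]_{q_{0,j}}K_0^{-1}K_j^{-1}$, of weight $\alpha_0+\alpha_{-j}$. It comes from the $E_0$-component of $(\up''_0)^{-1}B_0\up''_0=T_0(B_0)=-K_0^{-1}E_0-\va_0F_0K_0^2$. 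The $F_0K_0^2$-component contributes $0$ to the twisted bracket. With your target $E_{-j}K_0^{-1}K_j^{-1}$, no choice of scalar can make the identity hold.

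The first identity fails for the same reason. You conjugate $[B_0,B_j]_{q_{0,j}^{-1}}$ by $\up_{0}^{-1}$ as if $B_j$ commuted with $\up_{0,\I}$. But $\up_{0,\I}$ is a nontrivial series in $E_0^{2}$, and because $-j\sim 0$, $E_{-j}K_j^{-1}$ does not commute with $E_0^2$, even up to a scalar. \cref{lem:rktwo} gives the commutation only for $F_j$. Your term-by-term claims are also reversed: $[F_0,E_{-j}K_j^{-1}]_{q_{0,j}^{-1}}=[F_0,E_{-j}]K_j^{-1}=0$, so your computation would force $T'_0(E_{-j}K_j^{-1})=0$, while $[E_0K_0,E_{-j}K_j^{-1}]_{q_{0,j}^{-1}}\neq 0$.

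The fix is the route of \eqref{eq:diag8}--\eqref{eq:diag9}. First, $T'_0(E_{-j}K_j^{-1})=[E_{-j},E_0]_{q_{0,j}}K_0^{-1}K_j^{-1}$ commutes with $\up_{0,\I}$ (apply $T'_0$ to \eqref{eq:rktwo2}). So it suffices to check in $\U$, without any conjugation, that $[B_0,E_{-j}K_j^{-1}]_{q_{0,j}^{-1}}=\va_0[E_0K_0^{-1},E_{-j}K_j^{-1}]_{q_{0,j}^{-1}}=T'_0(E_{-j}K_j^{-1})$. This scalar match requires the correct distinguished parameter $\va^\dm_0=-q^{-(\alpha_0,\alpha_0)}$, not $-q^{-(\alpha_0,\alpha_0)/2}$ as you wrote: in \eqref{balance} one has $w_\bu\alpha_{-0}=\alpha_0$, which doubles the exponent. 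Correspondingly, $\sigma\tau(B_0)=F_0-E_0K_0$, not $F_0-q^{-1}E_0K_0$. With these corrections your direct computation does prove the proposition.
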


\begin{proof}
Note that node $0$ is even, and hence the proof for this proposition is essentially the same as its non-super counterpart in \cite[Theorem 4.7 and 6.1]{WZ23}.
\end{proof}
 
 \subsection{Symmetries $\TT'_\pt,\TT_\pt,\pt\neq 0$}
 In this subsection, we will construct symmetry $\TT_{\pt,\I}$ associated to the node $a$ for $a\neq 0$; see \eqref{AIIIdiagram}. In this case, we have
 \[
 \bs_{\pt,\I}=s_{a} s_{\pt-1}\cdots s_{-\pt+1} s_{-\pt} s_{-\pt+1}\cdots s_{\pt-1} s_{\pt,\I}.
 \]
Set $o_{\pt,\I}=\sqrt{(-1)^{|\pt,\I|}}$. By \eqref{balance}, $B_{\pt,\I}=F_{\pt,\I}-o_{\pt,\I} q^{-(\alpha_{\pt,\I},\alpha_{\pt,\I}+w_\bu \alpha_{-\pt,\I})/2}T_{w_\bu}(E_{-\pt,\I}) K^{-1}_{\pt,\I}$. 

Recall from \eqref{qijJ} that $q_{\pt,\pt+1,\J}=q^{(\alpha_{\pt,\J},\alpha_{\pt+1,\J})}$ for $\J\in \mathcal{D}_{m,n}$.

\begin{lemma}\label{lem:qs1}
Let $\I,\J\in \DiAIII_\pt$ such that $\J=\bs_\pt(\I)$. We have
\begin{align}
\label{eq:qs1}
\begin{split}
T_{\bs_\pt,\I}(F_{\pt+1,\I})
= &
\big[[F_{\pt+1,\J},B_{\pt,\J}]_{q_{\pt,\pt+1,\J}^{-1}}, 
T_{w_\bu}^{-1}(B_{-\pt,\J})\big]_{q_{\pt,\pt+1,s_\pt(\I)}^{-1}}
\\
&-o_{\pt,\J} (-1)^{|\pt,\J|} q^{-(\alpha_{\pt,\J},\alpha_{\pt,\J}+w_\bu \alpha_{-\pt,\J})/2} [c_{\pt,\pt+1,\J}]_\pt\cK_{\pt,\J} F_{\pt+1,\J}.
\end{split}
\end{align}
\end{lemma}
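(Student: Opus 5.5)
We will verify \eqref{eq:qs1} by expanding the right-hand side in terms of Chevalley generators of $\U(\J)$ and matching it with an explicit expression for the left-hand side. For the left-hand side, we factor $\bs_{\pt,\I}=s_\pt\, s_{\pt-1}\cdots s_{-\pt+1}\, s_{-\pt}\, s_{-\pt+1}\cdots s_{\pt,\I}$ and apply \cref{braidoperator} step by step to $F_{\pt+1,\I}$. Only the outermost $s_\pt$ and the innermost $s_{\pt,\I}$ touch node $\pt+1$; the middle string $s_{\pt-1}\cdots s_{-\pt}\cdots s_{\pt-1}$ acts on the already produced factor $F_{\pt}$. Concretely, $T_{s_\pt}(F_{\pt+1})=[F_{\pt+1},F_\pt]_{q^{-1}}$. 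The middle reflections send $F_\pt$ to $T_{w_\bu}^{-1}(F_{-\pt})$, up to the standard identification, because $s_{\pt-1}\cdots s_{-\pt}\cdots s_{\pt-1}$ relates to $w_\bu$ and $s_{-\pt}$ via $\bs_\pt w_\bu$ being the longest element of $\I_{\bu,\pt}$, as used in \cref{prop:qsrkone'}. The last $s_\pt$ then produces a second bracket. The outcome we aim for is
\[
T_{\bs_\pt,\I}(F_{\pt+1,\I})=\big[[F_{\pt+1,\J},F_{\pt,\J}]_{q_{\pt,\pt+1,\J}^{-1}},T_{w_\bu}^{-1}(F_{-\pt,\J})\big]_{q_{\pt,\pt+1,s_\pt(\I)}^{-1}} .
\]
The second $q$-parameter is computed in the intermediate diagram $s_\pt(\I)$, since the parity of $\pt+1$ changes there when $\pt$ is odd (\cref{paritylemma}). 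This is the analogue of the non-super rank-two formula in \cite{WZ23}.

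For the right-hand side, we substitute $B_{\pt,\J}=F_{\pt,\J}-o_{\pt,\J}q^{-(\cdots)/2}T_{w_\bu}(E_{-\pt,\J})K_{\pt,\J}^{-1}$ into the inner bracket. The $E$-part commutes with $F_{\pt+1,\J}$ up to the $K$-factor, since $T_{w_\bu}(E_{-\pt})$ has weight supported on $\I_\bu\cup\{-\pt\}$, which is disjoint from and not adjacent to $\pt+1$. With the twisting exponent $q_{\pt,\pt+1,\J}^{-1}$, this contribution cancels exactly, as in the computation proving \eqref{eq:diag1}. Hence the inner bracket equals $[F_{\pt+1,\J},F_{\pt,\J}]_{q^{-1}}$. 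We then expand $T_{w_\bu}^{-1}(B_{-\pt,\J})=T_{w_\bu}^{-1}(F_{-\pt,\J})-o\,q^{(\cdots)}E_{\pt,\J}T_{w_\bu}^{-1}(K_{-\pt,\J}^{-1})$ in the outer bracket. The $F$-part reproduces the left-hand side above. The $E$-part gives $\big[[F_{\pt+1},F_\pt]_{q^{-1}},E_\pt\,T_{w_\bu}^{-1}(K^{-1}_{-\pt})\big]$. Using \eqref{EF}, $[F_{\pt+1},E_\pt]=0$, and the commutation of the Cartan factor, this becomes a multiple of
\[
\Big[F_{\pt+1},\tfrac{K_\pt-K_\pt^{-1}}{q_\pt-q_\pt^{-1}}\Big]_{q^{-1}}T_{w_\bu}^{-1}(K_{-\pt}^{-1})=\pm[c_{\pt,\pt+1,\J}]_\pt\,q^{(\cdots)}F_{\pt+1}K_{\pt}T_{w_\bu}(K_{-\pt}^{-1}).
\]
Here the last step uses $T_{w_\bu}^{-1}(K_{-\pt})=T_{w_\bu}(K_{-\pt})$ on the Cartan part. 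After commuting $\cK_{\pt,\J}$ past $F_{\pt+1,\J}$, this matches the correction term $o_{\pt,\J}(-1)^{|\pt,\J|}q^{(\cdots)}[c_{\pt,\pt+1,\J}]_\pt\cK_{\pt,\J}F_{\pt+1,\J}$ of \eqref{eq:qs1}, which is then moved to the other side.

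We expect the main obstacle to be the exact bookkeeping of signs and $q$-powers, not the structure of the argument. This has two sources. The first is the super sign $(-1)^{|x||y|}$ in the brackets, which changes between $\J$ and $s_\pt(\I)$. The second is the parity-dependent factor $o_{\pt,\J}=\sqrt{(-1)^{|\pt,\J|}}$, together with the exponent $(\alpha_\pt,\alpha_\pt+w_\bu\alpha_{-\pt})$, which depends on the case. We will handle this as in \cref{prop:Tirktwo1}(1). First we isolate a numerical identity among the bilinear form values $(\alpha_{\pt,\J},\alpha_{\pt+1,\J})$, $(\alpha_{\pt,\J},w_\bu\alpha_{-\pt,\J})$, and the corresponding values in $s_\pt(\I)$. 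We then verify it case by case over the parities of $\pt$ and $\pt+1$, using \cref{lem:iparity} and \cref{paritylemma}. With that identity in hand, the coefficient comparison in the previous paragraph is mechanical.
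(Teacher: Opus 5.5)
Your proposal is correct and is essentially the paper's own proof. You first obtain $T_{\bs_\pt,\I}(F_{\pt+1,\I})=\big[[F_{\pt+1,\J},F_{\pt,\J}]_{q_{\pt,\pt+1,\J}^{-1}},T_{w_\bu}^{-1}(F_{-\pt,\J})\big]_{q_{\pt,\pt+1,s_\pt(\I)}^{-1}}$. Then you expand the right-hand side of \eqref{eq:qs1}: the $E$-part of $B_{\pt,\J}$ drops out of the inner bracket, and the $E$-part of $T_{w_\bu}^{-1}(B_{-\pt,\J})$ produces the $[c_{\pt,\pt+1,\J}]_\pt\cK_{\pt,\J}F_{\pt+1,\J}$ term via \eqref{EF}, exactly as in the paper.

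One small correction. It is the whole product $T_{s_\pt}T_{s_{\pt-1}}\cdots T_{s_{-\pt}}\cdots T_{s_{\pt-1}}$, not the middle string alone, that sends $F_{\pt,s_\pt(\I)}$ to $T_{w_\bu}^{-1}(F_{-\pt,\J})$. The weight identity $s_\pt s_{\pt-1}\cdots s_{-\pt}\cdots s_{\pt-1}(\alpha_\pt)=w_\bu(\alpha_{-\pt})$ shows this, since the middle string alone gives weight $\alpha_{-\pt}+\cdots+\alpha_\pt$. The numerical identity you plan to isolate is precisely the first line of \eqref{eq:qs6}.
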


\begin{proof}
On one side, we have
\begin{align*}
T_{\bs_\pt,\I}(F_{\pt+1,\I})&=T_{s_\pt s_{\pt-1} \cdots s_{-\pt} \cdots s_{\pt-1},s_\pt(\I)}\Big([F_{\pt+1,s_\pt(\I)},F_{a,s_\pt(\I)}]_{q_{\pt,\pt+1,s_\pt(\I)}^{-1}}\Big)
\\
&=\big[[F_{\pt+1,\J},F_{\pt,\J}]_{q_{\pt,\pt+1,\J}^{-1}}, 
T_{s_\pt s_{\pt-1} \cdots s_{-\pt} \cdots s_{\pt-1},s_\pt(\I)}
\big(F_{a,s_\pt(\I)}\big)\big]_{q_{\pt,\pt+1,s_\pt(\I)}^{-1}}
\\
&= \big[[F_{\pt+1,\J},F_{\pt,\J}]_{q_{\pt,\pt+1,\J}^{-1}}, 
T_{w_\bu}^{-1}(F_{-\pt,\J})\big]_{q_{\pt,\pt+1,s_\pt(\I)}^{-1}}.
\end{align*}
 (cf. \cite[Lemma A.17]{WZ23} for a similar calculation in the non-super setting.)
 
On the other side, we rewrite the first term on right-hand side of \eqref{eq:qs1} as follows
\begin{align*}
&\quad \big[[F_{\pt+1,\J},B_{\pt,\J}]_{q_{\pt,\pt+1,\J}^{-1}}, 
T_{w_\bu}^{-1}(B_{-\pt,\J})\big]_{q_{\pt,\pt+1,s_\pt(\I)}^{-1}}
\\
&= \big[[F_{\pt+1,\J},F_{\pt,\J}]_{q_{\pt,\pt+1,\J}^{-1}}, 
T_{w_\bu}^{-1}(F_{-\pt,\J})\big]_{q_{\pt,\pt+1,s_\pt(\I)}^{-1}}
\\
&\quad -o_{\pt,\J}q^{-(\alpha_{\pt,\J},\alpha_{\pt,\J}+w_\bu \alpha_{-\pt,\J})/2}
\big[[F_{\pt+1,\J},F_{\pt,\J}]_{q_{\pt,\pt+1,\J}^{-1}}, 
E_{\pt,\J} T_{w_\bu}^{-1}(K_{-\pt,\J}^{-1})\big]_{q_{\pt,\pt+1,s_\pt(\I)}^{-1}}
\\
&= \big[[F_{\pt+1,\J},F_{\pt,\J}]_{q_{\pt,\pt+1,\J}^{-1}}, 
T_{w_\bu}^{-1}(F_{-\pt,\J})\big]_{q_{\pt,\pt+1,s_\pt(\I)}^{-1}}
\\
&\quad +o_{\pt,\J}(-1)^{|\pt,\J|} q^{-(\alpha_{\pt,\J},\alpha_{\pt,\J}+w_\bu \alpha_{-\pt,\J})/2}
\left[F_{\pt+1,\J},\frac{K_{\pt,\J}-K_{\pt,\J}^{-1}}{q_{\pt,\J}-q_{\pt,\J}^{-1}}\right]_{q_{\pt,\pt+1,\J}^{-1}} T_{w_\bu}^{-1}(K_{-\pt,\J}^{-1})
\\
&=T_{\bs_\pt,\I}(F_{\pt+1,\I})
+o_{\pt,\J} (-1)^{|\pt,\J|} q^{-(\alpha_{\pt,\J},\alpha_{\pt,\J}+w_\bu \alpha_{-\pt,\J})/2} [c_{\pt,\pt+1,\J}]_\pt
\cK_{\pt,\J} F_{\pt+1,\J}.
\end{align*}
Hence, \eqref{eq:qs1} follows.
\end{proof}

\begin{proposition}\label{prop:qs}
Let $\I,\J\in \DiAIII_\pt$ such that $\J=\bs_\pt(\I)$. We have
\begin{align}
\label{eq:qs3}
\begin{split}
\up''_{\pt,\J} T_{\bs_\pt,\I}(B_{\pt+1,\I}) \big(\up''_{\pt,\J}\big)^{-1}
= &
\big[[B_{\pt+1,\J},B_{\pt,\J}]_{q_{\pt,\pt+1,\J}^{-1}}, 
T_{w_\bu}^{-1}(B_{-\pt,\J})\big]_{q_{\pt,\pt+1,s_\pt(\I)}^{-1}}
\\
&-o_{\pt,\J} (-1)^{|\pt,\J|} q^{-(\alpha_{\pt,\J},\alpha_{\pt,\J}+w_\bu \alpha_{-\pt,\J})/2} [c_{\pt,\pt+1,\J}]_\pt\cK_{\pt,\J} B_{\pt+1,\J}.
\end{split}
\end{align}
\end{proposition}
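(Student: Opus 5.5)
Since $B_{\pt+1,\I}=F_{\pt+1,\I}+\va_{\pt+1,\I}E_{-\pt-1,\I}K^{-1}_{\pt+1,\I}$ (note $T_{w_\bu}$ fixes $E_{-\pt-1}$ as $-\pt-1\nsim\I_\bu$), I will split the identity \eqref{eq:qs3} into an $F$-part and an $E$-part, exactly as in the proofs of \cref{prop:Tirktwo1} and \cref{prop:Tirktwo2}. The right-hand side of \eqref{eq:qs3} is linear in $B_{\pt+1,\J}$ (it appears once in each term), so it splits into the same expression with $B_{\pt+1,\J}$ replaced by $F_{\pt+1,\J}$, plus the expression with $B_{\pt+1,\J}$ replaced by $\va_{\pt+1,\J}E_{-\pt-1,\J}K_{\pt+1,\J}^{-1}$. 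Correspondingly I will prove
\begin{align*}
\up''_{\pt,\J} T_{\bs_\pt,\I}(F_{\pt+1,\I}) \big(\up''_{\pt,\J}\big)^{-1}
&= \big[[F_{\pt+1,\J},B_{\pt,\J}]_{q_{\pt,\pt+1,\J}^{-1}}, T_{w_\bu}^{-1}(B_{-\pt,\J})\big]_{q_{\pt,\pt+1,s_\pt(\I)}^{-1}} - c\,\cK_{\pt,\J}F_{\pt+1,\J},
\end{align*}
with $c$ the scalar in \eqref{eq:qs3}, together with the analogous identity for the $E$-part, up to the ratio $\va_{\pt+1,\I}/\va_{\pt+1,\J}$.

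For the $F$-part, \cref{lem:qs1} already identifies the right-hand side with $T_{\bs_\pt,\I}(F_{\pt+1,\I})$. The identity then reduces to the statement that $T_{\bs_\pt,\I}(F_{\pt+1,\I})$ commutes with $\up''_{\pt,\J}=T_{\bs_\pt}(\up_{\pt,\I}^{-1})$. This is $T_{\bs_\pt}$ applied to \eqref{eq:rktwo1} of \cref{lem:rktwo} with $j=\pt+1$.

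For the $E$-part, the left-hand side is $T_{\bs_\pt,\I}(E_{-\pt-1,\I}K^{-1}_{\pt+1,\I})$ conjugated by $\up''_{\pt,\J}$. I will instead conjugate the right-hand side by $(\up''_{\pt,\J})^{-1}$, using three inputs:
\begin{itemize}
\item \cref{prop:qsrkone} (with $\tau_\pt=\tau$) gives $(\up''_{\pt,\J})^{-1}B_{\pm\pt,\J}\up''_{\pt,\J}$ as a multiple of $T_{\bs_\pt,\I}(T_{w_\bu}^{2}(B_{\mp\pt,\I}))$ times a Cartan factor. I will rewrite it through $T_{w_\bu}$-conjugates; this is legitimate because $T_{w_\bu}$ fixes $\up''_{\pt,\J}$ by \cref{lem:blackfix}.
\item \eqref{eq:rktwo2} says that $E_{-\pt-1,\J}K_{\pt+1,\J}^{-1}$ commutes with $\up''_{\pt,\J}$.
\item $\cK_{\pt,\J}$ commutes with $\up''_{\pt,\J}$ by \cref{Kmatrix}.
\end{itemize}
After this conjugation, the nested bracket has $E_{-\pt-1,\J}K^{-1}_{\pt+1,\J}$ in the inner slot and images of $B_{\mp\pt,\I}$ under $T_{\bs_\pt,\I}$ in the other slots. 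I then expand each such $B$ into its $F$-part and $E$-part. Three kinds of terms appear:
\begin{itemize}
\item The $E\cdot E\cdot E$ term: I expect it to reassemble into $T_{\bs_\pt,\I}(E_{-\pt-1,\I}K^{-1}_{\pt+1,\I})$, computed as in \cref{lem:qs1} via the $\sigma$- or $\tau$-symmetric version of that calculation.
\item Mixed $E$–$F$ terms: via \eqref{EF} these produce $K$-commutators, which should cancel the correction term $c\,\cK_{\pt,\J}E_{-\pt-1,\J}K_{\pt+1,\J}^{-1}$.
\item $F$-only terms: these vanish because $E_{-\pt-1}$ and $F_{\pm\pt}$ supercommute.
\end{itemize}

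The main obstacle will be the bookkeeping in this $E$-part. Three things must be tracked: the scalars (powers of $q$ and the signs $o_{\pt}$ and $(-1)^{|\pt,\J|}$), the change of $(\alpha_{\pt+1},\alpha_{\pt+1})$ between $\I$ and $\J$ (an analogue of \cref{prop:Tirktwo1}(1), which I will check case by case using \cref{paritylemma} and \cref{lem:iparity}), and the $T_{w_\bu}^{\pm1}$-twists. To keep this manageable, I will first verify the needed root-pairing identity, and then match the $E\cdot E\cdot E$ term against $T_{\bs_\pt,\I}(E_{-\pt-1,\I})$ computed via the reduced expression of $\bs_\pt$.
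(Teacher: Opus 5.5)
Your proposal is correct and follows the paper's proof essentially step for step. The $F$-part is handled by \cref{lem:qs1} together with \cref{lem:rktwo}. The $E$-part (the paper's \eqref{eq:qs4}) is handled by conjugating with $(\up''_{\pt,\J})^{-1}$, using \cref{prop:qsrkone}, \eqref{eq:rktwo2} and the $T_{w_\bu}$-invariance of $\up''_{\pt,\J}$; the $E$--$E$--$E$ term then reassembles into $T_{\bs_\pt,\I}(E_{-\pt-1,\I}K_{\pt+1,\I}^{-1})$, and the mixed term produces the $[c_{\pt,\pt+1,\J}]_\pt$ correction.

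Two small bookkeeping corrections:
\begin{itemize}
\item Since $\tau_\pt=\tau$, \cref{prop:qsrkone} relates $B_{\pm\pt,\J}$ to $T_{w_\bu}^{2}T_{\bs_\pt,\I}(B_{\pm\pt,\I})$, not to $B_{\mp\pt,\I}$; compare \eqref{eq:qs5}.
\item Since $\bs_\pt(\I)=\I$ by \cref{bsiI}, there is no change in $(\alpha_{\pt+1},\alpha_{\pt+1})$ and no ratio $\va_{\pt+1,\I}/\va_{\pt+1,\J}$ to track. The root-pairing identities you actually need are those in \eqref{eq:qs6}, which involve the intermediate diagram $s_\pt(\I)$.
\end{itemize}
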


\begin{proof}
By \cref{lem:rktwo}, $\up''_{\pt,\J}$ commutes with $T_{\bs_\pt,\I}(F_{\pt+1,\I})$. Hence, we have
\begin{align*}
&\quad \up''_{\pt,\J} T_{\bs_\pt,\I}(B_{\pt+1,\I}) \big(\up''_{\pt,\J}\big)^{-1}
\\
&=T_{\bs_\pt,\I}(F_{\pt+1,\I})-o_{\pt+1,\I} q^{-(\alpha_{\pt+1,\I},\alpha_{\pt+1,\I})/2}
\up''_{\pt,\J} T_{\bs_\pt,\I}\big(E_{-\pt-1,\I} K_{\pt+1,\I}^{-1}\big) \big(\up''_{\pt,\J}\big)^{-1}.
\end{align*}
Apply \cref{lem:qs1} to the above identity, and then it remains to show that 
\begin{align}
\label{eq:qs4}
\begin{split}
&\quad \up''_{\pt,\J} T_{\bs_\pt,\I}\big(E_{-\pt-1,\I} K_{\pt+1,\I}^{-1}\big) \big(\up''_{\pt,\J}\big)^{-1}
\\
&=\big[[E_{-\pt-1,\J} K_{\pt+1,\J}^{-1},B_{\pt,\J}]_{q_{\pt,\pt+1,\J}^{-1}}, 
T_{w_\bu}^{-1}(B_{-\pt,\J})\big]_{q_{\pt,\pt+1,s_\pt(\I)}^{-1}}
\\
&\quad -o_{\pt,\J} (-1)^{|\pt,\J|} q^{-(\alpha_{\pt,\J},\alpha_{\pt,\J}+w_\bu \alpha_{-\pt,\J})/2} [c_{\pt,\pt+1,\J}]_\pt\cK_{\pt,\J} E_{-\pt-1,\J} K_{\pt+1,\J}^{-1}.
\end{split}
\end{align}

We prove \eqref{eq:qs4}. Note that $\up_{\pt,\I}$ is fixed by $T_{w_\bu}$ and then $\up''_{\pt,\J}$ is also fixed by $T_{w_\bu}$. Now \cref{prop:qsrkone} implies that 
\begin{align}
\label{eq:qs5}
\big(\up''_{\pt,\J}\big)^{-1}  B_{\pt,\J} \up''_{\pt,\J}=o_{\pt,\J} q^{(\alpha_{\pt,\J},\alpha_{\pt,\J}-w_\bu \alpha_{-\pt,\J})/2} T_{w_\bu}T_{\bs_\pt w_\bu,\I}(B_{\pt,\I}) \cK_{\pt,\J}^{-1}.
\end{align}
Via a case-by-case checking, we have
\begin{align}\label{eq:qs6}
\begin{split}
&(\alpha_{\pt,\J},w_\bu \alpha_{-\pt,\J})=(\alpha_{a,s_\pt(\I)},\alpha_{\pt+1,s_\pt(\I)}),
\\
&(\alpha_{\pt,\J},\alpha_{\pt,\J}+w_\bu \alpha_{-\pt,\J})+(\alpha_{\pt,\J},\alpha_{\pt+1,\J})=0.
\end{split}
\end{align}
Using \cref{lem:rktwo} and \eqref{eq:qs5}-\eqref{eq:qs6}, we have
\begin{align*}
&\quad \big(\up''_{\pt,\J}\big)^{-1} \big[[E_{-\pt-1,\J} K_{\pt+1,\J}^{-1},B_{\pt,\J}]_{q_{\pt,\pt+1,\J}^{-1}}, 
T_{w_\bu}^{-1}(B_{-\pt,\J})\big]_{q_{\pt,\pt+1,s_\pt(\I)}^{-1}} \up''_{\pt,\J}
\\
&=(-1)^{|\pt,\J|} q^{(\alpha_{\pt,\J},\alpha_{\pt,\J}-w_\bu \alpha_{-\pt,\J})} \times
\\
&\qquad \times \big[[E_{-\pt-1,\J}K_{\pt+1,\J}^{-1},T_{w_\bu}T_{\bs_\pt w_\bu,\I}(B_{\pt,\I}) \cK_{\pt,\J}^{-1}]_{q_{\pt,\pt+1,\J}^{-1}}, 
T_{\bs_\pt w_\bu,\I}(B_{-\pt,\I}) \cK_{\pt,\J}\big]_{q_{\pt,\pt+1,s_\pt(\I)}^{-1}} 
\\
&=(-1)^{|\pt,\J|} q^{(\alpha_{\pt,\J},\alpha_{\pt,\J}-w_\bu \alpha_{-\pt,\J})} \times
\\
&\qquad \times \big[[E_{-\pt-1,\J} K_{\pt+1,\J}^{-1},T_{w_\bu}T_{\bs_\pt w_\bu,\I}(F_{\pt,\I}) \cK_{\pt,\J}^{-1}]_{q_{\pt,\pt+1,\J}^{-1}}, 
T_{\bs_\pt w_\bu,\I}(B_{-\pt,\I}) \cK_{\pt,\J}\big]_{q_{\pt,\pt+1,s_\pt(\I)}^{-1}} 
\\
&=(-1)^{|\pt,\J|} q^{-(\alpha_{\pt,\J},\alpha_{\pt,\J}+w_\bu \alpha_{-\pt,\J})} \times
\\
&\qquad \times \big[[E_{-\pt-1,\J} , T_{w_\bu}(E_{-\pt,\J}) ]_{q_{\pt,\pt+1,\J}^{-1}}K_{\pt+1,\J}^{-1}K_{\pt,\J}^{-1}, 
E_{\pt,\J} T_{w_\bu}(K_{-\pt,\J}^{-1})\big]_{q_{\pt,\pt+1,s_\pt(\I)}^{-1}} 
\\
&\quad -o_{\pt,\I}(-1)^{|\pt,\J|} q^{-(\alpha_{\pt,\J},w_\bu \alpha_{-\pt,\J})-(\alpha_{\pt,\J}, \alpha_{\pt,\J}+w_\bu \alpha_{-\pt,\J})/2} \times
\\
&\qquad \times \big[[E_{-\pt-1,\J} , T_{w_\bu}(E_{-\pt,\J}) ]_{q_{\pt,\pt+1,\J}^{-1}}K_{\pt+1,\J}^{-1}K_{\pt,\J}^{-1}, 
T_{w_\bu}(F_{-\pt,\J}) K_{\pt,\J}^{2} \big]_{q_{\pt,\pt+1,s_\pt(\I)}^{-1}} 
\\
&= q^{-(\alpha_{\pt,\J},2\alpha_{\pt,\J}+w_\bu \alpha_{-\pt,\J})-2(\alpha_{\pt,\J},\alpha_{\pt+1,\J})-(\alpha_{a,s_\pt(\I)},\alpha_{a+1,s_{a}(\I)}) } \times
\\
&\qquad \times \big[E_{\pt,\J}, [T_{w_\bu}(E_{-\pt,\J} ), E_{-\pt-1,\J} ]_{q_{\pt,\pt+1,\J}} 
\big]_{q_{\pt,\pt+1,s_\pt(\I)}} 
K_{\pt+1,\J}^{-1}K_{\pt,\J}^{-1}T_{w_\bu}(K_{-\pt,\J}^{-1})
\\
&\quad -o_{\pt,\J}(-1)^{|\pt,\J|} q^{-(\alpha_{\pt,\J}, \alpha_{\pt,\J}+w_\bu \alpha_{-\pt,\J})/2} \times
\\
&\qquad \times \left[E_{-\pt-1,\J} , T_{w_\bu}\left(\frac{K_{-\pt,\J}-K_{-\pt,\J}^{-1}}{q_{-\pt,\J}-q_{-\pt,\J}^{-1}}\right) \right]_{q_{\pt,\pt+1,\J}^{-1}} K_{\pt+1,\J}^{-1}K_{\pt,\J} 
\\
&=T_{\bs_\pt,\I}\big(E_{-\pt-1,\I} K_{\pt+1,\I}^{-1}\big)
 +o_{\pt,\J} (-1)^{|\pt,\J|} q^{-(\alpha_{\pt,\J},\alpha_{\pt,\J}+w_\bu \alpha_{-\pt,\J})/2} [c_{\pt,\pt+1,\J}]_\pt 
\cK_{\pt,\J} E_{-\pt-1,\J} K_{\pt+1,\J}^{-1}.
\end{align*}
This computation shows that \eqref{eq:qs4} holds. The proof is completed.
\end{proof}

\begin{lemma}\label{lem:qsqs}
Let $\I,\J\in \DiAIII_\pt$ such that $\J=\bs_\pt(\I)$. We have
\begin{align}
\label{eq:qsqs1}
\begin{split}
T'_{\bs_\pt,\I}(E_{-\pt-1,\I}K_{\pt+1,\I}^{-1}) 
= & \big[T_{w_\bu}(B_{-\pt,\J}), [B_{\pt,\J},E_{-\pt-1,\J}K_{\pt,\J}^{-1} ]_{q_{\pt,\pt+1,\J}^{-1}}\big]_{q_{\pt,\pt+1,s_\pt(\I)}^{-1}}
\\
&-o_{\pt,\J}  q^{-(\alpha_{\pt,\J},\alpha_{\pt,\J}+w_\bu \alpha_{-\pt,\J})/2} [c_{\pt,\pt+1,\J}]_\pt 
E_{-\pt-1,\J} K_{\pt+1,\J}^{-1} \cK_{\pt,\J}^{-1}.
\end{split}
\end{align}
\end{lemma}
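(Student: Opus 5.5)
The plan mirrors the proof of \cref{lem:qs1}, with $T_{\bs_\pt,\I}$ replaced by $T'_{\bs_\pt,\I}$ and $F$ replaced by $E\,K^{-1}$. We compute the left-hand side of \eqref{eq:qsqs1} directly, expand the iterated $q$-commutator on the right-hand side, and compare the two.

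\emph{Step 1: the left-hand side.} Write $\bs_\pt=s_\pt\, s_{\pt-1}\cdots s_{-\pt}\cdots s_{\pt-1}\, s_{\pt,\I}$ and apply the factors of $T'$ one at a time. The first reflection $s_{\pt}$ acting on $\I$ turns $E_{-\pt-1,\I}K_{\pt+1,\I}^{-1}$ into a single $q$-commutator of $E_{-\pt-1}K^{-1}$-type and $E_{-\pt}K^{-1}$-type factors, by the $\sigma$-conjugate of \cref{braidoperator}. The middle string $s_{\pt-1}\cdots s_{-\pt}\cdots s_{\pt-1}$ acts on the $-\pt$ factor as $T_{w_\bu}$ up to the usual conjugation, as in the proof of \cref{prop:qsrkone'}. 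We track the Cartan parts and signs using \cref{paritylemma} and \cref{lem:iparity}. The expected outcome is
\[
T'_{\bs_\pt,\I}(E_{-\pt-1,\I}K_{\pt+1,\I}^{-1})=c\,\big[T_{w_\bu}(E_{\pt,\J})\cdots,[E_{-\pt,\J}\cdots,E_{-\pt-1,\J}]\big]\,K\text{-monomial},
\]
which is an iterated $q$-commutator of three $E$'s. Here $c$ is an explicit power of $q$, and the $q$-parameters are $q_{\pt,\pt+1,\J}^{-1}$ and $q_{\pt,\pt+1,s_\pt(\I)}^{-1}$; their relation is given by \eqref{eq:qs6}.

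\emph{Step 2: expand the right-hand side.} Substitute
\[
B_{\pt,\J}=F_{\pt,\J}-o_{\pt,\J}q^{-(\alpha_{\pt,\J},\alpha_{\pt,\J}+w_\bu\alpha_{-\pt,\J})/2}T_{w_\bu}(E_{-\pt,\J})K_{\pt,\J}^{-1},
\]
together with the analogous expression for $T_{w_\bu}(B_{-\pt,\J})$, into the double bracket.

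\begin{itemize}
\item The term with both $E$-parts reproduces the triple $E$-commutator from Step 1. To see this, move all $K$'s to the right and reverse the order of the brackets, using $[x,y]_a=-(-1)^{|x||y|}a[y,x]_{a^{-1}}$ and the identities \eqref{eq:qs6}.
\item The term with both $F$-parts vanishes, since $F_{\pt}$ and $T_{w_\bu}(F_{-\pt})$ (weights $-\alpha_\pt$ and $-w_\bu\alpha_{-\pt}$) commute with $E_{-\pt-1,\J}$ under \eqref{EF}.
\item Of the two mixed terms, one vanishes for weight reasons: $F_{\pt,\J}$ commutes with the $E_{-\pt-1}$ and $T_{w_\bu}(E_{-\pt})$ part when $a\ne0$. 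The other mixed term, $F$ from the outer $T_{w_\bu}(B_{-\pt,\J})$ against $E$ from the inner bracket, gives $[T_{w_\bu}(F_{-\pt,\J}),[T_{w_\bu}(E_{-\pt,\J}),E_{-\pt-1,\J}]]$. By \eqref{EF} this collapses to a $q$-commutator of $E_{-\pt-1,\J}$ with $T_{w_\bu}\big((K_{-\pt}-K_{-\pt}^{-1})/(q-q^{-1})\big)$.
\end{itemize}

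Using \eqref{EKKF}, this last expression equals $-[c_{\pt,\pt+1,\J}]_\pt$ times $E_{-\pt-1,\J}K_{\pt+1,\J}^{-1}\cK_{\pt,\J}^{-1}$, with the prefactor $o_{\pt,\J}q^{-(\alpha_{\pt,\J},\alpha_{\pt,\J}+w_\bu\alpha_{-\pt,\J})/2}$. Moving it to the other side gives \eqref{eq:qsqs1}. The sign $(-1)^{|\pt,\J|}$ that appeared in \cref{lem:qs1} is absent here, because in this ordering the supercommutator sign from \eqref{EF} and the $o_{\pt,\J}^2$ from the two $E$-parts cancel.

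\emph{Main obstacle.} We expect the hardest part to be the bookkeeping of powers of $q$ and super signs when reordering the triple commutator and commuting $K$'s past $E$'s. In particular, we must confirm that the leading coefficients agree. This is where the case-by-case identities \eqref{eq:qs6} and \cref{lem:iparity} enter, since they relate $\J$ to $s_\pt(\I)$; if a mismatch appears, it should be checked against the two parity cases $|\pt,\J|=\bar0,\bar1$ separately.
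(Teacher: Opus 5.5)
Your outline (compute the left side as a triple $E$-commutator, expand the $B$'s, match the $E$--$E$ term, and extract the correction from the $[F,E]$ term) is the paper's. However, Step~2 as written does not close.

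In \eqref{eq:qsqs1} the outer factor is $T_{w_\bu}(B_{-\pt,\J})$. Its $E$-part is a scalar times $T_{w_\bu}^{2}(E_{\pt,\J})\,T_{w_\bu}(K_{-\pt,\J}^{-1})$. The $E$-part of $B_{\pt,\J}$ is $T_{w_\bu}(E_{-\pt,\J})K_{\pt,\J}^{-1}$, as you correctly use in the mixed term. So the $E$--$E$ term is built from $T_{w_\bu}^{2}(E_{\pt,\J})$, $T_{w_\bu}(E_{-\pt,\J})$ and $E_{-\pt-1,\J}$. Your Step~1 expression is built from $T_{w_\bu}(E_{\pt,\J})$, $E_{-\pt,\J}$ and $E_{-\pt-1,\J}$. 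Up to Cartan factors, the former is $T_{w_\bu}$ applied to the latter.

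Moreover, $T_{w_\bu}^{2}(E_{\pt,\J})$ is not a Cartan multiple of an $E$-element. Already in the purely even case with one black node $j\sim i$, one has $T_j^2(E_i)=-q^{-1}E_i-(q-q^{-1})F_jT_j(E_i)K_j$. So reordering brackets, moving $K$'s and using \eqref{eq:qs6} cannot identify the two expressions. They coincide only because the left-hand side is fixed by $T_{w_\bu}$.

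That invariance is the missing idea, and it is the first line of the paper's proof. $T'_{\bs_\pt}$ commutes with $T_{w_\bu}$, and $T_{w_\bu}$ fixes $E_{-\pt-1,\I}K_{\pt+1,\I}^{-1}$ because $\pm(\pt+1)$ are not connected to $\I_\bu$. Hence you may apply a power of $T_{w_\bu}^{-1}$ to the whole identity before expanding.

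The paper applies $T_{w_\bu}^{-2}$, which fixes all $K$'s, so the correction term is unchanged. After this:
\begin{itemize}
\item the outer $E$-part becomes $E_{\pt,\J}\,T_{w_\bu}^{-1}(K_{-\pt,\J}^{-1})$;
\item the inner $F$-part $T_{w_\bu}^{-2}(F_{\pt,\J})$ drops out of the inner bracket, since it lies in the subalgebra generated by $F_{\pt,\J}$, $\U_\bu(\J)$ and Cartan elements, all of which commute with $E_{-\pt-1,\J}$ up to the weight twist;
\item the $E$--$E$ term is matched with $T'_{\bs_\pt,\I}(E_{-\pt-1,\I})=\big[[E_{-\pt-1,\J},T_{w_\bu}^{-1}(E_{-\pt,\J})]_{q_{\pt,\pt+1,\J}},E_{\pt,\J}\big]_{q_{\pt,\pt+1,s_\pt(\I)}}$.
\end{itemize}
Applying $T_{w_\bu}^{-1}$ instead would turn the $E$--$E$ term into literally your Step~1 expression. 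With this step inserted, the rest of your outline goes through.

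Separately, your explanation of why $(-1)^{|\pt,\J|}$ is absent does not apply. The correction term comes from the $F$-part of the outer factor against the $E$-part of the inner one, so it carries a single $o_{\pt,\J}$, not $o_{\pt,\J}^2$. That sign must be tracked directly from the supercommutator $[T_{w_\bu}(F_{-\pt,\J}),T_{w_\bu}(E_{-\pt,\J})]$.
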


\begin{proof}
Note that $T'_{\bs_\pt}$ commutes with $T_{w_\bu}$. By applying $T_{w_\bu}^{-2}$, it is clear that \eqref{eq:qsqs1} is equivalent to
\begin{align}
\label{eq:qsqs1'}
\begin{split}
T'_{\bs_\pt,\I}(E_{-\pt-1,\I}K_{\pt+1,\I}^{-1}) 
= & \big[T_{w_\bu}^{-1}(B_{-\pt,\J}), [T_{w_\bu}^{-2}(B_{\pt,\J}),E_{-\pt-1,\J}K_{\pt,\J}^{-1} ]_{q_{\pt,\pt+1,\J}^{-1}}\big]_{q_{\pt,\pt+1,s_\pt(\I)}^{-1}}
\\
&-o_{\pt,\J}  q^{-(\alpha_{\pt,\J},\alpha_{\pt,\J}+w_\bu \alpha_{-\pt,\J})/2} [c_{\pt,\pt+1,\J}]_\pt 
E_{-\pt-1,\J} K_{\pt+1,\J}^{-1} \cK_{\pt,\J}^{-1}.
\end{split}
\end{align}
We show \eqref{eq:qsqs1'}. On one side, we have
\[
T'_{\bs_\pt,\I}(E_{-\pt-1,\I})=\big[[E_{-\pt-1,\J}, T_{w_\bu}^{-1}(E_{-\pt,\J})]_{q_{\pt,\pt+1,\J} },
E_{\pt,\J}\big]_{q_{\pt,\pt+1,s_\pt(\I)} }.
\] 
On the other side, we have
\begin{align*}
&\quad \big[T_{w_\bu}^{-1}(B_{-\pt,\J}), [T_{w_\bu}^{-2}(B_{\pt,\J}),E_{-\pt-1,\J}K_{\pt,\J}^{-1} ]_{q_{\pt,\pt+1,\J}^{-1}}\big]_{q_{\pt,\pt+1,s_\pt(\I)}^{-1}}
\\
&=(-1)^{|\pt,\J|} q^{-(\alpha_{\pt,\J},\alpha_{\pt,\J}+w_\bu \alpha_{-\pt,\J})}\times
\\
&\qquad \times \big[E_{\pt,\J} T_{w_\bu}(K_{-\pt,\J}^{-1}), [T_{w_\bu}^{-1}(E_{-\pt,\J})K_{\pt,\J}^{-1},E_{-\pt-1,\J}K_{\pt+1,\J}^{-1} ]_{q_{\pt,\pt+1,\J}^{-1}}\big]_{q_{\pt,\pt+1,s_\pt(\I)}^{-1}}
\\
&\quad -o_{\pt,\J}  q^{-(\alpha_{\pt,\J},\alpha_{\pt,\J}+w_\bu \alpha_{-\pt,\J})/2}\big[T_{w_\bu}^{-1}(F_{-\pt,\J}), [T_{w_\bu}^{-1}(E_{-\pt,\J}),E_{-\pt-1,\J}]_{q_{\pt,\pt+1,\J}^{-1}}K_{\pt,\J}^{-1}K_{\pt+1,\J}^{-1} \big]_{q_{\pt,\pt+1,s_\pt(\I)}^{-1}}
\\
&=q^{-(\alpha_{\pt,\J},2\alpha_{\pt,\J}+w_\bu \alpha_{-\pt,\J})-2(\alpha_{\pt,\J},\alpha_{\pt+1,\J})-(\alpha_{a,s_\pt(\I)},\alpha_{a+1,s_{a}(\I)}) } \times
\\
&\qquad \times\big[[E_{-\pt-1,\J}, T_{w_\bu}^{-1}(E_{-\pt,\J})]_{q_{\pt,\pt+1,\J} },
E_{\pt,\J}\big]_{q_{\pt,\pt+1,s_\pt(\I)} } K_{\pt+1,\J}^{-1} K_{\pt,\J}^{-1}T_{w_\bu}(K_{-\pt,\J}^{-1})
\\
&\quad +o_{\pt,\J}  q^{-(\alpha_{\pt,\J},\alpha_{\pt,\J}+w_\bu \alpha_{-\pt,\J})/2} \left[T_{w_\bu}^{-1}\bigg(\frac{K_{-\pt,\J}-K_{-\pt,\J}^{-1}}{q_{\pt,\J}-q_{\pt,\J}^{-1}}\bigg),E_{-\pt-1,\J}\right]_{q_{\pt,\pt+1,\J}^{-1}}  K_{\pt,\J}^{-1}K_{\pt+1,\J}^{-1}
\\
&=T'_{\bs_\pt,\I}(E_{-\pt-1,\I}K_{\pt+1,\I}^{-1})
+o_{\pt,\J}  q^{-(\alpha_{\pt,\J},\alpha_{\pt,\J}+w_\bu \alpha_{-\pt,\J})/2} [c_{\pt,\pt+1,\J}]_\pt 
E_{-\pt-1,\J} K_{\pt+1,\J}^{-1}\cK_{\pt,\J}^{-1},
\end{align*}
where we used \eqref{eq:qs6} in the last equality. This proves \eqref{eq:qsqs1'} and then \eqref{eq:qsqs1} follows.
\end{proof}

\begin{proposition}\label{prop:qsqs}
Let $\I,\J\in \DiAIII_\pt$ such that $\J=\bs_\pt(\I)$. We have
\begin{align}
\label{eq:qsqs2}
\begin{split}
\up_{\pt,\J} T'_{\bs_\pt,\I}(B_{\pt+1,\I}) \big(\up_{\pt,\J}\big)^{-1}
= & \big[T_{w_\bu}(B_{-\pt,\J}), [B_{\pt,\J},B_{\pt+1,\J}]_{q_{\pt,\pt+1,\J}^{-1}}\big]_{q_{\pt,\pt+1,s_\pt(\I)}^{-1}}
\\
&-o_{\pt,\J}  q^{-(\alpha_{\pt,\J},\alpha_{\pt,\J}+w_\bu \alpha_{-\pt,\J})/2} [c_{\pt,\pt+1,\J}]_\pt  B_{\pt+1,\J}\cK_{\pt,\J}^{-1}.
\end{split}
\end{align}
\end{proposition}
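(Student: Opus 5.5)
The plan is to mirror the proof of \cref{prop:qs}, now using $\up_{\pt,\J}$ and \cref{prop:qsrkone'} in place of $\up''_{\pt,\J}$ and \cref{prop:qsrkone}. Write
\[
B_{\pt+1,\I}=F_{\pt+1,\I}-o_{\pt+1,\I}q^{-(\alpha_{\pt+1,\I},\alpha_{\pt+1,\I})/2}E_{-\pt-1,\I}K_{\pt+1,\I}^{-1}.
\]
Both sides of \eqref{eq:qsqs2} split along this decomposition: the right-hand side is linear in $B_{\pt+1,\J}$, and the other entries $T_{w_\bu}(B_{-\pt,\J})$, $B_{\pt,\J}$ and $\cK_{\pt,\J}^{-1}$ are fixed. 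Since $\pt+1\neq \pm\pt$, the scalar coefficients $o_{\pt+1}$ and $q^{-(\alpha_{\pt+1},\alpha_{\pt+1})/2}$ must be tracked between $\I$ and $\J$. This is where an identity like \cref{prop:Tirktwo1}(1) enters, relating $(\alpha_{\pt+1,\I},\alpha_{\pt+1,\I})$ to $(\alpha_{\pt+1,\J},\alpha_{\pt+1,\J})$ through \eqref{eq:qs6}. Consequently, it suffices to prove an $F$-part identity and an $E$-part identity separately.

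\emph{$E$-part.} By \cref{lem:rktwo} (applied to $\J$, with $j=\pt+1$), the element $T'_{\bs_\pt,\I}(E_{-\pt-1,\I}K_{\pt+1,\I}^{-1})$, being of the form $E_{-\pt-1,\J}\cdots$ up to Cartan factors, commutes with $\up_{\pt,\J}$. Conjugation by $\up_{\pt,\J}$ is therefore trivial on this term, and the required identity is exactly \cref{lem:qsqs}. The only remaining task is to check that the scalar from rewriting $E_{-\pt-1}K_{\pt+1}^{-1}$ in $B_{\pt+1,\J}$ matches the one in \cref{lem:qsqs}.

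\emph{$F$-part.} First compute $T'_{\bs_\pt,\I}(F_{\pt+1,\I})$ as an iterated $q$-commutator
\[
\big[T_{w_\bu}(F_{-\pt,\J}),[F_{\pt,\J},F_{\pt+1,\J}]_{q^{-1}}\big]_{q^{-1}},
\]
following the computation in \cref{lem:qs1} with $T'$ in place of $T$ (cf.\ \cite[Lemma A.17]{WZ23}). Next conjugate the right-hand side term $\big[T_{w_\bu}(B_{-\pt,\J}),[B_{\pt,\J},F_{\pt+1,\J}]\big]$ by $\up_{\pt,\J}^{-1}$. \cref{Kmatrix}, together with the $T_{w_\bu}$-invariance of $\up_{\pt,\J}$ (\cref{lem:blackfix}), turns $B_{\pt,\J}$ and $T_{w_\bu}(B_{-\pt,\J})$ into $\sigma\tau$-versions. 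Each of these is $F$ plus an $E$-term times $K$, and \cref{lem:rktwo} gives that $F_{\pt+1,\J}$ commutes with $\up_{\pt,\J}$.

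Expanding the $q$-commutators, the pure $F$ term reproduces $T'_{\bs_\pt,\I}(F_{\pt+1,\I})$. The cross terms involving $E_{\pt,\J}$ (or $T_{w_\bu}(E_{-\pt,\J})$) against $F_{\pt,\J}$ (resp.\ $T_{w_\bu}(F_{-\pt,\J})$) collapse via \eqref{EF} to a $K$-fraction. Commuting that fraction past $F_{\pt+1,\J}$ yields $[c_{\pt,\pt+1,\J}]_\pt F_{\pt+1,\J}\cK_{\pt,\J}^{-1}$ with the stated coefficient, exactly as in the proof of \cref{prop:Tirktwo3}. The remaining cross terms vanish because $E_{\pt}$ supercommutes with $F_{\pt+1}$ and with $T_{w_\bu}(F_{-\pt})$; they lie in different blocks when $\pt\neq0$.

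The main obstacle is the bookkeeping of powers of $q$ and signs $(-1)^{|\pt,\J|}$, $o_{\pt,\J}$ across the odd reflections contained in $\bs_\pt$. In particular one must verify that the two $q$-commutator parameters $q_{\pt,\pt+1,\J}^{-1}$ and $q_{\pt,\pt+1,s_\pt(\I)}^{-1}$ are the correct ones. I would handle this using \eqref{eq:qs6} and \cref{lem:iparity}, checking the parity cases $|\pt,\J|\in\{\zero,\one\}$ separately.
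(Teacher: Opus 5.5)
Your proposal follows the paper's proof. You handle the $E$-part of $B_{\pt+1,\I}$ by its commutation with $\up_{\pt,\J}$ plus \cref{lem:qsqs}, and the $F$-part by conjugating with the intertwining relation of \cref{Kmatrix}, where the only surviving cross term is $E_{\pt,\J}$ against $F_{\pt,\J}$; the $E$-part $T_{w_\bu}^{-1}(E_{-\pt,\J})$ of $\sigma\tau(B_{-\pt,\J})$ dies in the inner bracket with $F_{\pt+1,\J}$.

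Two small corrections. First, the commutation of $T'_{\bs_\pt,\I}(E_{-\pt-1,\I}K_{\pt+1,\I}^{-1})$ with $\up_{\pt,\J}$ does not hold because the element is ``$E_{-\pt-1,\J}$ up to Cartan factors''; it is an iterated $q$-commutator that also involves $E_{\pm\pt,\J}$. It instead follows by applying $T'_{\bs_\pt,\I}$ to \eqref{eq:rktwo2} for $\up''_{\pt,\I}=T_{\bs_\pt}(\up_{\pt,\J}^{-1})$. Second, $\J=\I$ by \cref{bsiI}, so the $\pt+1$ coefficients need no tracking between $\I$ and $\J$.
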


\begin{proof}
Thanks to \cref{lem:qsqs}, it suffices to show the following relation
\begin{align}
\label{eq:qsqs3}
\begin{split}
\up_{\pt,\J} T'_{\bs_\pt,\I}(F_{\pt+1,\I}) \big(\up_{\pt,\J}\big)^{-1}
= & \big[T_{w_\bu}(B_{-\pt,\J}), [B_{\pt,\J},F_{\pt+1,\J}]_{q_{\pt,\pt+1,\J}^{-1}}\big]_{q_{\pt,\pt+1,s_\pt(\I)}^{-1}}
\\
&-o_{\pt,\J}  q^{-(\alpha_{\pt,\I},\alpha_{\pt,\I}+w_\bu \alpha_{-\pt,\I})/2} [c_{\pt,\pt+1,\J}]_\pt  F_{\pt+1,\J}\cK_{\pt,\J}^{-1}.
\end{split}
\end{align}
By \cref{Kmatrix}, we have
\begin{align*}
& \quad \up_{\pt,\J}^{-1} \big[T_{w_\bu}(B_{-\pt,\J}), [B_{\pt,\J},F_{\pt+1,\J}]_{q_{\pt,\pt+1,\J}^{-1}}\big]_{q_{\pt,\pt+1,s_\pt(\I)}^{-1}} \up_{\pt,\J}
\\
&=\big[T_{w_\bu}\big(\sigma \tau (B_{\pt,\J})\big), [\sigma \tau(B_{-\pt,\J}),F_{\pt+1,\J}]_{q_{\pt,\pt+1,\J}^{-1}}\big]_{q_{\pt,\pt+1,s_\pt(\I)}^{-1}}
\\
&=\big[T_{w_\bu}(F_{-\pt,\J}), [F_{\pt,\J},F_{\pt+1,\J}]_{q_{\pt,\pt+1,\J}^{-1}}\big]_{q_{\pt,\pt+1,s_\pt(\I)}^{-1}}
\\
&\quad -o_{\pt,\J} q^{-(\alpha_{\pt,\J},\alpha_{\pt,\J}+w_\bu \alpha_{-\pt,\J})/2} \big[T_{w_\bu}(K_{-\pt,\J}) E_{a ,\J}, [F_{\pt,\J},F_{\pt+1,\J}]_{q_{\pt,\pt+1,\J}^{-1}}\big]_{q_{\pt,\pt+1,s_\pt(\I)}^{-1}}
\\
&=T'_{\bs_\pt,\I}(F_{\pt+1,\I})-o_{\pt,\J} q^{-(\alpha_{\pt,\J},\alpha_{\pt,\J}+w_\bu \alpha_{-\pt,\J})/2} \left[\frac{K_{a ,\J}-K_{\pt,\J}^{-1}}{q_{\pt,\J}-q_{\pt,\J}^{-1}},F_{\pt+1,\J}\right]_{q_{\pt,\pt+1,\J}^{-1}} T_{w_\bu}(K_{-\pt,\J}) 
\\
&=T'_{\bs_\pt,\I}(F_{\pt+1,\I})+o_{\pt,\J} q^{-(\alpha_{\pt,\J},\alpha_{\pt,\J}+w_\bu \alpha_{-\pt,\J})/2} [c_{\pt,\pt+1,\J}]_\pt F_{\pt+1,\J} \cK_{\pt,\J}^{-1}.
\end{align*}
This proves \eqref{eq:qsqs3} and hence \eqref{eq:qsqs2} follows.
\end{proof}

\section{Relative braid group actions}
\label{sec:factorization}
 In this section, we first show that $\TT_{\wseq}$ sends $B_{i,\I}$ to $B_{i,\J}$ if $w(\alpha_{i,\I})=\alpha_{j,\J}$. We then extend the factorization result of quasi $K$-matrices to quantum supersymmetric pairs of type sAIII, generalizing \cite{DK19, WZ23}. Finally, we use the factorization property of quasi $K$-matrices to show that $\TT_i,\TT'_{i}$ satisfy braid relations in the underlying relative Coxeter groupoid.

\subsection{A basic property}
Given $w\in \reW$ with a reduced expression
\(
\wseq=\bs_{i_1}\bs_{i_2} \cdots \bs_{i_k,\I},
\)
we define 
\[
\TT_{\wseq}:=\TT_{i_1,\I_1}\circ \TT_{i_2,\I_{2}} \circ \cdots  \circ \TT_{i_k,\I_k}.
\]
where $\I_k=\I$ and $\I_j=\bs_{j}\bs_{j+1}\cdots \bs_{k-1}(\I)$ for any $1\leq j \leq k-1$. For simplicity, we may also write $\TT_{\wseq}:=\TT_{i_1}\circ \TT_{i_2} \circ \cdots  \circ \TT_{i_k,\I}$ instead.
\begin{theorem}
\label{thm:property}
    Suppose $\J=w(\I)$ and $i\in \I_\circ$ such that $w(\alpha_{i,\I})=\alpha_{j,\J}$ for some $j\in \J_\circ$. Then we have 
    \[
    \TT_{\wseq}(B_{i,\I})=B_{j,\J}
    \]
 for some reduced expression of $w$.
\end{theorem}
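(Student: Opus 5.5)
We reduce the statement to rank-two computations, following the strategy of \cite[Theorem 7.?]{WZ23} and the non-super argument for quantum groups (\cite[39.2.4]{Lubook}). The key structural input is that $\reW$ is a Coxeter groupoid with type B braid relations (\cref{relCgroupid}), together with \cref{prop:LL}, which lets us pass freely between reduced expressions in $\reW$ and in $W$.

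We argue by induction on $\ell_\circ(w)$. If $\ell_\circ(w)=0$ there is nothing to prove. Otherwise $w(\alpha_{i,\I})=\alpha_{j,\J}$ is a positive simple root, so $\ell_\circ(w\bs_{i})>\ell_\circ(w)$. Pick $k\in\I_\circ$, $k\neq i,\tau i$, with $\ell_\circ(w\bs_{k,\I'})<\ell_\circ(w)$, where $\I'=\bs_k(\I)$. The standard Coxeter-groupoid argument (Deodhar's lemma in the groupoid setting of \cite{HY08}) then gives the following. Let $w_{ik}$ be the longest element of the rank-two parabolic sub-groupoid generated by $\bs_i,\bs_k$ at $\I$. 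Then $w=w'\cdot x$, where $x$ is the unique minimal length element of that rank-two sub-groupoid with $x(\alpha_{i,\I})$ simple, say $x(\alpha_{i,\I})=\alpha_{i',\I''}$. Moreover $\ell_\circ(w)=\ell_\circ(w')+\ell_\circ(x)$ and $w'(\alpha_{i',\I''})=\alpha_{j,\J}$ with $\ell_\circ(w')<\ell_\circ(w)$.

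Choosing the reduced expression of $w$ compatible with this factorization, we get $\TT_{\wseq}=\TT_{\underline{w'}}\circ\TT_{\underline{x}}$. By induction $\TT_{\underline{w'}}(B_{i',\I''})=B_{j,\J}$, so it suffices to check $\TT_{\underline x}(B_{i,\I})=B_{i',\I''}$ in rank two. The possible rank-two types are as follows.
\begin{itemize}
\item Commuting pairs $i\nsim k$. This case does not arise, since then $\ell(w\bs_k)<\ell(w)$ would be impossible, or $x=\bs_k$ acts trivially; by \cref{thm:formula}, $\TT_k(B_{i})=B_{i}$ when $i$ is not adjacent to $k,\tau k$.
\item Type $A_2$ with $i,k\neq\pm\pt$. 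Here $x=\bs_k\bs_i$ and $i'=k$.
\item Type $B_2$ involving $\pt,\pt+1$. Here $x=\bs_i\bs_k\bs_i$, and $i'=i$.
\end{itemize}

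In the $A_2$ case we compute $\TT_k\TT_i(B_{i,\I})$ using \eqref{eq:TiBj}. First, $\TT_i(B_i)$ is a multiple of $B_{\tau i}k_{\tau i}$; however, $x$ is chosen to send $\alpha_i$ to $\alpha_k$, so the relevant composite is really $\TT_{i}\TT_{k}(B_{i})$. Here $\TT_k(B_i)=[B_i,B_k]_{q^{-1}}$ by the third/fourth line of \eqref{eq:TiBj}, and applying $\TT_i$ gives $\TT_i\big([B_i,B_k]_{q^{-1}}\big)=[c\,B_{\tau i}k_{\tau i},[B_k,B_i]_{q^{-1}}]_{q^{-1}}$. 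This should collapse to $B_k$ by the iSerre-type relation in $\Ui$, or more cleanly as follows. Use the intertwining relations \eqref{eq:intertw}: multiply through by the quasi $K$-matrices and reduce to the corresponding identity $T_{\bs_i}T_{\bs_k}(F_{i})=F_{k}$ in $\U$ (the super Lusztig property for the groupoid, which follows from \cref{braidoperator} and Yamane's results). Then compare leading terms using \cref{Iwasawabasis}. Concretely, $\TT_{\underline x}(B_{i,\I})-B_{i',\I''}$ lies in $\Ui$ and has the same image under the projection to the $\U^-$-component. The injectivity argument from \cref{Iwasawabasis}, together with a weight argument (the difference has weight $-\alpha_{i'}$ modulo $\U^{\imath0}\Ub^+$ corrections, which are forced to vanish by weight), gives equality.

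The $B_2$ case (nodes $\pt,\pt+1$) is the main obstacle, as in \cite{WZ23}. Here the formulas \eqref{eq:TaBj} involve $T_{w_\bu}^{\pm1}$ and $\cK_{\pt}$, and the odd/even parity changes along $\bs_\pt$ must be tracked via \cref{paritylemma} and \cref{lem:iparity}. I would again avoid direct expansion. I would use the leading-term strategy: show that $\TT_{\underline x}(B_{i,\I})$ has $\U^-$-leading part equal to $T_{x}(F_{i,\I})=F_{i',\I''}$, computed in $\U$ via \cref{braidoperator} and the explicit identity of \cref{lem:qs1}. Then conclude with \cref{Iwasawabasis}, noting that $B_{i',\I''}$ is the unique element of $\Ui(\I'')$ of that weight with that leading term, up to lower terms that vanish for weight reasons.
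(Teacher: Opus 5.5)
Your overall plan is the paper's: induction on $\ell_\circ(w)$, splitting off the component $w''$ of $w$ in the rank-two parabolic subgroupoid $\langle\bs_i,\bs_k\rangle$, and settling rank two with \cref{Iwasawabasis} plus a weight argument. However, the rank-two step as you set it up has a real gap.

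You compare $\U^-$-leading terms directly for $\TT_{\underline x}$, and assert that $\TT_{\underline x}(B_{i,\I})-B_{i',\I''}$ has zero $\U^-$-component. The intertwiner for $\TT$ in \eqref{eq:intertw} is not a quasi $K$-matrix. Since $T_{w_\bu}$ fixes $\up_{i,\I}$ (\cref{lem:blackfix}), one has $T_{\bs_i}(\up_{i,\I}^{-1})=T_{\bs_iw_\bu}(\up_{i,\I}^{-1})$, where $\bs_iw_\bu$ is the longest element for $\bIi$. This lies in a completion of $\U^-\U^0$ (cf.\ \eqref{Twi}). Hence $\TT_{\underline x}(B_{i,\I})=\Theta B_{i',\I''}\Theta^{-1}$ with $\Theta$ lower triangular. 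Conjugation by such a $\Theta$ does not preserve $\U^-$-components, so nothing forces the difference into $\Ub^+\U^{\imath0}$, and the Iwasawa argument cannot start.

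You need the step the paper takes in \cref{prop:relamb1}. First use \cref{thm:ibraid}(3) to rewrite the claim as $\TT'_{\underline{x}^{-1}}(B_{i',\I''})=B_{i,\I}$. For $\TT'$ the composite intertwiner $\Theta'=\up_{j_1}T'_{\bs_{j_1}}(\up_{j_2})\cdots$ lies in $1+\widehat{\U^+}$. Combined with $T'_{x^{-1}}(B_{i',\I''})=B_{i,\I}$ (invert \cref{tata1} and \cref{tata2}), this gives $Y:=\TT'_{\underline{x}^{-1}}(B_{i',\I''})-B_{i,\I}\in\Ui\cap\U^+\U^0=\Ub^+\U^{\imath0}$.

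Even then, your remark about the ``unique element of that weight with that leading term'' is not an argument. $\Ui$ is not weight-graded; see the $B_jk_i$ and $\cK_{\pt}B_{\pt+1}$ terms in \cref{thm:formula}. You must compare lowest-weight components in $Y\Theta'=\Theta'B_{i,\I}-B_{i,\I}\Theta'$. The components of $Y$ have weights in $\N\I_\bu$. The right side only has weights $\mu-\alpha_{i,\I}$ and $\mu+w_\bu\alpha_{\tau i,\I}$, with $\mu\neq 0$ a weight of $\Theta'$. One checks these are not in $\N\I_\bu$, as in \cite[Prop.~7.2]{WZ23}, which forces $Y=0$.

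The rank-two bookkeeping also needs repair. ``The minimal-length element with $x(\alpha_{i,\I})$ simple'' is just $e_\I$, so that characterization is vacuous. What you need is the following:
\begin{itemize}
\item $w''\neq e$ satisfies $w''(\talpha_{i,\I})>0>w''(\talpha_{k,\I})$, so its reduced words end in $\bs_k$.
\item Because $w'$ keeps $\talpha_i$ and $\talpha_k$ positive while $w(\alpha_{i,\I})$ is simple, $w''(\alpha_{i,\I})$ is itself a simple root $\alpha_{i'}$. The paper obtains this from the weight argument of \cite[Thm.~7.13]{WZ23}.
\end{itemize}
In type $A_2$ this forces $w''=\bs_i\bs_k$ with $i'\in\{k,\tau k\}$. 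In type $B_2$ it forces $w''=\bs_k\bs_i\bs_k$, not $\bs_i\bs_k\bs_i$ as you wrote; the latter sends $\talpha_i$ to a negative root. There $i'=\tau i$ when $i=\pm(\pt+1)$ and $i'=i$ when $i=\pm\pt$, exactly as in \cref{tata1}. The commuting case does occur, with $w''=\bs_k$, and is trivial. With these corrections your argument coincides with the paper's.
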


The remainder of this subsection is devoted to the proof of \cref{thm:property}, following a strategy similar to its non-super analog \cite[Theorem 7.13]{WZ23}.  
To this end, we first treat the real rank \(2\) case (see \cref{def:rank}), namely we consider a Satake sub-diagram $(\I_\bu\cup \{i,j,\tau i,\tau j\},\tau)$ for some $i,j \in \I_\circ, i,j\geqslant 0$. For any $\I\in \DiAIII_\pt$, there are three possibilities of real rank \(2\) sub-diagrams
\begin{itemize}
\item  $i\nsim j$;
\item  $i\sim j$ and $\{i,j\}\neq \{\pt,\pt+1\}$;
    \item $i\sim j$ and $\{i,j\}=\{\pt,\pt+1\}$.
\end{itemize}

The next two lemmas are super analogs of \cite[Lemmas 7.5 and 7.1]{WZ23}, respectively.

\begin{lemma} 
\label{tata1}
    Suppose $\I\in \DiAIII_\pt$ and $\J=\bs_{\pt+1}(\I)$, then  
    \begin{gather*}
    \bs_{\pt}\bs_{\pt+1}\bs_{\pt}(\alpha_{\pm (\pt+1),\I})=\alpha_{\mp (\pt+1),\J},\quad 
     \bs_{\pt+1}\bs_{\pt}\bs_{\pt+1}(\alpha_{\pm \pt,\I})=\alpha_{\pm \pt,\I}.
    \end{gather*}
    Thus we have
    \begin{gather}
        \label{lamb1}
    T_{\bs_{\pt}}T_{\bs_{\pt+1}}T_{\bs_{\pt}}(B_{\pm(\pt+1),\I})=B_{\mp (\pt+1),\J},\quad T_{\bs_{\pt+1}}T_{\bs_{\pt}}T_{\bs_{\pt+1}}(B_{\pm \pt,\I})=B_{\pm \pt,\I}.
    \end{gather}
\end{lemma}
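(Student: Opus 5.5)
The plan is to prove the root identities first, by an explicit computation in the basis $\{\epsilon_a\}$, and then lift them to the quantum supergroup using a super analogue of Lusztig's result that $T_w(F_i)=F_j$ whenever $w(\alpha_i)=\alpha_j$.

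\emph{Step 1: the root identities.} Unwinding \eqref{ridef}, I will first check how each relative reflection acts on the $\epsilon$-basis. The element $\bs_{\pt}=s_\pt\cdots s_{-\pt}\cdots s_{\pt}$ acts on $\h^*$ as the transposition $\epsilon_{-\pt-1/2}\leftrightarrow\epsilon_{\pt+1/2}$ and fixes all other $\epsilon_b$. The element $\bs_{\pt+1}=s_{\pt+1}s_{-\pt-1}$ acts as the product of the transpositions $\epsilon_{\pt+1/2}\leftrightarrow\epsilon_{\pt+3/2}$ and $\epsilon_{-\pt-1/2}\leftrightarrow\epsilon_{-\pt-3/2}$. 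Here the parities of the $\epsilon_b$ are carried along, so the target diagrams are the ones prescribed by the groupoid.

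Applying the factors from right to left to $\alpha_{\pt+1,\I}=\epsilon_{\pt+1/2}-\epsilon_{\pt+3/2}$ gives
\[
\epsilon_{\pt+1/2}-\epsilon_{\pt+3/2}\mapsto \epsilon_{-\pt-1/2}-\epsilon_{\pt+3/2}\mapsto \epsilon_{-\pt-3/2}-\epsilon_{\pt+1/2}\mapsto \epsilon_{-\pt-3/2}-\epsilon_{-\pt-1/2}=\alpha_{-(\pt+1)}.
\]
By \cref{bsiI}, $\bs_\pt$ fixes every diagram, so the target diagram of $\bs_\pt\bs_{\pt+1}\bs_\pt$ is $\bs_{\pt+1}(\I)=\J$. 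The computation for $\alpha_{-(\pt+1)}$ is symmetric.

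For $\alpha_{\pt}=\epsilon_{\pt-1/2}-\epsilon_{\pt+1/2}$, applying $\bs_{\pt+1}$, then $\bs_\pt$, then $\bs_{\pt+1}$ gives
\[
\epsilon_{\pt-1/2}-\epsilon_{\pt+1/2}\mapsto\epsilon_{\pt-1/2}-\epsilon_{\pt+3/2}\mapsto\epsilon_{\pt-1/2}-\epsilon_{\pt+3/2}\mapsto\epsilon_{\pt-1/2}-\epsilon_{\pt+1/2}.
\]
The diagram returns to $\I$ because $\bs_{\pt+1}$ is an involution on diagrams and $\bs_\pt$ fixes them. The case of $-\pt$ is identical.

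\emph{Step 2: a super Lusztig lemma.} Next I will establish the following: if $w=s_{i_1}\cdots s_{i_k,\I}\in W$ is reduced, $\ell(ws_{i})>\ell(w)$, and $w(\alpha_{i,\I})=\alpha_{j,w(\I)}$, then
\[
T_w(F_{i,\I})=F_{j,w(\I)},\qquad T_w(E_{i,\I})=E_{j,w(\I)},\qquad T_w(K_{i,\I})=K_{j,w(\I)}.
\]
I will prove this by the standard argument of \cite[\S 39.2]{Lubook}: induct on $\ell(w)$ and reduce to rank-two parabolic subgroupoids. There one checks directly, with the formulas of \cref{braidoperator}, that $T_iT_j(F_i)=F_j$ when $i\sim j$ (the only rank-two case in type A). This uses the braid relations of \cref{braidoperator} together with Matsumoto's theorem for $W$. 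Odd nodes introduce sign factors $(-1)^{|i||j|}$. To see that these cancel, I will use \cref{paritylemma} to track how parities change along the path.

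I expect this step to be the main obstacle. The sign conventions in \cref{braidoperator} (for example the factor $(-1)^{|i,\J|}$ in $T(K_i)$) must combine exactly so that no stray sign or power of $q$ survives. If a sign does survive on the $K$-part, I will compensate by tracking $K_{i}K_{\tau i}^{-1}$-type combinations only, since only these enter $B_i$.

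\emph{Step 3: lifting to $B_i$.} By \cref{prop:LL}, the word $\bs_\pt\bs_{\pt+1}\bs_\pt$ (respectively $\bs_{\pt+1}\bs_\pt\bs_{\pt+1}$) is reduced in $W$, and its length increases after multiplying by $s_{\pm(\pt+1)}$ (respectively $s_{\pm\pt}$). Recall
\[
B_{k}=F_{k}+\va^\dm_{k}\,T_{w_\bu}(E_{\tau k})K_{k}^{-1}.
\]
I will apply Step 2 to each of the three pieces. For $F_{\pm(\pt+1)}$ and $K_{\pm(\pt+1)}$ this is direct. For the middle term, I will use that the relative reflections commute with $T_{w_\bu}$ and that the root $w_\bu\alpha_{\tau k}$ is sent correspondingly by the same element. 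Finally, the distinguished parameters must agree on both sides: $\va^\dm_{\pm(\pt+1),\I}=\va^\dm_{\mp(\pt+1),\J}$, and similarly for $\pm\pt$. This follows from \eqref{balance}, since the bilinear form is $W$-invariant, so $(\alpha_i,\alpha_i+w_\bu\alpha_{\tau i})$ is preserved, and since parities are preserved, as in \cref{lem:iparity}. Combining these gives \eqref{lamb1}.
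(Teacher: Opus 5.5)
Your proposal is correct and follows the paper's proof essentially step for step: the paper also computes the weights directly and then invokes the super analogue of Lusztig's lemma, $T_w(X_i)=X_j$ for $X=E,F,K$, citing \cite[Lem.~3.14]{Sh25} rather than reproving it by the rank-two reduction you outline in Step~2. It likewise uses that $\bs_\pt\bs_{\pt+1}\bs_\pt$ commutes with $w_\bu$ for the term $T_{w_\bu}(E_{\tau k})K_k^{-1}$, and matches the distinguished parameters via \eqref{balance}, \cref{lem:iparity} and the balancing $\va_i=\va_{-i}$, where you use $W$-invariance of the form instead; your fallback of tracking only $K_iK_{\tau i}^{-1}$ would not help, since $B_i$ contains $K_i^{-1}$ by itself, but it is not needed because the parity signs in $T_iT_j(K_i)$ cancel in the rank-two check.
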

\begin{proof}
    We note that $\bs_{\pt}\bs_{\pt+1}\bs_{\pt}(\I)=\J$ and $\bs_{\pt+1}\bs_{\pt}\bs_{\pt+1}(\I)=\I$ by \cref{bsiI}. We only prove the first equality in \cref{lamb1} and the remaining one can follows similarly. By a direct computation on weights we obtain that $\bs_{\pt}\bs_{\pt+1}\bs_{\pt}(\alpha_{\pm (\pt+1),\I})=\alpha_{\mp (\pt+1),\J}$. Thus by \cite[Lem. 3.14]{Sh25} (see also \cite[39.2]{Lubook}) we have
    \[
    T_{\bs_{\pt}}T_{\bs_{\pt+1}}T_{\bs_{\pt}}(X_{\pm (\pt+1),\I})=X_{\mp(\pt+1),\J}\qquad \text{ for }X=E,F,K.
    \]
    Since $\bs_{\pt}\bs_{\pt+1}\bs_{\pt} w_\bu(\I)=w_\bu\bs_{\pt}\bs_{\pt+1}\bs_{\pt}(\I)$, we conclude that
    \begin{align*}
         T_{\bs_{\pt}}T_{\bs_{\pt+1}}T_{\bs_{\pt}}(B_{\pt+1,\I})&\overset{\cref{eq:Uibvs}}{=}T_{\bs_{\pt}}T_{\bs_{\pt+1}}T_{\bs_{\pt}}(F_{\pt+1,\I} + \va^\dm_{\pt+1,\I} T_{w_\bu}(E_{-\pt-1,\I}) K^{-1}_{\pt+1,\I})
         \\
         &=F_{-\pt-1,\J}+\va^\dm_{\pt+1,\I} T_{w_\bu}(E_{\pt+1,\J})K^{-1}_{-\pt-1,\J}.
    \end{align*}
    Since $\J=\bs_{\pt+1}(\I)$, we have $\va^\dm_{\pt+1,\I}=\va^\dm_{-\pt-1,\J}$ by \cref{balance} and \cref{lem:iparity}. This finishes the proof.
\end{proof}

\begin{lemma}
\label{tata2}
   Suppose $\I\in \DiAIII_\pt$ with $0< i,j\in \I_\circ $ such that  $i\sim j$ and $\{i,j\}\neq \{\pt,\pt+1\}$. Then we have
   \[
   \bs_i\bs_j(\alpha_{\pm i,\I})=\alpha_{\pm j,\J},\qquad T_{\bs_i}T_{\bs_j}(B_{\pm i,\I})=B_{\pm j, \J}.
   \]
   where $\J=\bs_i\bs_j(\I)$
\end{lemma}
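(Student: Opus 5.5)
Since $i,j>0$ are adjacent, both lie in $\I_\circ$ and neither is $\pm\pt$ (apart from the excluded pair $\{\pt,\pt+1\}$). So $\bs_i=s_is_{-i}$ and $\bs_j=s_js_{-j}$, and we may take $j=i\pm1$ with $i,j\geq \pt+1$. I would run the argument of \cref{tata1}: first a weight computation in the Coxeter groupoid $W$, then a transfer to the generators $B$ using the Lusztig-type property of $T_w$.

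\emph{Step 1 (weights).} Write $\bs_i\bs_j=s_is_{-i}s_js_{-j}$. The nodes $\pm i$ and $\pm j$ on opposite arms are not connected, so $s_{\pm i}$ commutes with $s_{\mp j}$ by \eqref{braid}. Hence
\[
\bs_i\bs_j=(s_is_j)(s_{-i}s_{-j}),
\]
with the two factors acting on the two arms separately. The reflections $s_{-i},s_{-j}$ fix $\alpha_{i,\I}$, since these roots are orthogonal and the labels are not adjacent. On the positive arm, $s_is_j(\alpha_i)=\alpha_j$ is the standard type A rank-two identity. In the groupoid setting, even when $s_i$ or $s_j$ is odd, one checks it directly with $\alpha_k=\epsilon_{k-1/2}-\epsilon_{k+1/2}$: the reflections only permute the $\epsilon$'s in positions $i\pm\tfrac12$, $j\pm\tfrac12$. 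Applying $\tau$ (multiplication of labels by $-1$) gives the statement for $-i$. This also shows $\J=\bs_i\bs_j(\I)\in\DiAIII_\pt$, by \cref{bsiI}.

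\emph{Step 2 (root vectors).} The expression $s_is_js_{-i}s_{-j}$ is reduced, since the length is additive by \cref{prop:LL}. Since $w(\alpha_{\pm i,\I})=\alpha_{\pm j,\J}$ is simple, I would invoke the super analogue of \cite[39.2]{Lubook}, namely \cite[Lem.~3.14]{Sh25}, as in the proof of \cref{tata1}. This gives
\[
T_{\bs_i}T_{\bs_j}(X_{\pm i,\I})=X_{\pm j,\J}\quad\text{for } X=E,F,K.
\]
\emph{Step 3 (the $B$'s).} Next I would show that $\bs_i\bs_j$ commutes with $w_\bu$. This holds because $\bs_i\bs_j$ only involves nodes at distance $\geq 1$ from $\I_\bu$, except possibly $\pt+1$ adjacent to $\I_\bu$; since $s_{\pm(\pt+1)}$ acts only through $\epsilon$'s outside the black block, the conjugation identity $T_{\bs_i\bs_j}T_{w_\bu}=T_{w_\bu}T_{\bs_i\bs_j}$ follows from the braid relations. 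Then
\[
T_{\bs_i}T_{\bs_j}\big(F_{i,\I}+\va^\dm_{i,\I}T_{w_\bu}(E_{-i,\I})K_{i,\I}^{-1}\big)=F_{j,\J}+\va^\dm_{i,\I}T_{w_\bu}(E_{-j,\J})K_{j,\J}^{-1}.
\]
It remains to see $\va^\dm_{i,\I}=\va^\dm_{j,\J}$. By \eqref{balance} this needs $|i,\I|=|j,\J|$, $(\alpha_{i,\I},\alpha_{i,\I})=(\alpha_{j,\J},\alpha_{j,\J})$ and $(\alpha_{i,\I},w_\bu\alpha_{-i,\I})=(\alpha_{j,\J},w_\bu\alpha_{-j,\J})$. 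The first two hold because $w$ is an isometry preserving parity. The third follows from the isometry together with $w\,w_\bu=w_\bu w$. When $i,j\neq \pt+1$ both sides of the third identity vanish; otherwise it is the computation of \cref{lem:iparity}.

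The main obstacle is Step 2 in the super setting: \cite[Lem.~3.14]{Sh25} needs the reduced expression and the simple-root hypothesis in the groupoid, including the sign conventions of $T_i$ on $K$ and $E$ for odd nodes. The factors $(-1)^{|i||j|}$ in \cref{braidoperator} could a priori produce a sign, so I would verify it directly in rank two by computing $T_iT_j(F_i)=F_j$ from the formulas.
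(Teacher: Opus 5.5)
Your proposal is correct and follows the paper's route: the paper's proof of this lemma just reruns that of \cref{tata1}, namely a weight computation, then \cite[Lem.~3.14]{Sh25} for $E,F,K$, then commutation of $\bs_i\bs_j$ with $w_\bu$, and finally equality of the distinguished parameters via \eqref{balance}. One small correction: since $i,j\geq \pt+1$, none of $\pm i,\pm j$ is connected to $\I_\bu$ (only $\pm\pt$ are, so $\pt+1$ is not a special case). Hence $T_{w_\bu}$ fixes $E_{\mp i}$ and $E_{\mp j}$, and $(\alpha_{k},w_\bu\alpha_{-k})=(\alpha_k,\alpha_{-k})=0$ for $k=i,j$ in every case, so no appeal to \cref{lem:iparity} is needed.
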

\begin{proof}
    Similar to the proof of \cref{tata1}.
\end{proof}

The following proposition is a super analog of \cite[Propositions 7.2 and 7.6]{WZ23}.

\begin{proposition}
\label{prop:relamb1}
    Retain the set-up in \cref{tata1} and \cref{tata2}. Then  
     \begin{gather}
        \label{relamb1}
    \TT_{\bs_{\pt}}\TT_{\bs_{\pt+1}}\TT_{\bs_{\pt}}(B_{\pm(\pt+1),\I})=B_{\mp (\pt+1),\bs_{\pt}(\I)},\quad \TT_{\bs_{\pt+1}}\TT_{\bs_{\pt}}\TT_{\bs_{\pt+1}}(B_{\pm \pt,\I})=B_{\pm \pt,\I}.\\
    \TT_{\bs_i}\TT_{\bs_j}(B_{\pm i,\I})=B_{\pm j,\J}. \label{relamb2}
    \end{gather}
\end{proposition}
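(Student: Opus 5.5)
We will follow the strategy of \cite[Propositions 7.2 and 7.6]{WZ23}: convert the statement about $\TT$'s into a statement about Lusztig symmetries $T$ conjugated by a product of rank-one quasi $K$-matrices, and then use the weight/degree considerations on these quasi $K$-matrices together with \cref{tata1} and \cref{tata2}. Concretely, for a composite $\TT_{\bs_{i_1}}\TT_{\bs_{i_2}}\TT_{\bs_{i_3}}$ (or the length-two version), iterate the defining intertwining relation \eqref{eq:intertw}. Set $\up''_{\wseq}$ to be the product
\[
T_{\bs_{i_1}}T_{\bs_{i_2}}(\up''_{i_3})\,T_{\bs_{i_1}}(\up''_{i_2})\,\up''_{i_1},
\]
with the appropriate diagram subscripts, where $\up''_{i_k}$ is defined by \eqref{def:up''}. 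Then
\[
\TT_{\wseq}(x)\,\up''_{\wseq}=\up''_{\wseq}\,T_{\wseq}(x), \qquad x\in\Ui(\I),
\]
and this follows formally by induction on the length, exactly as in the non-super case. Applying this with $x=B_{\pm(\pt+1),\I}$ (respectively $B_{\pm\pt,\I}$, $B_{\pm i,\I}$) and using \eqref{lamb1} or \cref{tata2}, we obtain
\[
\TT_{\wseq}(B_{\pm(\pt+1),\I})\,\up''_{\wseq}=\up''_{\wseq}\,B_{\mp(\pt+1),\J},
\]
and analogously in the other cases.

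It then suffices to show that $B_{\mp(\pt+1),\J}$ commutes with $\up''_{\wseq}$, since by invertibility of $\up''_{\wseq}$ this forces $\TT_{\wseq}(B)=B$ with the claimed index. Since $B=F+\va^\dm T_{w_\bu}(E)K^{-1}$, we split the commutation into the $F$-part and the $ET_{w_\bu}K^{-1}$-part, as in \cref{lem:rktwo}. Each factor of $\up''_{\wseq}$ is the image under a product of $T_{\bs}$'s of an inverse rank-one quasi $K$-matrix, so it lies in a completion of $\U^-$-type or $\U^+$-type pieces whose weights lie in $\N$-spans of roots $w'(\alpha_{k}+w_\bu\alpha_{\tau k})$. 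The key step is a weight check: the roots appearing in every factor are orthogonal/non-adjacent to the simple root $\alpha_{\mp(\pt+1),\J}$ in the sense that they lie in $\Z$-spans of simple roots in $\J_\bu\cup\{k,\tau k\}$ with $k$ not connected to $\pt+1$, or more precisely that $w(\alpha_{\pm(\pt+1),\I})$ being simple forces each intermediate $w'$ to send the rank-one root subsystems into positive roots not involving $\alpha_{\mp(\pt+1),\J}$. For such weights, $F_{\mp(\pt+1),\J}$ (anti)commutes through the factor because $[E_\beta,F_j]=0$ for $\beta$ a positive root not involving $\alpha_j$. Parity is harmless because each quasi $K$-matrix component has even weight; this is noted in the proof of \cref{lem:rktwo}. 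The $E$-part is handled by the same argument applied through $\sigma\tau$ or $T_{w_\bu}$-invariance, using \cref{lem:blackfix}.

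The main obstacle is precisely this commutation in the super setting, since the intermediate factors live in $\U(\I_k)$ for different Dynkin diagrams related by odd reflections. Identifying $T_{\bs_{i_1}}T_{\bs_{i_2}}(\up''_{i_3})$ as an element of the subalgebra generated by root vectors not involving $\alpha_{\mp(\pt+1),\J}$ requires a super analog of \cite[Lemma 39.2 / Prop.~40.2]{Lubook}. We would first try to reduce this to a rank-two factorization: show that $\up''_{\wseq}$ equals the rank-two quasi $K$-matrix-type element for the subdiagram $\I_\bu\cup\{\pm\pt,\pm(\pt+1)\}$, up to the factor $\up''_{\pt+1}$, and verify the needed commutation by a direct check in $\gl(2|2\pt+2)$-type rank-two subalgebras, case by case over the parities of nodes $\pt,\pt+1$. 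The case $\TT_{\bs_i}\TT_{\bs_j}$ for $\{i,j\}\neq\{\pt,\pt+1\}$ is simpler: it involves only type A rank-two data and follows the same way with two factors.
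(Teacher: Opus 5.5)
There is a genuine gap at your key step. The commutation $B_{wi}\,\up''_{\wseq}=\up''_{\wseq}\,B_{wi}$ is equivalent to the proposition, and it holds only for the product as a whole. It cannot be verified factor by factor, because the factors are supported on roots adjacent to, or containing, the target root.

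Take \eqref{relamb2} with $i\sim j$. Iterating \eqref{eq:intertw} gives the intertwiner $\up''_{i_1}\,T_{\bs_{i_1}}(\up''_{i_2})\,T_{\bs_{i_1}}T_{\bs_{i_2}}(\up''_{i_3})$; this is the reverse of the order you wrote. For $\TT_{\bs_i}\TT_{\bs_j}$ it is therefore $\up''_{i,\J}\,T_{\bs_i}(\up''_{j,\bs_j(\I)})$.
\begin{itemize}
\item The first factor is supported on $\I_\bu\cup\{i,\tau i\}$. By \eqref{eq:rktwo2}, the $E$-part of $B_{j,\J}$ commutes with it, so $[B_{j,\J},\up''_{i,\J}]=[F_{j,\J},\up''_{i,\J}]$.
\item This commutator is nonzero. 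Its lowest component is a nonzero multiple of $[F_{j,\J},F_{i,\J}K_{i,\J}F_{\tau i,\J}K_{\tau i,\J}]$, and $F_{j}F_{i}F_{\tau i}$ and $F_{i}F_{j}F_{\tau i}$ are linearly independent.
\item The second factor has weights $-m(\alpha_i+\alpha_j+\alpha_{\tau i}+\alpha_{\tau j})$, which contain $\alpha_j$ itself.
\end{itemize}
So the claim that every factor lives on roots not involving $\alpha_{wi}$ is false, and the vanishing $[E_\beta,F_j]=0$ never applies. The nonzero commutators of the individual factors cancel only in the product, and that cancellation is exactly the content of the proposition.

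Your fallback does not close the gap either. Identifying the product with a rank-two quasi $K$-matrix is the factorization \cref{thm:factorization}. The paper proves that using \cref{thm:property}, which in turn rests on this very proposition, so the argument would be circular. A ``direct check'' of the commutation is precisely the nontrivial computation left unspecified.

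The missing idea is to avoid analyzing individual factors at all. The paper proceeds as follows.
\begin{itemize}
\item Via \cref{thm:ibraid}(3) it passes to the $\TT'$-form. The intertwiner there is $\up_{i_1}\,T'_{\bs_{i_1}}(\up_{i_2})\,T'_{\bs_{i_1}}T'_{\bs_{i_2}}(\up_{i_3})$, which lies in a completion of $\U^+$ with constant term $1$.
\item Using \cref{Iwasawabasis}, it writes $\TT'_{\wseq}(B)-B_{wi}=\sum_J A_JB_J$ with $A_J\in\Ub^+\U^{\imath0}$.
\item A weight argument as in \cite[Prop.~7.2]{WZ23} then forces every $A_J=0$. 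Roughly: $\sum_J A_JB_J$ times the intertwiner equals the commutator of the intertwiner with $B_{wi}$. That commutator has no $\U^-$-component. Comparing leading $\U^-$-terms kills all $A_J$ with $J\neq\emptyset$. Comparing weights then kills $A_\emptyset$, since the nonzero weights of the intertwiner involve white nodes while $A_\emptyset$ carries only black weights.
\end{itemize}
This uses only the triangularity and weights of the whole product, which is why it works where factorwise commutation fails.
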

\begin{proof}
We only prove the second equality in \cref{relamb1} and the rest follow similarly.
    By \cref{thm:ibraid}, the second equality in \cref{relamb1} is equivalent to
    \[
    \TT'_{\bs_{\pt+1}}\TT'_{\bs_{\pt}}\TT'_{\bs_{\pt+1}}(B_{\pm \pt,\I})=B_{\pm \pt,\I}.
    \]
    By \cref{Iwasawabasis}, we can write
    \[
    \TT'_{\bs_{\pt+1}}\TT'_{\bs_{\pt}}\TT'_{\bs_{\pt+1}}(B_{\pm \pt,\I})-B_{\pm \pt,\I}=\sum_{J\in \mathcal{J}} A_J B_J
    \]
    for some $A_J\in \Ub^+(\I) \U^{\imath 0}(\I)$. Then by a weight argument entirely similar to the proof of \cite[Prop. 7.2]{WZ23} one concludes that $\sum_{J\in \mathcal{J}} A_J B_J=0$. This finishes the proof.
\end{proof}

 Now we are ready to prove \cref{thm:property} in general.

 \begin{proof}[Proof of \cref{thm:property}]
The strategy of the proof is adapted from the well-known quantum group analogue; cf. \cite[Lemma~8.20]{Ja96}.  
We reduce the argument to the rank-two cases established earlier, and then complete the proof by induction on $\ell_\circ(w)$.  
By \cref{prop:relamb1}, the statement is already known for arbitrary rank-two Satake subdiagrams
\(
(\I_\bu \cup \{i,\tau i,j,\tau j\},\tau).
\)
(The case \(i\nsim j\) is immediate.)

We now argue by induction on $\ell_\circ(w)$ for \(w\in \reW\), where $\ell_\circ$ denotes the length function on the relative Weyl group $\reW$.  
Following \cite[\S 2.3]{DK19}, for \(i\in \I_\circ\) and \(\I\in \DiAIII_\pt\), we define the relative simple roots by
\begin{align}
 \label{def:talpha}
\talpha_{i,\I}:=\frac{\alpha_{i,\I} +w_\bu(\alpha_{\tau i,\I})}{2}.
\end{align}
By construction, we have \(\talpha_{i,\I}=\talpha_{\tau i,\I}\).  
Moreover, since \(w(\alpha_{i,\I})=\alpha_{j,\J}\) for some \(j\in \J_\circ\), a direct computation shows that
\[
w(\talpha_{i,\I})=\talpha_{j,\J}.
\]
As usual, we write \(\beta>0\) (resp. \(\beta<0\)) for a positive (resp. negative) root in the relative root system.

Suppose now that \(\ell_\circ(w)>0\).  
Then there exists \(k\in \I_\circ\) such that \(w(\talpha_{k,\I})<0\); clearly \(k\neq i\), since \(w(\talpha_{i,\I})>0\).  
Consider the decomposition of \(w\) with respect to the rank-two parabolic subgroupoid \(\langle \bs_i,\bs_k\rangle\).  
Then we may write
\(
w=w'w''
\)
in \(\reW\), where \(w''\in \langle \bs_i,\bs_k\rangle\), the element \(w'\) is the minimal-length representative of the corresponding coset, and
\[
w'(\talpha_{i,\I})>0,\qquad w'(\talpha_{k,\I})>0,
\qquad
\ell_\circ(w)=\ell_\circ(w')+\ell_\circ(w'').
\]
Since \(w(\talpha_{i,\I})>0\) and \(w(\talpha_{k,\I})<0\), it follows that
\[
w''(\talpha_{i,\I})>0,\qquad w''(\talpha_{k,\I})<0.
\]
Hence
\[
w''(\alpha_{i,\I})>0,\qquad w''(\alpha_{k,\I})<0,\qquad
w'(\alpha_{i,\I})>0,\qquad w'(\alpha_{k,\I})>0,
\]
because the positive system of the relative root system is compatible with that of \(\Phi\).

Using the same weight argument as in the proof of \cite[Theorem~7.13]{WZ23}, we deduce that \(w''i\in \J_\circ\) and
\[
w''(\alpha_{i,\I})=\alpha_{w''i,\J}.
\]
By the rank-two case established in \cref{prop:relamb1}, we have
\[
\TT_{\underline{w''}}(B_{i,\I})=B_{w''i,\J}.
\]
for any reduced expression \(\underline{w''}\) of \(w''\).  
Now apply the induction hypothesis to \(w'\).  
There exists a reduced expression \(\underline{w'}\) such that
\(
\underline{w}=\underline{w'}\,\underline{w''}
\)
is a reduced expression for \(w\), and
\begin{align*}
\TT_{\underline{w}}(B_i)
=
\TT_{\underline{w'}}\TT_{\underline{w''}}(B_i)
=
\TT_{\underline{w'}}(B_{w''i})
=
B_{wi}.
\end{align*}
This completes the proof.
 \end{proof}

 \subsection{Factorization of quasi $K$-matrix}
Let $\I\in \DiAIII_\pt$. Given $w\in \reW$ with a reduced expression
\(
\wseq=\bs_{i_1}\bs_{i_2} \cdots \bs_{i_k,\I},
\)
we define (following \cite{DK19,WZ23}), for $1\leq j \leq k$
\begin{gather}
    \label{upj}
    \up^{[j]}:=T_{\bs_{i_1}}T_{\bs_{i_2}}\cdots T_{\bs_{i_{j-1}}}(\up_{i_j,\I_j}), \qquad \text{ where }\I_j=\bs_{j}\bs_{j+1}\dots \bs_{k} (\I),
    \\
    \label{upwseq}
    \up_{\wseq}=\up^{[k]}\up^{[k-1]}\cdots \up^{[1]}.
\end{gather}
Note that we have suppressed the dependence of $\up^{[j]}$ on $\wseq$ in the notation.

Recall that $\relongest$ denotes the longest element in the relative Coxeter groupoid $\reW$ associated with $\I\in \DiAIII_\pt$.  
The goal of this section is to establish \cref{thm:factorization}, which may be regarded as a super analogue of \cite[Conjecture~3.22]{DK19}. In the purely even case, this conjecture is proved in \cite[Theorem~8.1]{WZ23} for all finite types.
\begin{theorem}
\label{thm:factorization}
    (1) For any $w\in \reW$, the partial quasi $K$-matrix $\up_{\wseq}$ is independent of the choice of reduced expressions of $w$. (Hence it can be denoted by $\up_w$.)

    (2) The quasi $K$-matrix associated with $(\U(\I),\Ui(\I))$ admits a factorization $\up_\I=\up_{\relongest}$.
\end{theorem}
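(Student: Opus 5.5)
We follow the strategy of \cite[\S 8]{WZ23}, which in turn adapts the rank-one reduction of \cite{DK19}, and transport it to the groupoid setting.

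\textbf{Part (2) from an intertwining characterization.} Suppose we know that for every $\I\in\DiAIII_\pt$ and every reduced expression $\wseq$ of $\relongest$ (starting at $\I$), the element $\up_{\wseq}$ satisfies the defining properties of \cref{Kmatrix}: it lies in the completion of $\U^+(\I)$, has constant term $1$, commutes with $\U^{\imath0}(\I)\Ub(\I)$, and satisfies $B_{\tau i,\I}\up_{\wseq}=\up_{\wseq}\sigma\tau(B_{i,\I})$ for $i\in\I_\circ$. Then uniqueness in \cref{Kmatrix} gives $\up_{\wseq}=\up_\I$. This yields (2), and it also yields (1) for $w=\relongest$.

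\textbf{Part (1) for general $w$.} For general $w$, write $\relongest=w\,w'$ with lengths adding (using \cref{prop:LL}). The factorization \eqref{upwseq} is multiplicative: $\up_{\underline{ww'}}=\big(T_{\wseq}(\up_{\underline{w'}})\big)\,\up_{\wseq}$, with the appropriate order of factors and diagram labels. Two reduced expressions of $w$ can be completed by a common $\underline{w'}$. Since $T_{\wseq}$ depends only on $w$ (the $T_i$ satisfy the braid relations by \cref{braidoperator} and Matsumoto), cancelling the common invertible factor gives independence. Alternatively, by Matsumoto's theorem for $\reW$ it suffices to check independence under the rank-two braid relations of \cref{relCgroupid}. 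Those reduce to the rank-two identities $\up_{\bs_i\bs_j\bs_i\cdots}=\up_{\bs_j\bs_i\bs_j\cdots}$, and both sides equal the rank-two quasi $K$-matrix by the argument of the previous paragraph applied to the real-rank-two subdiagram $\I_\bu\cup\{i,\tau i,j,\tau j\}$. These cover type $A_2$ and the type $B_2$ relation at $\{\pt,\pt+1\}$, together with the commuting case. I prefer this second route.

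\textbf{Key step: the intertwining property for $\up_{\relongest}$.} I would prove the intertwining by induction on length in the following form. For $w=\bs_{i_1}\cdots\bs_{i_k,\I}$ reduced, the map
\[
x\mapsto \up_{\wseq}^{-1}\cdots
\]
should satisfy
\[
\TT'_{\wseq}(x)\,\up_{\wseq}'=\up_{\wseq}'\,T'_{w}(x),
\]
where $\up_{\wseq}'$ is the analogous product built from the $T'$'s. This is obtained by iterating \eqref{eq:intertw} from \cref{thm:ibraid}: each step conjugates by one factor $\up^{[j]}$, and the factors compose because $T'_{\bs_i}$ is an algebra map.

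Apply this with $w=\relongest$. Then $T'_{\relongest}$ equals $T_{w_\bu}^{\pm1}$ composed with $\tau$-type maps, up to the longest element of $W$. Meanwhile, by \cref{thm:property}, $\TT_{\relongest}$ sends $B_{i,\I}$ to $B_{\tau_0 i,\J}$ for the appropriate involution. Comparing the two gives $B_{\tau i}\up=\up\,\sigma\tau(B_i)$, after the same manipulation as in the proofs of \cref{prop:qsrkone'} and \cref{prop:qsrkone}. Commutation with $\U^{\imath0}\Ub$ follows from \cref{lem:rkzero} and \cref{lem:blackfix}. Finally, the weight and parity support conditions follow because each $\up_{i,\I_j}$ has even weight and $T$'s preserve parity.

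\textbf{Main obstacle.} I expect the main obstacle to be identifying $T'_{\relongest}$ on the $B_i$ and tracking signs and diagram labels. Here $\relongest$ may map $\I$ to a different diagram $\J$ (the orbit can be nontrivial), whereas \cref{Kmatrix} concerns a single $\I$. I would first check that $\relongest(\I)=\I$ using \cref{bsiI} and the orbit description of \cref{prop:orbit}, since $\relongest w_\bu$ is the longest element $w_\I$ of $W$. If that check fails, I would instead compare with $\up_\J$ via $\TT$. The super signs $(-1)^{|i,\J|}$ and $o_{i,\J}$ must be tracked using \cref{lem:iparity}.
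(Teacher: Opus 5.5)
\textbf{Verdict: there is a genuine gap.} It sits in the key step, which is exactly the input your preferred route to (1) relies on in rank two, so neither (1) nor (2) is established.

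Your two reductions for (1) are fine: cancelling a common tail $T_w(\up_{\underline{w'}})$, or Matsumoto plus rank-two identities. The key step, however, must give $B_{\tau i,\I}\,\up_{\wseq}=\up_{\wseq}\,\sigma\tau(B_{i,\I})$ for \emph{every} $i\in\I_\circ$ and \emph{every} reduced expression $\wseq$ of $\relongest$. Iterating \eqref{eq:intertw} to get $\TT'_{\wseq}(x)\,\up'_{\wseq}=\up'_{\wseq}\,T'_{\relongest}(x)$ is correct. But you then need $\TT'_{\wseq}(B_{i,\I})$ explicitly, and \cref{thm:property} does not provide it, for two reasons:
\begin{enumerate}
\item It applies only to $w$ with $w(\alpha_{i,\I})$ a simple root, whereas $\relongest(\alpha_{i,\I})<0$. (Relatedly, $\TT_{\relongest}(B_{i,\I})$ is not $B_{\tau_0 i}$ but a scalar times a $B$ times a Cartan or $\cK$-factor.)
\item It holds only for \emph{some} reduced expression of $w$, not for the one appearing in your $\wseq$.
\end{enumerate}
Peeling off the last letter $\bs_{i_m}$ handles $B_{\pm i_m}$: apply the rank-one formula, then \cref{thm:property} to $\relongest\bs_{i_m}$. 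In higher rank even this works only for a suitably chosen prefix. For $j\neq \pm i_m$, though, $\TT'_{i_m}(B_{j,\I})$ is a $q$-commutator. Pushing the remaining $\TT'$'s through it requires $\TT'_{\wseq}$ to be independent of $\wseq$, i.e.\ the braid relations of \cref{thm:braidreW}. The paper deduces those \emph{from} the factorization you are proving, so the argument is circular. Consequently, even in rank two, a fixed $\wseq$ only yields the intertwining relation for the $\tau$-orbit attached to its last letter.

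What is missing is an independent proof of the rank-two case, which is what the paper supplies:
\begin{enumerate}
\item It reduces to real rank two via \cref{prop:DK}, whose input is \cref{prop:tau0}, namely $T_{\bs_{i_1}}\cdots T_{\bs_{i_{m-1}}}(\up_{i_m,\I})=\up_{i_m,\I}$. Your plan never uses this.
\item It then verifies \eqref{aqua} for both reduced expressions in each of the three rank-two configurations, case by case as in \cite[\S 8.3]{WZ23}, using \cref{thm:property}.
\end{enumerate}
In rank two every proper prefix of $\relongest$ has a unique reduced expression, so \cref{prop:relamb1} is available. The generator not attached to the last letter still needs an explicit computation, for instance expanding with the rank-two formulas of \cref{sec:rktwo} and simplifying in $\Ui$. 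Your plan gives no way to obtain this.

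Once (1) is known, your global argument for (2) can be completed. For each $i$, pick a reduced expression of $\relongest$ ending in $\bs_i$ whose prefix is the one provided by \cref{thm:property}. All the resulting $\up'_{\wseq}$ then coincide.

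A smaller point: your product $\up'_{\wseq}=\up_{i_1}T'_{\bs_{i_1}}(\up_{i_2})\cdots$ is $\sigma(\up_{\wseq})$, not $\up_{\wseq}$. To conclude $\up_{\wseq}=\up_\I$ you would also need $\sigma$-invariance of $\up_{i,\I}$ and $\up_\I$. Alternatively, run the argument with $\TT_i$ and $\up''_{i,\J}$ instead.
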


The rest of this subsection will be devoted to prove \cref{thm:factorization}. We first recall the following result.
\begin{proposition} {\rm \cite[Theorems~3.17 and 3.20]{DK19}}
      \label{prop:DK}
      \cref{thm:factorization} holds  if it holds for all its rank-two Satake subdiagrams.
\end{proposition}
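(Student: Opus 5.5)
The plan is to follow the argument of \cite[Theorems~3.17 and 3.20]{DK19}, as adapted in \cite[\S 8]{WZ23}. Two things change in our setting: the relative Weyl group is replaced by the relative Coxeter groupoid $\reW$ of \cref{def:relgoupoid}, and all objects carry a diagram label $\I\in\DiAIII_\pt$. The proof splits into the two parts of \cref{thm:factorization}.

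\emph{Part (1): independence of the reduced expression.} By \cref{relCgroupid}, $\reW$ is a Coxeter groupoid with type B braid relations. Matsumoto's theorem holds for it (the argument of \cite{HY08} applies verbatim), so any two reduced expressions of $w$ differ by a finite sequence of rank-two braid moves. It therefore suffices to compare $\wseq=\underline{u}\,\underline{x}\,\underline{v}$ with $\wseq'=\underline{u}\,\underline{y}\,\underline{v}$, where $\underline{x},\underline{y}$ are the two reduced expressions of the longest element $w_{ij}$ of a rank-two parabolic subgroupoid $\langle\bs_i,\bs_j\rangle$ acting on the diagram $\I'=v(\I)$. From the definitions \eqref{upj}--\eqref{upwseq}, the block of factors of $\up_{\wseq}$ coming from $\underline{x}$ equals $T_{\underline{u}}\big(\up_{\underline{x}}\big)$, where $\up_{\underline{x}}$ is built inside the rank-two subdiagram $(\I'_\bu\cup\{i,\tau i,j,\tau j\},\tau)$. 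The factors coming from $\underline{u}$ and $\underline{v}$ are literally the same in $\wseq$ and $\wseq'$. The rank-two hypothesis gives $\up_{\underline{x}}=\up_{\underline{y}}$, since both equal the quasi $K$-matrix of that subdiagram. Hence $\up_{\wseq}=\up_{\wseq'}$. The point to check here is that all the $T_{\bs_k}$ act between the correct diagrams $\I_k$; this is bookkeeping guaranteed by \cref{bsiI}.

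\emph{Part (2): $\up_\I=\up_{\relongest}$.} The approach is to use the uniqueness in \cref{Kmatrix}, verifying three things for $\up_{\relongest}$.
\begin{itemize}
\item \emph{Weights.} Each factor $T_{\bs_{i_1}}\cdots T_{\bs_{i_{j-1}}}(\up_{i_j,\I_j})$ lies in $\widehat{\U}^+$ with constant term $1$. This positivity holds because $\bs_{i_1}\cdots\bs_{i_{j-1}}(\talpha_{i_j})>0$, using \cref{prop:LL}. Each factor also has even weights, so $\up_{\relongest}$ has the shape required in \cref{Kmatrix}.
\item \emph{Commutation with $\U^{\imath0}(\I)\U_\bu(\I)$.} This follows factorwise from \cref{lem:rkzero}, \cref{lem:blackfix} and \cref{Kmatrix}.
\item \emph{The intertwining relation \eqref{intertwine} for each $B_{k,\I}$.} Since $\relongest$ is longest, for each $k\in\I_\circ$ it admits a reduced expression ending in $\bs_{k,\I}$, and by part (1) we may use it. Write $\relongest=w'\bs_{k,\I}$. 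Then $\up_{\relongest}=\up_{k,\I}\cdot T_{\bs_k}(\up_{w'})$. Peeling off $\up_{k,\I}$ via the rank-one relation, and then transporting across $T_{\bs_k}$ via the intertwining identities of \cref{thm:ibraid}, reduces the claim to the statement that $\TT_{\underline{w'}}$ sends $B_{k}$ to the appropriate $B_{\tau k}$. That statement is supplied by \cref{thm:property} together with the relation $w'(\talpha_k)=\talpha_{\tau k}$ computed in its proof.
\end{itemize}

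The step I expect to be the main obstacle is this last one. One must track the signs $(-1)^{|i,\J|}$, the factors $o_{i,\J}$, and the distinguished parameters through the super braid operators. One must also confirm that $\sigma\tau$ is compatible with $T_{w'}$ across different diagrams; here \cref{lem:iparity} and the balanced-parameter condition \eqref{par} should make the scalars match. If the direct transport proves messy, my fallback is to mimic \cite{DK19}: prove by induction on $\ell_\circ(w)$ that $\up_w$ satisfies a partial intertwining relation for all $B_k$ with $w^{-1}(\talpha_k)<0$, using rank-two input only at each induction step.
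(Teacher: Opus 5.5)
Your Part (1) is the formal argument of \cite[Theorem~3.17]{DK19} and is fine, with one imprecision. The factors coming from $\underline v$ are not literally the same in $\wseq$ and $\wseq'$: they contain $T_{\underline x}$, respectively $T_{\underline y}$. They agree because of the braid relations in \cref{braidoperator} together with \cref{prop:LL}.

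The genuine gap is the factorization you use to start the intertwining bullet of Part (2).

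\emph{The factorization is wrong as stated.} For $\relongest=w'\bs_{k,\I}$ with $w'=\bs_{i_1}\cdots\bs_{i_{m-1}}$, the definitions \eqref{upj}--\eqref{upwseq} give
\[
\up_{\relongest}=T_{\bs_{i_1}}\cdots T_{\bs_{i_{m-1}}}(\up_{k,\I_m})\cdot \up_{w'},
\]
not $\up_{k,\I}\cdot T_{\bs_k}(\up_{w'})$. The factor attached to the last letter is transported by $T_{w'}$.

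\emph{The other end does not help.} The only factor that is literally a rank-one quasi $K$-matrix is the one attached to the first letter, and it sits at the right end: for a reduced expression $\relongest=\bs_k w''$ one has $\up_{\relongest}=T_{\bs_k}(\up_{w''})\,\up_{k,\I}$. Peeling this off leaves you needing $T_{\bs_k}(\up_{w''})$ to commute with $B_{\tau k}$. After pulling back by $T_{\bs_k}$, that is a commutation of $\up_{w''}$ with a $\sigma\tau$-twisted element (see the proof of \cref{prop:qsrkone'}). It is not the statement about $\TT_{\underline{w'}}(B_k)$ that you invoke.

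\emph{The missing step.} Your peeling step therefore requires that $T_{\bs_{i_1}}\cdots T_{\bs_{i_{m-1}}}(\up_{k,\I_m})$ be again a rank-one quasi $K$-matrix. This is exactly \cref{prop:tau0}, the super analogue of \cite[Proposition~3.18]{DK19}. It is the one non-formal input the paper isolates; once it is available, the argument of \cite{DK19} is largely formal. It is neither bookkeeping nor a consequence of \cref{thm:property}, which concerns the generators $B_i$ rather than the rank-one quasi $K$-matrices.

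The paper proves it in three moves:
\begin{itemize}
\item write the transport as $T_{\relongest}T_{\bs_{k}}^{-1}$;
\item use $w_\I=w_\bu\relongest$ and $\bs_k=w_{\bu,k}w_\bu$ to rewrite it as $T_{w_\I}T^{-1}_{w_{\bu,k}}$;
\item apply the explicit formulas \eqref{Twi}, which show this operator fixes $\up_{k}=\up_{\tau k}$.
\end{itemize}
Once you add this step, your reduction has the right shape. The rank-one relation then leaves a commutation of $\up_{w'}$ with a single twisted generator, and that is where \cref{thm:property} and the iterated intertwining relations of \cref{thm:ibraid} enter. The sign-tracking you flag as the main obstacle is secondary to this missing ingredient.
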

We note that the proof of \cref{prop:DK} is largely formal once \cref{prop:tau0}, the super analogue of \cite[Proposition~3.18]{DK19}, is established.  
To prove \cref{prop:tau0}, we follow the strategy of both \cite[Proposition~3.18]{DK19} and \cite[Proposition~8.3]{WZ23}, where the braid group operator associated with the longest element of the Weyl group plays a central role. 

In our setting, recall that we have the Coxeter groupoid $W$ introduced in \cref{groupoiddef}.  
For each $\I\in \DiAIII_\pt$, we see that $w_\I(\alpha_{i,\I})=-\alpha_{\tau i,\I}$. Therefore, if we let $\tilde{\I}$ denote the element in $\DiAIII_\pt$ such that \[
|i,\I|=|i,\tilde{\I}|,\qquad d_{i,\I}=d_{\tau i,\tilde{\I}} ,\qquad \forall i\in \I,
\]
then the associated braid group operator $T_{w_\I}$ defines an isomorphism \[
T_{w_\I}: \U(\I) \to \U(\tilde{\I})
\] 
Moreover, we have the identities (for all $j\in \I$)
\begin{equation}
\label{Twi}
    \begin{gathered}
    T_{w_\I}(F_{j,\I})=-K_{\tau j,\tilde{\I}}^{-1} E_{\tau j,\tilde{\I}},\quad T_{w_\I}(E_{j,\I})=-F_{\tau j,\tilde{\I}} K_{\tau j,\tilde{\I}}, \quad T_{w_\I}(K_{j,\I})=(-1)^{|j,\tilde{\I}|}K_{\tau j,\tilde{\I}}^{-1},\\
     T_{w_\I}^{-1}(F_{j,\I})=-(-1)^{|j,\tilde{\I}|}E_{\tau j,\tilde{\I}}K_{\tau j,\tilde{\I}} ,\quad T_{w_\I}^{-1}(E_{j,\I})=-(-1)^{|j,\tilde{\I}|}K_{\tau j,\tilde{\I}}^{-1}F_{\tau j,\tilde{\I}} , \quad T_{w_\I}^{-1}(K_{j,\I})=(-1)^{|j,\tilde{\I}|}K_{\tau j,\tilde{\I}}^{-1}.
\end{gathered}
\end{equation}
The identities in \cref{Twi} are well known in the purely even case, and in our setting they can be derived by the same argument as in \cite[Lemma~3.4]{Ko14}. 
\begin{proposition}
   \label{prop:tau0}
 Let $\relongest =\bs_{i_1}\bs_{i_2}\cdots \bs_{i_m}$ be a reduced expression of $\relongest$.  Then we have
 \[
 T_{\bs_{i_1}} T_{\bs_{i_2}} \cdots T_{\bs_{i_{m-1}}}(\up_{i_m,\I})
 =\up_{i_m,\I}.
 \]
\end{proposition}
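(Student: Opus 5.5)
Following \cite[Proposition~3.18]{DK19} and \cite[Proposition~8.3]{WZ23}, the idea is to express the composite braid operator through the longest element of the Coxeter groupoid and then use uniqueness of the rank-one quasi $K$-matrix. Since $\relongest=\bs_{i_1}\cdots\bs_{i_m}$ is reduced, \cref{prop:LL} makes the underlying word in the $s_j$ reduced in $W$. Set $w'=\bs_{i_1}\cdots\bs_{i_{m-1}}$. Because $\relongest$ is the longest element of $\reW$ and $w_\bu$ is the longest element of $W_\bu$, we have $w_\I=\relongest w_\bu$ with lengths adding. Likewise $\bs_{i_m}w_\bu=w_{\bu,i_m}$ is the longest element of the rank-one subdiagram $\bIi$ with $i=i_m$. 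Hence
\[
w_\I = w'\,\bs_{i_m}w_\bu = w'\,w_{\bu,i_m},
\]
again with lengths adding. This gives the factorization $T_{w'}=T_{w_\I}T_{w_{\bu,i_m}}^{-1}$ of braid operators on the relevant presentations, using \cref{braidoperator} and Matsumoto's theorem for $W$. The first step is to check carefully that all intermediate Dynkin diagrams match up, i.e.\ that $w'$ lands in $\tilde{\I}$-type data.

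The second step is to describe $T_{w'}$ on the rank-one subalgebra $\U^+_{\bIi}(\I)$. The element $T_{w_{\bu,i}}^{-1}$ maps $\U^+_{\bIi}$ into negative parts. Then $T_{w_\I}$ sends these back to positive parts by \eqref{Twi}, while relabelling $j\mapsto\tau j$. Since $\tau$ preserves $\bIi$, the composite $T_{w'}$ restricts to an algebra isomorphism from $\U_{\bIi}(\I)$ to $\U_{\bIi}(\I)$, possibly with a diagram relabel. It preserves weight spaces up to $\tau$ and fixes $\U^0$ up to the twist by $w_{\bu,i}\tau$. I would compute the resulting map on generators explicitly. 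The expected answer is
\[
E_j\mapsto \pm E_{\tau w_{\bu,i}(j)},
\]
i.e.\ a "diagram automorphism $\tau_{\bu,i}\tau$" with signs. This needs the super signs in \eqref{Twi} and \eqref{tau}.

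The third step is to show that $T_{w'}(\up_{i,\I})$ satisfies the defining intertwining relations \eqref{intertwine}--\eqref{xup=upx} for the rank-one pair $(\bIi,\tau)$. The relation for $\U^{\imath0}\U_\bu$ follows since $T_{w'}$ maps these subalgebras to themselves, using \cref{lem:blackfix}. For $B_i,B_{\tau i}$, the automorphism found in step two must be checked to map $B_{i}$ to a scalar multiple of $B_{\tau_i(i)}$ (or $B_i$), and to intertwine with $\sigma\tau$ appropriately. For this the distinguished parameter \eqref{balance} together with \cref{lem:iparity} should make the scalar equal to $1$, exactly as in \cref{tata1}. Uniqueness in \cref{Kmatrix}, applied to the rank-one subdiagram, then forces $T_{w'}(\up_{i_m,\I})=\up_{i_m,\I}$. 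The normalization is automatic because the constant term $1$ is preserved.

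The main obstacle I expect is the sign and parameter bookkeeping in step three. In the super setting, \eqref{Twi} carries factors $(-1)^{|j,\tilde\I|}$, and the identification $\tilde\I$ versus $\I$ matters when $m=n$ (the $d_i\mapsto d_{\tau i}$ convention). One must verify that the induced map on $\U_{\bIi}$ commutes with $\sigma\tau$ up to exactly the signs absorbed by $o_{i,\I}$ and $\va^\dm$. I would first try the case $i_m\neq\pm\pt$, where $\bIi$ splits as $\gl(1|1)\times\gl(2\pt)\times\gl(1|1)$ or its even analogue, and check that $T_{w'}$ just swaps the two outer factors. After that I would handle $i_m=\pm\pt$ by a direct generator computation in $\gl(2|2\pt)$, where $T_{w_\bu}$-invariance of $\up_{\pt,\I}$ reduces matters to the node $\pt$.
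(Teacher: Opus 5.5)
Your proposal is correct and is essentially the paper's argument: the paper also writes $T_{\bs_{i_1}}\cdots T_{\bs_{i_{m-1}}}=T_{\relongest}T_{\bs_{i_m}}^{-1}=T_{w_\I}T_{w_\bu}^{-1}\,T_{w_\bu}T_{w_{\bu,i_m}}^{-1}$ and then concludes directly from \eqref{Twi} together with $\up_{i_m,\I}=\up_{\tau i_m,\I}$, leaving implicit the uniqueness step that you spell out. The sign bookkeeping is lighter than you expect: the signs in \eqref{Twi} cancel, so $T_{w_\I}T_{w_{\bu,i_m}}^{-1}$ acts on $\U^+_{\bIi}$ simply by $E_j\mapsto E_{\tau\tau_{\bu,i_m}(j)}$, which is the identity when $i_m=\pm\pt$ and the swap $i_m\leftrightarrow\tau i_m$ otherwise.
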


\begin{proof}
    We note that $w_\I=w_\bu \relongest$ by our constructions. 
    By definition of $\up_{i_m,\I}$ in \cref{Kmatrix}, we have $\up_{i_m,\I}=\up_{\tau i_m,\I}$. Let $w_{\bu,i_m}$ denote the longest element associated with $\I_{\bu,i_m}$ defined in \cref{Iib}. Hence $\relongest (\I)=\I$ and
    \begin{align*}
        T_{\bs_{i_1}} T_{\bs_{i_2}} \cdots T_{\bs_{i_{m-1}}}(\up_{i_m,\I})
        {=} T_{\relongest} T^{-1}_{\bs_{i_m}} (\up_{i_m,\I})=T_{w_\I}T_{w_\bu}^{-1}T_{w_\bu} T^{-1}_{w_{\bu,i_m}}(\up_{i_m,\I})
        \overset{\cref{Twi}}{=}\up_{i_m,\I}.
    \end{align*}
    This concludes the proof.
\end{proof}

\begin{proof}[Proof of \cref{thm:factorization}]
    By \cref{prop:tau0} and \cref{prop:DK}, it suffices to consider the real rank-two case where $\I_{\circ}=\{i,j,\tau i,\tau j\}$. In that case, the first statement in \cref{thm:factorization} is nontrivial only when $w=\relongest$, the longest element in $\reW$. Moreover, by the uniqueness in \cref{Kmatrix}, to prove the the first statement in the real rank two case, it suffices to show that 
    \begin{gather}
    \label{aqua}
        B_{\tau i,\I}\up_{\underline{\relongest}}=\up_{\underline{\relongest}} \sigma \tau (B_{i,\I}),\quad  B_{\tau j,\I}\up_{\underline{\relongest}}=\up_{\underline{\relongest}} \sigma\tau (B_{j,\I}),
    \end{gather}
    for any reduced expression $\underline{\relongest}$ of $\relongest$. As in the proof of \cref{thm:property}, there are three possibilities of real rank $2$ sub-diagrams. Then the verification of \cref{aqua} follows from a case-by-case checking as in \cite[Section 8.3]{WZ23} with the help of \cref{thm:property}. This finishes the proof.
\end{proof}

\subsection{Braid group actions on $\Omega$}
\label{sec:braid}
Let $\Br(\reW)$ (resp. $\Br(W_\bu)$) be the braid groupoid (resp. braid group) associated with $\reW$ (resp. $W_\bu$).  
In this subsection, we prove that the operators $\TT_i,\TT_i'$ $(i\in \I_\circ)$ satisfy the braid relations of $\Br(\reW)$.  
As a consequence, they induce an action of $\Br(W_\bu)\rtimes \Br(\reW)$ on the family
\[
\Omega:=\{\Ui(\I)\mid \I\in \DiAIII_\pt\}.
\]

Recall that, for any $\I\in \DiAIII_\pt$, there are three possible types of real rank \(2\) subdiagrams:
\begin{itemize}
\item if \(i\nsim j\), then \(\bs_i\bs_{j,\I}=\bs_j\bs_{i,\I}\);
\item if \(i\sim j\) and \(\{i,j\}\neq \{\pt,\pt+1\}\), then \(\bs_i\bs_j\bs_{i,\I}=\bs_j\bs_i\bs_{j,\I}\);
\item if \(i\sim j\) and \(\{i,j\}=\{\pt,\pt+1\}\), then \(\bs_i\bs_j\bs_i\bs_{j,\I}=\bs_j\bs_i\bs_j\bs_{i,\I}\).
\end{itemize}
For convenience, we write
\begin{align*}
\underbrace{\bs_i\bs_j\bs_i\cdots}_{k_{ij}}
=
\underbrace{\bs_j\bs_i\bs_j\cdots}_{k_{ij}}
\end{align*}
for the corresponding relation, where \(k_{ij}=2,3,4\) in the three cases above, respectively.

\begin{theorem}
  \label{thm:braidreW}
For $i\neq j\in \I_\circ$ and $i\neq \tau j$, we have
\begin{gather}
\label{eq:braidreW}
    \underbrace{\TT_{i} \TT_{j} \TT_{i}  \cdots }_{k_{ij}}
=\underbrace{\TT_{j} \TT_{i} \TT_{j}  \cdots}_{k_{ij}},\qquad 
\underbrace{\TT'_{i} \TT'_{j} \TT'_{i}  \cdots }_{k_{ij}}
=\underbrace{\TT'_{j} \TT'_{i} \TT'_{j}  \cdots}_{k_{ij}},
\end{gather}
\end{theorem}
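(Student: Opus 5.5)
We follow the strategy of \cite[\S 9]{WZ23}: derive the braid relations from the intertwining relations \eqref{eq:intertw} together with the factorization \cref{thm:factorization}. We treat only the $\TT'$ case; the $\TT$ case then follows from \cref{thm:ibraid}(3), since $\TT_i$ and $\TT'_i$ are mutually inverse along the groupoid arrows, so either braid relation implies the other.

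Fix $\I\in\DiAIII_\pt$ and $i,j$ as in the statement. Let $w=\underbrace{\bs_i\bs_j\bs_i\cdots}_{k_{ij}}=\underbrace{\bs_j\bs_i\bs_j\cdots}_{k_{ij}}$, a morphism $\I\to\J:=w(\I)$; by \cref{relCgroupid} it is the longest element of the rank-two parabolic subgroupoid $\langle\bs_i,\bs_j\rangle$. For a reduced expression $\wseq=\bs_{i_1}\cdots\bs_{i_k,\I}$ we iterate \eqref{eq:intertw}. For $x\in\Ui(\I)$ we write
\[
\TT'_{\wseq}(x)=\TT'_{i_1}\cdots\TT'_{i_k,\I}(x).
\]
We also use that $T'_{\bs_i}$ maps the rank-one quasi $K$-matrices of later steps appropriately. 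This yields an identity
\[
\TT'_{\wseq}(x)\,\up'_{\wseq}=\up'_{\wseq}\,T'_{w}(x),
\]
where $\up'_{\wseq}$ is the product of the translates $T'_{\bs_{i_1}}\cdots T'_{\bs_{i_{l-1}}}$ applied to the $\up_{i_l}$, in the appropriate order. This is the $T'$ version of \eqref{upwseq}, taken with respect to the target diagram $\J$.

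Here $T'_w$ is independent of the reduced expression by the braid relations for Lusztig–Yamane symmetries (\cref{braidoperator}) and Matsumoto's theorem for $W$. Also $w\in\reW$ has compatible lengths by \cref{prop:LL}, so the concatenated expression in $W$ is reduced.

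The key step is to show that $\up'_{\wseq}$ does not depend on the reduced expression. We identify it, via $\sigma$ or $\psi$ conjugation, or directly by the $T'$ analogue of the construction, with the partial quasi $K$-matrix $\up_{w}$ of the rank-two Satake subdiagram $(\I_\bu\cup\{i,\tau i,j,\tau j\},\tau)$ of $\J$. By \cref{thm:factorization} applied to this rank-two subdiagram, $\up_{w}$ is independent of the reduced expression and equals the rank-two quasi $K$-matrix. Given this, for the two reduced expressions $\wseq_1,\wseq_2$ of $w$ we get
\[
\TT'_{\wseq_1}(x)\,\up'_w=\up'_w\,T'_w(x)=\TT'_{\wseq_2}(x)\,\up'_w,
\]
and invertibility of $\up'_w$ gives $\TT'_{\wseq_1}=\TT'_{\wseq_2}$.

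The main obstacle is the bookkeeping in the iteration. Each application of \eqref{eq:intertw} must be transported by $T'_{\bs_{i_1}}\cdots$ to the correct diagram, and the rank-one factors must recombine exactly into the factorized quasi $K$-matrix. In the non-super case this is \cite[Theorem 9.?]{WZ23}. Here one must check that the $T'_{\bs}$ translates of $\up_{i,\I_l}$ are the rank-one factors with respect to the groupoid labels. This uses \cref{lem:blackfix}, \cref{lem:iparity} (the parameters $\va^\dm$ match across $\bs_i$), and \cref{thm:property}, which guarantees that $\TT$ maps $B_i$ to $B_j$ and hence that the rank-one pieces line up. The type-B case $k_{ij}=4$ with $\{i,j\}=\{\pt,\pt+1\}$, and odd $\pt$-nodes, is the delicate one. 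If the direct identification fails, the fallback is to check the equality on the generators $B_{l,\I}$ using \cref{thm:property} and the explicit formulas in \cref{thm:formula2}, with the Iwasawa-type weight argument of \cref{prop:relamb1}.
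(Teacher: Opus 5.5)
Your proposal is correct and follows essentially the same route as the paper. You reduce to the $\TT'$ relation via \cref{thm:ibraid}(3), iterate \eqref{eq:intertw} along both reduced expressions, cancel the Lusztig--Yamane parts using the braid relations for $T'$, and conclude from the reduced-expression independence in \cref{thm:factorization}. The paper makes your hedged identification concrete via $\sigma$: since $T'_{\bs}=\sigma T_{\bs}\sigma$ by \eqref{eq:sTs} and $\sigma$ reverses products, each product of translated rank-one factors is $\sigma(\up_{\bbw})$ for the corresponding reduced expression, so your fallback to checking generators is not needed.
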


\begin{proof}
 By \cref{thm:ibraid}(3), it suffices to prove the second equality in \cref{eq:braidreW}.  
The case \(i\nsim j\) is immediate.  
We treat the case \(i\sim j\) and \(\{i,j\}=\{\pt,\pt+1\}\); the remaining case is similar. In this case, we have $k_{ij}=4$. For any $x\in \Ui(\I)$, we have
\begin{equation}
\label{k=4}
    \begin{aligned}
        &\TT'_{i} \TT'_{j} \TT'_{i} \TT'_{j,\I}(x)\up_{i,\I} \cdot T'_{\bs_i}(\up_{j,\bs_j\bs_i\bs_j(\I)})\cdot T'_{\bs_i}T'_{\bs_j}(\up_{i,\bs_i\bs_j(\I)}) \cdot T'_{\bs_i}T'_{\bs_j}T'_{\bs_i}(\up_{j,r_j(\I)})\\
          \overset{\cref{eq:intertw}}{=}&
           \up_{i,\I}T'_{\bs_i}(\up_{j,\bs_j\bs_i\bs_j(\I)})T'_{\bs_i}T'_{\bs_j}(\up_i,\bs_i\bs_j(\I))T'_{\bs_i}T'_{\bs_j}T'_{\bs_i}(\up_{j,r_j(\I)})
            T'_{\bs_i}T'_{\bs_j}T'_{\bs_i}T'_{\bs_j,\I}(x)
    \end{aligned} 
\end{equation}
and 
\begin{equation}
\label{k=4'}
    \begin{aligned}
        &\TT'_{j} \TT'_{i} \TT'_{j} \TT'_{i,\I}(x)\up_{j,\I} \cdot T'_{\bs_j}(\up_{i,\bs_i\bs_j\bs_i(\I)})\cdot T'_{\bs_j}T'_{\bs_i}(\up_{j,\bs_j\bs_i(\I)}) \cdot T'_{\bs_j}T'_{\bs_i}T'_{\bs_j}(\up_{i,r_i(\I)})\\
          \overset{\cref{eq:intertw}}{=}&
           \up_{j,\I}T'_{\bs_j}(\up_{i,\bs_i\bs_j\bs_i(\I)})T'_{\bs_j}T'_{\bs_i}(\up_j,\bs_j\bs_i(\I))T'_{\bs_j}T'_{\bs_i}T'_{\bs_j}(\up_{i,r_i(\I)})
            T'_{\bs_j}T'_{\bs_i}T'_{\bs_j}T'_{\bs_i,\I}(x).
    \end{aligned} 
\end{equation}
Let $\bbw=\bs_i\bs_j \bs_i \bs_{j,\I}=\bs_j \bs_i\bs_j \bs_{i,\I}$. Since $T'_i$ satisfy the braid group relations, we have
\[
 T'_{\bs_i}T'_{\bs_j}T'_{\bs_i}T'_{\bs_j,\I}(x)=T'_{\bs_j}T'_{\bs_i}T'_{\bs_j}T'_{\bs_i,\I}(x).
\]
Comparing \cref{k=4,k=4'}, we see that, to complete the proof, it remains to show that
\begin{multline}
\label{fac}
\up_{i,\I}T'_{\bs_i}(\up_{j,\bs_j\bs_i\bs_j(\I)})T'_{\bs_i}T'_{\bs_j}(\up_{i,\bs_i\bs_j(\I)})T'_{\bs_i}T'_{\bs_j}T'_{\bs_i}(\up_{j,r_j(\I)})
    \\
=
\up_{j,\I}T'_{\bs_j}(\up_{i,\bs_i\bs_j\bs_i(\I)})T'_{\bs_j}T'_{\bs_i}(\up_{j,\bs_j\bs_i(\I)})T'_{\bs_j}T'_{\bs_i}T'_{\bs_j}(\up_{i,r_i(\I)}).
\end{multline}
By \cref{eq:sTs,upj,upwseq}, the two sides of \cref{fac} are precisely the factorizations of $\sigma(\up_{\bbw})$ corresponding to the two different reduced expressions.  
Therefore, \cref{thm:factorization} implies that the two sides of \cref{fac} are equal.
\end{proof}

For $w\in \reW$, take a reduced expression $w=\bs_{i_1}\bs_{i_2}\cdots\bs_{i_k,\I}$ and define
\begin{align}\label{def:tTT3}
\TT_{w}:= \TT_{i_1} \TT_{i_2} \cdots \TT_{i_k},
\qquad
\TT'_{w}:= \TT'_{i_1} \TT'_{i_2} \cdots \TT'_{i_k}.
\end{align}
By \cref{thm:braidreW}, these are now independent of the choice of reduced expressions for $w$.

\begin{theorem}
  \label{thm:braidW}
(1) There exists an action of $\Br(W_\bu) \rtimes \Br(\reW)$ on $\Omega$ as isomorphisms of algebras generated by $T'_{j} \; (j\in \I_\bu)$ and $\TT'_{i,\I} \;(i\in \I_\circ)$ for all $\I\in\DiAIII_a$.

(2) There exists an action of $\Br(W_\bu) \rtimes \Br(\reW)$ on $\Omega$ as isomorphisms of algebras generated by $T_{j} \; (j\in \I_\bu)$ and $\TT_{i,\I} \;(i\in \I_\circ)$ for all $\I\in\DiAIII_a$.
\end{theorem}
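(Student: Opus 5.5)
We have to show that the operators $T'_j$ ($j\in\I_\bu$) and $\TT'_{i,\I}$ ($i\in\I_\circ$, $\I\in\DiAIII_\pt$) satisfy every defining relation of $\Br(W_\bu)\rtimes\Br(\reW)$, viewed as a braid groupoid acting on $\Omega$. Part (2) then follows from part (1) by inversion, since $\TT_{i,\I}$ is inverse to $\TT'_{i,\J}$ by \cref{thm:ibraid}(3), and $T_j$ is inverse to $T'_j$.

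The relations split into three groups.

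\textbf{Relations inside $\Br(\reW)$.} The invertibility relations $\TT'_{i,\bs_i(\I)}$ versus $\TT_{i,\I}$ are \cref{thm:ibraid}(3). The braid relations of length $k_{ij}=2,3,4$ are \cref{thm:braidreW}. Hence $\TT'_w$ in \eqref{def:tTT3} is well defined, and we get a groupoid action of $\Br(\reW)$ on $\Omega$.

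\textbf{Relations inside $\Br(W_\bu)$.} By \cite[Theorem 5.2]{SWsuper}, $T'_j$ restricts to an automorphism of each $\Ui(\I)$. Since $\I_\bu\subset\Ieven$ is the same for all $\I\in\DiAIII_\pt$, these are ordinary Lusztig symmetries of $\Ub$. They satisfy the type A braid relations of $W_\bu$ because they already do so on $\U(\I)$ (\cref{braidoperator}).

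\textbf{Semidirect product relations.} This is the main step. We must show that conjugation by $\TT'_{i,\I}$ sends $T'_j$ ($j\in\I_\bu$) to $T'_{\tau_{\bu,i}(j)}$ or $T'_j$, matching how $\bs_i$ acts on $W_\bu$. For $i\neq\pm\pt$, $\bs_i$ commutes with $W_\bu$, so the relation is $\TT'_{i}T'_j=T'_j\TT'_{i}$. For $i=\pm\pt$, conjugation by $\bs_\pt=w_{\bu,\pt}w_\bu$ induces a diagram automorphism of $\I_\bu$, namely the composite of the two longest-element involutions. Since both act as $j\mapsto -j$ on $\I_\bu$, this composite is the identity, so again we expect plain commutation. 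On $\U$ we have $T'_{\bs_i}T'_j=T'_{j'}T'_{\bs_i}$ with $j'$ the image of $j$, by braid relations in $W$. Using this and \cref{lem:blackfix} ($T'_j$ fixes $\up_{i,\J}$), apply $T'_j$ to the intertwining relation $\TT'_{i,\I}(x)\up_{i,\J}=\up_{i,\J}T'_{\bs_i}(x)$. This gives
\[
T'_j\TT'_{i,\I}(x)\,\up_{i,\J}=\up_{i,\J}\,T'_{\bs_i}T'_{j}(x).
\]
The uniqueness in \cref{thm:ibraid}(1) then yields $T'_j\TT'_{i,\I}=\TT'_{i,\I}T'_j$ as maps $\Ui(\I)\to\Ui(\J)$.

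The main obstacle is checking that $T'_j$ and $T'_{\bs_i}$ commute on $\U$ across diagrams $\I\neq\J$ in the groupoid setting. Concretely, $s_jw_{\bu,i}w_\bu=w_{\bu,i}w_\bu s_j$ must hold as reduced compositions in $W$ with lengths adding. This should follow from Matsumoto's theorem for $W$ and the length compatibility of \cref{prop:LL}. One must also verify that the reflections $s_j$ with $j\in\I_\bu$ are even, so they fix each diagram. I would check the case $i=\pm\pt$ explicitly by computing on simple roots, using $w_\bu(\I)=\I$ from \eqref{wbI=I}.
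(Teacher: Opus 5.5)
Your proposal is correct and follows the paper's proof essentially step for step. The paper also reduces (2) to (1) via \cref{thm:ibraid}(3) and takes the relative braid relations from \cref{thm:braidreW}. It likewise obtains the mixed relation $\TT'_{i,\I}T'_{j,\I}=T'_{j,\J}\TT'_{i,\I}$ by applying $T'_j$ to the intertwining relation, using \cref{lem:blackfix} and $T'_{\bs_i}T'_j=T'_jT'_{\bs_i}$ (justified there by $\ell(\bs_i s_j)=\ell(\bs_i)+1$), and then invoking uniqueness. Your additional observations, namely the commutation for $i\neq\pm\pt$ and that conjugation by $\bs_{\pt}$ acts trivially on $\I_\bu$, are consistent with the paper's relation $\bs_{\pm\pt}s_{j,\I}=s_j\bs_{\pm\pt,\I}$.
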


\begin{proof}
    It suffices to prove the first statement. The defining relations of $\Br(W_\bu) \rtimes \Br(\reW) $ consist of braid relations for $\Br(W_\bu)$, the braid relations for $\Br(\reW)$, and the following relation:
     \begin{align}
     \label{rbgoidrel}
         \bs_{\pm \pt} s_{j,\I}=s_j\bs_{\pm \pt,\I},\qquad \text{for any }j\in \I_\bu, \I\in\DiAIII_\pt
     \end{align}
     Let $i=\pm \pt$ and $\J=\bs_i(\I)$. Since $\ell(\bs_{i} s_j) = \ell(\bs_i) +1$, we have $T'_{\bs_i}T'_{j,\I}=T'_{j}T'_{\bs_i,\I}$.
     By \cref{lem:blackfix}, $\up_{i,\I}$ is fixed by $T_j'$. Hence we obtain that
     \begin{align}
T'_{j,\J}\TT'_{i,\I} (x) \up_{i,\J}\overset{\cref{eq:intertw}}{=}\up_{i,\J} T'_{j}T'_{\bs_i,\I} (x) =\up_{i,\J}T'_{\bs_i}T'_{j,\I}(x)\overset{\cref{eq:intertw}}{=}\TT'_{i,\I}T'_{j,\I}(x) \up_{i,\J}
\end{align}
for any $x\in \Ui(\I)$, hence we conclude that 
\[
\TT'_{i,\I}T'_{j,\I}=T'_{j,\J}\TT'_{i,\I}
\]
as desired. On the other hand, the braid relations for $\TT'_{k,\I},k\in \I_\circ$ are verified in \cref{thm:braidreW}. This concludes the proof.
\end{proof}

\begin{remark}[Compatible actions on modules]
Fix \(\I\in \DiAIII_\pt\).  For \(i\in \Ieven\cap \I_\circ\), the simple reflection \(\bs_i\) fixes \(\I\).  Thus we suppress the subscript \(\I\) in what follows.  Let \(M\) be an integral \(\U(\I)\)-module as in \cite[Definition 2.2]{BKK00}. By Lusztig's construction \cite{Lubook}, the braid group operators \(T'_i\) and \(T_i\) act on \(M\), which satisfy the compatibility relations
\[
T'_{i}(uv)=T'_{i}(u)T'_{i}(v),
\qquad
T_{i}(uv)=T_{i}(u)T_{i}(v),
\]
for \(u\in \U\) and \(v\in M\); see \cite[39.4.3]{Lubook}.

Since \(\Ui=\Ui(\I)\) is a subalgebra of \(\U\), the module \(M\) is also a \(\Ui\)-module by restriction.  For \(i\in \Ieven\cap \I_\circ\), define linear operators on \(M\) by
\[
\TT'_{i}(v):=\up_i T'_{\bs_i}(v),
\qquad
\TT_i(v):=T_{\bs_i}(\up_i^{-1})T_{\bs_i}(v).
\]
Then these operators are compatible with the relative braid group symmetries on \(\Ui\).  Namely, for every \(x\in \Ui\) and \(v\in M\), one can use \cref{thm:ibraid} to show that
\[
\TT'_{i}(xv)=\TT'_{i}(x)\TT'_{i}(v),
\qquad
\TT_{i}(xv)=\TT_{i}(x)\TT_{i}(v).
\]
Moreover, if \(i,j\in \Ieven\cap \I_\circ\) with \(i\neq j,\tau j\), then the operators \(\TT'_i\) and \(\TT_i\) satisfy the relative braid relations on \(M\).
This follows from the same argument as in the proof of \cref{thm:braidreW}, using the canonical factorization of the quasi \(K\)-matrix.  Thus the relative braid group symmetries associated with even simple roots admit compatible actions on integral \(\U\)-modules, in direct analogy with Lusztig's braid group action on integrable modules over quantum groups.

When the underlying Satake diagram is quasi-split (i.e., $\pt=0$ in \eqref{AIIIdiagram}), the formulas on the module level in \cite[Theorem C]{WZ25} still valid for $\TT'_i,\TT_i$ associated with $i\in \Ieven$ in the super setting. On the other hand, it would be interesting to study how the relative braid group symmetries associated with odd simple roots act on representations of \(\Ui\) attached to different Satake diagrams, as a super generalization of \cite{WZ25}.  
\end{remark}

\printbibliography

\end{document}